\newcommand{\U}{\mathcal{U}}
\newcommand{\N}{\mathbb{N}}
\newcommand{\E}{\mathcal{E}}
 \DeclareMathOperator{\Mat}{Mat}
  \DeclareMathOperator{\Gal}{Gal}
   \DeclareMathOperator{\GL}{GL}
\newcommand{\Spec}{\rm{Spec}}
\renewcommand{\O}{\mathcal{O}}
\renewcommand{\P}{\mathbb{P}}
\theoremstyle{definition}
\newtheorem{definition}{Definition}[subsection]
\newtheorem{construction}[definition]{Construction}
\newtheorem{example}[definition]{Example}
\newtheorem{notation}[definition]{Notation}
\theoremstyle{plain}
\newtheorem{proposition}[definition]{Proposition}
\newtheorem{theorem}[definition]{Theorem}
\newtheorem{corollary}[definition]{Corollary}
\newtheorem{lemma}[definition]{Lemma}
\newtheorem{fact}[definition]{Fact}
\newtheorem{question}[definition]{Question}
\newtheorem{claim}{Claim}
\newtheorem*{claimproof}{Claim}
\theoremstyle{exampstyle}
\newtheorem{thmx}{Theorem}
\theoremstyle{exampstyle}
\numberwithin{equation}{section}
\theoremstyle{remark}
\newtheorem{remark}[definition]{Remark}
\def\Ind{\setbox0=\hbox{$x$}\kern\wd0\hbox to 0pt{\hss$\mid$\hss}
\lower.9\ht0\hbox to 0pt{\hss$\smile$\hss}\kern\wd0}
\def\Notind{\setbox0=\hbox{$x$}\kern\wd0\hbox to 0pt{\mathchardef
\nn=12854\hss$\nn$\kern1.4\wd0\hss}\hbox to
0pt{\hss$\mid$\hss}\lower.9\ht0 \hbox to
0pt{\hss$\smile$\hss}\kern\wd0}
\def\ind{\mathop{\mathpalette\Ind{}}}
\def\nind{\mathop{\mathpalette\Notind{}}}
\renewcommand{\Spec}{\mathrm{Spec}}
\newtheoremstyle{exampstyle}
{3pt} 
{3pt} 
{\itshape} 
{} 
{\bfseries} 
{.} 
{.5em} 
{} 
\title[Strongly minimal vector fields]{On the density of strongly minimal  algebraic vector fields}
\author{Rémi Jaoui}
\address{Université Claude Bernard Lyon 1, CNRS UMR 5208, Institut Camille Jordan, 43 blvd. du 11 novembre 1918, 69622 Villeurbanne, France}
\email{jaoui@math.univ-lyon1.fr}
\date\today
\subjclass[2020]{03C45, 12H05, 34M45, 32M25}
\begin{document}

\maketitle

\selectlanguage{english}

\begin{abstract} 
Two theorems witnessing the abundance of geometrically trivial strongly minimal autonomous differential equations of arbitrary order are shown. The first one states that a generic algebraic vector field of degree $d\geq 2$  on the affine space of dimension $n \geq 2$ is strongly minimal and geometrically trivial. The second one states that if $X_0$ is the complement of a smooth hyperplane section $H$ of a smooth projective variety $X$ of dimension $n$, then for $d$ large enough, the system of differential equations associated with a generic vector field on $X_0$ with a pole of order at most $d$ along $H$ is strongly minimal and geometrically trivial. 

This produces the first examples of meromorphic functions that are new in the sense of Painlevé and satisfy autonomous differential equations of order $n \geq 4$.
\end{abstract} 

%
%

\setcounter{tocdepth}{1}
\tableofcontents

\section{Introduction}

Following the pioneering work of Hrushovski \cite{Hrushovski-MordellLang} on Manin kernels, a large body of results has emerged in the model-theoretic literature concerning the study of the algebraic relations between solutions of algebraic {\em nonlinear} differential equations for which the classical Picard-Vessiot Galois theory is not applicable. 

The effectiveness of these model-theoretic techniques has been demonstrated both from a practical and from a theoretical perspective. On the practical side, they led to a full understanding  of the algebraic relations between solutions of various classical families of algebraic differential equations. This includes the families of {\em Painlevé equations} \cite{Nagloo-Pillay1, Nagloo-Pillay2, Freitag-Nagloo}, families of {\em Schwarzian equations} \cite{Freitag-Scanlon, Casale-Freitag-Nagloo, BCFN} and certain families of {\em Liénard equations} \cite{FJMN}. From a theoretical perspective, they provided  new insights on the possible algebraic relations between solutions of arbitrary algebraic differential equations \cite{Freitag-Jaoui-Moosa}.

\subsection{The model-theoretic strategy} Given a definable family of algebraic differential equations depending on some complex parameters $\alpha \in \mathbb{C}^n$  written as
$$E(\alpha) : F_\alpha(y,y',y'', \ldots, y^{(n)}) = 0,$$
the core of the model-theoretic strategy is to first establish a ``{\em{rough classification''}} of the algebraic relations among solutions of the equations $E(\alpha)$. This rough classification describes the structures of the solution sets of the equations $E(\alpha)$ for fixed values of the parameters $\alpha$ as well as the behavior of the relations between different equations in the family. It is phrased in the language of geometric stability theory in the sense of  \cite{Pillay} and called the {\em{semi-minimal analysis}} of the family.

Among all the possible semi-minimal analysis, a key feature of nonlinear differential algebra is that, after discarding a ``small subset'' $\mathcal E$ of parameters, one of them appears to have an {\em ubiquitous character} shared, for instance, by all the classical families of equations  above. This ubiquitous semi-minimal analysis can be summarized as follows:

\begin{enumerate} 
\item {\em strong minimality}: outside of the exceptional set $\mathcal E$ of parameters,  the equations $E(\alpha)$ are  strongly minimal: for every solution $y$ of $E(\alpha)$ and every differential field $L$ containing the parameters of the equation $E(\alpha)$,
$$ \mathrm{trdeg}(L\langle y \rangle / L )  = 0 \text{ or } n$$
where $n$ denotes the (common) order of the equations $E(\alpha)$ and $L\langle y \rangle$ denotes the differential field generated by $L$ and $y$.

\item {\em geometric triviality:} for every $r \geq 2$, every parameters $\alpha_1,\ldots,\alpha_r$ lying outside of the exceptional set $\mathcal E$ and every solutions $y_1,\ldots, y_r$ of $E(\alpha_1),\ldots, E(\alpha_r)$ respectively, if $\mathrm{trdeg}(L\langle y_1,\ldots, y_r \rangle /L) < r \cdot n$ then 
$$ \text{ for some } i \neq j, \mathrm{trdeg}(L\langle y_i,y_j\rangle/L) = 0 \text{ or } n, $$
where $L$ denotes any differential field containing the parameters of all the equations $E(\alpha_i)$. This condition is equivalent to geometric triviality of each individual fiber  outside of the exceptional set $\mathcal E$ of parameters.

\item {\em multidimensionality:} for every pair $(\alpha,\beta)$ of parameters outside of an exceptional set $\mathcal E_2$ of pairs of parameters, the equations $E(\alpha)$ and $E(\beta)$ are {\em orthogonal}: 
$$ \mathrm{Corr}(E(\alpha),E(\beta)) = \emptyset$$ 
where $\mathrm{Corr}(E(\alpha),E(\beta)) $ denotes the set of generically finite-to-finite correspondences between the solution sets of $E(\alpha)$ and $E(\beta)$.

\end{enumerate} 
 
The problem addressed in this article is to confirm the ubiquitous character of this semi-minimal analysis by showing that  the families of algebraic differential equations satisfying the properties (1) and (2) are indeed abundant in the universe of {\em autonomous} algebraic differential equations. The conjectural property (3) as well as other related problems are discussed in subsection 1.5. 

\subsection{State of the art} Before Nagloo and Pillay clarified in \cite{Nagloo-Pillay1} the relationship between (1) and classical ideas of Painlevé from \cite[Vingt-et-unième leçon, pp.487]{Painleve} on ``highly irreducible''  differential equations, the scope of this model-theoretic strategy was essentially limited to order one equations. Nevertheless, it was already clear since the beginning of the eighties that all the properties (1)-(3) should hold somewhat generically for algebraic differential equations of higher order as well. For instance, after showing that the Poizat equation $y''\cdot y = y'$ and $y' \neq 0$ satisfies (1), Poizat conjectures in \cite{Poizat} that {\em for every $n \geq 2$, a ``general'' order $n$ algebraic (nonlinear) differential equation should be strongly minimal}.

The first result of this kind was established in \cite{jaoui-ANT} and only came after substantial progress was made on the study of the classical examples mentioned above. It states that the family $\Xi(2,d)$ of two-dimensional systems of the form
$$ (S): \begin{cases} x' = f(x,y) \\ y' = g(x,y) \end{cases} \text{ where } f,g \in \mathbb{C}[X,Y]_{\leq d},$$
satisfies the properties (1) and (2) as soon as $d \geq 3$.  Allowing the coefficients of the equations to be differentially transcendental functions, Devilbiss and Freitag \cite{Devilbiss-Freitag} then established a similar result valid in arbitrary dimension. Their result states that for every $n \geq 1$ and every $d \geq 6$, the ``highly non-autonomous'' equations 
$$F(y,y',\ldots, y^{(n)}) = 0$$ where $F \in \mathcal M(U)[Y_0,\ldots, Y_n]$
is a (multivariate) polynomial of total degree $d$ with  {\em differentially independent and differentially transcendental} coefficients are strongly minimal. Together with the results of this article, they provide a complete answer to Poizat's conjecture.

\subsection{Statement of the main results}
The first result of this article concerns systems of polynomial differential equations on the affine space of arbitrary dimension. In contrast with the previous results, it already applies to quadratic systems of polynomial differential equations.

\begin{thmx} \label{theoremA}
Let $n,d \in \mathbb{N}$. Consider the family $\Xi(n,d)$ of systems of differential equations of the form 
$$ (S): \begin{cases} y'_1 & =  f_1(y_1,\ldots, y_n) \\  &\vdots  \\  y'_n & =   f_n(y_1,\ldots, y_n)  \end{cases} $$ 
parametrized by $n$-tuples of complex polynomials $f_1,\ldots, f_n$ of degree bounded by $d$. If $n,d \geq 2$ then the family $\Xi(n,d)$ satisfies the properties (1) and (2): 
\begin{itemize} 
\item[(1)] a {\em generic} system $(S)$ from $\Xi(n,d)$ is strongly minimal,
\item[(2)] if $y(1),\ldots, y(r)$ are $r$  ($n$-tuples of) solutions of {\em generic} systems $(S_1),\ldots , (S_r)$ from $\Xi(n,d)$ and $L$ is a differential field extension of $\mathbb{C}$ then 
$$\mathrm{trdeg}(L (y(1),\ldots,y(r)) /L) = r \cdot n$$
unless for some $i \neq j$,  $\mathrm{trdeg}(L(y(i),y(j))/L) = 0 \text{ or } n$.
\end{itemize}  
\end{thmx}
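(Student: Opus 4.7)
\emph{Projectivization.} A polynomial vector field $\sum_i f_i \partial/\partial y_i$ with $\deg f_i \leq d$ extends to a meromorphic vector field on $\mathbb{P}^n$ with poles of bounded order along the hyperplane at infinity $H_\infty = \mathbb{P}^n \setminus \mathbb{A}^n$. My first step is to identify $\Xi(n,d)$ with an open subset of the corresponding linear system, so that Theorem A becomes an instance of Theorem B applied to $(X,H_X) = (\mathbb{P}^n, H_\infty)$. The plan below therefore describes the strategy I would use to prove this more general statement in the case that matters here.

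\emph{Model-theoretic dichotomy.} Let $p$ be the generic type (in $\mathrm{DCF}_0$) of a solution of a system $(S) \in \Xi(n,d)$. Suppose, for contradiction, that $p$ is not strongly minimal. The finite-rank machinery in $\mathrm{DCF}_0$ (Hrushovski's socle analysis) then produces one of two purely algebraic obstructions: either (a) $(S)$ preserves a proper algebraic subvariety $Y \subsetneq \mathbb{A}^n$, or (b) $(S)$ descends along a dominant rational map $\pi \colon \mathbb{A}^n \dashrightarrow Z$ to a non-trivial vector field on a lower-dimensional variety $Z$ --- a rational first integral when $\dim Z = 1$, an invariant fibration in general. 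Once the numerical invariants of $Y$, respectively of $(Z,\pi)$, are fixed, each condition cuts out a Zariski-closed locus in the parameter space $\Xi(n,d)$.

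\emph{Stratification and main obstacle.} The moduli of candidate invariant structures of fixed numerical type form a bounded-dimensional algebraic family, and the tangency (resp.\ descent) condition is algebraic; stratifying accordingly, it suffices to show that each stratum defines a \emph{proper} Zariski-closed subset of $\Xi(n,d)$, so that the union over countably many strata yields the exceptional locus $\mathcal{E}_1$. Case (a) can be handled by a parameter count combined with a jet-space argument at a single well-chosen vector field, forcing the required codimension. The \emph{main obstacle} is case (b) for $\dim Z \geq 1$: one must exclude the entire lattice of potential invariant rational fibrations along which $(S)$ descends. I would attack this by adapting Jouanolou-style non-integrability arguments (generic foliations of degree $\geq 2$ on projective space admit no rational first integral) to all intermediate quotient types, using an explicit tangency computation on the relevant incidence variety. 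This geometric step, rather than the model-theoretic dichotomy itself, carries the bulk of the work.

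\emph{Geometric triviality.} For (2), suppose $y(1), \ldots, y(r)$ are solutions of $r$ generic systems satisfying a non-trivial algebraic relation over $\mathbb{C}$. Additivity of transcendence degree over pairwise orthogonal strongly minimal types forces some pair $(y(i), y(j))$ to be non-orthogonal; by the internality criterion in $\mathrm{DCF}_0$, non-orthogonality of the strongly minimal types associated with $(S_i)$ and $(S_j)$ is witnessed by a generically finite-to-finite rational correspondence $C \subset \mathbb{A}^n \times \mathbb{A}^n$ tangent to the product vector field $v_i \oplus v_j$. Running the same stratification strategy on $\Xi(n,d) \times \Xi(n,d)$, with $C$ varying in a bounded-dimensional algebraic family of fixed degree, reduces (2) to showing that each such stratum is not Zariski-dense --- again a parameter-count argument in the spirit of the previous paragraph.
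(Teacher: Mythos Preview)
Your proposal has genuine gaps at each of its three stages.

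\textbf{Theorem A is not an instance of Theorem B.} The paper explicitly warns against this reduction (see the paragraph following Theorem B): an affine vector field of degree $d$ extends to an $\mathcal{O}_{\mathbb{P}^n}(d-1)$-twisted vector field that \emph{leaves the hyperplane $H_\infty$ invariant}, so $\Xi(n,d)$ is identified (Proposition~\ref{proposition-affine-degree}) with a \emph{proper linear subspace} $\overline{\Xi}(n,d) \subset \Xi(\mathbb{P}^n,\mathcal{O}(d-1))$, not with an open subset of any $\Xi(X_0,k)$. A generic point of this subspace is not generic in the ambient linear system, so Theorem B says nothing about it; and even if it did, Theorem B only gives the conclusion for $d \geq d_0$ with no control on $d_0$. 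The paper instead invokes Coutinho's theorem (Fact~\ref{theorem-Coutinho}), which is specifically about foliations on $\mathbb{P}^n$ preserving a fixed hyperplane.

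\textbf{Your dichotomy misses the hard case.} Failure of minimality of the generic type does not reduce to your (a)/(b). By the Jaoui--Moosa trichotomy (Fact~\ref{fact-Jaoui-Moosa}), the obstructions are either (i) a proper \emph{almost} fibration --- one that exists only after passing to a generically finite \emph{cover} of $(X,v)$ --- or (ii) a generically finite dominant map to a simple abelian variety with translation-invariant vector field. Your case (b) captures only rational fibrations of $(\mathbb{A}^n,v)$ itself; the covers are not parametrized by your strata, the abelian case is absent, and a Jouanolou-style argument (which concerns rational first integrals) does not see either of them. The paper's core technical contribution (Theorems C and D) is precisely a mechanism to handle almost fibrations uniformly: one lifts to the first projective prolongation $(\mathbb{P}(T^*X),\mathbb{P}(v^{[1]}))$, where they manifest as invariant horizontal subvarieties, and then combines a non-resonant singularity with Pereira's theorem on characteristic varieties of generic foliations to show the only such subvariety is the canonical hypersurface. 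This machinery is entirely absent from your plan.

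\textbf{Property (2) needs no correspondence count.} Property (2) is a \emph{triviality} statement about the decoupled multi-sorted structure, not an orthogonality statement: the conclusion explicitly allows $\mathrm{trdeg}(\mathbb{C}(y(i),y(j))/\mathbb{C}) = n$ for some pair. Ruling out correspondences between pairs of generic systems would aim at property (3) (multidimensionality), which the paper leaves open as Question B. The paper obtains (2) essentially for free: once each system is strongly minimal and autonomous of dimension $\geq 2$, Hrushovski--Sokolovi\'c (Fact~\ref{Hrushovski-Sokolovic}) gives geometric triviality of each sort, and a short stability-theoretic argument (Corollary~\ref{triviality-decoupledsystem}) then passes triviality to the decoupled structure. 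No further geometric input is required.
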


The conclusion of Theorem A asserts that the properties (1) and (2) are properties of ``a generic system from the family $\Xi(n,d)$'', namely that they hold for all the systems from $\Xi(n,d)$ {\em outside of an exceptional set of parameters formed by a countable union of proper Zariski-closed subsets} of the complex vector space of parameters of $\Xi(n,d)$. Since this family is defined over $\mathbb{Q}$, a standard argument implies that these Zariski-closed subsets can be taken to be  defined over $\mathbb{Q}$ as well. 

This theorem implies the following statement reflected by the title of the article: for every system $(S_0)$ from $\Xi(n,d)$ defined by $(f_0,\ldots, f_n)$ and every $\varepsilon > 0$, there exists a {\em strongly minimal geometrically trivial system} $(S_\varepsilon)$ from $\Xi(n,d)$ defined by $(f^\varepsilon_0,\ldots, f^\varepsilon_n)$ such that
$$ \mathrm{max}_{0 \leq i \leq n} \mid f_i - f^\varepsilon_i \mid < \varepsilon $$ 
where $\mid-\mid$ denotes any norm on the complex vector space of polynomials of degree $\leq d$. Finally, Theorem \ref{theoremA} is also equivalent to its {\em measure-theoretic version} asserting the vector fields from $\Xi(n,d)$ satisfying the properties (1) and (2) have full Lebesgue measure in the parameter space of $\Xi(n,d)$, see Remark \ref{remark-Lebesguemeasure}.  \\

The second theorem provides a similar conclusion for algebraic vector fields living on the complement of an hyperplane section in an arbitrary smooth projective complex algebraic variety.

\begin{thmx} 
Let $X$ be a smooth projective complex algebraic variety of dimension $n$, let $H_X$ be a smooth hyperplane section of $X$ and set $X_0 = X \setminus H_X$. Consider the family $\Xi(X_0,d)$ of vector fields on $X_0$ with a pole of order at most $d$ along $H_X$. There exists $d_0 \geq 0$ such that for all $d \geq d_0$, the family $\Xi(X_0,d)$ satisfies the properties (1) and (2): 
\begin{itemize} 
\item[(1)] a {\em generic} vector field $(X_0,v)$ from $\Xi(X_0,d)$  is strongly minimal,
\item[(2)] if $y_1,\ldots, y_r$ are  solutions of systems of differential equations associated to {\em generic} vector fields from $\Xi(X_0,d)$ and $L$ is a differential field extension of $\mathbb{C}$ then
$$ \mathrm{trdeg}(L(y_1,\ldots, y_r)/L) = r \cdot n $$
unless for some $i \neq j$, $\mathrm{trdeg}(L( y_i,y_j)/L) =  0 \text{ or } n$.
\end{itemize}
\end{thmx}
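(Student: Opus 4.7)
My plan is to deduce Theorem B from Theorem A by a local-to-global argument, exploiting the birational invariance of strong minimality and geometric triviality. Fix a closed point $p \in X_0$ together with a Zariski open affine neighborhood $U \subset X_0$ of $p$ equipped with an étale morphism $\psi : U \to \mathbb{A}^n$; such a $U$ exists because $X$ is smooth of dimension $n$. The pullback $\psi^*$ sends any polynomial vector field $w \in \Xi(n, e)$ to a regular vector field $\psi^* w$ on $U$. The main geometric input I would need is that for $d$ large, every such $\psi^* w$ with $w \in \Xi(n, e(d))$ extends to a global element of $\Xi(X_0, d) = H^0(X, TX(dH_X))$, for some function $e(d) \to \infty$. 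This is a Serre-vanishing / very ampleness statement: as $d$ grows, the restriction map $H^0(X, TX(dH_X)) \to H^0(U, TX|_U)$ surjects onto the subspace of sections whose Laurent expansion along $H_X$ has pole order at most $d$, and in particular captures all pullbacks of polynomial vector fields of bounded degree. Combined with a routine regularity analysis on $X_0 \setminus U$, this yields a linear injection $\iota_d : \Xi(n, e(d)) \hookrightarrow \Xi(X_0, d)$.

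With $\iota_d$ in hand, the transfer of properties is essentially formal. If $w \in \Xi(n, e(d))$ satisfies (1) and (2), then so does $\iota_d(w)$: indeed $\iota_d(w)$ agrees with $\psi^* w$ on the dense open $U \subset X_0$, so $(X_0, \iota_d(w))$ and $(U, \psi^* w)$ have the same generic differential type over $\mathbb{C}$, while $\psi$ finite étale makes $(U, \psi^* w)$ interalgebraic with $(\mathbb{A}^n, w)$. Strong minimality and geometric triviality both descend along such interalgebraic extensions, since they are invariants of the interalgebraic closure of the generic type.

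The conclusion then follows from a standard density argument in the parameter space. The bad sets $\mathcal E^{(1)}_d \subset \Xi(X_0, d)$ and $\mathcal E^{(2)}_{d,r} \subset \Xi(X_0, d)^r$ where properties (1) or (2) fail are countable unions of Zariski closed subsets, each defined by the existence of an invariant algebraic subvariety (resp.\ finite-to-finite correspondence) of bounded degree --- a constructibility argument in the relative Chow scheme of $X^{\times r}$. By Theorem A applied to the family $\Xi(n, e(d))$, provided $e(d) \geq 2$, each such closed subset meets the image of $\iota_d$ (resp.\ $\iota_d^{\times r}$) in a proper subset, and is therefore itself a proper subset of $\Xi(X_0, d)$ (resp.\ $\Xi(X_0, d)^r$). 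Choosing $d_0$ so that $e(d_0) \geq 2$ gives the result.

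The principal obstacle is the geometric step: I need uniform control, for $d \geq d_0$, of both the surjectivity of the restriction map $H^0(X, TX(dH_X)) \to H^0(U, TX|_U)$ onto $\psi^*(\Xi(n, e(d)))$ and the pole order along $H_X$ of the resulting extensions. This requires a cohomological bound independent of $w$ (Castelnuovo--Mumford regularity of $TX$ twisted by powers of $H_X$ is the natural tool), and the unspecified threshold $d_0$ in the theorem reflects precisely this input.
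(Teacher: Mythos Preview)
Your reduction has two related gaps. The first is the construction of $\iota_d$: you need $\psi^* w$, regular on $U$, to extend regularly to all of $X_0$, but the filtration $\Xi(X_0,d)$ controls only the pole order along the divisor $H_X$ at infinity, not along the components of $X_0 \setminus U$ lying \emph{inside} $X_0$. For a general smooth affine $X_0$, an \'etale chart $\psi:U\to\mathbb{A}^n$ has complement of codimension one, and $\psi^* w$ will typically acquire poles there; Serre vanishing and Castelnuovo--Mumford regularity govern restriction to \emph{closed} subschemes, not extension from open ones. The second and more fundamental gap is the claim that strong minimality is ``an invariant of the interalgebraic closure of the generic type''. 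This is false. Minimality of the generic type is such an invariant, but strong minimality of the solution set $(X_0,v)^{\U}$ requires in addition that $(X_0,v)$ have no proper positive-dimensional closed invariant subvariety --- this is exactly the content of Lemma~\ref{lemma-strongminimality}. Even granting the existence of $\iota_d$, nothing in your argument rules out invariant hypersurfaces of $(X_0,\iota_d(w))$ supported on $X_0 \setminus U$; these would destroy strong minimality while leaving the (minimal) generic type, shared with $(U,\psi^* w)$, untouched.

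The paper does not reduce Theorem~B to Theorem~A; it proves both in parallel from the minimality criterion of Theorem~D. The missing global ingredient --- the absence of proper positive-dimensional invariant subvarieties for a generic $v \in \Xi(X_0,d)$ --- is supplied directly on $X$ by the Coutinho--Pereira theorem (Fact~\ref{fact-Pereira-Coutinho-int}), and combined with the construction of a non-resonant singularity (Proposition~\ref{proposition-singularlocus}) to feed into Theorem~D. That global control over invariant subvarieties is precisely what a local transfer from $\mathbb{A}^n$ cannot supply.
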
  

In this case, the proper Zariski-closed subsets forming the exponential set of parameters can be taken to be defined over any field of definition of the algebraic variety $X$ and of the hyperplane section $H_X$. Since the $\Xi(X_0,d)$ when $d$ varies among the nonnegative integers form an increasing filtration 
$$\Xi(X) = \Xi(X_0,0) \subset \ldots \subset \Xi(X_0,d) \subset \ldots $$
of the space $\Xi(X_0)$ of regular vector fields on $X_0$, Theorem B implies that strongly minimal and geometrically trivial algebraic vector fields are dense in the space of all regular algebraic vector fields on the complement of a smooth hyperplane section in any smooth projective complex-algebraic variety. \\

Even after putting aside the effectiveness condition on the degree, Theorem A can not be deduced Theorem B applied to $X = \mathbb{P}^n$ because of the well-known discrepancy between the degree of an affine vector field and the projective degree of the associated foliation. A common feature of both settings established by Coutinho and Pereira \cite{Coutinho-Pereira, Coutinho} is  the existence of algebraic vector fields with {\em ``strongly isolated'' non-resonant singular points}: complex points where the algebraic vector field vanishes, the eigenvalues of the linear part of the vector field are $\mathbb{Q}$-linearly independent and which are not contained in any proper positive-dimensional irreducible subvariety tangent to the algebraic vector field. The subvarieties with the latter property are called {\em invariant} as they are preserved under the complex-analytic flow of the vector field.

\begin{thmx}
Let $(X,v)$ be a complex algebraic vector field with reduced singularities of dimension $n \geq 2$.  Assume that  $(X,v)$ admits a singular point $x_0 \in X$ satisfying:
\begin{itemize}
\item[(a)] the singularity $x_0$ is  {\em non-resonant} in the sense that the eigenvalues $\lambda_1,\ldots, \lambda_n \in \mathbb{C}$ of the linear part of $v$ at $x_0$ generate a free $\mathbb{Z}$-module of rank $n$ in $\mathbb{C}$,
\item[(b)]  the singularity $x_0$ is not contained in any {\em invariant} proper positive-dimensional irreducible algebraic subvariety of $(X,v)$.
\end{itemize}
Then the generic type of $(X,v)$ is minimal. 
\end{thmx} 

The algebraic vector field  $(X,v)$ has {\em reduced singularities} if its singular locus $\mathrm{Sing}(v)$ consisting of the points where $v$ vanishes has codimension greater or equal to two in $X$. Note that an algebraic vector field may admit nontrivial invariant subvarieties  and satisfy the assumptions of Theorem C. This is for example the case for the quadratic planar Lotka-Volterra systems to which this theorem is applied in \cite{DEJ}. In dimension two and dimension three, the assumption (a) that the singularity is non-resonant can be removed, see Remark \ref{remark-theoremminimality2}.

The conclusion describes the differential-algebraic behavior of the {\em generic solutions} of  $(X,v)$, those which are Zariski-dense in $X$. The existence of such analytic solutions is guaranteed by the properties (a) and (b) and any two such generic solutions satisfy the same quantifier-free formulas in the language of differential rings. The collection of all such formulas is called the {\em generic type} of the algebraic vector field. Hence, the generic types of algebraic vector fields play the same role for systems of autonomous algebraic differential equations in differential algebra as the generic points of integral schemes do for systems of polynomial equations in algebra.  The conclusion of Theorem C asserts that this type is {\em minimal}. This is a standard  model-theoretic weakening of the notion of strong minimality tailored to the description of types rather than definable sets. In the model-theoretic terminology, a type is minimal if it is nonalgebraic but every forking extension is, see also Fact \ref{fact-Jaoui-Moosa} (iii) below for an equivalent geometric characterization. In contrast with the stronger notion, this notion of minimality is {\em preserved under generically finite-to-finite correspondences} of algebraic vector fields and hence amenable to the birational methods of the theory of algebraic foliations.  

Although the articulation between minimality and strong minimality can be rather subtle in general \cite{Hrushovski-Scanlon,Pila-Scanlon}, it is rather mild in the setting of Theorem A and Theorem B, see Lemma \ref{lemma-strongminimality}. \\

As an application, we produce the first examples of meromorphic functions which are new in the sense of Painlevé and satisfy autonomous algebraic differential equations of order $n \geq 4$. They are the meromorphic functions satisfying an (autonomous) order $n$ algebraic differential equation which are algebraically independent from all the meromorphic functions that one can obtain from the solutions of algebraic differential equations of order $< n$ by successive resolutions of linear differential equations and compositions with abelian functions, see Definition \ref{definition-newmeromorphic}. The simplest examples we obtain are constructed as follows. Consider, in any dimension $n \geq 2$, a system of algebraic differential equations
$$ (E): Y' = A_0 \cdot Y + \varepsilon(Y) $$
obtained by perturbing a linear system $(L): Y' = A_0 \cdot Y$ defined by a matrix $A_0 \in \Mat_n(\mathbb{C})$ with $\mathbb{Q}$-algebraically independent eigenvalues by a homogeneous quadratic perturbation $\varepsilon(Y) \in \mathbb{C}[Y_1,\ldots,Y_n]_{= 2}$. 

Both the linear system $(L)$ and its pertubation $(E)$ admits the origin as a non-resonant singular point which is contained in $n$ hyperplanes of $\mathbb{C}^n$ {\em invariant for the linear system $(L)$}. Crucially, the results of Coutinho and Pereira \cite{Coutinho-Pereira,Coutinho} ensure that one can choose the quadratic perturbation $\varepsilon(Y)$ so that $(E)$ no longer admits any proper positive-dimensional invariant subvariety. Using this strong form of the hypothesis (b) from Theorem C, we can conclude that the perturbed system $(E)$ is even {\em strongly minimal}. Applying the model-theoretic machinery developed by Nagloo and Pillay \cite{Nagloo-Pillay1} in their study of the Painlevé equations, this ensures that any meromorphic function $z \mapsto f(z) = g (y_1(z),\ldots, y_n(z))$
built as a composition of a nonconstant rational function $g \in \mathbb{C}(Y_1,\ldots, Y_n) \setminus \mathbb{C}$ with a nonconstant solution $y_1(z),\ldots, y_n(z)$ of the system $(E)$ is a new meromorphic function of order $n$ in the sense of Painlevé.

Note that in this construction, only the choice of the quadratic perturbation is not constructive as it relies on the nonconstructive results of \cite{Coutinho-Pereira}. The details of this construction as well as similar constructions based on Theorem A and Theorem B are described in subsection 5.5.

\subsection{Description of proof methods and organization of the paper} The proof of these results uses a combination of {\em model-theoretic techniques} concerning the structure of strongly minimal sets in the theory $\mathrm{DCF}_0$ of differentially closed fields and of {\em geometric techniques} describing the dynamical properties of generic foliations on smooth projective algebraic varieties. We refer the reader to \cite{Bouscaren, Marker, Pizarro} for background on the model-theoretic terminology and quickly describe the articulation between these techniques in the rest of the article.

\subsubsection*{Algebraic vector fields without nontrivial algebraic factors.} The starting point is the following description of algebraic vector fields without nontrivial algebraic factors obtained by Moosa and the author.
\begin{fact}[{\cite[Theorem 1.3]{Jaoui-Moosa}}] \label{fact-Jaoui-Moosa}
Let $(X,v)$ be an algebraic vector field. One of the following three mutually exclusive cases occur: 

\begin{itemize} 
\item[(i)] the algebraic vector field $(X,v)$ admits {\em a nontrivial algebraic factor}: after passing to  generically finite dominant rational cover $\rho: (X',v') \dashrightarrow (X,v)$, the algebraic vector field admits a nontrivial rational factor 
 $$\pi: (X',v') \dashrightarrow (Y,w) \text{ with } 0 < \mathrm{dim}(Y) < \mathrm{dim}(X).$$ 
\item[(ii)] the algebraic vector field $(X,v)$ is {\em{a cover of a simple  abelian equation}}: there exist a simple abelian variety $A$ of dimension $\geq 2$, a nonzero translation-invariant vector field $v_A$ on $A$ and a generically finite dominant rational morphism 
$$ \phi : (X,v) \dashrightarrow (A,v_A).$$

\item[(iii)] the generic type of the algebraic vector field $(X,v)$ is {\em minimal}: for all algebraic vector fields $(Y,w)$, every proper invariant subvariety of $$(X \times Y,v \times w)$$ that projects dominantly on both factors is a generically finite cover of $(Y,w)$.
\end{itemize}   
\end{fact}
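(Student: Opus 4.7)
The plan is to analyze the generic type $p$ of $(X,v)$ in $\dcf$ over a differential field $k$ of definition of $(X,v)$ whose field of constants is $\mathbb{C}$, using Shelah's semi-minimal analysis together with the Hrushovski--Sokolovi\'c trichotomy for minimal types in $\dcf$. The three cases of the statement should correspond, respectively, to $p$ non-minimal, $p$ minimal but non-trivial, and $p$ minimal and geometrically trivial; what must be shown is that each of the two ``non-trivial'' cases has the asserted geometric shape.

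First, assume $p$ is non-minimal. I would exploit the existence of an intermediate forking extension: choose a non-algebraic realization $a \models p$ over $k$ and a non-algebraic $b \in \mathrm{dcl}(k,a)$ with $0 < \RM(b/k) < \RM(a/k)$. Writing $b = f(a)$ for a differential rational function $f$ with parameters in a finite extension $k'/k$, the locus of $b$ over $k'$ is an irreducible variety $Y$ with $0 < \dim Y < \dim X$. Passing to the generically finite cover $\phi: (X',v') \dashrightarrow (X,v)$ obtained by base change along $k'/k$, the function $f$ descends to a dominant rational morphism $\pi: X' \dashrightarrow Y$; and since $b \in \mathrm{dcl}(a)$, the flow of $v'$ preserves the fibers of $\pi$, so $\pi_*v' = w$ is an algebraic vector field on $Y$. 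This produces the proper almost fibration of case (i).

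Next, assume $p$ is minimal. By Hrushovski--Sokolovi\'c, $p$ is either geometrically trivial, non-orthogonal to the (generic type of the) Manin kernel of a simple abelian variety, or non-orthogonal to the field of constants $\mathcal C$. The third alternative is excluded by the autonomy of $v$: non-orthogonality to $\mathcal C$ over the constant parameter field $\mathbb C$ would yield a non-constant element in $\mathrm{dcl}(p) \cap \acl(\mathcal C)$, hence a non-constant rational first integral $g \in \mathbb{C}(X)$ satisfying $v(g) = 0$, which contradicts the minimality of $p$ via the resulting proper fibration $(X,v) \dashrightarrow (\mathbb{A}^1, 0)$. In the ``abelian'' alternative, the definable correspondence between $p$ and the generic type of a simple abelian variety $A$ equipped with a translation-invariant vector field $v_A$ can be rigidified, using the group law on $A$ together with the fact that both types are minimal of matching Morley rank, into a generically finite dominant rational morphism $(X,v) \dashrightarrow (A, v_A)$ intertwining the vector fields; the condition $\dim A \geq 2$ then follows from the already-excluded internality of rank-one types to $\mathcal C$. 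The remaining ``trivial'' alternative is exactly case (iii).

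The main obstacle I anticipate is the algebro-geometric realization of the abstract model-theoretic reductions: in case (i), descending the partial type-definable function $f$ to an honest rational morphism of varieties after a controlled generically finite cover of $X$, and in case (ii), straightening the non-orthogonality correspondence with the Manin kernel into an actual morphism of algebraic vector fields using the group structure on $A$. Mutual exclusivity of the three cases is then a cleanup: case (i) visibly precludes minimality of $p$, while cases (ii) and (iii) are separated by the trivial/non-trivial dichotomy of the pregeometry induced on $p$ by the Hrushovski--Sokolovi\'c trichotomy.
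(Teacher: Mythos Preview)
The paper does not prove this statement: it is quoted as a \textbf{Fact} from \cite[Theorem 1.3]{Jaoui-Moosa} and used as a black box throughout. So there is no proof here to compare against, but your proposal contains a genuine structural error worth flagging.

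You have misread the trichotomy. Case (iii) is simply ``the generic type $p$ is minimal'' --- it does not separate trivial from non-trivial minimal types. Case (ii) is \emph{not} the minimal locally-modular branch of Hrushovski--Sokolovi\'c; it describes a specific class of \emph{non-minimal} types. Indeed, if $\phi:(X,v)\dashrightarrow (A,v_A)$ is generically finite with $\dim A\geq 2$, the generic type of $(X,v)$ is interalgebraic with that of $(A,v_A)$, and the latter is a coset of $A(\mathcal C)$ inside $A(\mathcal U)$, hence internal to the constants and of rank $\dim A\geq 2$ --- so not minimal. The actual content of the cited theorem is: if $p$ is non-minimal and $(X,v)$ nevertheless admits \emph{no} proper almost fibration, then $(X,v)$ is a cover of a simple logarithmic abelian equation. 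That is the ``abelian reduction'' of the title of \cite{Jaoui-Moosa}, and it is not a consequence of the trichotomy for minimal types.

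Your argument for case (i) carries the same error in disguise. You claim that non-minimality of $p$ yields some $b\in\mathrm{dcl}(k,a)$ with $0<\RM(b/k)<\RM(a/k)$. But non-minimality only furnishes a non-algebraic forking extension over some larger base $L$; it does not in general produce an intermediate element of $\mathrm{dcl}(k,a)$ (or of $\mathrm{acl}(k,a)$, which is what ``almost fibration'' allows). The passage from the former to the latter is precisely what fails in case (ii): $(A,v_A)$ with $A$ simple is non-minimal yet admits no proper almost fibration. Identifying exactly when this obstruction occurs, and showing it forces the abelian shape of (ii), is the substance of the result you are citing.
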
 

Here, a morphism of algebraic vector fields denoted $\phi : (X,v) \rightarrow (Y,w)$ is a morphism of algebraic varieties $\phi : X \rightarrow Y$ whose differential $d\phi$ sends the vector field $v$ to the vector field $w$. 

The first result established in Section 3 is a corollary of Fact \ref{fact-Jaoui-Moosa} suitable to be analyzed by geometric means. To any algebraic vector field $(X,v)$, we associate in subsection 3.2 its {\em first projective prolongation} which is an algebraic vector field $\P(v^{[1]})$ supported by the projectivization $\P(T^\ast X)$ of the cotangent bundle of $X$. It is defined by a commutative diagram 
$$\xymatrix{ (T^\ast X, \setminus \lbrace 0\text{-section} \rbrace ,v^{[1]})  \ar[d]^\pi \ar[r] & (\P(T^\ast X), \P(v^{[1]})) \ar[ld]^p \\ 
(X,v)
 }$$
where $v^{[1]}$ is an algebraic vector field on the cotangent bundle $T^\ast X$ linear on the fibers. The algebraic vector field $v^{[1]}$ is constructed ``by prolongation of $v$'' using  the partial connection along $v$ on the sheaf $\Theta_X$ of vector fields on $X$ defined by the Lie-bracket of a vector field with our distinguished vector field $v$. The relevance of these prolongations for model-theoretic purposes goes back to the proof of the canonical base property in $\mathrm{DCF}_0$ from  \cite{Pillay-Ziegler}. They also play an important role in the definition of the Malgrange pseudogroup of an algebraic vector field, see \cite{Casale-Davy} for example.

The main result of Section 3 is the following minimality criterion based on the absence of  {\em invariant horizontal irreducible subvarieties} in $\P(T^\ast X)$. They are  the  closed  $\P(v^{[1]})$-invariant irreducible subvarieties of $\P(T^\ast X)$.

\begin{thmx}[Theorem \ref{theorem-minimality}]\label{theoremC}
Let $(X,v)$ be an algebraic vector field where $X$ is a smooth algebraic variety of dimension $n \geq 2$. Assume that  either $n \geq 3$ or $n = 2$ and $(X,v)$ does not admit a nontrivial rational first integral. 

If the first prolongation of $(X,v)$ is {\em quasi-irreducible} then the generic type of $(X,v)$ is minimal. 
\end{thmx} 

Here, we say that the first prolongation of $(X,v)$ is quasi-irreducible if $\P(T^\ast X)$ admits {\em exactly one} proper $\P(v^{[1]})$-invariant horizontal subvariety. In that case, it is always an hypersurface that we call the {\em canonical invariant hypersurface} of $(\P(T^\ast X), \P(v^{[1]}))$ which is described in greater details below.  We also obtain a Galois-theoretic version of Theorem D (Theorem \ref{theorem-C-Galois}) and show that, in dimension greater or equal to three, the converse of Theorem D does not hold. 

\subsubsection*{Foliations by curves vs vector fields.} To study these invariant horizontal subvarieties in Section 4, our primary ingredient is the description of {\em the characteristic foliation of a generic foliation by curves} obtained by Pereira \cite{Pereira-JEMS}.  

A difficulty that we have to overcome  is to account for the subtle articulation between the notions of {\em foliation} and {\em vector field} in the model-theoretic analysis of an autonomous differential equation. For instance, it shown in \citep[Example 4.14]{FJMN} that the two vector fields 
$$ \begin{cases} 
x' = y \\
y' = \frac y x \end{cases} \text{ and } \begin{cases} x' = xy \\
y' = y \end{cases} $$
are tangent to the same foliation by curves but have {\em opposite} semi-minimal analysis: the first one is strongly minimal geometrically trivial while the second one is two-step analyzable in the constants.

It turns out that this articulation for an algebraic vector field $(X,v)$ is materialized by the {\em canonical invariant hypersurface $H(v)$} of $\P(T^\ast X)$.  Outside of the singular locus, this hypersurface is defined as the projectivization of the conormal bundle of the one-dimensional foliation $\mathcal F$ tangent to the vector field $v$. In other words, its fiber over a nonsingular  point $x \in X \setminus \mathrm{Sing}(v)$ is the projectivization of the vector space 
$$ N^\ast\mathcal F_x = \lbrace \omega \in TX^\ast_x \mid \omega(v(x)) = 0 \rbrace \subset T^\ast X_x.$$  
There are then two important types of $\P(v^{[1]})$-invariant horizontal subvarieties: the ones contained in $H(v)$ only depend on the foliation $\mathcal F$ and the ones disjoint from $H(v)$ are characteristic features of the vector field $v$ itself.

Studying these two kinds of horizontal subvarieties separately  under the assumptions (a) and (b) of Theorem C, we deduce Theorem C from Theorem D in Section 4. On the one hand, the horizontal subvarieties which only depends on the foliation $\mathcal F$ can be controlled directly using the results of Pereira \cite{Pereira-JEMS}. This is done in subsections 4.1 and 4.2. On the other hand, the horizontal subvarieties  characteristic of the vector field $v$ correspond to invariant multidistributions of codimension one transverse to the foliation $\mathcal F$. They are studied in subsections 4.3 and 4.4 using the tangency locus of an invariant distribution of codimension one with the foliation $\mathcal F$ as was done in \cite{jaoui-ANT} in dimension two.

\subsubsection*{The proofs of Theorem A and Theorem B} Finally, we deduce Theorem A and Theorem B from Theorem C in Section 5. Starting with a family of algebraic vector fields, say a family $\Xi$ of algebraic vector fields  on a fixed smooth algebraic variety $X_0$ parametrized by an algebraic variety $S$, the property 
\begin{center} 
$\mathcal P(s)$: \textit{the solution set of $(X_0,v(s))$ is strongly minimal} 
\end{center}
only depends on the type $\mathrm{tp}(s/k_0)$ (in the field language) where $k_0$ is a countable field of definition for $\Xi,S$ and $X_0$. It follows that every generic vector field from $\Xi$ is strongly minimal if and only if some generic vector field from $\Xi$ is. Furthermore, the existence of {\em any} vector field from $\Xi$ with a non-resonant singularity ensures the existence of a generic vector field from $\Xi$ with the same property. This follows from the stability (in the sense of dynamical systems) of non-resonant singularities. The remaining hypothesis of Theorem C is provided by the following result.
  
\begin{fact}[{\citep[Theorem 1.1]{Coutinho-Pereira}}]\label{fact-Pereira-Coutinho-int}
Let $\mathcal L$ be an ample line bundle on a smooth projective complex algebraic variety $X$. There exists $k_0 = k_0(X,\mathcal L) \geq 0$ such that for all $k \geq k_0$, there exists a countable union of proper Zariski-closed subsets of the complex vector space $\Xi(X,\mathcal L^{\otimes k})$ of {\em $\mathcal L^{\otimes k}$-twisted vector fields}
$$\mathcal E_1(k) = \bigcup_{n \in \N} Z_n(\mathbb{C}) \subsetneq \Xi(X,\mathcal L^{\otimes k})$$ 
such that for all $v \in  \Xi(X,\mathcal L^{\otimes k}) \setminus \mathcal E_1(k)$, the {\em twisted algebraic vector field $(X,v)$} does  not leave invariant any proper positive-dimensional algebraic subvariety of $X$. 
\end{fact}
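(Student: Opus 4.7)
The plan is a Hilbert-scheme dimension count combined with asymptotic vanishing. The Hilbert scheme of $X$ decomposes into countably many quasi-projective components $H_P$ indexed by Hilbert polynomials $P$. Restricting attention to those $P$ whose generic point parametrises a proper positive-dimensional closed subscheme of $X$, set
$$\mathcal{E}_P(k) = \bigl\{v \in \Xi(X, \mathcal{L}^{\otimes k}) : \exists\, [Y] \in H_P \text{ with } v \text{ tangent to } Y \bigr\},$$
which is constructible (the image of a projection of a closed subscheme). It suffices to show, for every such $P$ and every $k \geq k_0$, that $\mathcal{E}_P(k)$ is contained in a proper Zariski-closed subset of $\Xi(X, \mathcal{L}^{\otimes k})$; the result will then follow with $\mathcal{E}_1(k) := \bigcup_P \overline{\mathcal{E}_P(k)}$.

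Fix such a $P$ and form the incidence variety
$$I_P = \bigl\{(v, [Y]) : v \text{ is tangent to } Y\bigr\} \subset \Xi(X, \mathcal{L}^{\otimes k}) \times H_P,$$
closed because tangency is a linear closed condition on $v$ for fixed $Y$. The fibre of the projection $I_P \to H_P$ over $[Y]$ is the kernel of the \emph{normal-component} map
$$\rho_Y \colon H^0(X, TX \otimes \mathcal{L}^{\otimes k}) \longrightarrow H^0\bigl(Y, N_{Y/X} \otimes \mathcal{L}^{\otimes k}|_Y\bigr).$$
By ampleness of $\mathcal{L}$ together with Serre vanishing applied successively to the ideal-sheaf sequence $0 \to \mathcal{I}_Y \otimes TX \to TX \to TX|_Y \to 0$ and the normal bundle sequence $0 \to TY \to TX|_Y \to N_{Y/X} \to 0$ twisted by $\mathcal{L}^{\otimes k}$, the map $\rho_Y$ is surjective for $k$ large enough. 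Consequently the codimension of the fibre in $\Xi(X, \mathcal{L}^{\otimes k})$ is exactly $h^0\bigl(Y, N_{Y/X} \otimes \mathcal{L}^{\otimes k}|_Y\bigr)$.

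The projection $I_P \to \Xi(X, \mathcal{L}^{\otimes k})$ then fails to be dominant as soon as
$$h^0\bigl(Y, N_{Y/X} \otimes \mathcal{L}^{\otimes k}|_Y\bigr) \;>\; \dim_{[Y]} H_P.$$
The standard Hilbert-scheme tangent-space bound $\dim_{[Y]} H_P \leq h^0(Y, N_{Y/X})$ is $k$-independent, while by asymptotic Riemann–Roch on $Y$ the left-hand side grows polynomially in $k$ with strictly positive leading coefficient (proportional to $c_1(\mathcal{L}|_Y)^{\dim Y}$, positive because $\mathcal{L}|_Y$ is ample). Hence the required inequality holds for $k$ sufficiently large depending on $P$.

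The main obstacle is upgrading this $P$-dependent threshold into a \emph{single} $k_0$ working uniformly across all Hilbert polynomials. The remedy must be a boundedness argument: subvarieties of bounded degree form a bounded family that one handles as a finite union of cases, while for $P$ of large degree one has to show that the asymptotic gap between $h^0\bigl(Y, N_{Y/X} \otimes \mathcal{L}^{\otimes k}|_Y\bigr)$ and $\dim H_P$ improves uniformly as $\deg Y$ grows (heuristically, larger invariant subvarieties impose strictly more tangency conditions than their moduli can absorb). Turning this uniformity into a genuine proof is the technical core; Coutinho and Pereira phrase it foliation-theoretically in \cite{Coutinho-Pereira} via an analysis of the tangency scheme of the twisted vector field with a moving family of subvarieties, and this is the step I expect to require the most care.
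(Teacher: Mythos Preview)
The paper does not prove this statement: it is stated as a \emph{Fact} and attributed to \cite[Theorem 1.1]{Coutinho-Pereira}, with no argument given here beyond the citation. So there is no proof in the paper to compare your attempt against.

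As for your sketch itself, you have correctly identified the architecture (incidence variety over Hilbert-scheme components, dimension count via the normal-component map) and you are honest that the uniformity of $k_0$ across all Hilbert polynomials $P$ is the crux you have not resolved. That gap is genuine and is exactly where the work lies: your per-$P$ argument uses Serre vanishing and asymptotic Riemann--Roch, both of which give thresholds depending on $P$, and there are infinitely many $P$. Your proposed remedy (``bounded degree handled as finitely many cases, large degree handled by the growing gap'') is the right intuition, but turning it into an actual bound requires a quantitative comparison between $h^0(Y,N_{Y/X}\otimes\mathcal L^{\otimes k}|_Y)$ and $\dim H_P$ that is uniform in both $k$ and the degree of $Y$; this is precisely what Coutinho and Pereira establish, and it is not a formality.

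There is also a secondary technical issue you glossed over: the Hilbert scheme parametrises arbitrary closed subschemes, so a general $[Y]\in H_P$ need not be smooth or even reduced. Your use of the normal bundle sequence $0\to TY\to TX|_Y\to N_{Y/X}\to 0$ and the tangent-space bound $\dim_{[Y]}H_P\le h^0(Y,N_{Y/X})$ both presuppose $Y$ is at least a local complete intersection. You would need either to restrict to the open locus of lci subschemes and argue separately that non-lci ones do not affect genericity, or to replace these objects by their scheme-theoretic analogues (the normal sheaf $\mathcal N_{Y/X}=\mathcal Hom(\mathcal I_Y/\mathcal I_Y^2,\mathcal O_Y)$), which complicates the Riemann--Roch estimate.
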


The notion of {\em twisted (algebraic) vector field} appearing in Fact \ref{fact-Pereira-Coutinho-int} is a natural extension of the usual notion of algebraic vector field adapted to the projective setting. It is described in subsections 2.2 and 2.3. Note that such an extension is required as a theorem of Buium \cite{Buium-projective} states that the solution set of a projective algebraic vector field $(X,v)$ is always internal to the constants, that is, its general solution can be parametrized algebraically by constants and finitely many particular solutions. This witnesses the failure of both (1) and (2) for projective algebraic vector fields, see also Theorem 6.11 in \cite{Jaoui-Moosa}.

In the twisted case, we show that if $H_X$ is a smooth hyperplane section of $X$ and $X_0$ is the complement of this hyperplane section as in Theorem B, then we have an isomorphism of complex vector spaces given by restriction: 
$$\Xi(X, \mathcal O_X(dH_X)) \simeq \Xi(X_0,d)$$
between vector space of $\mathcal O_X(dH_X)$-twisted vector field on $X$ and the vector space  $\Xi(X_0,d)$ of regular vector fields on $X_0$ with a pole of order at most $d$ along the hypersurface $H_X$, see Proposition \ref{presentation- twisted-vector field-hyperplanesection}. This isomorphism is used in subsection 5.3 to combine Fact \ref{fact-Pereira-Coutinho-int} with Theorem C and obtain the ``strong minimality part''  (1) of Theorem B. A similar argument based on \cite[Theorem 4.3]{Coutinho} is described to prove the first part of Theorem A in subsection 5.2.  

The ``geometric triviality part'' (2) of Theorem A and Theorem B then uses the trichotomy theorem of the unpublished manuscript of Hrushvoski-Sokolovic \cite{Hrushovski-Sokolovic}, see also \cite[Section 5.1]{Casale-Freitag-Nagloo} for a published account of their results. In the setting of autonomous differential equations of dimension greater or equal to two, the trichotomy boils down to the following result.
\begin{fact}[Hrushovski-Sokolovic, \cite{Hrushovski-Sokolovic}]\label{Hrushovski-Sokolovic}
If the solution set of an algebraic vector field $(X,v)$ is strongly minimal and if the dimension of the algebraic variety $X$ is greater or equal to two then the solution set of $(X,v)$ is also geometrically trivial. 
\end{fact}  


\subsection{Open questions} We left several open questions about the structure of these family of strongly minimal equations. We describe some of them below. 

\subsubsection*{Strong minimality and specialization} The first of these questions concerns the structure of the exceptional set of parameters. For all the classical examples of families of algebraic differential equations mentioned above 
$$ E(\alpha): F_\alpha(y,y',\ldots, y^{(n)}) = 0$$
the exceptional set of parameters where the equation fails to be strongly minimal is an at most countable union of proper Zariski-closed subset of $\mathbb{C}^n$. On the other hand, the results of this article (Theorem A and Theorem B) only ensures that the exceptional set is {\em contained} in such a subset. It seems reasonable to expect that the exceptional set is in fact always of the form observed for the classical families. This would follow from the following specialization (or generalization) result: 
\begin{question}[Under some reasonable smoothness and irreducibility assumptions]
Is it true that for a family of differential equations as above, if for some value of the parameter the differential equation is strongly minimal then for generic values of the parameter the equation is strongly minimal? 
\end{question} 

For the weaker version of {\em orthogonality to the constants}, the corresponding specialization result has been established in \cite{jaoui-Bulletin}. Furthermore, the results of \cite{Casale-Davy} concerning the specialization of the Malgrange pseudogroup strongly suggest that a positive answer to this question is expected.

\subsubsection*{Multidimensionality and trivial internal structure} All these classical families also satisfy the property (3) of multidimensionality. Furthermore, while certain Schwarzian equations give rise to strongly minimal structures with trivial forking geometry but with a rich (not $\aleph_0$-categorical) binary internal structure, one expects that the opposite happens in the generic case: 
\begin{itemize}
\item[(4)] \textit{trivial internal structure}: for $\alpha \in \mathbb{C}^n$ generic, 
$$\mathrm{Corr}(E(\alpha), E(\alpha)) = \lbrace \Delta_{E(\alpha)} \rbrace$$ 
\end{itemize} 
where $\Delta_{E(\alpha)}$ denotes the diagonal of the solution set of $E(\alpha)$.
A natural extension of the results presented here is to show that the properties (3) and (4) are also abundant for families of autonomous differential equations:  
\begin{question}
Is it true that the families $\Xi (n,d)$ for $n,d \geq 2$ and $\Xi(X_0,d)$ for $d$ large enough of Theorem A and Theorem B are multidimensional with trivial internal structure? 
\end{question} 
\subsubsection*{Abundance relatively to a foliation} The articulation between the notions of foliation and vector field plays an important role in this article and several recent developments of differential algebra.  As a consequence of the results of this article, we obtain that a ``generic'' one-dimensional foliation in the sense of \cite{Coutinho-Pereira} supports a vector field with a strongly minimal solution set. The Poizat equation and related examples also shows that certain foliations on surfaces with a Liouvillian (non-algebraic) first integral also support vector fields with the same property. 

A natural question is therefore \textit{which foliation support a vector field with a (strongly) minimal solution set?} In its simplest form (i.e. in dimension two), the only known obstruction is the existence of a nontrivial rational first integral:
\begin{question}
Is it true that every non algebraically-integrable foliation on a projective surface supports (on a Zariski-open set) a strongly minimal algebraic vector field? 
\end{question} 
\subsection{Acknowledgments} The author would like to thank J. V. Pereira for several discussions during the summer school \textit{Feuilletages et Géométrie Algébrique} in Grenoble in 2019 as well as during the workshop \textit{Foliations in Algebraic and Birational Geometry} in Lausanne in 2022  on his paper \cite{Pereira-JEMS} and on several additional  arguments used in this article. The author is also very grateful to Jean-Benoît Bost who suggested to him during his PhD  to study this problem using tools from foliation theory. The author was partially supported by ANR-DFG AAPG2019 GeoMod.
 

\section{Preliminaries} 

This preliminary section is organized as follows. In subsection 2.1, we recall the correspondence between  algebraic vector fields and their {\em{solution sets in a differentially closed field}}, see \cite{Hrushovski-Itai,Freitag-Jaoui-Moosa,Jaoui-Moosa}. The subsections 2.2 to 2.4 introduce some vocabulary on {\em twisted algebraic vector fields} from \cite{Coutinho-Pereira, Coutinho,Pereira-JEMS} that will be relevant in Section 5. Finally, in subsections 2.5 and 2.6, we recall some classical terminology on {\em distributions and foliations} in the setting of differential algebra. We refer to \cite[Sections 2 and 3]{Pizarro} for background on the standard model-theoretic terminology used in this section.

\subsection{Algebraic vector fields and autonomous equations} Let $k$ be an algebraically closed field of characteristic $0$ fixed once and for all. By an algebraic variety, we mean a (not necessarily irreducible) algebraic variety over $k$.

\begin{notation} 
Let $X$ be a smooth irreducible algebraic variety. We always denote by $TX \rightarrow X$ the {\em tangent bundle} of $X$, by $\Theta_X$ the sheaf of sections of $TX$, that is, the sheaf of {\em vector fields} on $X$ and by $\Xi(X) = \Theta_X(X)$ the (possibly infinite-dimensional) $k$-vector space of global regular vector fields on $X$.  Dually, $T^\ast X \rightarrow X$ denotes the {\em cotangent bundle of $X$} and $\Omega^1_X$ the sheaf of {\em one-forms} on $X$.   
\end{notation} 

\begin{definition} 
The category of {\em algebraic vector fields} (over $k$) is the category
\begin{itemize} 
\item whose objects, the algebraic vector fields, are the pairs $(X,v)$ where $X$ is a smooth irreducible algebraic variety and $v$ is a (global regular) vector field on $X$,
\item whose morphisms $\phi: (X,v) \rightarrow (Y,w)$ are the morphisms of algebraic varieties $\phi: X \rightarrow Y$ such that $d\phi_x(v(x)) = w(\phi(x))$ for all $x \in X$.
\end{itemize} 
\end{definition} 

\begin{fact}[{\citep[Section 1]{Hrushovski-Itai}}]\label{fact-solutionfunctor}
Fix  $ \mathcal U$ a saturated differentially closed field extending $k$ endowed with the trivial derivation. We have a functor called the {\em solution functor} 
$$(X,v) \mapsto (X,v)^\U : = \lbrace x \in X(\U) \text{ solution of } (X,v) \text{ in } \U \rbrace$$  
from the category of algebraic vector fields to the category of $k$-definable sets of the theory $\mathrm{DCF}_0$ (with morphisms the $k$-definable functions).
\end{fact}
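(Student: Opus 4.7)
The plan is to give the solution set explicitly in affine coordinates, verify it is intrinsic (independent of the chosen embedding), define the map on morphisms by applying $\phi$ pointwise, and check the two functoriality axioms. The content of the fact is a compatibility between algebraic geometry and the model theory of $\mathrm{DCF}_0$; the underlying computation is essentially the chain rule.

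First I would treat the affine case. Suppose $X = \Spec k[x_1,\ldots,x_n]/I$ is affine, and write $v$ as a $k$-linear derivation $D_v : k[X] \to k[X]$ represented by polynomials $v_1,\ldots,v_n \in k[X]$ with $D_v(x_i) = v_i$. Define
$$(X,v)^\U := \{\, a \in X(\U) \subset \U^n : \partial a_i = v_i(a),\ i=1,\ldots,n \,\},$$
a Kolchin-closed subset of $\U^n$, hence quantifier-free $k$-definable in the language of differential rings. For a general smooth irreducible $X$, I would cover $X$ by affine opens $(U_\alpha)$, construct $(U_\alpha, v|_{U_\alpha})^\U$ as above, and glue: on $U_\alpha \cap U_\beta$ a transition isomorphism $\psi : U_\alpha \cap U_\beta \to U_\beta \cap U_\alpha$ of charts satisfies $d\psi \circ v|_{U_\alpha} = v|_{U_\beta} \circ \psi$, because $v$ is a \emph{global} section of $\Theta_X$; the chain-rule computation below then shows that $a$ solves the $U_\alpha$-system if and only if $\psi(a)$ solves the $U_\beta$-system.

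For functoriality, given $\phi : (X,v) \to (Y,w)$ and $a \in (X,v)^\U$, I would compute in local coordinates. By the chain rule,
$$\partial(\phi_j(a)) = \sum_i (\partial_i \phi_j)(a) \cdot \partial a_i = \sum_i (\partial_i \phi_j)(a) \cdot v_i(a) = \bigl(d\phi_a(v(a))\bigr)_j,$$
which equals $w_j(\phi(a))$ by the morphism condition $d\phi_x(v(x)) = w(\phi(x))$. Hence $\phi(a) \in (Y,w)^\U$. The assignment $\phi \mapsto \phi^\U$ preserves identities and compositions (inherited from the corresponding algebraic-geometric identities), and $\phi^\U$ is $k$-definable because $\phi$ is a morphism over $k$, a fortiori $k$-definable in the field language and hence in $\mathrm{DCF}_0$.

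The only genuinely subtle step is the \emph{intrinsicality} of $(X,v)^\U$: one must check that the definition does not depend on the affine covering chosen. This is exactly what the gluing addresses, and it reduces to the same chain-rule identity applied to transition maps, using that $(v_i)$ transforms as a section of $TX$. Granted intrinsicality, functoriality is then formal. I would therefore expect the bulk of any written-out proof to consist in carefully setting up the local-to-global formalism, while no single step poses a real obstacle.
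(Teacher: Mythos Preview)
The paper does not give its own proof of this statement: it is recorded as a Fact with a citation to Section~1 of Hrushovski--Itai and then simply used. Your sketch is correct and is the standard verification; the chain-rule computation is indeed the entire content, and the gluing/intrinsicality check you flag is exactly what is needed to pass from affine charts to a global $k$-definable subset of $X(\U)$.
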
 
The $k$-definable set $(X,v)^\U$ is called the {\em solution set} of $(X,v)$. It follows from the irreducibility of $X$ that there exists a unique  type $p = p_{(X,v)} \in S(k)$ such that for some/every realization $x$ of $p$, 
\begin{equation}
\mathrm{trdeg}(k\langle x \rangle / k) = \mathrm{dim}(X).
\end{equation}
It is called the {\em{generic type}} of $(X,v)$. Since $k$ is algebraically closed, $p$ is always a stationary type, that is, $p$ admits a unique nonforking extension to any differential field extending $k$.

{\remark[Algebraic vector fields and autonomous equations] \label{remark-vectorfields-autonomous} Let $(X,v)$ be an algebraic vector field. We can associate to $(X,v)$ an autonomous differential equation as follows. 

The vector field $v$ defines a derivation $\delta_v : k(X) \rightarrow k(X)$ on the function field of $X$ and every $f \in k(X)$ defines a rational morphism of algebraic varieties
$$ \sigma_f: X \dashrightarrow \mathrm{Jet}^n(\mathbb{A}^1)$$
given by $x \mapsto ( f(x), \delta_v(f)(x), \ldots, {\delta^{(n)}_v(f)}(x) )$.
We say that $f$ is {\em primitive} if $\sigma_f$ is birational onto its image. In that case, since $X$ is $n$-dimensional and $\mathrm{Jet}^n(\mathbb{A}^1)$ is smooth affine of dimension $n + 1$, the Zariski-closure of the image of $\sigma_f$ is defined by a single equation of the form 
$$ (E): F(y,y',\ldots, y^{(n)}) = 0 $$
where $y,y',\ldots, y^{(n)}$ are the coordinates on $\mathrm{Jet}^n(\mathbb{A}^1)$. We then say that $(X,v)$ is a {\em geometric model of the autonomous equation $(E)$}. By the extension of Kolchin's primitive element theorem from \cite{Pogudin}, primitive rational functions do exist assuming $(X,v)$ admits no nonconstant rational first integral. Conversely, two geometric models of the same autonomous equation are birationally equivalent in the following sense.}

\begin{definition} 
A {\em rational} morphism of algebraic vector fields denoted $\phi : (X,v) \dashrightarrow (Y,w)$ 
is a rational morphism $\phi: X \dashrightarrow Y$ such that on some nonempty open set $U$ of $X$ (or equivalently on the open set of definition of $\phi$), 
$$d\phi_x(v(x)) = w(\phi(x)) \text{ for all } x \in U.$$  Moreover, we say that $\phi$ is {\em birational}, {\em dominant} or {\em{generically finite}} if the underlying morphism $\phi$ has the corresponding property.  
\end{definition} 

Given an algebraic vector field $(X,v)$,  the dominant rational morphisms $\phi : (X,v) \dashrightarrow (Y,w)$ with source $(X,v)$ are called the {\em rational factors} of $(X,v)$. 

\begin{definition}\label{definition-noalgebraicfactors}
Let $(X,v)$ be an algebraic vector field. An {\em algebraic factor} of  $(X,v)$ is a rational factor $\phi: (X',v') \dashrightarrow (Y,w)$
of some generically finite dominant cover $\rho : (X',v') \dashrightarrow (X,v)$ of $(X,v)$. 

\noindent We say that $(X,v)$ admits {\em no nontrivial algebraic factor} if every algebraic factor of $(X,v)$ satisfies $\mathrm{dim}(Y) = \mathrm{dim}(X)$. Otherwise, we say that the algebraic vector field $(X,v)$ admits a nontrivial algebraic factor.
\end{definition} 

In the terminology of \cite{Freitag-Jaoui-Moosa, Jaoui-Moosa}, an algebraic vector field admits no nontrivial algebraic factor if and only if its generic type admit no proper almost fibration.
\subsection{Twisted vector fields} 

\begin{definition} 
A {\em{twisted algebraic vector field}} is a triple $(X,\mathcal L, v)$ where $X$ is a smooth irreducible algebraic variety, $\mathcal L$ is an invertible sheaf on $X$ and $v$ is a global section of $\Theta_X \otimes \mathcal L$. If $X$ and $\mathcal L$ are fixed, we say that a global section of $\Theta_X \otimes \mathcal L$ is  an {\em $\mathcal L$-twisted vector field on $X$} and we denote by 
$$ \Xi(X,\mathcal L) = \mathrm{H}^0(X, \Theta_X \otimes \mathcal L)$$
the space of $\mathcal L$-twisted vector fields on $X$.
\end{definition} 
This generalizes the usual notion of algebraic vector field when $\mathcal L = \mathcal O_X$. Fix $(X,\mathcal L)$ be a smooth irreducible algebraic variety endowed with an invertible sheaf. The space of $\mathcal L$-twisted vector fields on $X$ will be identified with the space of morphisms of $\mathcal O_X$-modules:
$$ \Xi(X,\mathcal L)  \simeq \mathrm{Hom}_{\mathcal O_X}(\Omega^1_X , \mathcal L)$$
via $v \mapsto i_v$ given, for every open set $U$ of $X$, by:
\begin{equation}
 i_v: \begin{cases} 
 \Omega^1_X(U) \rightarrow \mathcal L(U)  \\ 
\omega \mapsto \omega(v) 
\end{cases}.
\end{equation}
Denoting by $\mathrm{Der}_k(\mathcal O_X,\mathcal L)$ the space of derivations of $\mathcal O_X$ into $\mathcal L$ which are trivial on $k$, we also have another identification:  
$$ 
\Xi(X,L)  \simeq \mathrm{Der}_k(\mathcal O_X, \mathcal L) $$ 
via $v \mapsto \delta_v$ given, for every open set $U$ of $X$, by:
\begin{equation}\label{derivation-definition}
\delta_v : \begin{cases} \mathcal O_X(U) \rightarrow \mathcal L(U)  \\
f \mapsto i_v(df)
\end{cases}.
\end{equation}
If $U$ is an open set of $X$, we have a natural {\em restriction ($k$-linear) map}
$$-_{\mid U} : \Xi(X,\mathcal L) \rightarrow \Xi(U, \mathcal L_{\mid U})$$ 
which is injective if $U$ is dense in $X$.  Moreover, if $U$ is affine then the invertible sheaf $\mathcal L$ is trivial on $U$. After fixing a nowhere vanishing section $s \in \mathcal L(U)$ inducing an isomorphism 
$\mathcal O_U \simeq \mathcal L_{\mid U}$
given by $ f\mapsto fs$, the restriction map produces a vector field on $U$ and is denoted:
$$-_{\mid U,s}: \Xi(X,\mathcal L) \rightarrow \Xi(U).$$ 

\begin{remark} \label{remark-counterexample}
If $s'$ is another nonvanishing section of $\mathcal L$ and $v \in \Xi(X,\mathcal L)$ then the vector fields obtained on $U$ by restriction relatively to $s$ and $s'$ are related by  
\begin{equation}\label{equation-multiplevectorfields} v_{\mid U,s} = g \cdot v_{\mid U,s'} \text{ for some } g \in \mathcal O^{\ast}_X(U)
\end{equation}
where $\mathcal O^\ast_X(U)$ denotes the group of invertible elements in the ring $\mathcal O_X(U)$. As an example, the two vector fields 
$$ \begin{cases} 
x' = y \\
y' = \frac y x \end{cases} \text{ and } \begin{cases} x' = xy \\
y' = y \end{cases} $$
differ by multiplication by the function $x$ which is invertible on the open set $x \neq 0$. In this case, Poizat \cite{Poizat} showed that the solution set of the first one is strongly minimal and the second one as it admits $y' = y$ as a rational factor. This example witnesses that (\ref{equation-multiplevectorfields}) does {\em not} guarantee that the model-theoretic properties of solution sets of  $(U,v_{\mid U,s})$  and $(U, v_{\mid U,s'})$ are the same. 
\end{remark}

\begin{lemma} \label{lemma-canonical-isomorphism}
Let $X$ be a smooth irreducible projective variety, let $D = \sum_{i = 1}^n n_i Z_i$ be a divisor on $X$ and denote by $X_0$  the complement of the support of $D$ in $X$. Consider $s$ a rational section of $\mathcal O_X(D)$ such that $\mathrm{div}(s) = D$ and $v \in \Xi(X, \mathcal O_X(D))$. The property:
$$(\mathcal P): \text{ the solution set of }(X_0,v_{\mid X_0,s}) \text{ is strongly minimal and geometrically trivial.} $$
only depends on $v$ and $D$ and not on the choice of the section $s$.  
\end{lemma}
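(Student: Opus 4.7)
The strategy is to reduce the statement to an invariance principle in two steps. First, since $X$ is smooth projective and irreducible over the algebraically closed field $k$, any global rational function on $X$ with trivial divisor is a nonzero constant. Applied to the ratio of two rational sections $s, s'$ of $\mathcal{O}_X(D)$ with $\mathrm{div}(s) = \mathrm{div}(s') = D$, this yields $s' = \lambda s$ for some $\lambda \in k^\ast$. Unwinding the construction of the restriction $v_{\mid X_0, s}$ via the trivialization $\mathcal{O}_{X_0} \simeq \mathcal{O}_X(D)_{\mid X_0}$ given by $s$ produces the identity $v_{\mid X_0, s'} = \lambda^{-1} \cdot v_{\mid X_0, s}$. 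The lemma therefore reduces to the following invariance statement: for any vector field $w \in \Xi(X_0)$ and any $\mu \in k^\ast$, the algebraic vector fields $(X_0, w)$ and $(X_0, \mu w)$ share the properties of strong minimality and geometric triviality.

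For this invariance, my plan is to exploit that scaling the vector field by a constant of $k$ is dual to scaling the derivation on the ambient differentially closed field $\mathcal{U}$. Since $\mu \in k^\ast$ is a constant, the derivation $\mu \delta_{\mathcal{U}}$ has the same field of constants as $\delta_{\mathcal{U}}$, so $(\mathcal{U}, \mu \delta_{\mathcal{U}})$ is again a saturated model of $\mathrm{DCF}_0$ over $k$ of the same uncountable cardinality. By uniqueness of such saturated models, there is a $k$-differential field isomorphism $\psi : (\mathcal{U}, \delta_{\mathcal{U}}) \to (\mathcal{U}, \mu \delta_{\mathcal{U}})$. As $w$ has coefficients in $k$, $\psi$ maps the solution set of $(X_0, w)$ in $(\mathcal{U}, \delta_{\mathcal{U}})$ bijectively onto the solution set of $(X_0, w)$ in $(\mathcal{U}, \mu \delta_{\mathcal{U}})$, and the latter is the same subset of $X_0(\mathcal{U})$ as $(X_0, \mu^{-1} w)^{(\mathcal{U}, \delta_{\mathcal{U}})}$ because the equations $\mu \delta(y) = w(y)$ and $\delta(y) = \mu^{-1} w(y)$ have the same solutions. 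Hence $\psi$ induces an isomorphism of $\mathrm{DCF}_0$-structures over $k$ between the two definable sets, transporting all first-order properties including strong minimality and geometric triviality.

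Alternatively and more concretely, one can observe that for any realizations $y_1, \ldots, y_r$ and any field $L \supseteq k$, the differential subfield $L\langle y_1, \ldots, y_r \rangle$ generated using $\delta_{\mathcal{U}}$ coincides with the one generated using $\mu \delta_{\mathcal{U}}$, because $(\mu \delta)^n y_i = \mu^n \delta^n y_i$ for all $n$. The transcendence-degree conditions defining items (1) and (2) of the introduction are therefore numerically identical in the two pictures. Combining this invariance with the first step (taking $\mu = \lambda^{-1}$) proves the lemma. The only delicate point is this invariance under constant rescaling of the vector field; the reduction to it is a routine divisor computation.
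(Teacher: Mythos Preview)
Your proposal is correct and follows essentially the same approach as the paper: reduce to the fact that two rational sections with the same divisor differ by a nonzero constant of $k$, so the two restricted vector fields differ by a constant scalar, and then invoke the invariance of model-theoretic properties under constant rescaling (the paper phrases the latter via the equality $(X_0,v_{\mid X_0,s})^{\mathcal U} = (X_0,\tfrac{1}{\lambda} v_{\mid X_0,s'})^{\mathcal U_{1/\lambda}}$, which is exactly your rescaled-derivation argument). Your second, more concrete justification via the equality of generated differential subfields is a pleasant extra, but the overall route is the same.
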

Here, $\mathrm{div}(s)$ denotes the divisor on $X$ whose restriction to every open set $U$ of $X$ for which $\mathcal L_{\mid U}$ is trivial is given by $\mathrm{div}(s)_{\mid U} = \mathrm{div}(f)_{\mid U}$ where $s = f\cdot s_0$  and $s_0$ is a generating section of $\mathcal L_{\mid U}$. In particular, if $D$ is an effective divisor and $\mathrm{div}(s) = D$ then $s$ is a regular section of $\mathcal O_X(D)$.

\begin{proof} 
If $s'$ is another rational section of $\mathcal L$ such that $\mathrm{div}(s') = D$ then $s = \lambda s'$ for some $\lambda \in k^\ast$ so that 
$$ (X_0,v_{\mid X_0,s}) = (X_0,\frac{1}{\lambda} v_{\mid X_0,s'}).$$
Denoting by $\U_{1/\lambda}$ the differentially closed field deduced form $\mathcal U$ by dividing the derivation by $\lambda$, it follows that
 $$ (X_0,v_{\mid X_0,s})^\U = (X_0,\frac{1}{\lambda} v_{\mid X_0,s'})^{\U_{1/\lambda}} \subset X_0(\U)$$ 
Since the differentially closed fields $\mathcal U$ and $\mathcal U_{1/\lambda}$ have the same definable sets, the lemma follows. 
\end{proof} 

Hence, if $D$ is a divisor on $X$ and $X_0$ denotes the complement of the support of $D$ in $X$ then we can identify $\Xi(X,\mathcal O_X(D))$ with a finite-dimensional $k$-vector subspace of the space $\Xi(X_0)$ of regular vector fields on $X_0$ and we write 
\begin{equation}
-_{\mid X_0} : \Xi(X,\mathcal O_X(D)) \rightarrow \Xi(X_0)
\end{equation}
for the restriction map (without explicit mention of the choice of the section $s$). To describe the image of this map, we use the following notion of pole along an hypersurface for rational algebraic vector fields.

\begin{definition} 
Let $H$ be an hypersurface of $X$ and let $v$ be a rational vector field on $X$, that is, a section of $\Theta_X \otimes \mathbb{C}(X)$. We say that the vector field $v$ has {\em a pole of order at most $d$ along $H$} if there exists an affine open set $U$ such that $U \cap H \neq \emptyset$ and 
$$ h^d \cdot v = v_0 \in \mathrm{H}^0(X,\Theta_X \otimes \mathbb{C}(X)) $$
where $v_0$ is a {\em regular} vector field on $U$ and $h \in \mathcal O_X(U)$ is a (reduced) equation of $H \cap U$.
\end{definition}

{\remark Let $v$ be a rational vector field on $X$ and let $H$ be an hypersurface. Consider $U$ an affine open set of $X$ such that $U \cap H \neq \emptyset$,  $v_1,\ldots, v_n$ a basis of the free $\mathcal O_X(U)$-module  $\Theta(U)$ and write
$$ v = \sum_{i = 1}^n \lambda_i \cdot v_i \text{ where } \lambda_1,\ldots,\lambda_n \in \mathbb{C}(X).$$
Then the vector field $v$ has a pole of order at most $d$ along $H$ if and only if for all $i = 1,\ldots, n$, we have $v_H(\lambda_i) \geq - d$ where $v_H$ denotes the valuation associated to $H$ on $\mathbb{C}(X)$. In particular, every rational vector field has a pole of finite order along $H$.}

\begin{proposition} \label{presentation- twisted-vector field-hyperplanesection}
Let $X$ be a smooth  subvariety of the projective space $\P^N$, let $H_X = H \cap X$ be a smooth hyperplane section of $X$ and denote by $X_0$ be the complement of $H_X$ in $X$. The image of the restriction map 
$$-_{\mid X_0}: \Xi(X,\mathcal O_X(dH_X)) \rightarrow  \Xi(X_0) $$
is the $k$-vector space $\Xi(X_0,d)$ of regular vector fields on $X_0$ with a pole of order at most $d$ along the hypersurface at infinity $H_X$.  
\end{proposition}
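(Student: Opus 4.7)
The plan is a standard local-to-global gluing argument. I would begin by recording that the restriction morphism is injective, since $X_0$ is Zariski dense in $X$ and the sheaf $\Theta_X \otimes \mathcal{O}_X(dH_X)$ is locally free, hence torsion-free. By Lemma~\ref{lemma-canonical-isomorphism}, I fix a rational section $s$ of $\mathcal{O}_X(dH_X)$ with $\mathrm{div}(s) = dH_X$; concretely, $s$ can be taken as the tautological section $1$ when $\mathcal{O}_X(dH_X)$ is realized as the subsheaf of rational functions having at worst a pole of order $d$ along $H_X$, so that $s$ is regular on $X$ and nowhere vanishing on $X_0$.

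For the inclusion $\mathrm{Im}(-_{\mid X_0}) \subseteq \Xi(X_0, d)$, I would take $v \in \Xi(X, \mathcal{O}_X(dH_X))$ and check that $v_{\mid X_0, s}$ has a pole of order at most $d$ along $H_X$; by the valuative reformulation given in the remark preceding the proposition, this is a local property at the generic point of $H_X$. Cover $H_X$ by affine opens $U \subset X$ on which $H_X \cap U$ is cut out by a single reduced equation $h \in \mathcal{O}_X(U)$. The rational function $h^{-d}$ is then a nowhere vanishing local section $s_0$ of $\mathcal{O}_X(dH_X)$ on $U$, related to our fixed $s$ by $s = h^{d} \cdot s_0$. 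Writing $v_{\mid U} = w \otimes s_0$ with $w \in \Xi(U)$ regular, a direct computation yields
\[
v_{\mid U \cap X_0, s} \;=\; h^{-d} \cdot w,
\]
which manifestly has a pole of order at most $d$ along $H_X \cap U$.

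For the reverse inclusion $\Xi(X_0, d) \subseteq \mathrm{Im}(-_{\mid X_0})$, I would take $v \in \Xi(X_0, d)$ and construct a global twisted lift by gluing. Cover $X$ by $X_0$ together with affine opens $U_i$ meeting $H_X$ with local reduced equations $h_i \in \mathcal{O}_X(U_i)$ for $H_X \cap U_i$. By the valuative characterization of pole order, on each $U_i \cap X_0$ we can write $v = h_i^{-d} \cdot w_i$ with $w_i \in \Xi(U_i)$ regular. Then $\tilde v_i := w_i \otimes h_i^{-d}$ is a regular section of $\Theta_X \otimes \mathcal{O}_X(dH_X)$ on $U_i$, and together with $\tilde v_0 := v \otimes s \in (\Theta_X \otimes \mathcal{O}_X(dH_X))(X_0)$, these sections agree pairwise on overlaps intersected with $X_0$ (by the computation of the first inclusion applied in reverse). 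Torsion-freeness of $\Theta_X \otimes \mathcal{O}_X(dH_X)$ together with density of $X_0$ promote these agreements from $X_0$ to the full overlaps, so the $\tilde v_i$ and $\tilde v_0$ glue to a global section $\tilde v \in \Xi(X, \mathcal{O}_X(dH_X))$ with $\tilde v_{\mid X_0} = v$.

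The argument is essentially bookkeeping, and I do not foresee any genuine obstacle. The only subtlety worth flagging is keeping careful track of the distinction between the fixed rational section $s$ with $\mathrm{div}(s) = dH_X$ (globally regular, nowhere vanishing on $X_0$) and the local trivializing sections $s_0 = h^{-d}$ of $\mathcal{O}_X(dH_X)$ on affine opens meeting $H_X$, which differ from $s$ by the factor $h^{d}$.
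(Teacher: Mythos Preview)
Your proposal is correct and follows essentially the same route as the paper's proof: fix a section, compute on affines meeting $H_X$ for the forward inclusion, then for the reverse inclusion multiply $v$ by $h_i^d$ to kill the pole and glue. The only point worth noting is that where you write ``by the valuative characterization of pole order, on each $U_i\cap X_0$ we can write $v = h_i^{-d}\cdot w_i$ with $w_i\in\Xi(U_i)$ regular'', you are implicitly using normality of the smooth affine $U_i$ to pass from $v_{H_X}(h_i^d\lambda_j)\geq 0$ to regularity of $h_i^d\lambda_j$ on all of $U_i$; the paper isolates this extension step as an explicit Claim and proves it via the UFD property of $\mathcal{O}_X(U_i)$, but it is the same standard ingredient.
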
 

\begin{proof}
Denote by $\mathcal L = \mathcal O_X(H_X)$ so that $\mathcal L^{\otimes d} = \mathcal O_X(dH_X)$, fix a regular section $s$ of $\mathcal L$ on $X$ such that $\mathrm{div}(s) = H_X$ and define the restriction map
$$-_{\mid U_0}: \Xi(X,\mathcal L^{\otimes d}) \rightarrow \Xi(X_0)$$
relatively to the section $s^{\otimes d}$ of $\mathcal L^{\otimes d}$. Consider $\mathcal U = \lbrace U_0,\ldots, U_l \rbrace$ an affine open cover of $X$ such that each $U_i$ is equipped with a nowhere vanishing section $s_i$ of $\mathcal L_{\mid U_i}$. We may assume that $U_0 = X_0$ and that $s_0$ is the restriction of $s$ to $X_0$. By definition of the divisor of a section of line bundle, for every $i \leq l$, we can write 
$$ s_{\mid U_i} = h_i \cdot s_i$$ 
where $h_i \in \mathcal O_X(U_i)$ vanishes at order one on $H_X$ and hence is a local equation for $H_X$ on $U_i$ (so in particular $h_0 = 1$). It follows that the transition functions of $\mathcal L$ are given by 
$$\phi_{ij} = \frac {h_{i \mid U_i \cap U_j}} {h_{j \mid U_i \cap U_j}} \in \mathcal O^\ast _X(U_i \cap U_j).$$
They are indeed invertible since both $h_i$ and $h_j$ vanish only on $H$ and with the same order. Hence, an $\mathcal L^{\otimes d}$-twisted vector field can be identified with a collection 
$$ v:  \begin{cases} \lbrace (U_i,v_i) \mid i = 0,\ldots, l \rbrace \text{ of regular vector fields on each }U_i \\ 
\text{ such that } v_i = \phi_{ij}^d \cdot v_j \text{ on } U_i \cap U_j. \end{cases}
$$
We first show that the image of the restriction map is contained in $\Xi(X_0,d)$. Consider $v \in \Xi(X, \mathcal L^{\otimes d})$ as described above and consider $i$ such that $U_i \cap H \neq \emptyset$. On the open set $U_i \cap U_0$ we have 
$$h_i^d \cdot v_0 = \frac {h_i^d} {h_0^d} \cdot v_0 = \phi_{i0}^d \cdot v_0 = v_i$$
which is a regular vector field on $U_i$. It follows that $v_0$ which is the restriction of $v$ to $U$ has a pole of order at most $d$ along $H_X$ as required. To prove that the restriction map is surjective on $\Xi(X_0,d)$, we need the following claim: 

\begin{claimproof} 
Let $U$ be an affine open set of $X$ and let $H_U$ be an hypersurface of $U$. Assume that $v$ is a regular vector field on $U^\ast = U \setminus H_U$ with a pole of order at most zero along $H_U$. Then $v$ extends to a regular vector field $\overline{v}$ on $U$. 
\end{claimproof}

\begin{proof}[Proof of the claim] 
Denote by $\Theta_X(U)$ the free $\mathcal O_X(U)$-module of vector fields on $U$ and by $h$ an equation for $H_U$. Since $\Theta_X$ is a coherent sheaf, we have that $\Theta(U^\ast) = \Theta(U)_h$
is the localization of $\Theta(U)$ by the multiplicative set generated by $h$. It follows that 
$$h^n \cdot v  = v_1 \in \Theta_X(U)$$ 
extends to a regular vector field on $U$. Similarly, since $v$ has a pole of order at most zero along $H$, there is an irreducible $g \in \mathcal O_X(U)$  not associated with $h$ such that 
$$ g^m \cdot v = v_2 \in \Theta_X(U)$$
extends to a regular vector field on $U$ and we conclude that
$g^m\cdot v_1 = h^n \cdot v_2$.
Since  $X$ is smooth and $U$ is affine, $\mathcal O_X(U)$ is a unique factorization domain and we obtain that $g^m$ divides $v_2$ and that $h^n$ divides $v_1$ in $\Theta(U)$. This precisely means that $v$ extends to a vector field $\overline{v}$ on $U$. 
\end{proof} 
To finish the proof of surjectivity, consider $v_0 \in \Xi(X_0,d)$. 
For every $i = 1,\ldots, l$, the vector field $v_i = h_i^d \cdot v_0$ is a regular vector field on $U_i^\ast = U_i \setminus H_X$ with a pole of order at most zero along $H_X$. By the claim above, $v_i$ extends to a regular vector field $\overline{v_i}$ on $U_i$. It is now easy to see that the collection $(U_i, \overline{v_i})$ that we have constructed defines an $\mathcal L^{\otimes d}$-twisted vector field with restriction $v_0$: indeed, on $U_i \cap U_j \cap U_0$, we have
$$ \phi_{ij} \cdot \overline{v_j} = \frac{h_i^d}{h_j^d} \cdot \overline{v_j} =  h_i^d \cdot v_0 = \overline{v_i}$$  
and therefore the same equality holds in $U_i \cap U_j$. This finishes the proof of the proposition.
\end{proof}

{\example[Twisted algebraic vector fields on $\P^n$] \label{example-full-P^n} Set $X = \mathbb{P}^n$ with $n \geq 2$, let $H$ be an hyperplane and $d \geq 0$. The tangent sheaf on $\mathbb{P}^n$ is described by Euler's exact sequence 
$$ 0 \rightarrow \mathcal O_{\P^n} \rightarrow \mathcal O_{\P^n}(1)^{n+1} \rightarrow \Theta_{\P^n} \rightarrow 0.$$ 
After tensoring with $\mathcal O_{\P^n}(d)$, we obtain the exact sequence 
$$ 0 \rightarrow \mathcal O_{\P^n}(d) \rightarrow \mathcal O_{\P^n}(d+1)^{n+1} \rightarrow \Theta_{\P^n} \otimes \O_{\P^n}(d) \rightarrow 0.$$
Since $\mathrm{H}^1(\P^n, \mathcal O_{\P^n}(d)) = 0$ as $d \geq 0$, we can conclude that 
\begin{equation} \label{equation-twistedvectorfields}
\Xi(\P^n, \mathcal O_{\P^n}(d)) \simeq \mathrm{H}^0(\P^n, \mathcal O_{\P^n}(d+1)^{n+1} )/\mathrm{H}^0(\P^n, \mathcal O_{\P^n}(d)).
\end{equation}
In projective coordinates $(X_0: \cdots : X_n)$, this isomorphism can be interpreted as follows: every homogeneous vector field $v$ of degree $d +1$ on $\mathbb{C}^{n+1}$ written as
$$ v = \sum_{i = 0}^n F_i(X_0,\ldots,X_n) \cdot \partial_{X_i},$$
defines a $\mathcal O_{\P^n}(d)$-twisted vector field on $\mathbb{P}^n$. It is defined as the derivation $\delta_v: \mathcal O_{\P^n} \rightarrow \mathcal O_{\P^n}(d)$  which on the affine open set given by $X_i \neq 0$ with coordinates $(X_j/X_i)_{j \neq i}$ is given by:

\begin{equation}\label{equation-derivationassociated}
X_j/X_i \mapsto \frac{F_j(X_0,\ldots, X_n)} {X_i} -  \frac{X_j F_i(X_0,\ldots, X_n)} {X_i^2}.
\end{equation} 
The  Euler vector field $E = \sum_{i = 0}^n X_i \cdot \partial_{X_i}$
of degree one vanishes on all homogeneous functions and hence every multiple of $E$ induces the trivial $\mathcal O_{\P^n}(d)$-twisted vector field on $\mathbb{P}^n$. The equality (\ref{equation-twistedvectorfields}) then expresses that two homogeneous vector fields $v_1,v_2 \in \Xi_{h}(n+1,d + 1)$ of degree $d + 1$ on $\mathbb{C}^{n+1}$ define the same $\mathcal O_{\P^n}(d)$-twisted vector field on $\mathbb{P}^n$ if and only if their difference is a multiple of Euler vector field by a homogeneous function $f$ of degree $d$.}

\subsection{Closed invariant subvarieties} If $Z$ is a closed subvariety of $X$, we have an exact sequence
$$ 0 \rightarrow \mathcal I_Z \rightarrow \mathcal O_X \rightarrow \mathcal O_Z \rightarrow 0$$
which after tensoring with an invertible sheaf $\mathcal L$ gives: 
$$ 0 \rightarrow \mathcal I_Z \otimes \mathcal L \rightarrow \mathcal L \rightarrow \mathcal O_Z \otimes \mathcal L \rightarrow 0.$$
This exact sequence identifies $\mathcal I_Z \otimes \mathcal L$ with the subsheaf $\mathcal I_Z \cdot \mathcal L$ of $\mathcal L$ and $\mathcal O_Z \otimes \mathcal L$ with the quotient sheaf $\mathcal L / \mathcal I_Z \cdot \mathcal L$.

\begin{definition} 
Let $(X,\mathcal L,v)$ be a twisted algebraic vector field. A closed subvariety $Z$ of $X$ is called a {\em{closed invariant subvariety}} of $(X,\mathcal L,v)$ if
$\delta_v(\mathcal I_Z) \subset \mathcal I_Z\cdot\mathcal L$
where $\mathcal I_Z$ denotes the sheaf of ideals defining $Z$. We also say that $Z$ is {\em $v$-invariant} or that the twisted vector field $(X,\mathcal L,v)$ {\em leaves $Z$ invariant}.
\end{definition}

This definition obviously extends the notion of invariance of differential algebra and coincides with the notion of invariance of \citep[Section 2.2]{Coutinho-Pereira}:
\begin{lemma} 
Let $(X,\mathcal L,v)$ be a twisted algebraic vector field. A closed subvariety $Z$ of $X$ is $v$-invariant if and only if we have a factorization: 
$$\xymatrix{\Omega^1_X \otimes \mathcal O_Z \ar[r]^{j^\ast} \ar[d]_{i_v\otimes \mathcal O_Z}  & \Omega^1_Z \ar@{..>}[ld] \\ \mathcal L \otimes \mathcal O_Z}$$ 
where $j^\ast: \Omega^1_X \otimes \mathcal O_Z \rightarrow \Omega^1_Z$ is the morphism given by the pullback of one-forms.
\end{lemma}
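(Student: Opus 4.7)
The plan is a diagram chase based on the conormal exact sequence of $Z$ inside $X$:
$$\mathcal I_Z/\mathcal I_Z^2 \xrightarrow{\,d_Z\,} \Omega^1_X \otimes \mathcal O_Z \xrightarrow{\,j^*\,} \Omega^1_Z \longrightarrow 0,$$
where $d_Z$ is the $\mathcal O_Z$-linear morphism defined locally by $f + \mathcal I_Z^2 \mapsto df \otimes 1$. This is well-defined because $d(fg) = f\,dg + g\,df \in \mathcal I_Z \cdot \Omega^1_X$ for $f, g \in \mathcal I_Z$, and it is a classical fact that the sequence above is exact with $j^*$ surjective. In particular, the kernel of $j^*$ coincides with the image of $d_Z$.

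By the universal property of the cokernel, a (necessarily unique) dashed morphism $\psi : \Omega^1_Z \to \mathcal L \otimes \mathcal O_Z$ factoring $i_v \otimes \mathcal O_Z$ through $j^*$ exists if and only if the composition
$$\mathcal I_Z/\mathcal I_Z^2 \xrightarrow{\,d_Z\,} \Omega^1_X \otimes \mathcal O_Z \xrightarrow{\,i_v \otimes \mathcal O_Z\,} \mathcal L \otimes \mathcal O_Z$$
is the zero morphism. To compute this composition, I will use the defining identity $\delta_v(f) = i_v(df)$ from~(\ref{derivation-definition}) together with the identification $\mathcal L \otimes \mathcal O_Z = \mathcal L / \mathcal I_Z \cdot \mathcal L$ recalled just before the statement. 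These give, for any local section $f$ of $\mathcal I_Z$,
$$(i_v \otimes \mathcal O_Z)\bigl(d_Z(f + \mathcal I_Z^2)\bigr) = i_v(df) \otimes 1 = \delta_v(f) \bmod \mathcal I_Z \cdot \mathcal L.$$
Therefore the composition vanishes if and only if $\delta_v(f) \in \mathcal I_Z \cdot \mathcal L$ for every local section $f$ of $\mathcal I_Z$, which is precisely the defining invariance condition $\delta_v(\mathcal I_Z) \subset \mathcal I_Z \cdot \mathcal L$.

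No essential obstacle is expected here: the lemma is a formal consequence of the right exactness of the conormal sequence together with the tautological translation between the two incarnations of $v$ as $i_v : \Omega^1_X \to \mathcal L$ and $\delta_v : \mathcal O_X \to \mathcal L$. The only minor point of care is to ensure that the $\mathcal O_Z$-module structures line up correctly in the computation above, which is straightforward once one works locally on an affine chart where $\mathcal L$ is trivialised.
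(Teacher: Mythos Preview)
Your proof is correct and follows essentially the same approach as the paper: both invoke the conormal exact sequence $\mathcal I_Z/\mathcal I_Z^2 \to \Omega^1_X \otimes \mathcal O_Z \to \Omega^1_Z \to 0$, use the universal property of the cokernel to reduce the existence of the factorization to the vanishing of $(i_v \otimes \mathcal O_Z) \circ d_Z$, and then translate this via $i_v(df) = \delta_v(f)$ into the defining condition $\delta_v(\mathcal I_Z) \subset \mathcal I_Z \cdot \mathcal L$.
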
 

\begin{proof} 

Let $Z$ be a closed subvariety of $X$ associated with a sheaf of ideals $\mathcal I_Z$. By  \cite[Proposition 8.4A]{Hartshorne}, we have an exact sequence of $\mathcal O_Z$-modules
$$ \mathcal I_Z /\mathcal I_Z^2 \overset{\epsilon}{\rightarrow} \Omega^1_X \otimes \mathcal O_Z \overset{j^\ast}{\rightarrow} \Omega^1_Z \rightarrow 0$$
where $\epsilon$ is given by $\epsilon(f) = df \otimes 1$. Since $\mathcal L \otimes \mathcal O_Z \simeq \mathcal L / \mathcal I_Z \cdot \mathcal L$, we have a factorization as in the lemma if and only if for every local section $f \in \mathcal I_Z(U)$,
$i_v(df) \in \mathcal I_Z\cdot \mathcal L.$
The lemma then follows from the equality $i_v(df) = \delta_v(f)$
given by (\ref{derivation-definition}).
\end{proof}
\noindent The following classical properties of closed invariant subvarieties follow from the corresponding statements for algebraic vector fields. 

{\remark \label{remark-invariant-restriction} Let $(X,\mathcal L,v)$ be a twisted vector field and $U$ a dense open set of $X$.
\begin{itemize} 
\item[(i)] If $Z$ is a closed invariant subvariety of $(X,\mathcal L, v)$ then $Z \cap U$ is a closed invariant subvariety of $(U,\mathcal L_{\mid U},v_{\mid U})$.
\item[(ii)] Conversely, if $Z$ is a closed invariant subvariety of $(U,\mathcal L_{\mid U},v_{\mid U})$ then the Zariski closure $\overline{Z}$ of $Z$ in $X$ is a closed invariant subvariety of $(X,\mathcal L,v)$ 
\item[(iii)] The irreducible components of a closed invariant subvariety are also closed invariant subvarieties.
\item[(iv)] The intersection of two closed invariant subvarieties is also a closed invariant subvariety.
\end{itemize}}

\begin{proposition} \label{proposition-affine-degree}
Let $n \geq 1$ and $d \geq 1$. Consider $H$ a hyperplane in $\P^n$ and $\mathbb{A}^n$ the affine space with the hyperplane $H$ at infinity. The space  
$$\overline{\Xi}(n,d) = \lbrace v \in \Xi(\P^n, \mathcal O_X((d-1)H)) \mid  \text{ the hyperplane } H \text{ is invariant under } v \rbrace$$
is a $k$-vector subspace of $\Xi(\P^n,\mathcal O_X((d-1)H)$ and the image under the restriction map 
$$-_{\mid \mathbb{A}^n}: \overline{\Xi}(n,d) \rightarrow \Xi(\mathbb{A}^n)  $$
is the space $\Xi(n,d)$ of vector fields of (affine) degree $d$ (in the sense of Theorem A).
\end{proposition}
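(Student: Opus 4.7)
The plan is to work in projective coordinates $(X_0 : \ldots : X_n)$ with $H = \{X_0 = 0\}$ and affine coordinates $x_j = X_j/X_0$ on $\mathbb{A}^n$, and to use the presentation of twisted vector fields on $\P^n$ given by Example \ref{example-full-P^n}. Since $\mathcal O_{\P^n}((d-1)H) = \mathcal O_{\P^n}(d-1)$, the isomorphism (\ref{equation-twistedvectorfields}) identifies $\Xi(\P^n, \mathcal O_{\P^n}(d-1))$ with the quotient of the space of homogeneous polynomial vector fields $V = \sum_{i=0}^n F_i(X_0,\ldots,X_n)\,\partial_{X_i}$ of degree $d$ on $\mathbb{A}^{n+1}$ by the subspace of multiples of the Euler vector field $E$. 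That $\overline{\Xi}(n,d)$ is a $k$-vector subspace is then immediate from the linearity of $v \mapsto \delta_v$: the invariance condition $\delta_v(\mathcal I_H) \subset \mathcal I_H \cdot \mathcal O_{\P^n}(d-1)$ is linear in $v$.

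Next, I would show that $H$ is invariant under $v$ if and only if a representative $V$ satisfies $F_0 \in (X_0) \cdot k[X_0,\ldots, X_n]$. It suffices to check invariance on an affine chart $U_k = \{X_k \neq 0\}$ for some $k \geq 1$, where $\mathcal I_H|_{U_k}$ is locally generated by $X_0/X_k$ and $\mathcal O_{\P^n}(d-1)|_{U_k}$ is locally trivialized by $X_k^{d-1}$. Applying formula (\ref{equation-derivationassociated}) with $i = k$ to the generator $X_0/X_k$ of $\mathcal I_H|_{U_k}$ reduces the invariance condition to the vanishing of $F_0$ on $H$, hence to $F_0 \in (X_0)$. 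This condition is well-defined modulo the Euler vector field since $E(X_0) = X_0 \in (X_0)$, so $\overline{\Xi}(n,d)$ is precisely the image in $\Xi(\P^n, \mathcal O_{\P^n}(d-1))$ of the homogeneous vector fields of degree $d$ with $F_0 \in (X_0)$.

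Finally, to compute the restriction to $\mathbb{A}^n$, I would apply formula (\ref{equation-derivationassociated}) on $U_0 = \mathbb{A}^n$ with $i = 0$. Writing $f_j(x) := F_j(1, x_1, \ldots, x_n)$ (a polynomial of degree $\leq d$) and trivializing $\mathcal O_{\P^n}(d-1)$ by the nowhere-vanishing section $X_0^{d-1}$, a direct calculation yields
$$ v_{\mid \mathbb{A}^n}(x_j) \;=\; f_j(x) - x_j\, f_0(x), \qquad j = 1,\ldots, n. $$
The polynomial $f_j - x_j f_0$ has degree $\leq d$ if and only if the degree $d$ homogeneous component of $f_0$ vanishes, which (since $F_0$ is homogeneous of degree $d$) is equivalent to $F_0 \in (X_0)$, matching exactly the characterization from the previous paragraph. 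Hence the restriction maps $\overline{\Xi}(n,d)$ into $\Xi(n,d)$. Surjectivity is then obtained by lifting any $w = \sum_{j=1}^n g_j(x) \partial_{x_j}$ with $\deg g_j \leq d$ to the homogeneous vector field $V$ with $F_0 := 0$ and $F_j(X_0, \ldots, X_n) := X_0^d\, g_j(X_1/X_0, \ldots, X_n/X_0)$ for $j \geq 1$, which visibly preserves $H$ and restricts to $w$. The only delicate bookkeeping step is tracking the powers of $X_0^{d-1}$ in the trivialization correctly when reading off the restriction formula; all remaining ingredients are elementary linear algebra and the geometric interpretation of invariance of $H$.
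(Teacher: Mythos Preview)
Your proof is correct and follows essentially the same approach as the paper: both use the presentation of $\Xi(\P^n,\mathcal O_{\P^n}(d-1))$ via homogeneous vector fields of degree $d$ modulo the Euler field from Example~\ref{example-full-P^n}, characterize the invariance of $H$ by the condition $X_0 \mid F_0$, and then compute the restriction to $\mathbb{A}^n$ using the trivialization by $X_0^{d-1}$. The only cosmetic difference is that the paper immediately normalizes each class in $\overline{\Xi}(n,d)$ to a representative with $F_0 = 0$ (by subtracting $h\cdot E$ when $F_0 = h\cdot X_0$), which makes the restriction formula read simply $x_j \mapsto f_j(x)$, whereas you keep a general representative and obtain $x_j \mapsto f_j(x) - x_j f_0(x)$ and then argue on degrees; your lift for surjectivity with $F_0 = 0$ is exactly the paper's normalized representative.
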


\begin{proof} 
Clearly, $\overline{\Xi}(n,d)$ is a $k$-vector subspace. To prove the second part, fix $(X_0: \cdots: X_n)$ projective coordinates on $\P^n$ such that the hyperplane $H$ is given by $X_0 = 0$ and the coordinates on $\mathbb A^n$ are the $x_i = X_i/X_0$
for $i = 1,\ldots, n$. 

\begin{claimproof} 
$\overline{\Xi}(n,d)$ is isomorphic to the $k$-vector subspace $\Xi$ of homogeneous vector fields of degree $d$ on $\mathbb{A}^{n+1}$ of the form 
$v = \sum_{i = 1}^n F_i \cdot \partial_{X_i}$, namely without terms in $\partial_{X_0}$.
\end{claimproof}
\begin{proof}[Proof of the claim] 
Denote by $\rho : \Xi(n+1,d) \rightarrow \Xi(\mathbb{P}^n, \mathcal O_{\P^n}(d))$ the $k$-linear map which sends a homogeneous vector fields of degree $d$ on $\mathbb{C}^{n+1}$ to an $\mathcal{O}_{\P^n}(d-1)$-twisted vector field on $\P^n$ described in Example \ref{example-full-P^n}. Since no vector field in $\Xi$ is a multiple of the Euler vector field, its restriction
$$ \rho_{\mid \Xi}: \Xi \rightarrow \Xi(\P^n,\mathcal O_{\P^n}(d-1))$$
is injective. Moreover, by definition, every $v \in \Xi$ satisfies $\delta_v(X_0) = 0$. Hence, the hyperplane $H$ is invariant under $\rho(v)$ so that the map $\rho_{\mid \Xi}$ takes values in $\overline{\Xi}(n,d)$. It remains to show that the image of $\rho_{\mid \Xi}$ is indeed $\overline{\Xi}(n,d)$: As described in Example \ref{example-full-P^n}, every vector field $v \in \overline{\Xi}(n,d)$ can be written as $\rho(v)$ where
$$ v = \sum_{i = 0}^n F_i \cdot \partial_{X_i} \in \Xi_h(n+1,d).$$ 
Since the hyperplane $H$ is invariant, we have that 
$F_0 = \delta_v(X_0) = h \cdot X_0$ 
for some homogeneous function $h$ of degree $d-1$ so that  
$$ v - h \cdot E = \sum_{i = 1}^n F_i \cdot \partial_{X_i} \in \Xi$$
and $\rho(v - h\cdot E) = \rho(v)$. This shows that $ \rho_{\mid \Xi}$ is surjective as required.
\end{proof} 
Set $U = \mathbb{A}^n$ for the complement of $X_0 \neq 0$. We now claim that the restriction map
$-_{\mid \mathbb{A}^n}: \Xi \rightarrow \Xi(\mathbb{A}^n) $
(relatively to the section $X_0^{d-1}$ of $\mathcal O_{\P^n}(d-1)$ nowhere vanishing on $U$) can be computed as:
$$ v = \sum_{i = 1}^n F_i \cdot \partial_{X_i} \mapsto v_0 = \sum_{i = 1}^n f_i \cdot \partial_{x_i}   $$ 
where $f_i(x_1,\ldots, x_n)$ is the polynomial of degree $\leq d$ given by dehomogenization of the homogeneous polynomial $F_i(X_0, X_1,\ldots, X_n)$ of degree $d$, that is,
$$f_i(X_1/X_0,\ldots, X_n/X_0) = \frac {F_i(X_0,X_1,\ldots, X_n)} {X_0^d}.$$

\noindent This follows easily from (\ref{equation-derivationassociated}) since after fixing the isomorphism 
$\mathcal O_{\P^n}(U) \simeq \mathcal O_{\P^n}(d-1)(U)$
given by $f \mapsto f\cdot X_0^{d-1}$, the restriction $v_0$ of $v$ to $U$ is the vector field associated to the derivation of $\mathcal O_{\P^n}(U)$ satisfying
$$x_i = X_i/X_0 \mapsto \frac {F_i(X_0,\ldots, X_n)} {X_0^d} = f_i(x_1,\ldots, x_n).$$
 We have therefore shown that the restriction map defines an isomorphism of $k$-vector spaces between $\overline{\Xi}(n,d)$ and $\Xi(n,d)$ as required. 
\end{proof}

\subsection{Singular points} Let $(X,\mathcal L,v)$ be a twisted algebraic vector field.

\begin{lemma} \label{lemma-singularity}
Let $x$ be a closed point of $X$. The following are equivalent: 
\begin{itemize}
\item[(i)] $\lbrace x \rbrace$ is a closed invariant subvariety of $(X,\mathcal L,v)$,
\item[(ii)] the morphism $i_v: \Omega^1_X \rightarrow \mathcal L$ is not surjective at $x$. 
\item[(iii)] the regular section of $v: X \rightarrow TX \otimes \mathcal L$ vanishes at $x$. 
\end{itemize} 
\end{lemma}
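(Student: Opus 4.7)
The three conditions are manifestly local around $x$, so the plan is to pass to the stalk $\mathcal O_{X,x}$ (with maximal ideal $\mathfrak m_x$ and residue field $k(x) = k$) and read all three assertions in terms of the same reduction modulo $\mathfrak m_x$.

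First I would dispatch the equivalence (ii) $\Leftrightarrow$ (iii). Under the tautological identification
\[
 \Xi(X,\mathcal L) \;\simeq\; \mathrm{Hom}_{\mathcal O_X}(\Omega^1_X,\mathcal L),\qquad v \longleftrightarrow i_v,
\]
the section $v$ of $\Theta_X \otimes \mathcal L$ evaluated at $x$ corresponds to the $k$-linear map
\[
 i_v \otimes k(x) \,:\, \Omega^1_X \otimes k(x) \longrightarrow \mathcal L \otimes k(x).
\]
Since $\mathcal L \otimes k(x)$ is one-dimensional, this linear map is surjective if and only if it is nonzero, i.e.\ if and only if the vector $v(x) \in (\Theta_X \otimes \mathcal L)\otimes k(x)$ does not vanish. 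By Nakayama's lemma (applied to the finitely generated $\mathcal O_{X,x}$-module $\mathcal L_x / \mathrm{Im}(i_{v,x})$), non-surjectivity of $i_v$ at the stalk is equivalent to non-surjectivity after tensoring with $k(x)$. This gives (ii) $\Leftrightarrow$ (iii).

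Next I would handle (i) $\Leftrightarrow$ (ii). By definition, $\{x\}$ is invariant iff $\delta_v(\mathfrak m_x) \subset \mathfrak m_x \cdot \mathcal L_x$. Using the canonical isomorphism
\[
 \mathfrak m_x / \mathfrak m_x^2 \;\simeq\; \Omega^1_X \otimes k(x), \qquad \bar f \longmapsto \overline{df},
\]
and the defining equality $i_v(df) = \delta_v(f)$ (formula \eqref{derivation-definition}), the reduction of $\delta_v$ mod $\mathfrak m_x$ factors as
\[
 \mathfrak m_x / \mathfrak m_x^2 \;\xrightarrow{\;\sim\;}\; \Omega^1_X \otimes k(x) \;\xrightarrow{\; i_v \otimes k(x)\;}\; \mathcal L \otimes k(x).
\]
Hence $\delta_v(\mathfrak m_x) \subset \mathfrak m_x \cdot \mathcal L_x$ if and only if $i_v \otimes k(x) = 0$, which (again by Nakayama, or equivalently by the observation from the previous paragraph) is the same as $i_v$ not being surjective at $x$. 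This is (i) $\Leftrightarrow$ (ii).

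There is no real obstacle: the proof is entirely a routine unfolding of definitions together with Nakayama's lemma, the only subtle point being to check that surjectivity at the stalk coincides with nonvanishing after tensoring by the residue field, which works precisely because $\mathcal L$ has rank one.
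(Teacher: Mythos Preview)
Your proof is correct and follows essentially the same approach as the paper: both arguments reduce to the fiber at $x$ and identify all three conditions with the vanishing of the map $i_v \otimes k(x)$. The only cosmetic differences are that for (ii) $\Leftrightarrow$ (iii) the paper computes in \'etale coordinates rather than invoking Nakayama, and for (i) $\Leftrightarrow$ (ii) the paper routes through the preceding factorization lemma (using $\Omega^1_{\{x\}}=0$) rather than the definition of invariance directly.
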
 

\begin{proof} 
$(i) \Leftrightarrow (ii)$: Denote by $m_x$ the  maximal ideal corresponding to $x$. Since $\Omega^1_{\lbrace x \rbrace}$ is trivial, $\lbrace x \rbrace$ is a closed invariant subvariety if and only if the morphism of $k$-vector space
$$ \Omega^1_{X,x}/ m_x \Omega^1_{X,x} \rightarrow \mathcal L_{X,x} / m_x \mathcal L_{X,x}$$
is identically zero if and only if 
$i_v(\Omega^1_{X,x}) \subset m_x \cdot \mathcal L_{X,x}$
if and only if $i_v$ is not surjective at $x$ (as $m_x$ is the unique maximal ideal). 

$(ii) \Leftrightarrow (iii)$: we can consider an affine open set endowed with a system of étale coordinates $(U; x_1,\ldots, x_n)$ around $x$ (see subsection 3.1) and write 
$v = \sum_{i = 1}^n f_i \partial_{x_i} \otimes s$
where $s$ is a non-vanishing section of $\mathcal L$ on $U$.
It follows that $i_v(dx_i) = f_i \otimes s$ and hence that $i_v$ is not surjective if and only $f_1(x) = \ldots = f_n(x) = 0$.
\end{proof} 

\begin{definition} 
We say that a closed point $x$ of $X$ is {\em a singular point (or a zero of $v$)} if the equivalent conditions of Lemma \ref{lemma-singularity} hold. We denote by 
$$ \mathrm{Sing}(v) = \mathrm{Sing}(X,\mathcal L,v)$$
the closed (and obviously invariant under $v$) subvariety of $X$ whose closed points are the singular points of $v$. 
\textit{We say that a (twisted) algebraic vector field has {\em reduced singularities} if the singular locus $\mathrm{Sing}(v)$ has codimension $\geq 2$ in $X$.} 
\end{definition}

\begin{construction} 
Let $x_0 \in X$ be a singular point of an algebraic vector field $(X,v)$. The vector field $v$ defines a derivation 
$\delta_{v}: \mathcal O_{X,x_0} \rightarrow \mathcal O_{X,x_0}$
of the local ring $\mathcal O_{X,x_0}$. Since $x_0$ is a singular point, the maximal ideal $m_{x_0}$ is invariant under the derivation $\delta_v$. Hence, the ideal $m_{x_0}^2$ is also invariant under $\delta_v$ so that the derivation $\delta_v$ defines a $k$-linear map
$L: m_{x_0}/m_{x_0}^2 \rightarrow m_{x_0}/m_{x_0}^2$. Using the identification $TX_{x_0} = (m_{x_0}/m_{x_0}^2)^\ast$, the transpose of $L$ defines a $k$-linear map
$$L^\ast: TX_{x_0} \rightarrow TX_{x_0}$$
called \textit{the linear part of $v$ at $x_0$}. \end{construction}

\begin{lemma} \label{lemma-linearpart}
Let $(X,v)$ be an algebraic vector field with a singular point $x_0 \in X$. Consider an affine open $(U;x_1,\ldots,x_n)$ endowed with étale coordinates (see subsection 3.1) containing $x_0$ and write 
$$ v_{\mid U} = \sum_{i = 1}^n a_i \cdot \partial_{x_i}\text{ with } a_i \in \mathcal O_X(U).$$ 
\begin{itemize} 
\item[(i)] The matrix of the linear part of $v$ at $x_0$ in the basis of $TX_{x_0}$ defined by $\partial_{x_1}, \ldots, \partial_{x_n}$ is given by 
$A = (\partial_{x_j}(a_i))_{ 1\leq i,j \leq n }$. 
\item[(ii)] Consider the evaluation morphism associated to $v$ given on $U$ by 
$$e : \begin{cases} U   \rightarrow TU \simeq U \times TU_{x_0}  \rightarrow TU_{x_0} \\
x  \mapsto v(x)   \mapsto pr_2(v(x))
\end{cases} 
$$ 
Then $de_{x_0}: TU_{x_0} \rightarrow TU_{x_0}$ is the linear part of $v$ at $x_0$.
\end{itemize} 
\end{lemma}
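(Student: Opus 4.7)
\medskip

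\noindent \textbf{Proof plan.} The two statements are essentially a coordinate computation; the only delicate point is to track carefully how the bases on $m_{x_0}/m_{x_0}^2$ and on its dual $TX_{x_0}$ interact, since the linear part is defined as a \emph{transpose}.

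\smallskip

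For (i), the plan is to work in the étale coordinate chart and translate, setting $t_i = x_i - x_i(x_0) \in m_{x_0}$ for $i = 1,\ldots, n$. Since $(x_1,\ldots,x_n)$ are étale coordinates on $U$, the differentials $dx_1,\ldots, dx_n$ form a basis of $\Omega^1_{X,x_0}$, so by the cotangent isomorphism $\Omega^1_{X,x_0} \otimes k \simeq m_{x_0}/m_{x_0}^2$ the classes $\overline{t_1}, \ldots, \overline{t_n}$ form a basis of $m_{x_0}/m_{x_0}^2$ dual to the basis $\partial_{x_1},\ldots,\partial_{x_n}$ of $TX_{x_0}$. A direct computation gives $\delta_v(t_i) = \delta_v(x_i) = a_i$, and since $a_i(x_0) = 0$ (because $x_0$ is a singular point), Taylor expansion in the étale coordinates yields
$$ a_i \equiv \sum_{j = 1}^n \partial_{x_j}(a_i)(x_0) \cdot t_j \pmod{m_{x_0}^2}. $$
Therefore $L(\overline{t_i}) = \sum_j \partial_{x_j}(a_i)(x_0) \cdot \overline{t_j}$, i.e.\ the matrix of $L$ in the basis $(\overline{t_i})$ has entry $\partial_{x_j}(a_i)(x_0)$ in row $j$ and column $i$. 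Transposing to pass from $L$ to $L^\ast$ in the dual basis $(\partial_{x_i})$ interchanges rows and columns and gives exactly the matrix $A = (\partial_{x_j}(a_i))_{1\leq i,j \leq n}$ claimed in the statement.

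\smallskip

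For (ii), the idea is that the étale coordinates $(x_1,\ldots,x_n)$ trivialize the tangent bundle $TU$ via
$$ TU \xrightarrow{\sim} U \times k^n, \qquad \sum_i \alpha_i(x)\partial_{x_i}\big|_x \mapsto (x, (\alpha_1(x),\ldots, \alpha_n(x))), $$
and the second factor $k^n$ is identified with $TU_{x_0}$ through the basis $(\partial_{x_i})$. Under this identification, the evaluation morphism $e$ becomes $x \mapsto (a_1(x),\ldots, a_n(x))$, so its differential at $x_0$ is the Jacobian
$$ de_{x_0} = \left(\partial_{x_j}(a_i)(x_0)\right)_{1\leq i,j\leq n} = A $$
in the basis $(\partial_{x_i})$ of $TU_{x_0}$. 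Comparing with part (i) gives $de_{x_0} = L^\ast$, which is the content of (ii).

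\smallskip

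The main (and really only) obstacle is the bookkeeping between $L$ on $m_{x_0}/m_{x_0}^2$ and its transpose $L^\ast$ on $TX_{x_0}$: one must use the \emph{étale} (as opposed to merely regular) property of the coordinate system to identify $\partial_{x_i}$ with the dual basis of $\overline{t_i}$ so that the transposition of matrices corresponds geometrically to the duality between $\Omega^1_{X,x_0}/m_{x_0}\Omega^1_{X,x_0}$ and $TX_{x_0}$. Once this identification is fixed, both parts reduce to a Taylor expansion at first order.
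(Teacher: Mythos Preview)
Your proposal is correct and follows essentially the same approach as the paper: both compute $L$ on the basis coming from the étale coordinates (the paper writes this basis as $dx_i$ via the identification $m_{x_0}/m_{x_0}^2 \simeq \Omega^1_{X,x_0}\otimes k$, you write it as $\overline{t_i}$ with $t_i = x_i - x_i(x_0)$), obtain the same matrix entries $\partial_{x_j}(a_i)(x_0)$, and then transpose; for (ii) both simply observe that $e(x) = (a_1(x),\ldots,a_n(x))$ in coordinates and take the Jacobian. The only difference is notational.
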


\begin{proof} 
(i). Since $x_1,\ldots, x_n$ are étale coordinates, the germs of $dx_1,\ldots, dx_n$ at $x_0$ form a basis of $m_{x_0}/m_{x_0}^2$ and 
$$L(dx_i) = d (v(x_i)) = da_i = \sum_{ j = 1}^n \partial_{x_j}(a_i)dx_j $$
It follows that in the dual basis defined by $\partial_{x_1}, \ldots, \partial_{x_n}$,  the transpose of $L$ is given by the matrix $(\partial_{x_j}(a_i))_{1 \leq i,j \leq n}$. 

(ii). Indeed, with the identification $TU_{x_0} \simeq \mathbb{A}^n$ given by the basis $\partial_{x_1},\ldots, \partial_{x_n}$, the morphism $e$ is given by $e(x) = (a_1(x),\ldots, a_n(x))$.
\end{proof} 

\begin{definition} 
Let $(X,v)$ be an algebraic vector field and let $x_0 \in X$ be a singular point of $v$. We say that $x_0$ is a {\em non-resonant singularity of $v$} if the eigenvalues $\lambda_1,\ldots, \lambda_n$ of the linear part of $v$ at $x_0$ generate a free $\mathbb{Z}$-module of rank $n$ in $k$. 
\end{definition}

{\remark This condition is stronger than the classical non-resonance condition of the Poincaré-Dulac linearization theorem which only requires that the eigenvalues $\lambda_1,\ldots, \lambda_n \in k$ of the linear part of $v$ at $x_0$ do not satisfy any $\mathbb{Z}$-linear relation of the form 
$$ \lambda_j = \sum_{i = 1}^n m_i \cdot \lambda_i \text{ where the } m_1,\ldots, m_n \in \mathbb{N} \text{ satisfy } \sum_{i = 1}^n m_i \geq 2. $$ 
A singularity is non-resonant in our sense (as in subsection 5.1 of \cite{Pereira-JEMS}) if the Galois group of the linear differential equation defined by the linear part of $(X,v)$ around $x_0$ computed over the differential field $(k,0)$ is the Galois group of a ``generic autonomous'' linear differential equation of dimension $n$, that is, the complex torus $\mathbb{G}_m^n$.}

\begin{proposition} \label{proposition-singularlocus}
Let $X$ be a smooth projective variety of dimension $n$, let $D$ be a divisor on $X$ and denote by $X_0$ the complement of the support of $D$ in $X$. 

\begin{itemize} 
\item[(i)] Assume that $\mathcal O_X(D)$ is generated by its global sections and that $\mathrm{dim}(\Xi(X,\mathcal O_X(D))) > \mathrm{dim}(\Xi(X))$. Then there exists a dense open set $U \subset \Xi(X,\mathcal O_X(D))$ such that every twisted vector field of $U$ has reduced singularities.  

\item[(ii)]  Assume that $D$ is ample and fix $x_0 \in X_0$. There exists $d_0 \geq 0$ such that for all $d \geq d_0$ and for every matrix $A \in \Mat_n(k)$, there exists a twisted vector field $v \in \Xi(X,\mathcal O_X(dD))$ such that 

\begin{itemize}
\item the restriction $v_{\mid X_0}$ is singular at $x_0$ and 
\item the linear part of $v_{\mid X_0}$  at $x_0$ is given by the matrix $A$.
\end{itemize}
\end{itemize}   
\end{proposition}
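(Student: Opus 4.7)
The plan is to treat the two parts by quite different means: part (i) via a semi-continuity reduction to the existence of a single example, and part (ii) via Serre vanishing applied to the 1-jet at $x_0$.

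For part (i), I would form the incidence variety
$$\Sigma = \{(v, x) \in \Xi(X, \mathcal O_X(D)) \times X : v(x) = 0\}$$
and consider the first projection $\pi_1 \colon \Sigma \to \Xi(X, \mathcal O_X(D))$, whose fiber over $v$ is the singular locus $\mathrm{Sing}(v)$. Upper semi-continuity of fiber dimensions shows that the set of $v$ with $\dim \mathrm{Sing}(v) \leq n-2$ (i.e.\ with reduced singularities) is a Zariski-open subset $U$ of the irreducible variety $\Xi(X, \mathcal O_X(D))$ --- working in the relevant setting $n \geq 2$ --- so its density reduces to its non-emptiness. To produce a twisted vector field in $U$, I would use the two hypotheses to show that the evaluation map
$$\Xi(X, \mathcal O_X(D)) \otimes_k \mathcal O_X \longrightarrow \Theta_X \otimes \mathcal O_X(D)$$
is surjective at the generic point of $X$; a standard Bertini-type argument for rank-$n$ sheaves generically generated by their global sections then yields a twisted vector field whose zero locus has the expected codimension $n \geq 2$.

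For part (ii), the idea is to apply Serre vanishing to the short exact sequence
$$0 \to m_{x_0}^2 \mathcal F_d \to \mathcal F_d \to \mathcal F_d / m_{x_0}^2 \mathcal F_d \to 0, \qquad \mathcal F_d := \Theta_X \otimes \mathcal O_X(dD).$$
Since $\mathcal O_X(D)$ is ample and $m_{x_0}^2 \Theta_X$ is a coherent sheaf, there exists $d_0 \geq 0$ such that $H^1(X, m_{x_0}^2 \mathcal F_d) = 0$ for every $d \geq d_0$, giving the surjectivity of the restriction
$$\rho : H^0(X, \mathcal F_d) \twoheadrightarrow H^0(X, \mathcal F_d / m_{x_0}^2 \mathcal F_d).$$
Since $x_0 \in X_0$, the rational section $s$ of $\mathcal O_X(D)$ used to define $v_{\mid X_0}$ is non-vanishing at $x_0$, and $s^{\otimes d}$ trivializes $\mathcal O_X(dD)$ near $x_0$, identifying the target of $\rho$ canonically with $T_{x_0} X \oplus \mathrm{End}(T_{x_0} X)$. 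The first summand records the value $v_{\mid X_0}(x_0)$ and the second, by Lemma \ref{lemma-linearpart}, records the linear part of $v_{\mid X_0}$ at $x_0$ (the local trivialization by $s^{\otimes d}$ is exactly the one used in defining the restriction morphism). Lifting the prescribed 1-jet $(0, A)$ through $\rho$ produces the required $v$.

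The main obstacle is the generic surjectivity step in part (i): one needs the hypotheses to rule out the degenerate scenario in which all global sections of $\Theta_X \otimes \mathcal O_X(D)$ lie in a proper saturated subsheaf of generic rank strictly less than $n$, in which case generic twisted vector fields would vanish along a divisor and the conclusion would fail. Once this is achieved, the rest of part (i) is a routine dimension count combined with Bertini, while part (ii) reduces to the cohomological vanishing together with the 1-jet identification.
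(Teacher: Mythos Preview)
Your treatment of part (ii) coincides with the paper's: both apply Serre vanishing for the ample $\mathcal O_X(D)$ to obtain surjectivity of global sections onto the space of $1$-jets at $x_0$. The only cosmetic difference is that the paper first restricts to the subsheaf $\Theta_{X,x_0}=m_{x_0}\Theta_X$ of vector fields vanishing at $x_0$ and uses the sequence
\[
0 \longrightarrow \Theta_{X,x_0}\otimes m_{x_0} \longrightarrow \Theta_{X,x_0} \overset{L}{\longrightarrow} Hom(T_{x_0}X,T_{x_0}X) \longrightarrow 0
\]
tensored with $\mathcal O_X(dD)$, thereby imposing the vanishing at $x_0$ and the prescription of the linear part in two separate steps, whereas you handle both at once via the full $1$-jet quotient $\mathcal F_d/m_{x_0}^2\mathcal F_d$. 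The content is identical.

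For part (i) the paper does not argue directly at all: it simply invokes Proposition~2.4 of Coutinho--Pereira \cite{Coutinho-Pereira} (in the case $m=1$). Your incidence-variety and semi-continuity reduction to the existence of a single twisted vector field with singular locus of codimension $\geq 2$ is the natural direct route, and you correctly isolate the crux---ruling out the possibility that every global section of $\Theta_X\otimes\mathcal O_X(D)$ vanishes along some divisor. You flag this honestly as the main obstacle and do not resolve it; that is precisely the content of the cited Coutinho--Pereira result, which extracts it from the two hypotheses (global generation of $\mathcal O_X(D)$ and the dimension inequality). So your outline is sound, but to make it self-contained you would have to reproduce that lemma; as written, your argument for (i) and the paper's are at the same level of completeness, both deferring the key step to \cite{Coutinho-Pereira}.
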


\begin{proof} 
(i) follows from Proposition 2.4 of \cite{Coutinho-Pereira} in the case where $m = 1$. (ii) follows for the beginning of the proof of Proposition 4.1 of \cite{Coutinho-Pereira} for two singular points that we repeat here for one singular point.  Fix $x_0 \in X$ and denote by $\Theta_{X,x_0}$ the coherent subsheaf of $\Theta_X$ formed by the vector fields vanishing at $x_0$. We have an exact sequence of coherent sheaves over $X$ 
$$ 0 \rightarrow \Theta_{X,x_0} \otimes m_{x_0} \rightarrow \Theta_{X,x_0} \overset{L}{\rightarrow} \mathrm{Hom}_k(TX_{x_0}, TX_{x_0}) \rightarrow 0.$$ 
where we identify $\mathrm{Hom}_k(TX_{x_0}, TX_{x_0})$ with the skycraper sheaf at $x_0$ with fiber this vector space and $L$ is the map which sends a vector field vanishing at $x_0$ to its linear part, see the proof of Proposition 4.1 of \cite{Coutinho-Pereira}.

Fix $s$ a section of $\mathcal  O_X(D)$ such that $\mathrm{div}(s) = D$. After tensoring with the line bundle $\mathcal O_X(dD)$, we obtain  the exact sequence
$$ 0 \rightarrow \Theta_{X,x_0} \otimes m_{x_0} \otimes \mathcal O_X(dD) \rightarrow \Theta_{X,x_0} \otimes \mathcal O_X(dD) \overset{L}{\rightarrow} \mathrm{Hom}_k(TX_{x_0}, TX_{x_0}) \rightarrow 0.$$ 
Since $D$ is ample, there exists $d_0$ such that for all $d \geq d_0$, 
$$\mathrm{H}^1(X, \Theta_{X,x_0} \otimes m_{x_0} \otimes \mathcal O_X(dD)) = 0.$$
Hence, for $d \geq d_0$, the morphism of $k$-vector spaces 
$\mathrm{H}^0(X, \Theta_{X,x_0} \otimes \mathcal O_X(dD))) \rightarrow \mathrm{Hom}_k(TX_{x_0}, TX_{x_0})$ is surjective.
This means that for every matrix $A \in \Mat_n(k)$, there is an $\mathcal O_X(dD)$-twisted vector field vanishing at $x_0$ and with linear part $A$. 
\end{proof}
\subsection{Some classical terminology on distributions and foliations} If $\mathcal E$ is a coherent sheaf on $X$, we write $\mathcal E^\vee = \mathrm{Hom}_{\mathcal O_X}(\mathcal E, \mathcal O_X)$. If $\mathcal E$ is the sheaf of sections of a vector bundle $V$ over $X$ then $\mathcal E^\vee$ is the sheaf of sections of the dual vector bundle $V^\ast$.  

\begin{definition} 
A \textit{predistribution} $\mathcal D$ of rank $r$ on $X$ is a coherent subsheaf $\Theta_\mathcal D$ of the tangent sheaf $\Theta_X$ of $X$ of rank $r$ called {\em the tangent sheaf of the distribution $\mathcal D$}. We say that a predistribution $\mathcal D$ is 
\begin{itemize} 
\item[(1)] a \textit{distribution} if the subcoherent sheaf $\Theta_D$ of $\Theta_X$ is saturated: that is if $\Theta_X/\Theta_D$ does not have torsion.
\item[(2)] a \textit{foliation} if it is a distribution which is also involutive: for every local sections $v,w \in \Theta_D(U)$, we have $[v,w] \in \Theta_\mathcal D(U) $ where $[-,-]$ is the Lie bracket of vector fields.  
\end{itemize} 
\end{definition}

A predistribution $\mathcal D$ defines an exact sequence 
\begin{equation} \label{equation-exactsequence-distribution}
0 \rightarrow \Theta_\mathcal D \rightarrow \Theta_X \rightarrow \Theta_X/\Theta_\mathcal D \rightarrow 0
\end{equation}
The {\em singular locus} $\mathrm{Sing}(\mathcal D)$ of the predistribution is the closed subset of $X$ formed by the points $x \in X$ such that (\ref{equation-exactsequence-distribution}) is not an exact sequence of locally free sheaves at $x$. 
It is well-known (see for example \cite[Lemma 2.1.3]{jaoui-Confluentes}) that the saturation condition (1) in this definition is equivalent to: 
\begin{itemize}
\item[(1)'] the singular locus of $\mathcal D$ has codimension $\geq 2$ in $X$.
\end{itemize}

\begin{notation} \label{notation-normal-bundle}
Let $\mathcal D$ be a distribution of rank $r$ on $X$ and set $U = U(\mathcal D)$ for the complement in $X$ of the singular locus of $\mathcal D$. The exact sequence (\ref{equation-exactsequence-distribution}) induces an exact sequence of vector bundles over $U$ 
\begin{equation} 0 \rightarrow T\mathcal D \rightarrow TU \rightarrow N\mathcal D \rightarrow 0
\end{equation} 
where the vector bundle $N\mathcal D$ over $U$ (of rank $n - r$) is called the {\em normal bundle} of the distribution $\mathcal D$. The dual vector bundle $N^\ast\mathcal D$ can be identified with the vector subbundle of $T^\ast U$ whose fiber $N\mathcal D_x$ over $x \in U$ is given by: 
$$N^\ast\mathcal D_x = \lbrace \omega \in T^\ast X_x \mid \omega(v) = 0 \text{ for all } v \in T\mathcal D_x \rbrace.$$ 
It is called the {\em conormal bundle of the distribution} $\mathcal D$. The conormal bundle $N^\ast\mathcal D$  determines the distribution $\mathcal D$ because of the following fact. 
\end{notation}

\begin{fact}[Extension of distributions] \label{fact-extension distribution}
Let $U$ be a dense open set of $X$. The restriction map 
$\mathcal D \mapsto \mathcal D_{\mid U}$
sending a distribution on $X$ to a distribution on $U$ is {\em one-to-one} and sends foliations to foliations. In other words, every distribution (resp. foliation) on $U$ can be uniquely extended to a distribution (resp. a foliation) on $X$. 
\end{fact}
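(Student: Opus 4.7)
The plan is to split the assertion into three parts: (i) the restriction of a distribution (resp.\ foliation) on $X$ remains a distribution (resp.\ foliation) on $U$; (ii) the restriction map is injective; and (iii) every distribution (resp.\ foliation) on $U$ arises as the restriction of one on $X$. Part (i) is essentially formal, because restriction to an open subset preserves coherence, the generic rank, torsion-freeness of the quotient $\Theta_X / \Theta_\mathcal D$ (hence saturation), and the Lie bracket condition (hence involutivity).

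For injectivity in (ii), my approach is to consider two saturated coherent subsheaves $\mathcal T_1, \mathcal T_2 \subset \Theta_X$ with $\mathcal T_1|_U = \mathcal T_2|_U$ and to study the morphism $\phi : \mathcal T_1 \hookrightarrow \Theta_X \to \Theta_X / \mathcal T_2$. Since $\mathcal T_1$ and $\mathcal T_2$ share the same generic stalk, $\phi$ vanishes at the generic point of $X$; and since $\Theta_X/\mathcal T_2$ is torsion-free (by saturation of $\mathcal T_2$) on the integral scheme $X$, it follows that $\phi$ vanishes identically. Hence $\mathcal T_1 \subset \mathcal T_2$, and the reverse inclusion follows by symmetry.

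For surjectivity in (iii), given a saturated coherent subsheaf $\mathcal T_0 \subset \Theta_U$, I would set $\mathcal Q_0 = \Theta_U/\mathcal T_0$ and denote by $j : U \hookrightarrow X$ the open immersion. The key construction is the composition
$$
\Theta_X \hookrightarrow j_* \Theta_U \twoheadrightarrow j_* \mathcal Q_0,
$$
in which the first map is injective because $\Theta_X$ is torsion-free and $U$ is dense in the integral scheme $X$. Letting $\mathcal Q$ denote its image, $\mathcal Q$ is coherent as a quotient of $\Theta_X$ and torsion-free as a subsheaf of $j_*\mathcal Q_0$ (itself torsion-free, thanks to the density of $U$ and the torsion-freeness of $\mathcal Q_0$). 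Define $\mathcal T$ as the kernel of $\Theta_X \twoheadrightarrow \mathcal Q$: it is coherent (kernel of a morphism of coherent sheaves on the Noetherian scheme $X$), saturated (since $\Theta_X/\mathcal T \simeq \mathcal Q$ is torsion-free), and a direct inspection yields $\mathcal T|_U = \mathcal T_0$. If moreover $\mathcal T_0$ is involutive, then for any local sections $v,w$ of $\mathcal T$ their restrictions $v|_U, w|_U$ lie in $\mathcal T_0$, so $[v,w]|_U = [v|_U, w|_U] \in \mathcal T_0$, which forces $[v,w] \in \mathcal T$ by the very characterization of $\mathcal T$ as the kernel of $\Theta_X \to j_*\mathcal Q_0$.

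The main obstacle in this plan lies in the coherence assertion of (iii): a naive definition such as $\Theta_X \cap j_* \mathcal T_0$ is only quasi-coherent a priori, and the trick to circumvent this is to factor $\Theta_X \to j_* \mathcal Q_0$ through its coherent image $\mathcal Q$, so that the desired extension appears as the kernel of a morphism between coherent sheaves on the Noetherian scheme $X$.
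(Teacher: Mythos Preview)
Your proof is correct and complete. The paper itself does not give a proof of this fact, deferring instead to Proposition~2.3.1 of \cite{jaoui-ANT}, so there is no in-text argument to compare against; your self-contained treatment via the kernel of $\Theta_X \to j_*\mathcal Q_0$ is a clean way to obtain the coherent saturated extension, and your handling of involutivity (reducing to the characterization of $\mathcal T$ as that kernel) is exactly right.
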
 

See for example the proof of Proposition 2.3.1 of \cite{jaoui-ANT}. \qed

{\example[Foliations of rank one]\label{example-foliations of rank one} Let $(X,\mathcal L)$ be a smooth algebraic variety endowed with a line bundle $\mathcal L$.  We have a natural isomorphism 
$$\Xi(X,\mathcal L)  = \mathrm{H}^0(X,\Theta_X \otimes \mathcal L)  \simeq \mathrm{Hom}(\mathcal L^\vee, \Theta_X)   $$ 
which to a twisted vector field $v$ associates the $\mathcal O_X$-linear morphim given, on affine open sets $U$ of $X$, by
$$f_v: \begin{cases} 
\mathcal L^\vee(U) \rightarrow \Theta_X(U) 
\\ 
g \mapsto g\cdot v \end{cases}.$$
The following properties are immediate:
\begin{itemize} 
\item[(i)] If $v\neq 0$ then $f_v$ is injective and the image $\mathcal D(v)$ of $f_v$ is distribution of rank one with tangent sheaf isomorphic to $\mathcal L^\vee$.
\item[(ii)] If $v \neq 0$, then $\mathrm{Sing}(v) = \mathrm{Sing}(\mathcal D(v))$. Hence, if $v$ has reduced singularities then $\mathcal D(v)$ is a foliation of rank one. 

\item[(iii)] If $v,w \neq 0$ are two $\mathcal L$-twisted algebraic vector fields then $\mathcal D(v) = \mathcal D(w)$ if and only if there exists $f \in \mathcal O_X^\ast (X)$ such that $v = f \cdot w$. 
\end{itemize} 
It follows that the set of (rank one) foliations on a {\em projective} variety $X$ with tangent sheaf isomorphic to $\mathcal L$ can be identified with the open set   
$$
\mathrm{Fol}(X,\mathcal L) \subset \P(\Xi(X,\mathcal L^\vee))
$$
consisting of the projective classes $[v]$ of $\mathcal L^\vee$-twisted vector fields with reduced singularities. In particular, the space of foliations of degree $d$ on $\P^n$ can be identified with the open set 
\begin{equation} \mathrm{Fol}(\P^n,d) \subset \P(\Xi(\P^n,\mathcal O_{\P^n}(d-1)))
\end{equation}
consisting of the twisted vector fields with reduced singularities \cite[Section 8.1]{Pereira-JEMS}.
}

\begin{example}[Algebraically integrable foliations] \label{example-algebraically-integrable}
Let $\phi: X \dashrightarrow Y$ be a rational dominant morphism of algebraic varieties. On the open set of definition of $\phi$, we have an exact sequence of coherent sheaves  
$$ 0 \rightarrow \Theta_{X/Y} \rightarrow \Theta_X \rightarrow \phi^\ast \Theta_Y \rightarrow 0.$$ There is a unique foliation on $X$ denoted $\mathcal F_\phi$ and called the {\em{foliation tangent to $\phi$ on $X$}} such that at the generic point $x \in X$, we have $\mathcal F_{\phi,x} = \Theta_{X/Y,x}$.
See Definition 2.3.4 of \cite{jaoui-Confluentes}. 
\end{example}

\begin{construction}[Pullback of distributions] Let $\phi: X \dashrightarrow Y$ a dominant generically finite rational morphism of smooth  irreducible algebraic varieties and let $\mathcal D_Y$ be a distribution on $Y$. It follows from Fact \ref{fact-extension distribution} that there exists a unique distribution $\mathcal D_X$ on $X$ with the property that
\begin{quote}
for every open set $U$ of $Y$ such that $\phi^{-1}(U) \rightarrow U$ is étale, the lifts  (see Example \ref{example-lift})  of the local sections of $\Theta_ {\mathcal D_Y}(U)$ are generating sections of $\Theta_{\mathcal D_X}(\phi^{-1}(U))$. 
\end{quote}
The distribution $\mathcal D_X$ is called the {\em pullback distribution of $\mathcal D_Y$ by $\phi$ and denoted $\phi^\ast \mathcal D_Y$}. Note that the distributions $\mathcal D_Y$ and $\phi^\ast \mathcal D_Y$ have the same rank and that $\phi^\ast \mathcal D_Y$ is a foliation if $\mathcal D_Y$ is one. 
\end{construction}
\subsection{Invariant distributions} Let $(X,v)$ be an algebraic vector field.

\begin{definition} 
We say that a distribution $\mathcal D$ is an {\em invariant distribution of $(X,v)$ } if for every local section  $w \in \mathcal D(U)$, we have $[v,w] \in \mathcal D(U)$. If $\mathcal D$ is a foliation which is an invariant distribution of $(X,v)$, we say that $\mathcal D$ is an \textit{invariant foliation of $(X,v)$}. 
\end{definition}

\begin{remark}[See Section 3.2 of \cite{jaoui-Confluentes}] The analytic interpretation of this definition for foliations goes as follows: a foliation $\mathcal F$ of rank $r$ defines a partition (equivalently, an equivalence relation) of $X$ into a singular set $\mathrm{Sing}(\mathcal F)$ and a partition of $U = X \setminus \mathrm{Sing}(\mathcal F)$ into analytic immersed subvarieties of dimension $r$ called the {\em leaves of the foliation}.The foliation $\mathcal F$ is an invariant for $(X,v)$ if and only if this partition is preserved by the complex-integral curves of $(X,v)$ in the sense that: 
\begin{itemize}
\item the singular set $\mathrm{Sing}(\mathcal F)$ is a closed invariant subvariety of $(X,v)$,
\item if $\gamma_1, \gamma_2: \mathbb{D} \rightarrow X$ are two integral curves of the algebraic vector field $(X,v)$ such that $\gamma_1(0), \gamma_2(0)$ lie in the same leaf then the same holds true for $\gamma_1(t),\gamma_2(t)$ for every $t \in \mathbb{D}$.
\end{itemize} 
\end{remark}

\begin{proposition} \label{proposition rational factors to invariant foliations}
Denote by $\mathcal F(v)$ the one-dimensional foliation on $X$ tangent to the algebraic vector field $(X,v)$ The following properties hold:
\begin{itemize} 
\item[(i)] The foliation $\mathcal F(v)$ is an invariant foliation of $(X,v)$. More generally, every foliation containing $\mathcal F(v)$ is an invariant foliation.
\item[(ii)] If $\phi: (X,v) \dashrightarrow (Y,w)$ is a rational dominant morphism of algebraic vector fields then the (algebraically integrable) foliation $\mathcal F_\phi$ is invariant for $(X,v)$.
\end{itemize} 
\end{proposition}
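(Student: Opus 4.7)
The plan in both parts is to verify the invariance condition on a carefully chosen dense open subset $V \subset X$ by a direct local computation, and then to propagate the conclusion globally using the torsion-freeness of $\Theta_X / \Theta_\mathcal F$ that is built into the notion of a distribution. Concretely, the extension principle I will invoke repeatedly is the following consequence of Fact \ref{fact-extension distribution}: if $s \in \Theta_X(W)$ is a local section on an open $W \subset X$ whose image in $\Theta_X / \Theta_\mathcal F$ vanishes on a dense open $W \cap V \subset W$, then $s \in \Theta_\mathcal F(W)$. Indeed, the saturation of the subsheaf of $\Theta_{X \mid W}$ generated by $\Theta_{\mathcal F \mid W}$ and $s$ is a distribution on $W$ agreeing with $\Theta_{\mathcal F\mid W}$ on the dense open $W \cap V$, and hence must coincide with $\Theta_{\mathcal F\mid W}$ by the uniqueness part of Fact \ref{fact-extension distribution}.

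For (i), I will take $V = X \setminus \mathrm{Sing}(v)$, whose complement has codimension at least two by the reduced singularities assumption. By Example \ref{example-foliations of rank one}, the tangent sheaf $\Theta_{\mathcal F(v)}$ is on $V$ the $\mathcal O_V$-submodule of $\Theta_V$ generated by $v_{\mid V}$, so that any local section $w \in \Theta_{\mathcal F(v)}(W)$ restricts as $w_{\mid W\cap V} = f\cdot v_{\mid W \cap V}$ for some $f \in \mathcal O_X(W \cap V)$, and the elementary identity $[v, f v] = v(f)\cdot v$ yields $[v,w]_{\mid W \cap V} \in \Theta_{\mathcal F(v)}(W \cap V)$. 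The extension principle then gives $[v,w] \in \Theta_{\mathcal F(v)}(W)$. For a foliation $\mathcal F$ containing $\mathcal F(v)$, the same open $V$ works and the argument is even simpler: on $V$ the vector field $v$ is itself a local section of $\Theta_\mathcal F$, so involutivity of $\mathcal F$ directly yields $[v,w]_{\mid W \cap V} \in \Theta_\mathcal F(W \cap V)$ whenever $w \in \Theta_\mathcal F(W)$.

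For (ii), denote the target vector field by $v_Y$ to avoid a notational clash, and take $V$ to be a dense open subset of $X$ contained in the domain of $\phi$ on which $\phi$ is smooth; by construction of $\mathcal F_\phi$ one has $\Theta_{\mathcal F_\phi \mid V} = \Theta_{V/Y} = \ker(d\phi_{\mid V})$. The hypothesis that $\phi : (X,v) \dashrightarrow (Y, v_Y)$ is a morphism of algebraic vector fields says precisely that $v_{\mid V}$ is $\phi$-related to $v_Y$, i.e., $v(\phi^\sharp f) = \phi^\sharp(v_Y f)$ for every local section $f$ of $\mathcal O_Y$. For a local section $\xi \in \Theta_{\mathcal F_\phi}(W)$, the restriction $\xi_{\mid W \cap V}$ lies in $\ker(d\phi)$ and is therefore $\phi$-related to the zero derivation on $Y$. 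The standard naturality of the Lie bracket with respect to $\phi$-related fields --- an immediate check from the defining identity above --- then yields that $[v, \xi]_{\mid W \cap V}$ is $\phi$-related to $[v_Y, 0] = 0$, and so belongs to $\ker(d\phi) = \Theta_{\mathcal F_\phi}$ on $W \cap V$. A final application of the extension principle concludes.

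The main (and really only) point of care is that invariance is a condition on all local sections on all opens of $X$, so the reduction to the dense open $V$ genuinely requires the distribution-theoretic extension step: this is precisely where the hypothesis that $\mathcal F(v)$ (respectively $\mathcal F_\phi$) is an honest foliation, not merely a predistribution, is being used.
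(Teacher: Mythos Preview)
Your argument is correct and is exactly the standard computation the paper has in mind: the paper's own proof simply declares (i) ``clear'' and defers (ii) to \cite[Proposition 3.1.5]{jaoui-Confluentes}, so you have supplied the details rather than taken a different route. One small remark: your ``extension principle'' is fine as stated, but it is slightly quicker to observe directly that $\Theta_X/\Theta_\mathcal F$ is torsion-free (this is the saturation hypothesis), so a local section of it that vanishes on a dense open vanishes identically --- this avoids invoking Fact \ref{fact-extension distribution} for what is really a one-line sheaf-theoretic statement.
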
 

\begin{proof} 
(i) follows from the definitions, (ii) is Proposition 3.1.5 of \cite{jaoui-Confluentes}. 
\end{proof}

\section{The first projective prolongation}
{\em{Let $X$ be a smooth irreducible algebraic variety of dimension $n \geq 2$ over some algebraically closed field $k$ of characteristic zero fixed once and for all}}.

\subsection{Systems of étale coordinates} We often need to work locally on $X$. This will be done by covering $X$ by affine open sets endowed with étale coordinates.

\begin{definition}
An {\em affine open set endowed with étale coordinates} of $X$ is a pair $(U; x_1,\ldots, x_n)$ where $U$ is a nonempty affine open set of $X$ and $(x_1,\ldots, x_n): U \rightarrow \mathbb{A}^n_k$ is an étale morphism. 
\end{definition}
Systems of étale coordinates correspond to uniformizing parameters in the sense of \cite{Mumford}. We refer to \cite[Chapter III, Section 6]{Mumford} for more details and an (easy) proof of the following fact:
\begin{fact}
Every smooth algebraic variety can be covered by affine open sets endowed with étale coordinates.
\end{fact}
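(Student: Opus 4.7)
The plan is to argue pointwise: for each $x \in X$, I will construct an affine open neighborhood of $x$ carrying an étale map to $\mathbb{A}^n_k$ (where $n = \dim X$), and conclude by taking a union of such neighborhoods as $x$ varies. First, I would pick an affine open neighborhood $V$ of $x$ in $X$. Since $X$ is smooth of dimension $n$ at $x$, the cotangent space $m_x/m_x^2$ is an $n$-dimensional $k$-vector space, so I can select $f_1,\ldots,f_n \in m_x \subset \mathcal{O}_X(V)$ (after shrinking $V$ if necessary to clear denominators) whose images form a basis of $m_x/m_x^2$, equivalently such that $df_1,\ldots,df_n$ form a basis of $\Omega^1_{X,x} \otimes k(x)$.

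Next I would form the morphism $\varphi = (f_1,\ldots,f_n) : V \to \mathbb{A}^n_k$. The key step is to verify that $\varphi$ is étale at $x$. Since $V$ and $\mathbb{A}^n_k$ are both smooth of the same dimension $n$ over $k$, étaleness at $x$ reduces to checking that $d\varphi_x : T_xV \to T_{\varphi(x)}\mathbb{A}^n$ is an isomorphism; but the matrix of $d\varphi_x$ in the bases $(\partial_{f_i})$ and the standard basis of the target is the identity by construction, so this holds. Equivalently, the Jacobian criterion applies: the pullback map $\varphi^\ast : \Omega^1_{\mathbb{A}^n, \varphi(x)} \otimes k(x) \to \Omega^1_{X,x} \otimes k(x)$ sends the standard basis to $df_1,\ldots,df_n$, which is a basis.

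Now I invoke the fact that the étale locus of a morphism of finite type between Noetherian schemes is open (see for instance \cite[Chapter III, Section 6]{Mumford}). Consequently there exists an affine open neighborhood $U \subset V$ of $x$ on which $\varphi$ is étale, and $(U; f_1,\ldots, f_n)$ is an affine open set endowed with étale coordinates. Running this construction as $x$ varies over $X$ produces a covering of $X$ by affine open sets endowed with étale coordinates, as required. The only mildly technical point in the argument is checking openness of the étale locus; all the other steps are straightforward consequences of smoothness and the Jacobian criterion.
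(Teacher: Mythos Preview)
Your argument is correct and is precisely the standard construction via uniformizing parameters that the paper invokes by reference to \cite[Chapter III, Section 6]{Mumford}; the paper does not give its own proof but defers to that source, and your pointwise Jacobian-criterion argument is exactly what one finds there.
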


Let $(U;x_1,\ldots, x_n)$ be an affine open set with étale coordinates of $X$. By definition of étale morphisms, $\Omega^1_X(U)$ is the free $\O_X(U)$-module generated by $dx_1, \ldots, dx_n$. We always denote by $\partial_{x_1}, \ldots, \partial_{x_n}$ the dual basis of $\Theta_X(U)$ so that a vector field $v$ on $U$ can always be uniquely written as  
$$v = \sum_{i = 1}^n a_i \cdot \partial_{x_i} \text{ where } a_1,\ldots, a_n \in \mathcal O_X(U).$$

\example[Lift by an étale morphism] \label{example-lift} Let $\phi: X \rightarrow Y$ be an étale morphism of smooth algebraic varieties. Every vector field $w$ on $Y$ has a unique lift to a vector field $v := \phi^\ast w$ on $X$ such that 
$$\phi: (X,v) \rightarrow (Y,w)$$
is a morphism of algebraic vector fields. If $(V; y_1,\ldots, y_n)$ is an affine open set of $Y$ endowed with étale coordinates, after setting $$U = \phi^{-1}(V) \text{ and } x_i = y_i \circ \phi \text{ for } i = 1,\ldots, n$$ then $(U; x_1,\ldots, x_n)$ is an affine open set of $X$ endowed with étale coordinates and the lift of 
$w = \sum_{i = 1}^n a_i \cdot \partial_{y_i}$ is 
$v = \sum_{i = 1}^n (a_i \circ \phi) \cdot \partial_{x_i}$.

\example[Presentation of the cotangent bundle] Let $(U;x_1,\ldots, x_n)$ be an affine open set with étale coordinates of $X$.
 For every $i \leq n$, $\partial_{x_i}$ defines a function $y_i$ on $\pi^{-1}(U)$ given by  
$$ y_i: (\omega, x) \mapsto \omega(\partial_{x_i}(x))$$
The open set $\pi^{-1}(U)$ is affine and we have a natural identification:  
$$\mathcal O_{T^\ast X}(\pi^{-1}(U)) = \mathcal O_X(U)[y_1, \ldots , y_n]$$
where the left-hand side is the (free) polynomial ring generated by the $y_i$. This ring is naturally equipped with a grading given by the total degree in $y_1,\ldots, y_n$ and we have that: 
$$\pi^{-1}(U) \simeq \Spec \text{ } \mathcal O_X(U)[y_1, \ldots , y_n] \text{ and } p^{-1}(U) \simeq \mathrm{Proj} \text{ } \mathcal O_X(U)[y_1, \ldots , y_n].$$
In particular, the $(\pi^{-1}(U); x_1,\ldots, x_n, y_1, \ldots, y_n)$ where $(U;x_1,\ldots, x_n)$ runs through a covering of $X$ by affine open sets equipped with étale coordinates gives a covering of $T^\ast X$ by affine open sets equipped with étale coordinates.

\subsection{Partial connection along an algebraic vector field} \label{section-differential structure} We fix $(X,v)$ an algebraic vector field and we denote by 
$$\delta_v : \mathcal O_X \rightarrow \mathcal O_X$$
the derivation defined by $v$ on the structure sheaf of $X$. For coherence with the rest of the text, this section has been written for algebraic vector fields but works equally well for (not necessarily autonomous) smooth finite-dimensional $D$-varieties in the sense of Buium \cite{Buium}.

\begin{definition} \label{definition-partialconnectionalongavectorfield}
Let $\mathcal E$ be a coherent sheaf on $X$. A {\em partial connection $\nabla_v$ on $\mathcal E$ along $v$} is an additive morphism of sheaves
$\nabla_v: \mathcal E \rightarrow \mathcal E$
satisfying the Leibniz rule: for every local sections $f \in \mathcal O_X(U)$ and  $\sigma \in \mathcal E(U)$,
$$\nabla_v(f\cdot \sigma) = \delta_v(f) \cdot \sigma + f \cdot \nabla_v(\sigma).$$

\end{definition}

\begin{lemma} \label{dual-connection} 
Assume that $\nabla_v$ is a partial connection on $\mathcal E$ along $v$. There is a unique partial connection $\nabla_v^\vee$ along $v$ on $\mathcal E^\vee = \mathrm{Hom}_{\mathcal O_X}(\mathcal E, \mathcal O_X)$  such that 
\begin{equation}\label{equation-dual}
\delta_v(\omega(\sigma)) = \nabla_v^\vee(\omega) (\sigma) + \omega(\nabla_v(\sigma))
\end{equation}
for every local sections $\omega \in \mathcal E^\vee(U)$ and $\sigma \in \mathcal E(U)$.
\end{lemma}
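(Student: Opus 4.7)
The formula (\ref{equation-dual}) forces the definition, which immediately gives uniqueness. For existence, the plan is to \emph{define} $\nabla_v^\vee$ by the prescribed formula and then check that it lands in $\mathcal{E}^\vee$ and satisfies the Leibniz rule.

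More precisely, for an open set $U$ of $X$ and a local section $\omega \in \mathcal{E}^\vee(U)$, I would define $\nabla_v^\vee(\omega)$ as the assignment
$$ \sigma \in \mathcal{E}(U) \mapsto \delta_v(\omega(\sigma)) - \omega(\nabla_v(\sigma)) \in \mathcal{O}_X(U). $$
The first point to check is that this assignment is $\mathcal{O}_X(U)$-linear in $\sigma$, so that it defines a genuine element of $\mathcal{E}^\vee(U) = \mathrm{Hom}_{\mathcal{O}_X(U)}(\mathcal{E}(U), \mathcal{O}_X(U))$. Additivity in $\sigma$ is clear from additivity of $\delta_v$, $\omega$ and $\nabla_v$. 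For $\mathcal{O}_X(U)$-linearity, given $f \in \mathcal{O}_X(U)$, one computes using the Leibniz rules for $\delta_v$ and $\nabla_v$:
\begin{align*}
\delta_v(\omega(f\sigma)) - \omega(\nabla_v(f\sigma)) &= \delta_v(f \omega(\sigma)) - \omega(\delta_v(f)\sigma + f\nabla_v(\sigma)) \\
&= \delta_v(f)\omega(\sigma) + f\delta_v(\omega(\sigma)) - \delta_v(f)\omega(\sigma) - f\omega(\nabla_v(\sigma)) \\
&= f\bigl(\delta_v(\omega(\sigma)) - \omega(\nabla_v(\sigma))\bigr),
\end{align*}
where the two $\delta_v(f)\omega(\sigma)$ terms cancel --- this cancellation is precisely what makes the definition work.

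Having checked that $\nabla_v^\vee(\omega) \in \mathcal{E}^\vee(U)$, the second point is to verify that $\nabla_v^\vee$ itself satisfies the Leibniz rule of Definition \ref{definition-partialconnectionalongavectorfield} with respect to $\omega$. Additivity in $\omega$ is immediate. For the Leibniz rule, given $f \in \mathcal{O}_X(U)$ and $\omega \in \mathcal{E}^\vee(U)$, for every $\sigma \in \mathcal{E}(U)$
\begin{align*}
\nabla_v^\vee(f\omega)(\sigma) &= \delta_v(f\omega(\sigma)) - f\omega(\nabla_v(\sigma)) \\
&= \delta_v(f)\omega(\sigma) + f\bigl(\delta_v(\omega(\sigma)) - \omega(\nabla_v(\sigma))\bigr) \\
&= \bigl(\delta_v(f)\cdot \omega + f\cdot \nabla_v^\vee(\omega)\bigr)(\sigma).
\end{align*}
Finally, the construction is compatible with restriction to smaller open sets (since $\delta_v$, $\omega$, $\nabla_v$ all are), so it glues to define a morphism of sheaves $\nabla_v^\vee : \mathcal{E}^\vee \to \mathcal{E}^\vee$. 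There is no substantive obstacle here: the lemma is a routine verification, and the only ``trick'' is recognizing that (\ref{equation-dual}) is nothing but the definition of $\nabla_v^\vee$ rewritten.
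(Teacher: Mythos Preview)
Your proof is correct and follows exactly the same approach as the paper: the formula forces uniqueness, and existence is a direct verification that the assignment is $\mathcal{O}_X$-linear in $\sigma$ and satisfies the Leibniz rule in $\omega$. You have simply written out the ``direct computation'' that the paper leaves to the reader.
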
 

\begin{proof} 
Since a section of $\omega \in \mathcal E^\vee(U)$ is determined by the $\omega(\sigma)$ where $\sigma \in \mathcal E(U)$, the previous formula determines at most one partial connection on $\mathcal E^\vee$ along $v$. Therefore it suffices to check that 
$$ \omega \mapsto   \nabla_v^\vee\omega: \sigma \mapsto \omega(\nabla_v(\sigma)) - \delta_v(\omega(\sigma))$$
is a well-defined connection. This follows from a direct computation.  
\end{proof} 

\begin{example} \label{standard}
The connection 
$$\nabla^{\mathrm{st}}_v : \xi \mapsto [v , \xi]$$
where the Lie bracket on the right-hand side is the Lie bracket of vector fields is a partial connection on $\Theta_X$ along $v$. This connection and its dual on $\Omega^1_X$ will be called the {\em partial standard connections associated to $(X,v)$}.
\end{example} 

If $\mathcal E$ is a locally free sheaf on $X$, we denote by $\pi: \mathbb{E}(\mathcal E) \rightarrow X$ the vector bundle  whose local sections are the sections of $\mathcal E$ and by $\mathbb{P}(\mathcal E)$ its projectivization. A local section $\sigma \in \mathcal E^\vee(U)$ can be identified with a function on $\pi^{-1}(U)$ given by 
$$\overline{\sigma}: (x,e) \mapsto \sigma_x(e)$$
and the correspondence $\sigma \mapsto \overline{\sigma}$ is $\mathcal O_X(U)$-linear.

\begin{proposition} \label{totalspace} 
Let $(\mathcal E, \nabla_v)$ be a locally free sheaf endowed with a partial connection along $v$. There are unique algebraic vector fields $\mathbb{E}(\nabla_v)$ on $\mathbb{E}(\mathcal E)$ and $\mathbb{P}(\nabla_v)$ on $\mathbb{P}(\mathcal E)$ respectively such that the following diagram of algebraic vector fields commutes: 

$$\xymatrix { (\mathbb{E}(\mathcal E) \setminus \lbrace 0\text{-section} \rbrace , \mathbb{E}(\nabla_v)) \ar[r] \ar[d]^\pi & (\P(\E), \P(\nabla_v)) \ar[ld]^p \\ (X,v) }$$
and  for every local section $\sigma \in \mathcal E^\vee(U)$, 
$\mathbb{E}(\nabla_v)(\overline{\sigma}) = \overline{\nabla_v^\vee(\sigma)}$
where the vector field $\mathbb{E}(\nabla_v)$ has been identified with the derivation it defines on $\mathcal O_{\mathbb{E}(\mathcal E)}$.
\end{proposition}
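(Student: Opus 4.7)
The plan is to construct the derivation $\mathbb{E}(\nabla_v)$ locally using the prescribed formula, verify that the local definitions glue to a global vector field on $\mathbb{E}(\mathcal E)$, and then descend to $\mathbb{P}(\mathcal E)$ via a grading-preservation argument.

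Fix an affine open $U \subset X$ small enough that $\mathcal E$ is trivial over $U$, and choose a basis $\sigma_1, \ldots, \sigma_r$ of $\mathcal E^\vee(U)$. Setting $y_i = \overline{\sigma_i}$, the discussion in subsection 3.1 gives a presentation
$$ \mathcal O_{\mathbb{E}(\mathcal E)}(\pi^{-1}(U)) \;\simeq\; \mathcal O_X(U)[y_1, \ldots, y_r] \;\simeq\; \mathrm{Sym}_{\mathcal O_X(U)}^{\bullet} \mathcal E^\vee(U). $$
This ring is generated as an $\mathcal O_X(U)$-algebra by the $\overline{\sigma}$'s, so a $k$-derivation extending $\delta_v$ and satisfying the prescribed formula $\mathbb{E}(\nabla_v)(\overline{\sigma}) = \overline{\nabla_v^\vee(\sigma)}$ is forced—whence uniqueness. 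For existence, define a derivation $D_U$ on $\mathcal O_X(U)[y_1,\ldots,y_r]$ by $D_U|_{\mathcal O_X(U)} = \delta_v$, $D_U(y_i) = \overline{\nabla_v^\vee(\sigma_i)}$, and extend via Leibniz.

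To glue the $D_U$'s into a global object, I verify the basis-independent formula $D_U(\overline{\sigma}) = \overline{\nabla_v^\vee(\sigma)}$ for every $\sigma \in \mathcal E^\vee(U)$. Writing $\sigma = \sum_i f_i \sigma_i$ with $f_i \in \mathcal O_X(U)$, the $\mathcal O_X(U)$-linearity of $\sigma \mapsto \overline{\sigma}$ gives $\overline{\sigma} = \sum_i f_i y_i$ and
$$ D_U(\overline{\sigma}) \;=\; \sum_i \delta_v(f_i)\, y_i + \sum_i f_i \, \overline{\nabla_v^\vee(\sigma_i)} \;=\; \overline{\sum_i \bigl(\delta_v(f_i)\, \sigma_i + f_i\, \nabla_v^\vee(\sigma_i)\bigr)} \;=\; \overline{\nabla_v^\vee(\sigma)}, $$
where the final equality is the Leibniz rule for $\nabla_v^\vee$ supplied by Lemma \ref{dual-connection}. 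Since this characterization refers only to $\nabla_v$ and not to the chosen basis, the $D_U$'s agree on overlaps and patch into a global derivation $\mathbb{E}(\nabla_v)$ on $\mathcal O_{\mathbb{E}(\mathcal E)}$. By construction $\mathbb{E}(\nabla_v)$ restricts to $\delta_v$ on $\pi^* \mathcal O_X \subset \mathcal O_{\mathbb{E}(\mathcal E)}$, which is exactly the condition that $\pi$ is a morphism of algebraic vector fields, giving the commutativity of the upper triangle.

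For the descent to $\mathbb{P}(\mathcal E)$, equip $\mathcal O_{\mathbb{E}(\mathcal E)} \simeq \mathrm{Sym}^\bullet \mathcal E^\vee$ with its natural $\mathbb{Z}_{\geq 0}$-grading (the degree $d$ piece being $\mathrm{Sym}^d \mathcal E^\vee$), which corresponds to the $\mathbb{G}_m$-action by fiberwise scaling. The derivation $\mathbb{E}(\nabla_v)$ preserves this grading, since it maps the degree-zero part $\pi^*\mathcal O_X$ into itself and sends each linear $\overline{\sigma}$ to the linear element $\overline{\nabla_v^\vee(\sigma)}$. Grading preservation is equivalent to $\mathbb{G}_m$-equivariance, so $\mathbb{E}(\nabla_v)$ descends to a unique vector field $\mathbb{P}(\nabla_v)$ on the quotient $\mathbb{P}(\mathcal E) = \mathrm{Proj}_X \mathrm{Sym}^\bullet \mathcal E^\vee$, and both triangles of the diagram now commute by construction. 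The only genuinely nontrivial step is the basis-independent formula above, but this reduces precisely to the defining Leibniz rule of $\nabla_v^\vee$, so no serious obstacle arises.
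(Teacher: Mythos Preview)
Your proof is correct and follows essentially the same approach as the paper: both construct $\mathbb{E}(\nabla_v)$ locally as the unique derivation on the polynomial ring $\mathcal O_X(U)[\overline{\sigma_1},\ldots,\overline{\sigma_r}]$ extending $\delta_v$, verify the basis-independent formula $D_U(\overline{\sigma})=\overline{\nabla_v^\vee(\sigma)}$ via the Leibniz rule for $\nabla_v^\vee$ to glue, and then descend to $\mathbb{P}(\mathcal E)$ using homogeneity of degree zero. The only cosmetic difference is that you phrase the descent as $\mathbb{G}_m$-equivariance while the paper writes out the explicit formula for $\mathbb{P}(\nabla_v)(\sigma_j/\sigma_i)$, which amounts to the same thing.
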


\begin{proof} 
We work locally on $X$: consider $U$ an affine open set of $X$, a basis $e_1,\ldots, e_n$ of $\mathcal E(U)$ and denote by $\sigma_1,\ldots, \sigma_n$ the dual basis of $\mathcal E^\vee(U)$. By construction of $\mathbb{E}(\mathcal E)$, the coordinate ring on $\pi^{-1}(U)$ is the polynomial ring 
$$ \mathcal O_{\mathbb{E}(\mathcal E)}(\pi^{-1}(U))  = \mathcal O_X(U)[\overline{\sigma_1}, \ldots, \overline{\sigma_n}].$$ 
Hence, there is a unique derivation $\mathbb{E}(\nabla_v)$ on $\mathcal O_{\mathbb{E}(\mathcal E)}(\pi^{-1}(U))$ extending the derivation induced by $v$ on $\mathcal O_X(U)$ and such that 
$$ \mathbb{E}(\nabla_v)(\overline{\sigma_i}) = \overline{\nabla_v^\vee(\sigma_i)}.$$
This already implies uniqueness of the algebraic vector field structure on $\pi^{-1}(U)$. If $\sigma \in \mathcal E(U)$ is any section written as $\sigma = \sum_{i = 1}^n f_i \sigma_i$, we see that:  
\begin{eqnarray*} 
\mathbb{E}(\nabla_v)(\overline{\sigma}) & = & \sum_{i = 1}^n (\delta_v(f_i) \overline{\sigma_i} + f_i \mathbb{E}(\nabla_v)(\overline{\sigma_i})) \\
& = & \sum_{i = 1}^n (\delta_v(f_i) \overline{\sigma_i} + f_i \overline{\nabla_v^\vee(\sigma_i)} \\
& = & \overline{\nabla_v(\sum_{i = 1}^n f_i \sigma_i)} = \overline{\nabla_v(\sigma)}.
\end{eqnarray*}
Hence, the construction produces a unique algebraic vector field structure on $\pi^{-1}(U)$ with the desired properties. 

To define the algebraic vector field structure on $p^{-1}(U)$, note that the previous derivation 
$$ \mathbb E(\nabla_v) : \mathcal O_X(U)[\overline{\sigma_1}, \ldots, \overline{\sigma_n}] \rightarrow \mathcal O_X(U)[\overline{\sigma_1}, \ldots, \overline{\sigma_n}] $$ 
is homogeneous of degree zero. It follows that $\mathbb{E}(\nabla_v)$ defines a unique algebraic vector field structure $\P(\nabla_v)$ on $p^{-1}(U)$ such that 
$$ (\pi^{-1}(U) \setminus \lbrace 0\text{-section} \rbrace,  \mathbb{E}(\nabla_v)) \rightarrow (p^{-1}(U)) , \P(\nabla_v))$$
is a morphism of algebraic vector fields. Indeed, on the open set $U_i$ defined by $\sigma_i \neq 0$, we just take: 
$$ \P(\nabla_v)(\sigma_j/\sigma_i) = \frac {\mathbb{E}(\nabla_v)(\sigma_i) \sigma_j - \mathbb{E}(\nabla_v)(\sigma_j) \sigma_i} {\sigma_i^2} \text{ for }j \neq i $$ 
which can be written as a function of the $\sigma_j/\sigma_i$ for $j \neq i$ by homogeneity of $\mathbb{E}(\nabla_v)$. Finally, the uniqueness of the constructions implies that this local constructions glue together to define global algebraic vector fields $\mathbb{E}(\nabla_v)$ on $\mathbb{E}(\mathcal E)$ and $\mathbb{P}(\nabla_v)$ on $\mathbb{P}(\mathcal E)$ respectively. 
\end{proof} 
\begin{definition} 
Let $(X,v)$ be an algebraic vector field. Consider the standard partial connection $\nabla^{\mathrm{st}}_v$ on $\Omega^1_X$ of Example \ref{standard} dual of the connection 
$\xi \mapsto [v, \xi]$.
The {\em{first prolongation of $(X,v)$}} and the {\em first projective prolongation of $(X,v)$} are respectively the algebraic vector fields $(T^\ast X, \mathbb{E}(\nabla^{\mathrm{st}}_v))$ and $(\P(T^\ast X), \P(\nabla^{\mathrm{st}}_v))$ associated by Proposition \ref{totalspace} to the standard connection $\nabla^{\mathrm{st}}_v$. They are denoted respectively
$(T^\ast X, v^{[1]}), (\P(T^\ast X), \P(v^{[1]})) $ in the rest of the paper.  
\end{definition}

{\remark[Local expression]\label{local-expression} Let $(U;x_1,\ldots, x_n)$ be an affine open set with étale coordinates of $X$. We can write the vector field $v$ on $U$ as
$$ v_{\mid U} = \sum_{i = 1}^n a_i \cdot \partial_{x_i}.$$
Setting $y_i = \overline{\partial_{x_i}}$, on the affine open set $(\pi^{-1}(U); x_1,\ldots, x_n,y_1,\ldots, y_n)$ of $T^\ast X$, we have 

$$ v^{[1]}_{\mid \pi^{-1}(U)} = \sum_{i = 1}^n a_i \cdot \partial_{x_i} - \sum_{i = 1}^n \Big( \sum_{j = 1}^n \partial_{x_i}(a_j)y_j \Big) \cdot \partial_{y_i}.$$
In particular, the vector field $v^{[1]}$ is always linear in the fibers although we did not require it explicitly in Proposition \ref{totalspace}. 
} 

%

\subsection{Invariant horizontal subvarieties of the first prolongation} From now on, $p : \mathbb{P}(T^\ast X) \rightarrow X$ always denotes the canonical projection of the projective cotangent bundle. If $(X,v)$ is an algebraic vector field then 
$$p : (\mathbb{P}(T^\ast X),\P(v^{[1]})) \rightarrow (X,v)$$
is a morphism of algebraic vector fields by Proposition \ref{totalspace}.

\begin{definition} 
A subvariety of $\P(T^\ast X)$ is called \textit{horizontal} if it is closed, equidimensional and all of its irreducible components project surjectively (or equivalently dominantly) on $X$. If $(X,v)$ is an algebraic vector field, {\em an invariant horizontal subvariety of $\P(T^\ast X)$} is a horizontal subvariety invariant under the vector field $\P(v^{[1]})$.
\end{definition}

The reason why we allow reducible closed subvarieties will become apparent in the next section to define a pullback operation by generically finite morphisms, see Example \ref{example-notirreducible}. 

\begin{lemma} \label{lemma-horizontal-extension}
Let $U$ be a dense open set of $X$. The restriction map
$$Z \mapsto Z_{\mid U} := Z \cap p^{-1}(U)$$
which to a horizontal subvariety of $\P(T^\ast X)$ associates a horizontal subvariety of $\P(T^\ast U)$ is a bijection. Moreover, if $(X,v)$ is an algebraic vector field  then  invariant horizontal subvarieties of $\P(T^\ast X)$ correspond to invariant horizontal subvarieties of $\P(T^\ast U)$ under this bijection.
\end{lemma}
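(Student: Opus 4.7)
My plan is to establish the bijection on horizontal subvarieties first (ignoring invariance) and then transfer invariance in both directions using the remark on restriction/extension of invariant closed subvarieties (Remark \ref{remark-invariant-restriction}) applied to the algebraic vector field $(\P(T^\ast X), \P(v^{[1]}))$ and its restriction to the invariant open subset $p^{-1}(U) = \P(T^\ast U)$.

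The first step is to check that restriction $Z \mapsto Z \cap p^{-1}(U)$ sends horizontal subvarieties to horizontal subvarieties. Writing $Z = \bigcup_i Z_i$ as a union of irreducible components, each $Z_i$ dominates $X$ by assumption, hence meets $p^{-1}(U)$ in a nonempty Zariski open (and thus dense) subset of $Z_i$. In particular $Z_i \cap p^{-1}(U)$ is irreducible of the same dimension as $Z_i$, and its image under $p$ equals $U$; so $Z \cap p^{-1}(U)$ is closed in $p^{-1}(U)$, equidimensional, and all of its irreducible components dominate $U$. The inverse map will be Zariski closure: given a horizontal subvariety $W \subset \P(T^\ast U)$ with irreducible decomposition $W = \bigcup_j W_j$, the closure $\overline{W} \subset \P(T^\ast X)$ has irreducible components $\overline{W_j}$ of the same dimension, and since $p$ is proper (it is a projective bundle) we have $p(\overline{W_j}) \supseteq \overline{p(W_j)} = \overline{U} = X$. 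Hence $\overline{W}$ is horizontal in $\P(T^\ast X)$.

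The two constructions are mutually inverse: if $W$ is horizontal in $\P(T^\ast U)$ then $\overline{W} \cap p^{-1}(U) = W$, because $W$ is already closed in $p^{-1}(U)$; conversely if $Z$ is horizontal in $\P(T^\ast X)$, then $\overline{Z \cap p^{-1}(U)} = Z$ componentwise, since each $Z_i \cap p^{-1}(U)$ is dense in the irreducible $Z_i$ as observed above.

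Finally, for the invariance statement, since $p : (\P(T^\ast X), \P(v^{[1]})) \to (X,v)$ is a morphism of algebraic vector fields, the open inclusion $p^{-1}(U) = \P(T^\ast U) \hookrightarrow \P(T^\ast X)$ identifies the restricted algebraic vector field on $p^{-1}(U)$ with $(\P(T^\ast U), \P(v^{[1]}_{|U}))$. Applying Remark \ref{remark-invariant-restriction}(i) to a $\P(v^{[1]})$-invariant horizontal subvariety $Z$ shows that $Z \cap p^{-1}(U)$ is invariant under $\P(v^{[1]}_{|U})$; conversely, applying Remark \ref{remark-invariant-restriction}(ii) to a $\P(v^{[1]}_{|U})$-invariant horizontal $W \subset \P(T^\ast U)$ shows that its Zariski closure $\overline{W}$ in $\P(T^\ast X)$ is invariant under $\P(v^{[1]})$. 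Combined with the bijection established above, this proves the claim. The only delicate point is tracking that the restriction $Z \cap p^{-1}(U)$ is nonempty on every irreducible component, which is ensured precisely by the surjectivity of $p_{|Z_i}$, that is, by the horizontality hypothesis.
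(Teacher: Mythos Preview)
Your proof is correct and follows essentially the same approach as the paper's own proof: the inverse map is Zariski closure, the key point is that each irreducible component of a horizontal $Z$ meets $p^{-1}(U)$ in a dense open subset, and the invariance statement is handled via Remark \ref{remark-invariant-restriction}. You have simply spelled out more details (in particular, the verification that closure preserves horizontality via properness of $p$) than the paper, which leaves these checks implicit.
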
  

\begin{proof} 
The inverse is given by $Z \mapsto \overline{Z}$ where $\overline{Z}$ denotes the Zariski-closure of $Z$ in $\P(T^\ast X)$ since if $H$ is a horizontal variety of $\P(T^\ast X)$ then $H \cap p^{-1}(U)$ is Zariski-dense in $H$. The moreover part follows from Remark \ref{remark-invariant-restriction}.  
\end{proof}

\begin{notation} 
If $Z$ is a horizontal subvariety of $\P(T^\ast X)$, we denote by $p_Z: Z \rightarrow X$
the restriction of the canonical projection to $Z$. The {\em relative dimension of $Z$ over $X$} is the dimension of the generic fiber of $p_Z$, that is,  $\mathrm{dim}(Z) - \mathrm{dim}(X)$. 

\end{notation}

\begin{proposition}\label{proposition-distributions-horizontal} 
We have an injective map: 
$$
\begin{cases} 
 \text{distributions on } X \\
 \text{ of rank  }< n 
 \end{cases} \rightarrow \begin{cases} 
  \text{irreducible horizontal} \\
 \text{subvarieties of } \P(T^\ast X) 
 \end{cases} $$
denoted $\mathcal D \mapsto Z(\mathcal D)$ with the following properties: 
\begin{itemize} 
\item[(i)] The map is preserved after restriction on both sides: if $U$ is a dense open set, then $Z(\mathcal {D}_{\mid U}) = Z(\mathcal D)_{\mid U}$.  
\item[(ii)] If $\mathcal D$ is a nonsingular distribution then $Z(\mathcal D) = \P(N^\ast \mathcal D)$
is the projectivization of the conormal bundle $N^\ast \mathcal D$ of the distribution. 
\item[(iii)] If $\mathcal D$ is a distribution of rank $r$ then the relative dimension of $Z(\mathcal D)$ is $n - r - 1$.
\end{itemize} 
Moreover, if $(X,v)$ is an algebraic vector field and $\mathcal D$ is a distribution on $X$ then 
\begin{itemize} 
\item[(iv)] the distribution $\mathcal D$ is an invariant distribution if and only if  $Z(\mathcal D)$ is an invariant horizontal subvariety of $\P(T^\ast X)$. 
\end{itemize} 
\end{proposition}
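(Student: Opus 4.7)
The plan is to build $Z(\mathcal D)$ first on the non-singular locus of $\mathcal D$, using the conormal bundle, and then extend to all of $X$ by Zariski closure via Lemma \ref{lemma-horizontal-extension}. Set $U = U(\mathcal D) = X \setminus \mathrm{Sing}(\mathcal D)$, where by Notation \ref{notation-normal-bundle} the conormal bundle $N^\ast \mathcal D$ is a vector subbundle of $T^\ast U$ of rank $n-r$. Its projectivization $\mathbb{P}(N^\ast \mathcal D) \subset \mathbb{P}(T^\ast U)$ is an irreducible closed subvariety projecting surjectively on $U$ with fibers $\mathbb{P}^{n-r-1}$. Define $Z(\mathcal D)$ to be the Zariski closure of $\mathbb{P}(N^\ast \mathcal D)$ inside $\mathbb{P}(T^\ast X)$; since $p: \mathbb{P}(T^\ast X) \rightarrow X$ is proper, $p(Z(\mathcal D))$ is a closed subvariety of $X$ containing the dense open set $U$, hence equals $X$, so $Z(\mathcal D)$ is horizontal and irreducible. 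Property (iii) follows from $\dim Z(\mathcal D) = \dim \mathbb{P}(N^\ast \mathcal D) = n + (n-r-1)$ and property (ii) is immediate when $\mathrm{Sing}(\mathcal D) = \emptyset$.

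Property (i) follows directly from Lemma \ref{lemma-horizontal-extension}: for any dense open $U_0 \subset X$, the horizontal subvarieties $Z(\mathcal{D}_{\mid U_0})$ and $Z(\mathcal D)_{\mid U_0}$ of $\mathbb{P}(T^\ast U_0)$ coincide on the further dense open $U_0 \cap U(\mathcal D)$ by construction, so they are equal. For injectivity of $\mathcal D \mapsto Z(\mathcal D)$, we reconstruct $\mathcal D$ from $Z(\mathcal D)$: on $U(\mathcal D)$, the preimage of $Z(\mathcal D)_{\mid U(\mathcal D)}$ in $T^\ast U(\mathcal D)$ under the canonical projection (together with the zero section) coincides with the total space of $N^\ast \mathcal D$, recovering $N^\ast \mathcal D \subset \Omega^1_{U(\mathcal D)}$; its annihilator recovers $\Theta_\mathcal D$ on $U(\mathcal D)$. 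The saturation hypothesis (equivalent to codimension $\geq 2$ singular locus by Subsection 2.5) then determines $\Theta_\mathcal D$ uniquely as the saturation of this subsheaf inside $\Theta_X$.

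For property (iv), by Remark \ref{remark-invariant-restriction} and Lemma \ref{lemma-horizontal-extension} it suffices to check that $Z(\mathcal D)_{\mid U}$ is invariant under $\mathbb{P}(v^{[1]})$ on $U = U(\mathcal D)$. Since the construction of $\mathbb{P}(v^{[1]})$ in Proposition \ref{totalspace} descends from the homogeneous-degree-zero derivation $\mathbb{E}(v^{[1]})$, it is enough to show that the total space $\mathbb{E}(N^\ast \mathcal D) \subset T^\ast U$ is invariant under $\mathbb{E}(v^{[1]}) = v^{[1]}$. Locally, this total space is cut out by the homogeneous ideal generated by the linear functions $\overline{\sigma}$ for $\sigma$ a local section of $\Theta_\mathcal D$; by the defining property of Proposition \ref{totalspace} combined with the dual description in Example \ref{standard} and Lemma \ref{dual-connection}, we have
\begin{equation*}
v^{[1]}(\overline{\sigma}) \;=\; \overline{(\nabla^{st}_v)^\vee(\sigma)} \;=\; \overline{[v, \sigma]}.
\end{equation*}
The hypothesis that $\mathcal D$ is invariant under $v$ gives $[v, \sigma] \in \Theta_\mathcal D(U)$, so this ideal is stable under $v^{[1]}$, and hence $\mathbb{E}(N^\ast \mathcal D)$ is invariant. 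The main technical step is organizing this verification around the total-space/projectivization passage of Proposition \ref{totalspace}; the underlying geometric content — that a connection preserving a subbundle induces an action preserving its projectivization — is straightforward once the local presentation of $v^{[1]}$ is exploited.
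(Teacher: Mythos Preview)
Your argument is correct and follows essentially the same approach as the paper's proof: construct $Z(\mathcal D)$ as the Zariski closure of $\mathbb{P}(N^\ast\mathcal D)$ over the non-singular locus, deduce (i)--(iii) from Lemma \ref{lemma-horizontal-extension} and the dimension of the conormal bundle, recover injectivity from Fact \ref{fact-extension distribution}, and verify (iv) locally via the identity $v^{[1]}(\overline{\sigma}) = \overline{[v,\sigma]}$ coming from Proposition \ref{totalspace}. The only cosmetic difference is that the paper writes out the local coordinate presentation of $N^\ast\mathcal D$ explicitly before identifying it with the conormal bundle, whereas you invoke Notation \ref{notation-normal-bundle} directly.
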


\begin{proof}
Let $\mathcal D$ be a distribution of rank $r < n$ on $X$. Denote by $V(\mathcal D)$ the complement of the singular locus of $\mathcal D$ in $X$. We first cover $V(\mathcal D)$ by affine open sets $U$ endowed with étale coordinates $(x_1,\ldots, x_n)$ so that 
$$\mathcal O_X(T^\ast U) \simeq \mathcal O_X(U)[\overline{\partial_{x_1}}, \ldots, \overline{\partial_{x_n}}].$$
Since the restriction of $\mathcal D$ to $U$ is smooth, $\Theta_\mathcal D(U)$ is a free $\mathcal O_X(U)$-module and we can fix a basis $\xi_1,\ldots, \xi_r$ of $\Theta_\mathcal D(U)$. Denote by $\mathcal I(U)$ the ideal of  $\mathcal O_{T^\ast X}(T^\ast U)$ generated by $\overline{\xi_1},\ldots, \overline{\xi_r}$ and set 
$$N^\ast(\mathcal D)_{\mid U} = Z(\mathcal I(U)) \subset T^\ast U  $$
By definition, for every $x \in U$ the fiber of 
$N^\ast(\mathcal D)_{\mid U}$ over $x$ given by 
$$N^\ast(\mathcal D)_x = \lbrace \omega \in TX^\ast_{x} \mid \omega(\xi_i(x)) = 0 \text{ for } i = 1,\ldots r \rbrace $$ 
is a subvector space of $T^\ast U_x$ of constant rank $n-r$.  Hence, $N^\ast(\mathcal D)_{\mid U}$ is a vector subbundle of $T^\ast U$ of rank $n-r$ and its projectivization
$$Z(\mathcal D)_{\mid U} = \P(N^\ast(\mathcal D)_{\mid U}) \subset  \P(T^\ast U)$$
is an irreducible horizontal subvariety of $\P(T^\ast U)$ of dimension $n - r - 1$. This construction does not depend on the choice of the basis $\overline{\xi_1},\ldots, \overline{\xi_r}$ so that the $N^\ast(\mathcal D)_{\mid U}$ and the $Z(\mathcal D)_{\mid U}$ glue together to form a global subvector bundle $N^\ast(\mathcal D)_{\mathrm{reg}}$ of $T^\ast V(\mathcal D)$ and its projectivization $Z(\mathcal D)_{\mathrm{reg}}$ over {\em the smooth locus $V(\mathcal D)$ of $\mathcal D$}. Finally set 
$$ Z(\mathcal D) = \overline{Z(\mathcal D)_{\mathrm{reg}}} \subset \P(T^\ast X)$$
which is an irreducible horizontal closed subvariety of $\P(T^\ast X)$ by Lemma \ref{lemma-horizontal-extension}. It remains to check that the properties of Proposition \ref{proposition-distributions-horizontal} are fulfilled: 
\begin{itemize} 
\item \textit{injectivity}: if $\mathcal D$ is a distribution then for every $x \in V(\mathcal D)$,
$$ T\mathcal D_x = \lbrace v \in TX_x \mid \omega(v) = 0 \text{ for all } \omega \in T^\ast X_x \text{  with }  [\omega] \in Z(\mathcal D)\rbrace$$
\end{itemize} 
Hence, the restriction of $\mathcal D$ to $V(\mathcal D)$ is determined by $Z(\mathcal D)$ and so is $\mathcal D$ by Fact \ref{fact-extension distribution}. 

\begin{itemize}
\item (i) follows from Lemma \ref{lemma-horizontal-extension}, (ii) follows from the construction and (iii) is a consequence of (ii) and (i). 

\item Finally to show (iv), consider $\mathcal D$ a distribution on $X$. Using the previous properties, we may replace $X$ by an affine open set $U$ with étale coordinates  and assume that $\mathcal D$ is smooth. Denote by $\xi_1,\ldots,\xi_r \in \Theta_D(U)$ a system of generators of $\mathcal D$. As previously, $\overline{\xi_1},\ldots, \overline{\xi_r}$ generate the ideal $\mathcal I(U)$ in $\mathcal O_{T^\ast X}(T^\ast U)$ defining $N^\ast \mathcal D$ and $Z(\mathcal D) = \mathbb{P}(N^\ast \mathcal D)$. 
\end{itemize}
If $\mathcal D$ is invariant then we can write for $i = 1,\ldots, r$
$[v,\xi_i] = \sum_{j = 1}^r \lambda_{i,j} \xi_j$ for some $\lambda_{i,j} \in \mathcal O_X(U)$. 
By Proposition \ref{totalspace}, the derivation $D^{[1]}$ induced by $v^{[1]}$ on $\mathcal O_{T^\ast X}(T^\ast U)$ satisfies 
$$D^{[1]}(\overline{\xi_i}) = \overline{[v,\xi_i]} = \sum_{j = 1}^s \lambda_{i,j} \overline{\xi_j} \in \mathcal I(U).$$ 
It follows that the ideal $\mathcal I(U)$ is an invariant ideal. This means that $N^\ast \mathcal D$ is $v^{[1]}$-invariant so that $Z(\mathcal D)$ is $\mathbb{P}(v^{[1]})$-invariant as required. 

Conversely,  assume that $Z(\mathcal D)$ is $\P(v^{[1]})$-invariant.  It follows from the commutative diagram of Proposition \ref{totalspace} that $N^\ast \mathcal D$ is $v^{[1]}$-invariant so that the ideal $\mathcal I(U)$ is invariant. Hence we can write
$$\overline{[v,\xi_i]} = D^{[1]}(\xi_i) = \sum_{j = 1}^r \lambda_{i,j} \cdot \overline{v_i}$$ 
for some functions $\lambda_{i,j}$ on $T^\ast X$. The homogeneity of the derivation $D^{[1]}$ imply that actually the $\lambda_{i,j}$ homogeneous of degree zero, that is, functions on $X$. It follows that 
$$ \overline{[v,\xi_i]} = \overline{\sum_{j = 1}^r \lambda_{i,j} \cdot v_i} \text{ and hence } [v,\xi_i] = \sum_{j = 1}^r \lambda_{i,j} \cdot v_i \in \Theta_\mathcal D(U).$$
This means that $\mathcal D$ is an invariant distribution as required.
\end{proof} 

\subsection{Pullback and pushforward under generically finite morphisms} Let $\phi: X \rightarrow X'$ be an étale morphism of algebraic varieties. Since the differential is locally invertible at every point, we obtain a commutative diagram 
$$ \xymatrix{ \P(T^\ast X) \ar[d] \ar[r] & \P(T^\ast X') \ar[d] \\ X \ar[r] & X'}$$
which induces linear isomorphisms on the fibers. 

\begin{lemma}\label{lemma-prolongation}
Let $\phi: (X,v) \rightarrow (X',w)$ be an étale morphism of algebraic vector fields. Then we have a commutative diagram of algebraic vector fields: 
$$ \xymatrix{ (\P(T^\ast X),\P(v^{[1]})) \ar[d] \ar[r] & (\P(T^\ast X'),\P(w^{[1]})) \ar[d] \\ (X,v) \ar[r] & (X',w)}$$
\end{lemma}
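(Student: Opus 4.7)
The plan is to construct the horizontal arrow explicitly as the projectivization of a morphism of cotangent bundles $\Phi_{\mathbb{E}} \colon T^{\ast} X \to T^{\ast} X'$ and then verify that it intertwines the first prolongations using the characterization of Proposition \ref{totalspace}.

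\smallskip

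\noindent\textbf{Construction of the map.} Since $\phi$ is étale, the canonical morphism of coherent sheaves $\phi^{\ast} \Omega^{1}_{X'} \to \Omega^{1}_X$ is an isomorphism, and dualizing yields an isomorphism of vector bundles $TX \xrightarrow{\sim} \phi^{\ast} TX'$ over $X$. Transposing and composing with the projection $\phi^{\ast} T^{\ast} X' \to T^{\ast} X'$ produces a morphism $\Phi_{\mathbb{E}} \colon T^{\ast} X \to T^{\ast} X'$ lying over $\phi$ and étale on the fibers. Since $\Phi_{\mathbb{E}}$ is $\mathcal{O}_X$-linear on fibers, it sends the zero-section to the zero-section, and thus projectivizes to a morphism of projective bundles $\Phi \colon \P(T^{\ast} X) \to \P(T^{\ast} X')$ making the square of underlying varieties commute.

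\smallskip

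\noindent\textbf{Reduction to the total space.} Because $T^{\ast} X \setminus \{0\text{-section}\}$ is dense in $T^{\ast} X$ and the vertical morphism $(T^{\ast} X \setminus \{0\text{-section}\}, v^{[1]}) \to (\P(T^{\ast} X), \P(v^{[1]}))$ of Proposition \ref{totalspace} is a morphism of algebraic vector fields and dominant, it suffices to show that $\Phi_{\mathbb{E}} \colon (T^{\ast} X, v^{[1]}) \to (T^{\ast} X', w^{[1]})$ is itself a morphism of algebraic vector fields. By Proposition \ref{totalspace}, the derivation $v^{[1]}$ on $\mathcal{O}_{T^{\ast} X}$ is the unique derivation extending $\delta_v$ from $\mathcal{O}_X$ and satisfying $v^{[1]}(\overline{\xi}) = \overline{[v,\xi]}$ for every local section $\xi$ of $\Theta_X$. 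Since these two classes of functions generate $\mathcal{O}_{T^{\ast} X}$ as a sheaf of $k$-algebras, it is enough to check compatibility on them.

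\smallskip

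\noindent\textbf{Compatibility check.} For $g \in \mathcal{O}_{X'}(U')$ we have $\Phi_{\mathbb{E}}^{\ast} g = g \circ \phi$ and
\[
v^{[1]}(\Phi_{\mathbb{E}}^{\ast} g) = \delta_v(g \circ \phi) = \delta_w(g) \circ \phi = \Phi_{\mathbb{E}}^{\ast}(w^{[1]} g),
\]
since $\phi$ is a morphism of algebraic vector fields. For $\xi' \in \Theta_{X'}(U')$, set $U = \phi^{-1}(U')$ and let $\phi^{\ast} \xi' \in \Theta_X(U)$ be its étale lift (as in Example \ref{example-lift}). By construction of $\Phi_{\mathbb{E}}$, for $(x,\omega) \in T^{\ast} X$ over $U$,
\[
\Phi_{\mathbb{E}}^{\ast} \overline{\xi'}\,(x,\omega) = \big((d\phi_x^{\ast})^{-1}(\omega)\big)\big(\xi'(\phi(x))\big) = \omega\big((d\phi_x)^{-1}(\xi'(\phi(x)))\big) = \omega\big((\phi^{\ast}\xi')(x)\big),
\]
so that $\Phi_{\mathbb{E}}^{\ast} \overline{\xi'} = \overline{\phi^{\ast} \xi'}$. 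Applying Proposition \ref{totalspace} on both sides gives
\[
v^{[1]}\big(\Phi_{\mathbb{E}}^{\ast}\overline{\xi'}\big) = \overline{[v, \phi^{\ast} \xi']}, \qquad \Phi_{\mathbb{E}}^{\ast}\big(w^{[1]}\overline{\xi'}\big) = \overline{\phi^{\ast}[w,\xi']},
\]
and these agree because, $\phi$ being étale and a morphism of vector fields, the lift by $\phi$ commutes with the Lie bracket: $[v, \phi^{\ast}\xi'] = \phi^{\ast}[w, \xi']$ (which follows from the fact that $\phi$-relatedness is preserved under Lie brackets, combined with uniqueness of the étale lift). The main point that needs care is precisely this commutation with the bracket, but in étale coordinates $(U; x_1,\ldots,x_n) = (\phi^{-1}(U'); y_1 \circ \phi, \ldots, y_n \circ \phi)$ of Example \ref{example-lift}, it reduces to the chain rule identity $\partial_{x_j}((a_k) \circ \phi) = (\partial_{y_j} a_k) \circ \phi$, matching the local expressions of $v^{[1]}$ and $w^{[1]}$ given by Remark \ref{local-expression}.
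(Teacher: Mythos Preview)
Your argument is correct and follows the same overall plan as the paper's proof: both reduce, via Proposition \ref{totalspace}, to checking that the induced map $\Phi_{\mathbb{E}}\colon (T^{\ast}X, v^{[1]}) \to (T^{\ast}X', w^{[1]})$ on total spaces is a morphism of algebraic vector fields. The only difference lies in how this last verification is carried out. The paper works entirely in \'etale coordinates: it pulls back a chart $(U; x'_1,\ldots,x'_n)$ on $X'$ to $(\phi^{-1}(U); x_1,\ldots,x_n)$ on $X$, writes $v^{[1]}$ out explicitly via Remark \ref{local-expression}, and checks line by line that the resulting expression is the lift of $w^{[1]}$. You instead use the characterizing property $v^{[1]}(\overline{\xi}) = \overline{[v,\xi]}$ from Proposition \ref{totalspace} directly and reduce the compatibility to the conceptual identity $[v,\phi^{\ast}\xi'] = \phi^{\ast}[w,\xi']$, which holds because $\phi$-relatedness is preserved under Lie brackets and \'etale lifts are unique. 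Your route is a bit more coordinate-free and makes the role of the Lie bracket transparent; the paper's route is a straightforward computation. Substantively they are the same proof, and indeed your closing remark about the chain rule in \'etale coordinates is exactly the content of the paper's calculation.
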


\begin{proof} 
By Proposition \ref{totalspace}, it is enough to show that 
$$ \xymatrix{ (T^\ast X,v^{[1]}) \ar[d] \ar[r] & (T^\ast X',w^{[1]}) \ar[d] \\ (X,v) \ar[r] & (X',w)}$$
is a commutative diagram of algebraic vector fields. Let $(U;x'_1,\ldots, x'_n)$ be an affine open set with étale coordinates of $X'$ and denote $y'_i = \partial_{x_i}'$ so that
$$(\pi^{-1}(U), x'_1,\ldots, x'_n,y'_1,\ldots, y'_n)$$ is an open set with étale coordinates on $T^\ast X'$. Set $V = \phi^{-1}(U)$, $x_i = \phi \circ x'_i$ and $y_i = \overline{\partial_{x_i}}$ so that 
$$(\pi^{-1}(V), x_1,\ldots, x_n,y_1,\ldots, y_n)$$
is an open set with étale coordinates of $T^\ast X$.  By construction, these étale coordinates relate through the morphism $\phi_1: T^\ast X \rightarrow T^\ast X'$  by 
$$x_i = x'_i \circ \phi_1 \text{ and } y_i = y'_i \circ \phi_1 $$
Now write $w = \sum_{i = 1}^n \lambda_i \cdot \partial_{x_i'}$ so that
$v = \sum_{i = 1}^n (\lambda_i \circ \phi) \cdot \partial_{x_i}$.
Using the Remark \ref{local-expression}, we compute that

\begin{eqnarray*} 
v^{[1]} & = & \sum_{i = 1}^n (\lambda_i \circ \phi) \cdot \partial_{x_i} - \sum_{i = 1}^n \Big( \sum_{j = 1}^n \partial_{x_i}(\lambda_j \circ \phi)y_j \Big) \cdot \partial_{y_i} \\ 
& = & \sum_{i = 1}^n (\lambda_i \circ \phi) \cdot \partial_{x_i} - \sum_{i = 1}^n \Big( \sum_{j = 1}^n (\partial_{x'_i}(\lambda_j) \circ \phi) y_j \Big) \cdot \partial_{y_i} \\
 & = & \sum_{i = 1}^n (\lambda_i \circ \phi_1) \partial_{x_i} - \sum_{i = 1}^n \Big( \sum_{j = 1}^n (\partial_{x'_i}(\lambda_j)y'_j)  \Big) \circ \phi_1 \cdot \partial_{y_i}
\end{eqnarray*}
where, on the last line, the $\lambda_i$ are seen as functions on $T^\ast X'$ using the embedding $\mathcal O_{X'}(X') \rightarrow \mathcal O_{T^\ast X'} (T^\ast X')$. Hence, $v^{[1]}$ is the lift of $w^{[1]}$ as required.
\end{proof}

\begin{proposition} \label{proposition-pull-back-push forward}
Let $\phi: X \dashrightarrow Y$ be a generically finite dominant rational morphism. We have pullback and pushforward  maps
$$ Z \mapsto \phi^\ast Z \text{ and } Z \mapsto \phi_\ast Z  $$ 
sending respectively a horizontal subvarieties of $\P(T^\ast Y)$ (resp. of $\P(T^\ast X)$) to horizontal subvarieties of $\P(T^\ast X)$ (resp. of $\P(T^\ast Y)$) which satisfy the following properties: 
\begin{itemize} 
\item[(i)] If $\mathcal D$ is a distribution on $Y$ then $Z(\phi^\ast \mathcal D) = \phi^\ast Z(\mathcal D)$.

\item[(ii)] For every horizontal subvarieties $Z$ of $\P(T^\ast X)$, $\phi^\ast \phi_\ast Z = Z$.
In particular, $\phi^\ast$ is injective and $\phi_\ast$ is surjective. 

\end{itemize} 
Moreover, if $\phi: (X,v) \dashrightarrow (Y,w)$ is a morphism of algebraic vector fields then 
\begin{itemize} 
\item[(iii)] the maps $Z \mapsto \phi_\ast Z$ and $Z \mapsto \phi^\ast Z$ send invariant horizontal subvarieties to invariant horizonal subvarieties. 
\end{itemize} 
\end{proposition}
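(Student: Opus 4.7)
The plan is to first reduce to the étale setting. Since $\phi : X \dashrightarrow Y$ is a generically finite dominant rational morphism between smooth irreducible varieties, there exist dense open sets $U \subset X$ and $V \subset Y$ such that $\phi$ restricts to a finite étale morphism $\phi_{\mid U} : U \to V$. By Lemma \ref{lemma-prolongation}, this lifts to a morphism $\phi_1 : \P(T^\ast U) \to \P(T^\ast V)$ which is itself étale, since it is a morphism of projective bundles inducing an isomorphism on each fiber. Moreover, if $\phi$ is a morphism of algebraic vector fields then $\phi_1$ is itself a morphism of algebraic vector fields intertwining the first prolongations.

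I would then define the two maps as follows. For $W \subset \P(T^\ast Y)$ horizontal, set $\phi^\ast W$ to be the Zariski closure in $\P(T^\ast X)$ of the closed subset $\phi_1^{-1}(W \cap p^{-1}(V)) \subset \P(T^\ast U)$. For $Z \subset \P(T^\ast X)$ horizontal, set $\phi_\ast Z$ to be the Zariski closure in $\P(T^\ast Y)$ of $\phi_1(Z \cap p^{-1}(U))$. Since $\phi_1$ is étale, both operations preserve equidimensionality and the property of dominating the base, so by Lemma \ref{lemma-horizontal-extension} both $\phi^\ast W$ and $\phi_\ast Z$ are horizontal subvarieties (note that $\phi^\ast W$ may fail to be irreducible when $\phi$ has degree $\geq 2$, which explains the footnote allowing reducible horizontal subvarieties).

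For (i), fix a distribution $\mathcal D$ on $Y$. The pullback $\phi^\ast \mathcal D$ on $X$ is characterized by the fact that on $\phi^{-1}(V \cap V(\mathcal D))$ its local generators are the lifts of the local generators of $\mathcal D$ via the étale morphism $\phi$. Together with the description $Z(\mathcal D)_{\mid V \cap V(\mathcal D)} = \P(N^\ast \mathcal D)$ from Proposition \ref{proposition-distributions-horizontal}(ii), this yields $\phi_1^{-1}(Z(\mathcal D)) = Z(\phi^\ast \mathcal D)$ over a dense open set, whence equality globally by Lemma \ref{lemma-horizontal-extension}. For (iii), the commutative diagram of Lemma \ref{lemma-prolongation} makes $\phi_1$ a morphism of algebraic vector fields from $(\P(T^\ast U), \P(v^{[1]}))$ to $(\P(T^\ast V), \P(w^{[1]}))$; preimages and scheme-theoretic images of invariant closed subvarieties under morphisms of algebraic vector fields are invariant, and invariance is preserved under Zariski closure and restriction by Remark \ref{remark-invariant-restriction}.

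The main obstacle is (ii). Note that as literally stated, $\phi^\ast \phi_\ast Z = Z$ would give that $\phi_\ast$ is injective and $\phi^\ast$ surjective, which is the opposite of what is claimed in the conclusion; the natural identity in this setting is rather $\phi_\ast \phi^\ast W = W$ for $W$ horizontal in $\P(T^\ast Y)$. I would prove this version by working on the étale locus: since $\phi_1$ is étale and in particular surjective on $p^{-1}(V)$, one has $\phi_1(\phi_1^{-1}(W \cap p^{-1}(V))) = W \cap p^{-1}(V)$, and taking Zariski closures on both sides gives $\phi_\ast \phi^\ast W = W$ via Lemma \ref{lemma-horizontal-extension}. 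From this identity the injectivity of $\phi^\ast$ is immediate (if $\phi^\ast W_1 = \phi^\ast W_2$, apply $\phi_\ast$ to both sides), and the surjectivity of $\phi_\ast$ is witnessed by $\phi^\ast$ itself. The equality $\phi^\ast \phi_\ast Z = Z$ does \emph{not} hold in general (when $\phi$ has degree $\geq 2$, $\phi^\ast \phi_\ast Z$ contains all Galois conjugates of $Z$), so I would interpret the stated assertion as a typographical inversion of the two maps.
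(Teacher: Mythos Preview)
Your approach matches the paper's: reduce via Lemma~\ref{lemma-horizontal-extension} to the locus where $\phi$ is (finite) étale, lift to an étale morphism $\phi_1$ of projectivized cotangent bundles, and define $\phi^\ast$, $\phi_\ast$ as preimage and image under $\phi_1$. Your treatments of (i) and (iii) are the same as the paper's (the paper phrases (i) as $N^\ast(\phi^\ast\mathcal D)=\phi^\ast(N^\ast\mathcal D)$, which is exactly your computation). One technical remark: the paper explicitly arranges $\phi$ to be étale \emph{and proper}, and then argues that $\phi_1$ is proper too, so that $\phi_1(Z)$ is already closed; your ``finite étale'' hypothesis gives the same conclusion, so the Zariski closure in your definition of $\phi_\ast$ over $V$ is harmless but redundant.

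Your diagnosis of (ii) is correct, and it is worth stating plainly: the identity $\phi^\ast\phi_\ast Z=Z$ as printed is false in general, and the paper's own Example~\ref{example-notirreducible} exhibits a counterexample (there $\phi^\ast\phi_\ast Z(\mathcal F_i)=Z(\mathcal F_1)\cup\cdots\cup Z(\mathcal F_n)\ne Z(\mathcal F_i)$). The intended statement is $\phi_\ast\phi^\ast W=W$ for $W$ horizontal in $\P(T^\ast Y)$, which is what you prove and from which injectivity of $\phi^\ast$ and surjectivity of $\phi_\ast$ follow as claimed. The only place the paper later invokes (ii) is in the proof of Lemma~\ref{finitetofinite correspondence}, and there what is actually used is the injectivity of $\phi^\ast$ together with the always-valid inclusion $Z\subset\phi^\ast\phi_\ast Z$; both are consequences of your corrected version, so nothing downstream is affected.
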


\begin{proof} 
By Lemma \ref{lemma-horizontal-extension}, it is enough to define the maps $Z \mapsto \phi_\ast Z$ and $Z \mapsto \phi^\ast Z$ after restricting $X$ and $Y$ to dense open sets. So we may assume that $\phi: X \rightarrow Y$
is étale and proper. Since the differential $d\phi$ is locally invertible at every point, we obtain a commutative diagram: 
$$ \xymatrix{ \P(T^\ast X) \ar[d]_{p_X} \ar[r]^{\phi_1} & \P(T^\ast Y) \ar[d]^{p_Y} \\ X \ar[r] & Y}$$
Since $\phi$ and $p_X$ are proper, the composition $p_Y \circ \phi_1$ is also proper. It then follows from the fact that $p_Y$ is separated that $\phi_1$ is a proper morphism. See \cite[II, Corollary 4.8]{Hartshorne}. Hence, the morphism $\phi_1$ is an étale cover too and we define 
$$\phi^\ast Z := \phi_1^{-1}(Z) \text{ and } \phi_\ast Z := \phi_1(Z).$$ 
This correspondence sends horizontal  subvarieties to horizontal  subvarieties: indeed, if $Z$ is horizontal in $\P(T^\ast Y)$ then the image under $\phi_1$ of an irreducible component $T$ of $\phi^\ast Z$ is an irreducible component of $Z$ and it follows that 
$\phi \circ p_X(T) = Y$. Since $X$ is irreducible, the only closed subvariety of $X$ projecting generically on $Y$ is $X$ so that $p_X(T) = X$. Conversely, if $Z$ is a  horizontal in $\P(T^\ast X)$, the irreducible components $H_Y$ of $\phi_\ast Z$ are the images under $\phi_1$ of the irreducible components $H_X$ of $Z$. It follows that 
$$p_Y(H_Y) = p_Y \circ \phi_1(H_Y) = \phi \circ p_X(H_X) = \phi(X) = Y.$$
We conclude that the maps
$Z \mapsto \phi^\ast Z  \text{ and } Z \mapsto \phi_\ast Z$
are well-defined. It remains to check that they satisfy the properties listed in the proposition: 

(i). Let $\mathcal D$ be a distribution on $Y$ of rank $r < n$. By Lemma \ref{lemma-horizontal-extension}, we may assume that $\phi$ is étale and proper and that $\mathcal D$ is nonsingular. Under these assumptions, $Z(\mathcal D) = \P(N^\ast (\mathcal D))$  and (i) follows from the equality: 
$$N^\ast (\phi^\ast \mathcal D) = \phi^\ast (N^\ast \mathcal D)$$
valid for the pullback $\phi^\ast \mathcal D$ of any distribution $\mathcal D$ by an étale morphism. 

(ii). follows from the the construction and Lemma \ref{lemma-horizontal-extension} and (iii) follows from Lemma \ref{lemma-prolongation}. This concludes the proof of the proposition.
\end{proof}

{\example \label{example-notirreducible} If $\phi: X \dashrightarrow Y$ and $Z$ is an irreducible horizontal subvariety of $\P(T^\ast Y)$ then $\phi^\ast Z$ might not to be irreducible. To obtain an example, consider $C$ a projective curve, a natural number $n \geq 2$ and set $X = C^n$ and $Y = S^n C$ and $\phi : X \rightarrow Y$ given by 
$$(x_1,\ldots, x_n) \mapsto \lbrace x_1,\ldots, x_n \rbrace$$
For every $i = 1,\ldots, n$, denote by ${\mathcal F_i}$ the foliation tangent to the projection $\pi_i: C^n \rightarrow C$ on the $i^{th}$ factor. Then for every $i \leq n$, 
$$\phi^\ast \phi_\ast Z(\mathcal F_i) = Z(\mathcal F_1) \cup \ldots \cup Z(\mathcal F_n)$$
is not irreducible but $\phi_\ast Z(\mathcal F_i)$
always is since $Z(\mathcal F_i)$ is irreducible. This example justifies our choices in the definition of a closed horizontal subvariety. }

\subsection{A minimality criterion based on the first projective prolongation} Fix $(X,v)$ an algebraic vector field of dimension $n \geq 2$ over some algebraically closed field of characteristic zero.

\begin{definition} \label{definition-canonical-invariant-hypersurface}
Assume that $(X,v)$ is not the trivial vector field on $X$. The {\em{canonical invariant horizontal hypersurface}} of $\P(T^\ast X)$ is the horizontal subvariety $ H(v):= Z(\mathcal F(v))$ associated to the rank one foliation $\mathcal F(v)$ tangent to the vector field $v$. By Proposition \ref{proposition-distributions-horizontal}, $H(v)$ is a $\mathbb{P}(v^{[1]})$-invariant horizontal hypersurface of $\P(T^\ast X)$. 
\end{definition} 

\begin{definition}  \label{definition-firstprolongationirreducible}
We say that {\em the first prolongation of $(X,v)$ is quasi-irreducible} if $(X,v)$ is not the trivial vector field on $X$ and if $H(v)$ is the unique proper horizontal subvariety of $\P(T^\ast X)$ which is $\P(v^{[1]})$-invariant. 
\end{definition}

\begin{remark} \label{remark-quasiminimal} Lemma \ref{lemma-horizontal-extension} ensures that if $U$ is a dense open set of $X$ then the first prolongation of $(X,v)$ is quasi-irreducible  if and only if the first prolongation of $(U,v_{\mid U})$ is. Hence ``having a quasi-minimal first prolongation'' is a property of the generic point of $(X,v)$.
\end{remark}

\begin{lemma} \label{finitetofinite correspondence}
Assume that $(X_1,v_1)$ and $(X_2,v_2)$ are algebraic vector fields in generically finite-to-finite correspondence. Then the first prolongation of $(X_1,v_1)$ is quasi-irreducible if and only if the first prolongation of $(X_2,v_2)$ is. 
\end{lemma}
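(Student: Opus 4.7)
The plan is to reduce, via the definition of a generically finite-to-finite correspondence as a roof $(X_1,v_1)\dashleftarrow(W,u)\dashrightarrow(X_2,v_2)$ with both legs generically finite dominant morphisms of algebraic vector fields, to proving the following single-morphism statement: if $\phi\colon(X,v)\dashrightarrow(Y,w)$ is a generically finite dominant rational morphism of algebraic vector fields with reduced singularities, then the first prolongation of $(X,v)$ is quasi-minimal if and only if the first prolongation of $(Y,w)$ is. Applying this twice along the roof proves the lemma.

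To handle the single-morphism statement, I first invoke Remark \ref{remark-quasiminimal} to replace $X$ and $Y$ by dense open subsets and assume $\phi\colon X\to Y$ is a finite étale morphism of algebraic vector fields. Then the proof of Proposition \ref{proposition-pull-back-push forward} produces a surjective étale morphism $\phi_1\colon\mathbb{P}(T^*X)\to\mathbb{P}(T^*Y)$ whose underlying maps $\phi^*Z=\phi_1^{-1}(Z)$ and $\phi_*Z=\phi_1(Z)$ on horizontal subvarieties satisfy $\phi^*\phi_*=\mathrm{id}$ by part (ii) of that proposition; since $\phi_1$ is surjective, the reverse composition $\phi_*\phi^*=\mathrm{id}$ also holds. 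Hence $\phi^*$ and $\phi_*$ are mutually inverse bijections between the horizontal subvarieties of $\mathbb{P}(T^*X)$ and those of $\mathbb{P}(T^*Y)$, and by part (iii) they restrict to mutually inverse bijections on proper invariant horizontal subvarieties.

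The main, though mild, remaining step is to verify that these bijections exchange the canonical invariant hypersurfaces, that is $\phi^*H(w)=H(v)$. Because $\phi$ is an étale morphism of algebraic vector fields, the lift of $w$ under $\phi$ is precisely $v$ (see Example \ref{example-lift}), so by the functoriality of the foliation-tangent-to-a-vector-field construction the foliation $\mathcal{F}(v)$ equals the pullback $\phi^*\mathcal{F}(w)$. Combining this with Proposition \ref{proposition-distributions-horizontal}(i) and Proposition \ref{proposition-pull-back-push forward}(i) gives
$$H(v)=Z(\mathcal{F}(v))=Z(\phi^*\mathcal{F}(w))=\phi^*Z(\mathcal{F}(w))=\phi^*H(w).$$
Consequently the bijection between proper invariant horizontal subvarieties on the two sides restricts to a bijection between those distinct from $H(v)$ and those distinct from $H(w)$. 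Either set is empty if and only if the other is, which is exactly the definition of quasi-minimality on each side.
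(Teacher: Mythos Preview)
Your reduction to a single generically finite dominant morphism and your verification that $\phi^*H(w)=H(v)$ are correct and match the paper. The gap is the claim that $\phi^*$ and $\phi_*$ are \emph{mutually inverse} bijections on horizontal subvarieties. Only one composition is the identity: surjectivity of $\phi_1$ gives $\phi_*\phi^*=\mathrm{id}$, but for a horizontal $Z\subset\mathbb{P}(T^*X)$ one has merely $Z\subset\phi^*\phi_*Z$. Example \ref{example-notirreducible} shows this inclusion can be strict (there $\phi_*Z(\mathcal{F}_1)=\cdots=\phi_*Z(\mathcal{F}_n)$, so $\phi_*$ is not injective and $\phi^*\phi_*Z(\mathcal{F}_1)\neq Z(\mathcal{F}_1)$). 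The displayed identity in Proposition \ref{proposition-pull-back-push forward}(ii) appears to be a typo for $\phi_*\phi^*Z=Z$, which is consistent with its ``In particular'' clause ($\phi^*$ injective, $\phi_*$ surjective) and with how the proposition is actually used in the paper's own proof of the lemma.

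Concretely, your argument cleanly establishes only one direction: if the first prolongation of $(X,v)$ is quasi-minimal and $Z$ is a proper invariant horizontal subvariety of $\mathbb{P}(T^*Y)$, then $\phi^*Z=H(v)=\phi^*H(w)$, and injectivity of $\phi^*$ gives $Z=H(w)$. For the converse you cannot invoke injectivity of $\phi_*$. The paper's fix is to take $Z$ an \emph{irreducible} proper invariant horizontal subvariety of $\mathbb{P}(T^*X)$ (this suffices by Remark \ref{remark-invariant-restriction}(iii)); then $\phi_*Z=H(w)$ by quasi-minimality of $(Y,w)$, so $Z$ is an irreducible component of $\phi^*\phi_*Z=\phi^*H(w)=H(v)$, and since $H(v)=Z(\mathcal{F}(v))$ is irreducible (Proposition \ref{proposition-distributions-horizontal}), one concludes $Z=H(v)$.
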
 

Recall that $(X_1,v_1)$ and $(X_2,v_2)$ are in generically finite-to-finite correspondence if there exists an irreducible closed invariant subvariety $Z$ of $(X_1\times X_2, v_1 \times v_2)$ such that the two projections
$\pi_{i \mid Z}: Z \rightarrow X_i $
are dominant and generically finite. 
\begin{proof} 
It suffices to show that the conclusion of the lemma holds if $\phi : (X_1,v_1) \dashrightarrow (X_2,v_2)$ is a generically finite dominant morphism of algebraic vector fields.

Assume first that the first prolongation of  $(X_1,v_1)$ is quasi-irreducible and consider $Z$ an invariant proper horizontal subvariety of $(\P(T^\ast X_2), \P(v^{[1]}_2))$. By Proposition \ref{proposition-pull-back-push forward}, $\phi^\ast Z$ is an invariant proper horizontal subvariety of $\P(T^\ast X_1)$ and hence $\phi^\ast Z = Z(\mathcal F(v')) = \phi^\ast Z(\mathcal F(v)).$
Since by Proposition \ref{proposition-pull-back-push forward}, $\phi^\ast$ is injective, we conclude that $Z$ is the canonical invariant hypersurface and hence that the first prolongation of $(X_2,v_2)$ is quasi-irreducible.

Conversely, assume that the first prolongation of $(X_2,v_2)$ is quasi-irreducible and consider $Z$ an invariant proper horizontal subvariety of  $(\P(T^\ast X_1),v_1^{[1]})$. By Proposition \ref{proposition-pull-back-push forward}, $\phi_\ast Z$ is an invariant proper horizontal subvariety of $\P(T^\ast X_2)$ and hence $\phi_\ast Z = Z(\mathcal F(v))$. 
It follows that $Z$ is an irreducible component of 
$\phi^\ast \phi_\ast Z = Z(\mathcal F(v'))$. Since $Z(\mathcal F(v'))$ is irreducible by Proposition \ref{proposition-distributions-horizontal}, we have that $Z  = Z(\mathcal F(v'))$ so that the first prolongation of $(X_1,v_1)$ is quasi-irreducible. 
 \end{proof} 
\begin{theorem}[Theorem D] \label{theorem-minimality}
Let $(X,v)$ be an algebraic vector field of dimension $\geq 2$.  Assume that
\begin{center} 
$(\ast)$: either $\mathrm{dim}(X) \geq 3$ or $\mathrm{dim}(X) = 2$ and $(X,v)$ does not admit a nontrivial rational integral. 
\end{center}
If the first prolongation of $X$ is quasi-irreducible then the generic type of $(X,v)$ is minimal. 
\end{theorem}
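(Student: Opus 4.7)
The plan is to argue by contradiction using the Jaoui--Moosa trichotomy (Fact~\ref{fact-Jaoui-Moosa}): if the generic type of $(X,v)$ is not minimal, then $(X,v)$ lies in case (i) or case (ii) of that trichotomy, and in each case I will produce a proper invariant horizontal subvariety of $(\P(T^\ast X),\P(v^{[1]}))$ distinct from the canonical invariant hypersurface $H(v)$, contradicting the quasi-minimality hypothesis. Lemma~\ref{finitetofinite correspondence} plays a background role throughout: since quasi-minimality is preserved under generically finite-to-finite correspondences, I am free to pass to any generically finite cover produced by the trichotomy and argue there.

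I would first dispose of case (ii), which is the cleaner of the two. The setup supplies a generically finite dominant rational morphism $\phi:(X,v)\dashrightarrow(A,v_A)$ with $A$ a simple abelian variety of dimension $\geq 2$ and $v_A$ translation-invariant. The key input is that the Lie algebra of $A$ is commutative, so every translation-invariant vector field on $A$ defines a foliation invariant under $v_A$. Since $\dim A\geq 2$, I can choose a translation-invariant $w_A$ not proportional to $v_A$; then $\mathcal F(w_A)$ is a rank one invariant foliation on $(A,v_A)$ distinct from $\mathcal F(v_A)$. Applying Proposition~\ref{proposition-pull-back-push forward}(i) and (iii), the subvariety $\phi^\ast Z(\mathcal F(w_A))=Z(\phi^\ast\mathcal F(w_A))$ is an invariant horizontal subvariety of $\P(T^\ast X)$; and by injectivity of $\mathcal D\mapsto Z(\mathcal D)$ (Proposition~\ref{proposition-distributions-horizontal}) together with the injectivity of $\phi^\ast$ (Proposition~\ref{proposition-pull-back-push forward}(ii)), it is distinct from $H(v)=Z(\mathcal F(v))=\phi^\ast Z(\mathcal F(v_A))$, contradicting quasi-minimality.

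Next I would handle case (i), where one has a generically finite cover $\phi:(X',v')\dashrightarrow(X,v)$ together with a proper fibration $\pi:(X',v')\to(Y,w)$ satisfying $0<\dim Y<n$. By Proposition~\ref{proposition rational factors to invariant foliations}(ii) the foliation $\mathcal F_\pi$ tangent to $\pi$ is invariant under $v'$, so $Z(\mathcal F_\pi)$ is a proper invariant horizontal subvariety of $\P(T^\ast X')$. Since $(X',v')$ inherits quasi-minimality, it must agree with $H(v')=Z(\mathcal F(v'))$, and injectivity of $\mathcal D\mapsto Z(\mathcal D)$ forces the equality of foliations $\mathcal F_\pi=\mathcal F(v')$. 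Two consequences follow: the rank of $\mathcal F_\pi$ is $1$, so $\dim Y=n-1$; and $v'$ is tangent to the fibres of $\pi$, so $d\pi(v')=0$ and $w=0$. If $n\geq 3$, then $\dim Y\geq 2$ and I would compose $\pi$ with any non-constant rational function $g:Y\dashrightarrow\P^1$ to obtain an invariant fibration $\sigma=g\circ\pi$ on $(X',v')$ with $\mathrm{rank}(\mathcal F_\sigma)=n-1>1$; the associated $Z(\mathcal F_\sigma)$ has relative dimension $0$, while $H(v')$ has relative dimension $n-2\geq 1$, so they are distinct --- the desired contradiction. If $n=2$, then $w=0$ exhibits $\pi$ as a non-constant rational first integral of $(X',v')$, and I would descend it to a rational first integral of $(X,v)$, contradicting hypothesis $(\ast)$.

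The main obstacle is the final descent step in case (i) for $n=2$. The idea is standard: if $f\in k(X')$ satisfies $v'(f)=0$ and $P(T)=T^d+a_{d-1}T^{d-1}+\cdots+a_0\in k(X)[T]$ is the minimal polynomial of $f$ over $k(X)$, then applying $\delta_{v'}$ to $P(f)=0$ and using $v'(f)=0$ yields $\sum_{i=0}^{d-1}v(a_i)f^i=0$; the minimality of $P$ forces $v(a_i)=0$ for every $i$, and at least one $a_i$ is non-constant (else $f$ would be algebraic over the algebraically closed field $k$, hence already constant). With this descent in hand, cases (i) and (ii) are both ruled out, so Fact~\ref{fact-Jaoui-Moosa} places $(X,v)$ in case (iii), meaning the generic type of $(X,v)$ is minimal, as required.
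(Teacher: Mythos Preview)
Your proof is correct and follows essentially the same approach as the paper's: both argue by contradiction via Fact~\ref{fact-Jaoui-Moosa}, handling case~(ii) by exploiting commuting translation-invariant vector fields on $A$ to produce a second invariant rank-one foliation, and case~(i) by forcing $\mathcal F_\pi=\mathcal F(v')$ from quasi-minimality and then composing with a non-constant rational function on $Y$ to reach a contradiction. The only substantive difference is that in the $n=2$ subcase of~(i) you explicitly carry out the descent of the rational first integral from $(X',v')$ to $(X,v)$ via the minimal-polynomial argument, whereas the paper's proof absorbs this step into the sentence ``by Lemma~\ref{finitetofinite correspondence} it is enough to prove the lemma when $(X,v)$ admits a proper rational factor'' (which tacitly uses that hypothesis~$(\ast)$ lifts to generically finite covers, precisely by the argument you wrote out).
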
 

Theorem \ref{theorem-minimality} will be deduced from Fact \ref{fact-Jaoui-Moosa} from the introduction. Recall also the terminology of algebraic factor of an algebraic vector field from Definition \ref{definition-noalgebraicfactors}.

\begin{lemma}[under the assumption $(\ast)$]\label{lemma-rational factors}
Assume that $(X,v)$ admits a nontrival algebraic factor. Then the first prolongation of $(X,v)$ is not quasi-irreducible. 
\end{lemma}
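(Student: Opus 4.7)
The plan is to exhibit, starting from a proper algebraic factor $\pi: (X,v) \dashrightarrow (Y,w)$ with $0 < \dim(Y) < \dim(X) =: n$, a proper invariant horizontal subvariety of $\P(T^\ast X)$ distinct from the canonical invariant hypersurface $H(v) = Z(\mathcal F(v))$; this directly negates quasi-minimality. The natural first candidate is $Z(\mathcal F_\pi)$, where $\mathcal F_\pi$ is the algebraically-integrable foliation tangent to $\pi$. It is invariant under $v$ by Proposition \ref{proposition rational factors to invariant foliations}(ii), and a proper horizontal subvariety of $\P(T^\ast X)$ by Proposition \ref{proposition-distributions-horizontal}(iii)--(iv), since its rank $r := n - \dim(Y)$ satisfies $1 \leq r \leq n-1$.

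Whenever $\mathcal F_\pi \neq \mathcal F(v)$, the injectivity of the assignment $\mathcal D \mapsto Z(\mathcal D)$ from Proposition \ref{proposition-distributions-horizontal} gives $Z(\mathcal F_\pi) \neq H(v)$ and we are done. This immediately disposes of the case $r \geq 2$ (where the ranks of $\mathcal F_\pi$ and $\mathcal F(v)$ already differ), together with the sub-case $r = 1$ for which $\mathcal F_\pi \neq \mathcal F(v)$.

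The main obstacle, and the only place where the hypothesis $(\ast)$ enters, is the remaining sub-case: $r = 1$ and $\mathcal F_\pi = \mathcal F(v)$. In this situation $v$ is tangent to the fibers of $\pi$, so $\pi$ is a rational first integral of $v$. If $\dim(X) = 2$, then $\dim(Y) = 1$ and $\pi$ is a nontrivial rational first integral, directly contradicting $(\ast)$. If instead $\dim(X) \geq 3$, then $\dim(Y) \geq 2$, and I would pick any non-constant rational function $g : Y \dashrightarrow \P^1$ and set $\sigma := g \circ \pi : X \dashrightarrow \P^1$. The fibers of $\sigma$ contain those of $\pi$, hence $\mathcal F_\sigma \supset \mathcal F_\pi = \mathcal F(v)$, and Proposition \ref{proposition rational factors to invariant foliations}(i) then ensures that $\mathcal F_\sigma$ is invariant under $v$. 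Because $\mathcal F_\sigma$ has rank $n-1 \geq 2 > 1$, the horizontal subvariety $Z(\mathcal F_\sigma)$ is a proper invariant horizontal subvariety of $\P(T^\ast X)$ distinct from $H(v)$, and quasi-minimality of the first prolongation fails in every case.
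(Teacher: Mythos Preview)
Your argument is essentially the paper's, reorganized as a direct construction rather than a proof by contradiction, but it has one genuine gap at the outset. In the paper's terminology, a \emph{proper algebraic factor} is the almost-fibration of Fact~\ref{fact-Jaoui-Moosa}(i): one first passes to a generically finite rational cover $(X',v') \dashrightarrow (X,v)$ and only then obtains a fibration $\pi\colon (X',v') \to (Y,w)$. You instead start from a rational map $\pi\colon (X,v)\dashrightarrow (Y,w)$ out of $X$ itself, which is what the paper calls a proper \emph{rational} factor. The paper bridges the two notions by invoking Lemma~\ref{finitetofinite correspondence} (quasi-minimality of the first prolongation is invariant under generically finite-to-finite correspondence), thereby reducing to the rational-factor case; you need to insert this step.

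Once that reduction is in place, your case analysis and the paper's are interchangeable. Where the paper argues by contradiction---deriving the auxiliary statement $(\ast\ast)$ that every rational factor must land in dimension $n-1$ with trivial vector field, and then violating it via $f\circ\phi\colon (X,v)\dashrightarrow (\P^1,0)$---you directly exhibit $Z(\mathcal F_\sigma)$ with $\sigma = g\circ\pi$ as a second invariant horizontal subvariety distinct from $H(v)$. These are two presentations of the same idea: the composite $g\circ\pi$ is exactly the paper's $f\circ\phi$, and the rank-$(n-1)$ foliation $\mathcal F_\sigma$ it produces is what makes $(\ast\ast)$ fail.
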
 

\begin{proof} 
By Lemma \ref{finitetofinite correspondence}, it is enough to prove the lemma when $(X,v)$ admits a nontrivial rational factor. For the sake of a contradiction, assume that the first prolongation of $(X,v)$ is quasi-irreducible and that $\phi: (X,v) \dashrightarrow (Y,w)$ is a nontrivial rational factor of $(X,v)$ satisfying  $0 < \mathrm{dim}(Y) < \mathrm{dim}(X)$. By Proposition \ref{proposition rational factors to invariant foliations}, the foliation $\mathcal F_\phi$, tangent to the fibers of $\phi$, is an invariant foliation and hence by Proposition \ref{proposition-distributions-horizontal}, 
$ Z(\mathcal F_\phi)$
is an invariant proper horizontal subvariety of $\P(T^\ast X)$. By quasi-irreducibility of the first prolongation of $(X,v)$, we conclude that:  
$$ Z(\mathcal F_\phi) = Z(\mathcal F(v)) \text{ and hence } \mathcal F_\phi = \mathcal F(v).$$ 
It follows that $\mathrm{dim}(Y) = \mathrm{dim}(X) - 1$ and that $v$ lies in the kernel of $d\phi$ by definition of the foliation $\mathcal F_\phi$ (see Example \ref{example-algebraically-integrable}) so that:
\begin{center}
$(\ast \ast)$: every rational factor of $(X,v)$ is of the form
$\phi: (X,v) \dashrightarrow (Y,0)$
where $\mathrm{dim}(Y) = \mathrm{dim}(X) - 1$.
\end{center}

Using $(\ast)$ and $(\ast \ast)$, we obtain a contradiction:

\begin{itemize} 
\item either $n = \mathrm{dim}(X) \geq 3$. Then $Y$ has dimension greater than two. Pick $f$ a nonconstant rational function on $Y$ so that 
$$f \circ \phi: (X,v) \dashrightarrow (Y,0) \dashrightarrow (\mathbb{P}^1,0)$$
is a rational factor of dimension one contradicting $(\ast \ast)$. 

\item or $n = \mathrm{dim}(X) = 2$ and $(X,v)$ does not have nontrivial rational integral. Pick $f$ a nonconstant rational function on $Y$ so that the rational factor 
$$ f\circ \phi: (X,v) \dashrightarrow (\mathbb{P}^1,0)$$
is a nontrivial rational integral of $(X,v)$ leading again to a contradiction.
\end{itemize} 
Since we obtained a contradiction in both cases, this concludes the proof of the lemma.
\end{proof} 

\begin{lemma} \label{lemma-commutaitve invariant}
Assume that $v$ is a translation-invariant vector field on a commutative group $G$ of dimension $> 1$. Then the first prolongation of $(X,v)$ is not quasi-irreducible. 
\end{lemma}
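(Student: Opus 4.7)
The plan is to exploit the commutativity of $G$ to construct many translation-invariant distributions, each of which is automatically invariant under $v$ and therefore produces an invariant horizontal subvariety of $\P(T^\ast G)$ via Proposition \ref{proposition-distributions-horizontal}. First I would recall that for any commutative algebraic group $G$, left translations trivialize the tangent bundle $TG \simeq G \times \mathfrak{g}$ where $\mathfrak{g} = T_e G$, and that under this trivialization the space of global translation-invariant vector fields is canonically identified with $\mathfrak{g}$. Since $G$ is commutative, its Lie algebra $\mathfrak{g}$ is abelian, so the Lie bracket of any two translation-invariant vector fields on $G$ vanishes.

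The key observation is the following: for any linear subspace $W \subset \mathfrak{g}$ of rank $r$ with $0 < r < n$, the sub-$\mathcal O_G$-module $\Theta_{\mathcal D_W} \subset \Theta_G$ generated by the translation-invariant vector fields corresponding to vectors of $W$ is locally free of rank $r$ (a basis of $W$ provides a global frame), hence defines a smooth distribution $\mathcal D_W$ on $G$. Since for every $\xi \in W$ we have $[v,\xi] = 0 \in \Theta_{\mathcal D_W}$, the distribution $\mathcal D_W$ is an invariant distribution of $(G,v)$. By Proposition \ref{proposition-distributions-horizontal}, the closed subvariety $Z(\mathcal D_W) \subset \P(T^\ast G)$ is a proper invariant horizontal subvariety, and distinct subspaces $W$ yield distinct such subvarieties by the injectivity part of that proposition.

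It then remains to note that the canonical invariant hypersurface $H(v) = Z(\mathcal F(v))$ coincides with $Z(\mathcal D_{\mathbb{C} v})$, since $\mathcal F(v)$ is the rank-one foliation tangent to the (nonzero, nowhere vanishing) translation-invariant vector field $v$. Because $\dim \mathfrak{g} = \dim G \geq 2$, we can pick any one-dimensional subspace $W \subset \mathfrak{g}$ with $W \neq \mathbb{C} v$. Then $Z(\mathcal D_W)$ is an invariant proper horizontal hypersurface of $\P(T^\ast G)$ different from $H(v)$, which contradicts quasi-minimality of the first prolongation of $(G,v)$.

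There is no real obstacle here: the only point requiring a little care is checking that the translation-invariant sub-sheaf $\Theta_{\mathcal D_W}$ genuinely defines a distribution (i.e. is saturated in $\Theta_G$), which is automatic because the chosen global frame shows it is locally a direct factor of $\Theta_G$. Everything else is a direct application of Proposition \ref{proposition-distributions-horizontal} (items (iv) and injectivity) together with the dimension count $\dim G > 1$.
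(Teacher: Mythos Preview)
Your proof is correct and follows essentially the same approach as the paper: the paper also uses the abelianity of $\mathrm{Lie}(G)$ to produce rank-one invariant foliations $\mathcal F_i$ generated by the elements of a basis $v = e_1,\ldots,e_n$ of $\mathrm{Lie}(G)$, applies Proposition \ref{proposition-distributions-horizontal} to obtain $n$ distinct invariant horizontal hypersurfaces $Z(\mathcal F_i)$, and concludes from $n\geq 2$. Your version is slightly more general in that you observe the same argument works for any proper nonzero subspace $W\subset\mathfrak g$, but the actual contradiction you extract (choosing a line $W\neq\mathbb C v$) is precisely the paper's argument.
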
 

\begin{proof} 
Since $G$ is a commutative group the Lie algebra $\mathrm{Lie}(G)$ formed by the invariant vector fields is commutative and we can consider $v = e_1,\ldots, e_n$ a basis of $\mathrm{Lie}(G)$ with $n > 1$. 
Consider the foliation $\mathcal F_i$ generated by $e_i$. For all $i \leq n$, $[v,e_i] = 0$ and hence the foliation $\mathcal F_i$ in an invariant foliation of $(X,v)$. By Proposition \ref{proposition-distributions-horizontal}, $Z(\mathcal F_1),\ldots, Z(\mathcal F_n)$ are $n$ distinct invariant horizontal hypersurfaces of $\P(T^\ast X)$. Since $n \geq 2$, the first prolongation of  $(X,v)$ is not quasi-irreducible. 
\end{proof}

\begin{proof}[Proof of Theorem \ref{theorem-minimality}]
Working by contraposition, assume that the generic type of $(X,v)$ is not minimal. By Fact \ref{fact-Jaoui-Moosa}, only two possibilities can occur: either $(X,v)$ admits a nontrivial algebraic factor, or there exists a generically finite-to-finite correspondence between $(X,v)$ and $(A,v_A)$ where $A$ is a simple Abelian variety of dimension $\geq 2$ and $v_A$ is a translation-invariant vector field on $A$. The two previous lemmas show that, in any case, the first prolongation of $(X,v)$ is not quasi-irreducible.
\end{proof}

The following example shows that the converse of Theorem \ref{theorem-minimality} does not hold: there exist an algebraic vector field of dimension three with a minimal generic type but such that the projective first prolongation $(X,v)$ is not quasi-minimal. We do not know a similar example in dimension two so it might be the case that in dimension two, the implication of Theorem \ref{theorem-minimality} is actually an equivalence.

\begin{example} \label{example-Schwarzian}
Consider a Schwarzian differential equation of the form 
$$ \Big(\frac{y''}{y'}\Big)' - \frac 1 2 \Big(\frac{y''}{y'}\Big)^2 + (y')^2 R(y) = 0$$  where $R(y) \in \mathbb{C}(y)$ with set of poles $S$. We can write this equation as an algebraic vector field 
$$v = v_1 =  y' \partial_y + y'' \partial_y' + \Big( - R(y) (y')^3 + \frac 3 2 \frac {(y'')^2}{y'} \Big)  \partial_{y''} $$
on the open set $U$ of the affine space with coordinates $(y,y',y'')$ defined by $y' \neq 0$ and $y \notin S$. By \cite{Blazquez-Casale}, there are two other vector fields $v_2 \text{ and } v_3$ on $U$ namely 
$$ v_2 = y' \partial_{y'} + 2y'' \partial_{y''} \text{ and } v_3 = 2y' \partial_{y''}$$ 
such that
\begin{itemize} 
\item the vector field $v_1,v_2,v_3$ define a trivilization of the tangent bundle on $U$ 
\item the vector fields $v_1,v_2,v_3$ generates a Lie algebra isomorphic to $\mathfrak{sl}_2(\mathbb{C})$ and more precisely
$$[v_1,v_2] = v_1, [v_1,v_3] =  2v_2 \text{ and } [v_2,v_3] = - v_3.$$ 

\end{itemize} 

It follows easily that the distribution $\mathcal G = \mathcal D(v_1,v_2)$ generated by the vector field $v_1$ and $v_2$ is a foliation. Since $\mathcal F(v_1) \subset \mathcal G$, this foliation is an invariant foliation of $(X,v)$ by Proposition \ref{proposition rational factors to invariant foliations}.  It follows that the canonical invariant hypersurface of  $\mathbb{P}(T^\ast U,v^{[1]})$ always has an invariant rational section and hence that the first prolongation of $(U,v)$ is never quasi-irreducible. On the other hand, \cite{Casale-Freitag-Nagloo} shows that in many cases, the generic type of this equation is (strongly) minimal. 
\end{example}

\subsection{A Galois-theoretic version of Theorem \ref{theorem-minimality}} We finally describe a Galois-theoretic version of Theorem \ref{theorem-minimality}. For that matter, we assume that $(X,v)$ is an algebraic vector field without nontrivial rational integral. This corresponds to the classical assumption in differential Galois theory that the differential field $(k(X),\delta_v)$ admits no new constants, that is, that its field of constants is equal to $k$.

\begin{definition} \label{definition-Galoisgroup}
Let $(\mathcal E, \nabla_v)$ be a coherent sheaf over $(X,v)$ endowed with a partial connection $\nabla_v$ along $v$. The generic stalk of  $(\mathcal E, \nabla_v)$ is a differential module over $(k(X),\delta_v)$ and we define the {\em Galois group of $(\mathcal E, \nabla_v)$} as 
$$\Gal((\mathcal E,\nabla_v)/(X,v)) = \Gal((\mathcal E,\nabla_v)_\eta/k(X)^{alg})$$
where $\eta$ is the generic point of $(X,v)$. The Picard-Vessiot differential Galois theory provides, together with the Galois group, a faithful representation: 
$$\rho_\mathcal E: \Gal((\mathcal E,\nabla_v)/(X,v)) \rightarrow \GL_n(k)$$ 
which is well-defined up to conjugation in $\GL_n(k)$. In model-theoretic terms, it can be described as follows. Denote by $K = (K,\delta)$ the differential closure of $(k(X),\delta_v)$ which has constant field $k$. The solution set $V$ in $K$ of the differential module $(\mathcal E,\nabla_v)_\eta$ is a $k$-vector space of dimension $n$ and we have a natural $k(X)^{alg}$-definable action  
$$(\mathrm{Aut}_{k(X)^{alg}}(V/k), V)$$ 
where $\mathrm{Aut}_{k(X)^{alg}}(V/k)$ denotes the binding group of $V$ relatively to the constants, that is, the group of permutations of $V$ induced by automorphims of $K$ fixing $k(X)^{alg}$ pointwise. The fundamental theorem linking binding groups to Galois groups \citep[Appendix B]{Hrushovski-Galois} expresses that every basis $\mathcal B = (v_1,\ldots, v_n)$ of $V$ defines a definable isomorphism 
\begin{equation} \label{equation-Galois-binding}  
(\mathrm{Aut}_{k(X)^{alg}}(V/k),V) \simeq_\mathcal B (\Gal((\mathcal E,\nabla_v)_\eta/k(X)^{alg}),k^n).
\end{equation} 
This construction produces the required representation of $\Gal((\mathcal E,\nabla_v)/(X,v))$ and different choices of the basis $\mathcal B$ give rise to representations on the right hand side conjugated by an element of $\GL_n(k)$.
\end{definition} 
Note also that Definition \ref{definition-Galoisgroup} implies that $\Gal((\mathcal E,\nabla_v)/(X,v))$ is always a {\em connected} linear algebraic group and that an alternative definition is: 
$$\Gal((\mathcal E,\nabla_v)/(X,v)) = \Gal^0((\mathcal E,\nabla_v)_\eta/k(X)).$$
See for example Lemma 2.1 of \cite{Jaoui-Moosa}.
Moreover, the Galois groups of $(\mathcal E,\nabla_v)$ and $(\mathcal E^\vee, \nabla_v^\vee)$ are naturally isomorphic. 

\begin{definition} 
Let $(X,v)$ be an algebraic vector field. The {\em Galois group of the first prolongation} of $(X,v)$ denoted $G_1(X,v)$ is the Galois group of the standard connection on $\Omega^1_X$ defined in Example \ref{standard}, that is,
$$G_1(X,v) := \Gal((\Omega^1_X, \nabla^{\mathrm{st}}_v)/(X,v)) $$
We also denote by $\rho_X: G_1(X,v) \rightarrow \GL_n(k)$
the representation defined by differential Galois theory (well-defined up to conjugation).
\end{definition}

\begin{notation} \label{example-standard}
For the rest of the section,  we fix the following notation: if $(X,v)$ is an algebraic vector field without nontrivial rational integral, 
\begin{itemize} 
\item we write $(M_X,\nabla_v^{\mathrm{st}})$ for the generic stalk of the partial connection $(\Omega^1_X, \nabla^{\mathrm{st}}_v)$. 
\end{itemize}
This differential module can be described explicitly as follows: fix $x_1,\ldots, x_n$ a transcendence basis of $k(X)$ and write
$$ v = \sum_{i= 1}^n a_i  \cdot \partial_{x_i} \text{ where } a_1,\ldots, a_n \in k(X).$$ 
It follows from Remark \ref{local-expression} that in the basis  $dx_1,\ldots,dx_n$, the differential module $(M_X,\nabla_v^{\mathrm{st}})$ can be written as the linear differential equation 
$$L(X,v): Y' = A Y \text{ where } A = (\partial_{x_i}{a_j})_{1 \leq i,j\leq n} \in \Mat_n(k(X)).$$ 

\begin{itemize} 
\item we write $K$ for the differential closure of $k(X)$, $(M_X,\nabla_v^{\mathrm{st}})^\sharp$ for the solution set of $(M_X,\nabla_v^{\mathrm{st}})$ in $K$ and $B_1(X,v) = \mathrm{Aut}_{k(X)^{alg}}((M_X,\nabla_v^{\mathrm{st}})^\sharp/k)$ for the binding group of this definable set over $k(X)^{alg}$ relatively to the constants. 
\end{itemize}
\end{notation} 

\begin{lemma} \label{lemma-Galois group}
Let $(X,v)$ be $(Y,w)$ be algebraic vector fields in generically finite-to-finite correspondence.  We have an isomorphism: 
$$(G_1(X,v),\rho_X) \simeq (G_1(Y,w),\rho_Y)$$
up to conjugation in$\GL_n(k)$.
\end{lemma}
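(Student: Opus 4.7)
The plan is to reduce the statement to the case of a single generically finite dominant morphism $\phi : (X,v) \dashrightarrow (Y,w)$, identify the two standard connections via étale pullback on generic stalks, and conclude by comparing the associated binding groups. Given a correspondence realized by an invariant irreducible subvariety $Z \subset X \times Y$, the restriction of $v \times w$ to $Z$ defines an algebraic vector field $(Z,u)$ equipped with generically finite dominant morphisms to both $(X,v)$ and $(Y,w)$. Since the desired conclusion is clearly transitive through such covers, it suffices to treat the case of a single generically finite dominant $\phi$.

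For such a $\phi$, I would first shrink $X$ and $Y$ so that $\phi$ becomes étale (which is possible by generic smoothness in characteristic zero, and harmless since $G_1$ depends only on the generic stalk of the connection). Lemma~\ref{lemma-prolongation} then identifies the total space $(T^\ast X, v^{[1]})$ of $(\Omega^1_X,\nabla_v^{st})$ with the étale pullback of the total space of $(\Omega^1_Y,\nabla_w^{st})$; equivalently, on generic stalks one gets a canonical isomorphism of $D$-modules
$$(M_X, \nabla_v^{st}) \simeq (M_Y, \nabla_w^{st}) \otimes_{k(Y)} k(X),$$
where $k(X)$ is a finite extension of $k(Y)$ and the derivation $\delta_v$ extends $\delta_w$.

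To compare Galois groups I would use the binding group description of $G_1$ provided by~(\ref{equation-Galois-binding}). Fix a differential closure $K$ of $(k(X),\delta_v)$; this is automatically a differential field extension of $(k(Y),\delta_w)$, and since $k(X)/k(Y)$ is a finite algebraic extension one has $k(X)^{\mathrm{alg}} = k(Y)^{\mathrm{alg}}$ inside $K$. The horizontal sections in $K$ of the two $D$-modules coincide as a single $n$-dimensional $k$-subspace $V$, and the two binding groups agree as definable subgroups of $\mathrm{Aut}(V/k)$:
$$B_1(X,v) = \mathrm{Aut}_{k(X)^{\mathrm{alg}}}(V/k) = \mathrm{Aut}_{k(Y)^{\mathrm{alg}}}(V/k) = B_1(Y,w).$$
Transporting this equality through~(\ref{equation-Galois-binding}) via the choice of a common $k$-basis of $V$ yields the desired isomorphism $(G_1(X,v),\rho_X) \simeq (G_1(Y,w),\rho_Y)$, the $Gl_n(k)$-conjugation ambiguity reflecting precisely the choice of basis.

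The main technical subtlety is the assertion that the solution spaces in $K$ literally coincide, rather than merely being isomorphic. This rests on the standard differential-Galois fact that a finite algebraic extension of differential fields does not enlarge the field of constants, so that no $k$-linearly independent horizontal sections are lost or created when one passes from the $D$-module over $k(Y)$ to its base change over $k(X)$.
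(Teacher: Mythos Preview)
Your proof is correct and follows essentially the same approach as the paper: both reduce to a single generically finite dominant morphism, invoke Lemma~\ref{lemma-prolongation} to obtain $(M_X,\nabla_v^{st}) \simeq (M_Y,\nabla_w^{st}) \otimes_{k(Y)} k(X)$, and then compare binding groups over the common algebraic closure $k(X)^{\mathrm{alg}} = k(Y)^{\mathrm{alg}}$ via~(\ref{equation-Galois-binding}). Your version is slightly more explicit about the reduction through $(Z,u)$ and about why the solution spaces in $K$ literally coincide, but the argument is the same.
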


\begin{proof} 
It follows easily from the proof of  Lemma \ref{lemma-prolongation} that if $(X,v) \dashrightarrow (Y,w)$ is a generically finite dominant rational morphism of algebraic vector fields then 
$$ (M_X, \nabla^{\mathrm{st}}_v) \simeq  (M_Y, \nabla^{\mathrm{st}}_w) \otimes_{k(Y)} k(X).$$ 
Hence, if $(X,v)$ and $(Y,w)$ are in generically finite-to-finite correspondence then 
$$ (M_X, \nabla^{\mathrm{st}}_v) \otimes_{k(X)} k(X)^{alg} \simeq (M_X, \nabla^{\mathrm{st}}_v) \otimes_{k(Y)} k(Y)^{alg}. $$
Hence, $(M_X, \nabla^{\mathrm{st}}_v)^\sharp$ and $(M_Y, \nabla^{\mathrm{st}}_w)^\sharp$ are gauge-conjugated over $k(X)^{alg}$ and have the same binding group. The lemma then follows from the identity (\ref{equation-Galois-binding}).
\end{proof} 

\begin{definition} 
Let $G$ be a group and $\rho: G \rightarrow \GL(V)$ be a representation of $G$ with $\mathrm{dim}(V) \geq 2$. We say that $\rho$ is {\em{quasi-irreducible}} if there exists a unique proper positive-dimensional vector subspace of $V$  which is $G$-invariant. 
\end{definition} 

\begin{example} \label{example-comparison}
To the canonical invariant hypersurface $H(v)$ of $\P(T^\ast X)$ corresponds a differential submodule $H_v$ of $(M_X,\nabla^{\mathrm{st}}_v)$ of dimension $n - 1$. It is the generic stalk of the conormal bundle $N^\ast \mathcal F$ of the foliation $\mathcal F$ tangent to $v$, that is, the $k(X)$-vector subspace of $M_X$ given by 
$H_v = \lbrace \omega \in M_X \mid \omega(v) = 0  \rbrace$.
Indeed, by (\ref{equation-dual}) and by anti-symmetry of the Lie bracket, we have that 
$$ \delta_v(\omega(v)) = \nabla^{\mathrm{st}}_v(\omega)(v) + \omega(\nabla_v^{\mathrm{st}}(v)) =  \nabla^{\mathrm{st}}_v(\omega)(v).$$ 
Hence, $\omega(v) = 0$ implies $\nabla_v^{\mathrm{st}}(\omega)(v) = 0$ so that $H_v$ is a differential submodule of $(M_X,\nabla^{\mathrm{st}}_v)$. It follows that, as soon as $\mathrm{dim}(X) \geq 2$, the representation $(G_1(X,v),\rho_X)$ is not irreducible. 

On the other hand, if the first prolongation of $(X,v)$ is quasi-irreducible in the sense of Definition \ref{definition-firstprolongationirreducible} then the representation $(G_1(X,v),\rho_X)$ is quasi-irreducible.  

Indeed, otherwise, the representation  $(B_1(X,v), (M_X, \nabla^{st}_v)^\sharp)$ is not quasi irreducible either and we can consider another $B_1(X,v)$-invariant subspace $W \neq H_v^\sharp$. $B_1(X,v)$-invariance means that $W = V^\sharp$ for some differential submodule $V$ of $(M_X,\nabla_v^{\mathrm{st}}) \otimes_{k(X)} k(X)^{alg}$. After replacing $(X,v)$ by a finite cover (which preserves quasi-irreducibility of the first prolongation by Lemma \ref{finitetofinite correspondence}), we may assume that $V$ is defined over $k(X)$. On some open set $U \subset X$, $V$ extends to a vector subbundle $E(V)$ of $T^\ast X$ such that $E(V)_\eta = V$ and which is $v^{[1]}$-invariant. This contradicts the quasi-irreducibility of the first prolongation of $(U,v_{\mid U})$ since $\P(E(V))$ is a $\P(v^{[1]})$-invariant horizontal subvariety distinct from the canonical hypersurface. 
\end{example}

\begin{theorem}[Galois-theoretic version of Theorem D]\label{theorem-C-Galois}
Let $(X,v)$ be an algebraic vector field of dimension $\geq 2$ without nontrivial rational integral. If the representation $(G_1(X,v), \rho_X)$
is quasi-irreducible, then the generic type of $(X,v)$ is minimal. 
\end{theorem}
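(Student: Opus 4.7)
The plan is to mimic the proof of Theorem~\ref{theorem-minimality}, replacing every ``invariant horizontal subvariety'' argument by a ``$D$-submodule of $(M_X, \nabla_v^{st})$'' argument. The key translation principle is that any proper $D$-submodule $N$ of $M_X$ (over $k(X)$) distinct from the canonical submodule $H_v = \{\omega \in M_X \mid \omega(v) = 0\}$ gives, via the identification (\ref{equation-Galois-binding}), two distinct proper positive-dimensional $G_1(X,v)$-invariant subspaces of $(M_X, \nabla_v^{st})^\sharp$, namely $N^\sharp$ and $H_v^\sharp$, thereby contradicting quasi-irreducibility of $\rho_X$. I argue by contrapositive: assuming the generic type of $(X,v)$ is not minimal, Fact~\ref{fact-Jaoui-Moosa} produces either (a) a proper almost fibration or (b) a generically finite dominant rational morphism $(X,v) \dashrightarrow (A, v_A)$ with $A$ a simple abelian variety of dimension $\geq 2$ and $v_A$ nonzero translation-invariant.

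In case (a), I first pass to the generically finite cover $(X',v')$ carrying the factor $\pi: (X',v') \to (Y,w)$. Lemma~\ref{lemma-Galois group} transfers quasi-irreducibility from $\rho_X$ to $\rho_{X'}$, while an elementary minimal-polynomial argument over $k(X)^{\delta_v}=k$ shows that the no-rational-integral hypothesis passes to $(X',v')$ as well. Renaming, I have a proper rational factor $\pi: (X,v) \dashrightarrow (Y,w)$ with $0 < \dim Y < \dim X$. If $w = 0$ then for any non-constant $f \in k(Y)$ one has $\delta_v(\pi^* f) = \pi^*(\delta_w f) = 0$, a nontrivial rational integral of $(X,v)$, contradiction; so $w \neq 0$. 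The foliation $\mathcal F_\pi$ tangent to $\pi$ is invariant under $v$ (Proposition~\ref{proposition rational factors to invariant foliations}), so its conormal sheaf is stable under $\nabla_v^{st}$, and its generic stalk $V_\pi = \pi^* M_Y \otimes_{k(Y)} k(X) \subset M_X$ is a $D$-submodule of dimension $\dim Y$. The identity $\pi^*\eta(v) = \eta(d\pi(v)) = \eta(w)$ combined with $w \neq 0$ forces $V_\pi \not\subseteq H_v$, so $V_\pi$ and $H_v$ are two distinct proper positive-dimensional $D$-submodules: contradiction.

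In case (b), Lemma~\ref{lemma-Galois group} reduces the problem to refuting quasi-irreducibility of $\rho_A$. Let $\omega_1,\ldots,\omega_n$ and $\xi_1,\ldots,\xi_n$ be dual bases of translation-invariant $1$-forms and vector fields on $A$. Then $\omega_i(\xi_j) \in k$ (invariant functions on $A$ are constants) and $[v_A, \xi_j] = 0$ since $\mathrm{Lie}(A)$ is commutative, so (\ref{equation-dual}) gives $\nabla_{v_A}^{st}(\omega_i)(\xi_j) = \delta_{v_A}(\omega_i(\xi_j)) - \omega_i([v_A,\xi_j]) = 0$. Hence $\nabla_{v_A}^{st} \equiv 0$ on $M_A$, $G_1(A,v_A)$ is trivial, and every subspace of $(M_A)^\sharp$ is invariant; since $\dim A \geq 2$ this contradicts quasi-irreducibility. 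The main subtle point in this plan is the clean unwinding of the dictionary between $D$-submodules of $M_X$ and $G_1(X,v)$-invariant subspaces of $(M_X)^\sharp$ via (\ref{equation-Galois-binding}), and the verification that two distinct $k(X)$-rational $D$-submodules remain distinct after the implicit base change to $k(X)^{alg}$ (which follows from faithfully flat descent); both points are, however, formalities given the setup recalled in Notation~\ref{example-standard}.
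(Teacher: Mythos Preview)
Your proof is correct and follows essentially the same contrapositive strategy as the paper: invoke Fact~\ref{fact-Jaoui-Moosa}, then in case (a) use Lemma~\ref{lemma-Galois group} to reduce to a genuine rational factor $\pi:(X,v)\dashrightarrow (Y,w)$, produce the $D$-submodule $V_\pi=\pi^\ast M_Y\otimes_{k(Y)}k(X)$ and compare it with $H_v$, and in case (b) show the Galois group is trivial. Your treatment is in fact slightly more careful than the paper's in one respect: you explicitly verify that the no-rational-integral hypothesis survives passage to the finite cover $(X',v')$ (via the minimal-polynomial argument), a point the paper glosses over when it writes ``with $w\neq 0$ (since $(X,v)$ has no nontrivial integral)'' after having tacitly replaced $(X,v)$ by its cover.
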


\begin{proof} 
Working by contraposition, assume that the generic type of $(X,v)$ is not minimal. By Fact \ref{fact-Jaoui-Moosa}, only two possibilities can occur: 
\begin{itemize} 
\item[(a)] either $(X,v)$ admits a nontrivial algebraic factor, 
\item[(b)] or there exists a generically finite-to-finite correspondence between $(X,v)$ and $(A,v_A)$ where $A$ is a simple Abelian variety of dimension $\geq 2$ and $v_A$ is a translation-invariant vector field on $A$.
\end{itemize}
We need to show that in both cases the representation of the Galois group is not quasi-irreducible.  In the first case, by Lemma \ref{lemma-Galois group}, we may assume that $(X,v)$ admits a rational factor $\phi: (X,v) \dashrightarrow (Y,w)$ satisfying $w \neq 0$ since $(X,v)$ has no nontrivial integral. By  \cite[Proposition 1.5.1]{jaoui-Confluentes}, the differential of $\phi$ defines a surjective morphism of differential modules over $(k(X), \delta_v)$ 
$$ d\phi: (M_X,\nabla^{\mathrm{st}}_v)^\vee \rightarrow (M_Y,\nabla^{\mathrm{st}}_w)^\vee \otimes_{k(Y)} k(X) $$
Taking duals, we obtain that 
$(M_Y,\nabla^{\mathrm{st}}_w)\otimes_{k(Y)} k(X)$ can be identified with a proper nontrivial differential submodule $V_Y$ of $(M_X,\nabla_v^{\mathrm{st}})$. This submodule $V_Y$ is the $k(X)$-vector space generated by the pullbacks of one-forms on $Y$. This implies that $V_Y \neq H_v$ since the equality $H_v  = V_Y$ implies that the pullback of every rational one-form on $Y$ vanishes on $v$ and hence that $w = 0$. Hence $(M_X, \nabla^{\mathrm{st}}_v)$ admits two proper differential submodules and it follows that the representation $(G_1(X,v), \rho_X)$ is not quasi-irreducible.

In the second case, the proof of Lemma \ref{lemma-commutaitve invariant} implies that $(M_X,\nabla_X)^\sharp$ admits a basis formed by the translation invariant vector fields defined over $k(X)^{alg}$ so that $G_1(X,v) = 0$. Since $\mathrm{dim}(M_X) \geq 2$, this contradicts quasi-irreducibility of $(G_1(X,v), \rho_X)$.  
\end{proof}

{\remark Let $(X,v)$ be an algebraic vector field of dimension $\geq 2$. As shown in Example \ref{example-comparison}, if the first projective prolongation is quasi-irreducible then the representation $(G_1(X,v), \rho_X)$ is also quasi-irreducible. Hence, under the assumption that $(X,v)$ does not admit nontrivial rational integral, Theorem \ref{theorem-C-Galois} implies Theorem \ref{theorem-minimality}.}

%
%

\section{Controlling the invariant horizontal subvarieties  using a singularity} 
\textit{In this section, we assume that $k = \mathbb{C}$ is the field of complex numbers in order to use analytic techniques.} One can still apply the results of this section to algebraic vector fields defined over any algebraically closed field $k$ of characteristic zero since by compactness and saturation of $\mathbb{C}$, one may assume that $k$ is a countable subfield of $\mathbb{C}$.


%

\subsection{Linear connection along a smooth foliation} Let $\pi: E \rightarrow X$ be a vector bundle of rank $p$ and let $\mathcal F$ be a {\em smooth} foliation on $X$. We denote by $\mathcal E$ the sheaf of sections of $E$.

\begin{definition} \label{definition-connectionalongafoliation}
A {\em linear connection $\nabla$} on $E$ along $\mathcal F$ is a biadditive morphism of sheaves  
$\nabla : \Theta_\mathcal F \times \mathcal E \rightarrow \mathcal E$
denoted $$(v,\sigma) \mapsto \nabla_v \sigma$$ such that: 
\begin{itemize} 
\item[(a)] $v \mapsto \nabla_v(\sigma)$ is $\mathcal O_X$-linear: for all local sections $v \in \Theta_{\mathcal F}(U), \sigma \in \mathcal E(U)$ and $f \in \mathcal O_X(U)$, 
$\nabla_{fv}(\sigma) = f \cdot \nabla_v(\sigma)$.
\item[(b)] $\sigma \mapsto \nabla_v(\sigma)$ satisfies the Leibniz rule: for all local sections $v \in \Theta_{\mathcal F}(U), \sigma \in \mathcal E(U)$ and $f \in \mathcal O_X(U)$, 
$\nabla_v(f \cdot \sigma) = \delta_v(f) \cdot \sigma + f \cdot \nabla_v(\sigma)$.
\end{itemize} 
Note that (b) means that for every local vector field $v \in \Theta_X(U)$, $\sigma \mapsto \nabla_v \sigma$ is a partial connection on $\mathcal E_{\mid U}$ along the vector field $v$ in the sense of Definition \ref{definition-partialconnectionalongavectorfield}. 
\end{definition}

\begin{example}[Bott's partial connection] \label{example-Bott}
Let $\mathcal F$ be a smooth foliation on $X$. Consider the exact sequence of vector bundles on $X$: 
$$ 0 \rightarrow T\mathcal F \rightarrow TX \overset{\rho}{\rightarrow} N\mathcal F \rightarrow 0$$
and denote by $\mathcal N_\mathcal F$ the sheaf of sections of $N\mathcal F$. The map $\nabla : \Theta_\mathcal F \times \mathcal N_\mathcal F \rightarrow \mathcal N_\mathcal F$ defined by
 $\nabla_w(\theta) =  \rho([w,\hat{\theta}])$
where $\hat{\theta}$ is any (local) lift of $\theta$ to $\Theta_X$ defines a partial connection on the normal bundle $N\mathcal F$ of $\mathcal F$ along the foliation $\mathcal F$ called \textit{Bott's partial connection}. See \cite[Section 4]{Pereira-JEMS}.
%
%
\end{example}

{\remark If $\nabla$ is a partial connection on a vector bundle $E$ along a foliation $\mathcal F$ then there is a unique dual connection denoted $\nabla^\ast$ on the dual $E^\ast$ of $E$ along $\mathcal F$ such that for every local sections $v \in \Theta_\mathcal F(U)$, $\omega \in \mathcal E^\vee(U)$ and $\sigma \in \mathcal E(U)$, we have:
$$ \delta_v(\omega(\sigma)) = \nabla^\ast_v(\omega)(\sigma) + \omega (\nabla_v(\sigma)).$$}

\begin{construction} 
Let $\pi: E \rightarrow X$ be a vector bundle over $X$ endowed with a partial connection $\nabla$ along a foliation $\mathcal F$ on $X$.  We define the $\mathcal O_X$-linear {\em{lifting operator of the partial connection $\nabla$}}
$$\mathcal L_\nabla : \Theta_\mathcal F \rightarrow \pi_\ast \Theta_E$$ as follows: if $v \in \Theta_\mathcal F$, then $\nabla_v$ is a partial connection along the vector field $v$ and we set 
$$\mathcal L_\nabla (v) := \mathbb{E}(\nabla_v) \in \Theta_E(\pi^{-1}(U))$$
where $\mathbb{E}(\nabla_v)$ is the vector field given by Proposition \ref{totalspace}.  The fact that $\mathcal L_\nabla$ is $\mathcal O_X$-linear follows from Definition \ref{definition-connectionalongafoliation} (a). Indeed, given $f \in \mathcal O_X(U)$ and $v,w \in \Theta_\mathcal F(U)$, applying (a) to the dual connection $\nabla^\ast$, we get in the notation of Proposition \ref{totalspace} that for every local section $\sigma$ of $E^\ast$,
$$\mathbb{E}(\nabla_{fv + w})(\overline{\sigma}) =  \overline{\nabla_{fv + w}^\ast (\sigma)} = \overline{f \nabla_v(\sigma) + \nabla_w(\sigma)} =  (f \mathbb{E}(\nabla_{v}) + \mathbb{E}(\nabla_{w})) (\overline{\sigma}).$$
Since $\mathbb{E}(\nabla_v)$ is characterized in Proposition \ref{totalspace} by the property that it projects on $v$ and that $\mathbb{E}(\nabla_v)(\overline{\sigma}) = \overline{\nabla_v^\ast (\sigma)}$ for every local section $\sigma$ of $E^\ast$, the vector field $(f \mathbb{E}(\nabla_{v}) + \mathbb{E}(\nabla_{w}))$ is the lift of $fv + w$. This shows that the lifting operator $\mathcal L_\nabla$ is $\mathcal O_X$-linear.

\end{construction} 

The lifting operator $v \mapsto v^{[1]}$ given by the first prolongation defined in Section 3.2 is not $\mathcal O_X$-linear (see Remark \ref{local-expression}) so is not of the form $\mathcal L_\nabla$. This obstruction is lifted after passing to the normal bundle of any foliation containing $v$.

\begin{proposition} \label{proposition-comparaison-Bott}
Let $\mathcal F$ be a smooth foliation on $X$. 
\begin{itemize} 
\item[(i)] For every local section $v$ of $\Theta_\mathcal F$, the conormal bundle $N^\ast \mathcal F$ is a closed invariant subvariety for the first prolongation $v^{[1]}$ of $v$.
\item[(ii)] The lifting operator
 $v \mapsto v^{[1]}_{\mid N^\ast \mathcal F}$
coincides with the lifting operator defined by the dual of Bott's partial connection on $N^\ast \mathcal F$  (in particular, it is $\mathcal O_X$-linear).
\end{itemize} 
\end{proposition}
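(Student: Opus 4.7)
My plan is to deduce both statements from the uniqueness part of Proposition \ref{totalspace}, which characterizes $v^{[1]}$ as the unique vector field on $T^\ast X$ projecting to $v$ and satisfying $v^{[1]}(\overline{\xi}) = \overline{[v,\xi]}$ for every local section $\xi \in \Theta_X(U)$. The guiding observation is that once $v$ is tangent to $\mathcal{F}$, involutivity of $\mathcal{F}$ implies that the dual standard connection $\xi \mapsto [v,\xi]$ on $\Theta_X$ preserves $\Theta_\mathcal{F}$ and thus descends via $\rho : \Theta_X \to \mathcal{N}_\mathcal{F}$ to Bott's connection of Example \ref{example-Bott}, which will force the comparison in (ii).

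For part (i), I will repeat the argument of (iv) in Proposition \ref{proposition-distributions-horizontal}, at the level of $T^\ast X$ instead of its projectivization. Working locally on an affine open $U$ equipped with étale coordinates and with a basis $\xi_1, \ldots, \xi_r$ of the free $\mathcal{O}_X(U)$-module $\Theta_\mathcal{F}(U)$, the subvariety $N^\ast\mathcal{F}_{|U} \subset T^\ast U$ is cut out by the ideal $\mathcal{I}(U) \subset \mathcal{O}_{T^\ast X}(\pi^{-1}(U))$ generated by the fibrewise linear functions $\overline{\xi_1}, \ldots, \overline{\xi_r}$. Since $\mathcal{F}$ is involutive and $v \in \Theta_\mathcal{F}(U)$, one has $[v,\xi_i] = \sum_j \lambda_{i,j}\xi_j$ for some $\lambda_{i,j} \in \mathcal{O}_X(U)$, and Proposition \ref{totalspace} yields
$$v^{[1]}(\overline{\xi_i}) \;=\; \overline{[v,\xi_i]} \;=\; \sum_j \lambda_{i,j}\,\overline{\xi_j} \;\in\; \mathcal{I}(U).$$
Hence $\mathcal{I}(U)$ is stable under the derivation defined by $v^{[1]}$, and $N^\ast\mathcal{F}$ is a closed invariant subvariety of $(T^\ast X, v^{[1]})$.

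For part (ii), write $\nabla$ for Bott's partial connection on $\mathcal{N}_\mathcal{F}$ and $\nabla^\vee$ for its dual on the sheaf of sections of $N^\ast\mathcal{F}$. By (i), $v^{[1]}_{|N^\ast\mathcal{F}}$ is a vector field on $N^\ast\mathcal{F}$ projecting to $v$; so is $\mathbb{E}(\nabla^\vee_v)$. By the uniqueness in Proposition \ref{totalspace}, it suffices to check that for every local section $\theta \in \mathcal{N}_\mathcal{F}(U)$,
$$v^{[1]}_{|N^\ast\mathcal{F}}(\overline{\theta}) \;=\; \overline{\nabla_v(\theta)}.$$
For this, pick a local lift $\hat\theta \in \Theta_X(U)$ of $\theta$; the function $\overline{\hat\theta}$ on $T^\ast X$ restricts to $\overline{\theta}$ on $N^\ast\mathcal{F}$, the restriction being independent of the chosen lift because any two lifts differ by an element of $\Theta_\mathcal{F}(U)$ whose image under $\overline{\cdot}$ lies in $\mathcal{I}(U)$ (this is precisely the well-definedness of Bott's connection recalled in Example \ref{example-Bott}). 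The characterization of $v^{[1]}$ then gives $v^{[1]}(\overline{\hat\theta}) = \overline{[v,\hat\theta]}$; restricting to $N^\ast\mathcal{F}$ and using $\nabla_v(\theta) = \rho([v,\hat\theta])$ yields the required identity. The $\mathcal{O}_X$-linearity of the operator $v \mapsto v^{[1]}_{|N^\ast\mathcal{F}}$ is then automatic from that of $\mathcal{L}_{\nabla^\vee}$, which is axiom (a) of Definition \ref{definition-connectionalongafoliation}.

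There is no serious obstacle here: the whole content is an unwrapping of the local description of the first prolongation and a matching with the definition of Bott's connection. The only point requiring a little care is the independence of $\overline{\hat\theta}_{|N^\ast\mathcal{F}}$ from the choice of lift, which is precisely the observation that makes Bott's connection well-defined in Example \ref{example-Bott}.
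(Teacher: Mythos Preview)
Your proof is correct and follows essentially the same route as the paper: for (i) you reproduce the computation that the generating functions $\overline{\xi_i}$ of the ideal of $N^\ast\mathcal F$ are sent into that ideal by $v^{[1]}$ (the paper obtains this by citing Proposition~\ref{proposition-distributions-horizontal}(iv), whose proof is exactly this computation), and for (ii) you verify the characterizing identity of Proposition~\ref{totalspace} via $\overline{\hat\theta}_{|N^\ast\mathcal F} = \overline{\rho(\hat\theta)}$, which is precisely the key observation $\overline{w}_{|N} = \overline{\rho(w)}$ used in the paper's proof. No substantive difference.
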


\begin{proof}
(i). Note that since $v$ is a local section of $\mathcal F$ and $\mathcal F$ is a foliation, we have $[v, \Theta_\mathcal F] \subset \Theta_\mathcal F$. Hence, $\mathcal F$ is an invariant foliation of $(X,v)$ and hence (i) follows from Proposition \ref{proposition-distributions-horizontal}.  (ii) follows from the construction of the Bott's connection: working locally, we can assume that $X$ is an affine open set endowed with étale coordinates $x_1,\ldots, x_n$ so that 
$$ \mathcal O_{T^\ast X}(T^\ast X) = \mathcal O_X(X)[y_1,\ldots, y_n]$$
The  closed subvariety $N = N^\ast \mathcal F$ is defined by linear equations in $y_1,\ldots, y_n$ and the restriction morphism $ -_{\mid N}: \mathcal O_X(X)[y_1,\ldots, y_n] \rightarrow \mathcal O_N(N)$
satisfies for every section $w$ of $\Theta_X$:
$$ \overline{w}_{\mid N} = \overline{\rho(w)}$$
where $\overline{w}$ is the function induced by $w$ on $T^\ast X$, $\rho: \Theta_X \rightarrow \mathcal N_\mathcal F$ is the projection and $\overline{\rho(w)}$ is the function induced by $\rho(w)$ on $N^\ast \mathcal F$. By Proposition \ref{totalspace} and Example \ref{example-Bott}, we conclude that for every $v \in \mathcal F(X)$ and every $w \in \Theta_X(X)$, we have 
\begin{eqnarray*} v^{[1]}_{\mid N}(\overline{\rho(w)}) & = & v^{[1]}_{\mid N}(\overline{w}_{\mid N}) = (v^{[1]}(\overline{w}))_{\mid N} \text{ (by restriction to a closed subvariety) } \\ & = & \overline{[v,w]}_{\mid N} \text{ (by Proposition \ref{totalspace})} \\ & = & \rho(\overline{[v,w]}) = \nabla^{\mathrm{Bott}}_v(\overline{\rho(w)}) \text{
 (by definition of Bott's partial connection)} \end{eqnarray*}
It follows that $(N^\ast \mathcal F, v^{[1]}_{\mid N}) \rightarrow (X,v)$ is a morphism of algebraic vector field which satisfies the condition of Proposition \ref{totalspace} for the partial connection $\nabla^{\mathrm{Bott}}_v$ and hence that $v^{[1]}_{\mid N} = \mathcal L_{\nabla^{\mathrm{Bott}}}(v)$.
This concludes the proof of the proposition. 
\end{proof}

\subsection{Horizontal subvarieties of the canonical invariant hypersurface}

\begin{lemma} \label{lemma-connection-distribution}
Let $\pi: E \rightarrow X$ be a vector bundle over $X$ endowed with a partial connection $\nabla$ along a smooth foliation $\mathcal F$ on $X$. There is a unique smooth distribution $\mathcal H = \mathcal H(\nabla)$ on $E$ of the same rank of $\mathcal F$ such that 
$$ \mathcal L_\nabla : \Theta_\mathcal F \rightarrow \pi_\ast \Theta_E$$
takes values in  $\pi_\ast \mathcal H$. Moreover 
$$d\pi_{\mid \mathcal H} : T\mathcal H \rightarrow T\mathcal F \times_{X} E$$
is an isomorphism of vector bundles over $E$.
\end{lemma}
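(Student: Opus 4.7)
The plan is to define $\mathcal H$ locally as the $\mathcal O_E$-submodule of $\Theta_E$ generated by the horizontal lifts $\mathcal L_\nabla(v_i)$ of a local basis $v_1, \ldots, v_r$ of $\Theta_\mathcal F$, and to exploit the $\mathcal O_X$-linearity of $\mathcal L_\nabla$ (automatic from its construction using property (a) of Definition \ref{definition-connectionalongafoliation}) in order to see that this definition is independent of the chosen basis and hence glues to a globally defined smooth subsheaf of $\Theta_E$.

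More precisely, I will cover $X$ by affine open sets $U$ on which $\Theta_\mathcal F$ admits a basis $v_1, \ldots, v_r$ --- such a cover exists because $\mathcal F$ is smooth --- and set
$$\Theta_\mathcal H(\pi^{-1}(U)) \; := \; \sum_{i=1}^r \mathcal O_E(\pi^{-1}(U)) \cdot \mathcal L_\nabla(v_i) \; \subset \; \Theta_E(\pi^{-1}(U)).$$
If $v'_1, \ldots, v'_r$ is another basis with $v'_i = \sum_j a_{ij} v_j$ and $(a_{ij}) \in \mathrm{GL}_r(\mathcal O_X(U))$, then $\mathcal O_X$-linearity of $\mathcal L_\nabla$ gives $\mathcal L_\nabla(v'_i) = \sum_j (a_{ij} \circ \pi) \cdot \mathcal L_\nabla(v_j)$, which is an $\mathcal O_E$-linear change of basis, so the submodule above is intrinsic to $U$ and the local pieces glue to a subsheaf $\Theta_\mathcal H \subset \Theta_E$. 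The key input, coming from the construction of $\mathcal L_\nabla$ via Proposition \ref{totalspace}, is that $\mathcal L_\nabla(v_i)$ projects through $d\pi$ onto $v_i \circ \pi$. This observation simultaneously delivers two conclusions: the $\mathcal L_\nabla(v_i)$ are linearly independent at every point of $\pi^{-1}(U)$, so they form a local frame that exhibits $\mathcal H$ as a smooth distribution of rank $r$; and the induced map $d\pi_{\mid \mathcal H}: T\mathcal H_e \to (\pi^\ast T\mathcal F)_e$ sends this frame to the pulled-back frame $v_i(\pi(e))$ of $(\pi^\ast T\mathcal F)_e$, so it is a fiberwise linear isomorphism and hence an isomorphism of vector bundles over $E$.

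For uniqueness, I would argue that any smooth distribution $\mathcal H'$ of rank $r$ on $E$ whose tangent sheaf contains $\mathcal L_\nabla(\Theta_\mathcal F)$ must contain $\Theta_\mathcal H$ locally by $\mathcal O_E$-saturation; since both sides are locally free of rank $r$ inside the torsion-free sheaf $\Theta_E$, the quotient $\Theta_{\mathcal H'}/\Theta_\mathcal H$ embeds into $\Theta_E/\Theta_\mathcal H$ (which is locally free because $\mathcal H$ is a subbundle) while simultaneously being generically zero, forcing it to vanish, so that $\mathcal H = \mathcal H'$. The main point to keep track of --- not really an obstacle so much as the conceptual content of the statement --- is the interplay between the two levels of linearity: the source of $\mathcal L_\nabla$ is an $\mathcal O_X$-module while a distribution on $E$ must be $\mathcal O_E$-linear, and the saturation appearing in the local definition of $\Theta_\mathcal H$ is precisely what bridges these two levels.
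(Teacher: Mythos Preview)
Your proposal is correct and follows essentially the same approach as the paper: both define $\mathcal H$ locally as the $\mathcal O_E$-span of the lifts $\mathcal L_\nabla(v_i)$ of a frame for $\Theta_\mathcal F$, invoke $\mathcal O_X$-linearity of $\mathcal L_\nabla$ to make the construction basis-independent and hence globally well-defined, and use the projection property $d\pi(\mathcal L_\nabla(v_i)) = v_i \circ \pi$ from Proposition~\ref{totalspace} both to see that the lifts are pointwise independent (so $\mathcal H$ is smooth of rank $r$) and to conclude the isomorphism of the moreover part. The only cosmetic difference is that the paper uses the uniqueness statement itself to glue the local pieces, whereas you verify basis-independence directly; your uniqueness argument via the torsion-free quotient is slightly more elaborate than the paper's one-line ``same rank, one contained in the other,'' but amounts to the same thing.
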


The distribution $\mathcal H(\nabla)$ is often called the horizontal distribution associated to the partial connection $\nabla$. We will refrain from using this terminology because of the conflict with the notion of ``horizontal subvarieties'' of $\P(T^\ast X)$ defined previously.  
\begin{proof}
Assume that the smooth foliation $\mathcal F$ has rank $r$. 
On an affine open set $U$ of $X$, $\mathcal F$ is generated by vector fields $v_1,\ldots, v_r \in \Theta_\mathcal F(U)$ and set $\mathcal H(\nabla)_{\mid U}$ to be the distribution of rank $r$ on $E_{\mid U}$ generated by 
$$w_i = \mathcal L_\nabla v_i, i = 1,\ldots, r.$$
Any distribution $\mathcal H$ satisfying the conclusion of the lemma must satisfy $\mathcal H(\nabla)_{\mid U} \subset \mathcal H_{\mid U}$ since both are distributions of rank $r$, they must be equal. Moreover, by $\mathcal O_X$-linearity of the lifting operator $\mathcal L_\nabla$, for every local vector field $w$ on $U$, $\mathcal L_\nabla(w)$ is a local section of $\pi_\ast \mathcal H(\nabla)_{\mid U}$ so that $\mathcal H(\nabla)_{\mid U}$ satisfies on $U$ the conclusion of the lemma. Finally, by uniqueness, the $\mathcal H(\nabla)_{\mid U}$ glue together to form a global smooth distribution on $X$. 

For the moreover part, it is enough to see that the morphism $d\phi_{\mid \mathcal H(\nabla)}$ is surjective since both are vector bundles of rank $r$ on $E$. This follows from the fact that for every $i$ 
$$\pi: (E,w_i = \mathcal L_\nabla(v_i)) \rightarrow (X,v_i) $$
is a morphism of algebraic vector fields and that the vectors $v_1(x),\ldots, v_r(x)$ generates $T\mathcal F_{x}$ for every point $x \in U$.
\end{proof}

\begin{definition} 
Let $\mathcal F$ be a smooth foliation on $X$. The foliation $\mathcal H(\nabla^{\mathrm{Bott}})$ on $N^\ast \mathcal F$ (of the same rank as $\mathcal F$) associated to the Bott partial connection is called {\em the first prolongation of the foliation $\mathcal F$} and denoted $\mathcal F^{[1]}$.
\end{definition}

\begin{proposition}\label{proposition-translation-Pereira}
Let $(X,v)$ be an algebraic vector field with reduced singularities. Denote by $U$ the complement of the singular locus of $v$ in $X$ and by $\mathcal F$ the nonsingular foliation of rank one on $U$ tangent to $v$. Consider $Z$ a closed horizontal subvariety of $\P(T^\ast X)$ contained in $H(v)$, denote by $Z_U$ its restriction to $\P(T^\ast U)$ and by $$g : N^\ast \mathcal F \setminus \lbrace 0\text{-section} \rbrace \rightarrow H(v) \cap p^{-1}(U) = \P(N^\ast \mathcal F)$$
the canonical projection. The following are equivalent: 
\begin{itemize} 
\item[(i)] the closed subvariety $Z$ of $\P(T^\ast X)$   is  $\P(v^{[1]}))$-{\em invariant},
\item[(ii)] the  homogeneous closed subvariety $\overline{g^{-1}(Z)}$ of $N^\ast \mathcal F$ is $v^{[1]}_{\mid N^\ast \mathcal F}$-{\em invariant}, 
\item[(iii)] the  homogeneous closed subvariety  $\overline{g^{-1}(Z)}$ of $N^\ast \mathcal F$ is $\mathcal F^{[1]}$-{\em invariant}.
\end{itemize} 
\end{proposition}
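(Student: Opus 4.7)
The plan is to first localize the problem to the open set $U$ on which $\mathcal{F}$ is smooth, and then establish the two equivalences separately using the linearity of $v^{[1]}$ along the fibres of $T^*X$ and the rank-one nature of $\mathcal{F}$. By Lemma \ref{lemma-horizontal-extension}, $Z$ is invariant under $\P(v^{[1]})$ if and only if $Z_U = Z \cap p^{-1}(U)$ is invariant under $\P(v^{[1]})_{\mid \P(T^*U)}$, and Proposition \ref{proposition-distributions-horizontal}(ii) identifies $H(v) \cap p^{-1}(U)$ with $\P(N^*\mathcal{F})$. So we may, and do, replace $X$ by $U$ and assume that $v$ is nowhere vanishing and $\mathcal F$ is non-singular; by Proposition \ref{proposition-comparaison-Bott}(i), $N^*\mathcal{F}$ is then a closed invariant subvariety of $(T^*X, v^{[1]})$ and the restriction $v^{[1]}_{\mid N^*\mathcal{F}}$ is well defined.

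For (i) $\Leftrightarrow$ (ii), I would argue using the $\mathbb{G}_m$-structure. The local expression in Remark \ref{local-expression} shows that $v^{[1]}$ is homogeneous of degree zero in the fibre coordinates $y_1,\ldots,y_n$, so the induced derivation preserves the grading on $\mathcal{O}_X(X)[y_1,\ldots,y_n]$; equivalently $v^{[1]}_{\mid N^*\mathcal{F}}$ is $\mathbb{G}_m$-equivariant on the $\mathbb{G}_m$-bundle $g:N^*\mathcal{F}\setminus\{0\}\to \P(N^*\mathcal{F})$. Since $\overline{g^{-1}(Z_U)}$ is the affine cone over $Z_U$, the correspondence $Y \mapsto \overline{g^{-1}(Y)}$ is a bijection between closed subvarieties of $\P(N^*\mathcal{F})$ and $\mathbb{G}_m$-invariant closed subvarieties of $N^*\mathcal{F}$ containing the zero section (the zero section being always invariant under $v^{[1]}_{\mid N^*\mathcal{F}}$, since $\pi: (T^*X,v^{[1]})\to (X,v)$ is a morphism of algebraic vector fields). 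One direction (pullback of invariance) follows directly from $g$ being a morphism of algebraic vector fields given by the commutative diagram of Proposition \ref{totalspace}; the other direction is the statement that a $\mathbb{G}_m$-equivariant derivation on a graded ring preserves a homogeneous ideal iff it preserves its degree-zero piece on the Proj side.

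For (ii) $\Leftrightarrow$ (iii), note that on $U$ the foliation $\mathcal F$ is rank one and admits the nowhere vanishing global section $v$, so $\Theta_\mathcal{F} = \mathcal{O}_U\cdot v$ is free of rank one. By Lemma \ref{lemma-connection-distribution}, $\mathcal F^{[1]}$ is then the rank-one distribution on $N^*\mathcal{F}$ globally generated by the single vector field $\mathcal{L}_{\nabla^{\mathrm{Bott}}}(v)$, which by Proposition \ref{proposition-comparaison-Bott}(ii) coincides with $v^{[1]}_{\mid N^*\mathcal{F}}$. Thus $\Theta_{\mathcal F^{[1]}} = \mathcal{O}_{N^*\mathcal{F}}\cdot v^{[1]}_{\mid N^*\mathcal{F}}$, and since invariance of a closed subvariety under a vector field $w$ automatically yields invariance under every multiple $fw$, invariance under $\mathcal F^{[1]}$ is the same as invariance under its single generator $v^{[1]}_{\mid N^*\mathcal{F}}$. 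The main technical obstacle is the bookkeeping in (i) $\Leftrightarrow$ (ii), namely the rigorous translation between invariance of the subvariety upstairs and invariance of its projectivization downstairs; this is where one must carefully use the degree-zero homogeneity of $v^{[1]}$ on fibres and verify that taking Zariski closure to include the zero section is harmless.
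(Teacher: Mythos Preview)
Your proof is correct and follows essentially the same route as the paper. The paper's own proof is terser: for (i)$\Leftrightarrow$(ii) it simply invokes that $g$ is a morphism of algebraic vector fields (from Proposition~\ref{totalspace}) and hence preserves invariance in both directions, while you unpack this into the $\mathbb{G}_m$-equivariance and cone correspondence; for (ii)$\Leftrightarrow$(iii) both you and the paper use Proposition~\ref{proposition-comparaison-Bott} to identify $\mathcal{F}^{[1]}$ as the rank-one foliation generated by the nowhere-vanishing $v^{[1]}_{\mid N^*\mathcal{F}}$.
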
 

\begin{proof} 
The equivalence between (i) and (ii) follows Proposition \ref{totalspace} asserting that
$$g : (N^\ast \mathcal F \setminus \lbrace 0\text{-section} \rbrace, v^{[1]}) \rbrace \rightarrow (H(v) \cap p^{-1}(U), \P(v^{[1]}))$$
is a morphism of algebraic vector fields. Hence, $g$ sends invariant subvarieties onto invariant subvarieties. The equivalence between (ii) and (iii) follows from the fact that the rank one foliation
$\mathcal F^{[1]}$ is generated by $v^{[1]}_{\mid N^\ast \mathcal F}$ by Proposition \ref{proposition-comparaison-Bott}.
\end{proof}

\begin{theorem}[Pereira]\label{theorem-Pereira}
Let $X$ be a smooth algebraic variety and $\mathcal F$ be a foliation of rank one on $X$.  Assume that the foliation $\mathcal F$ admits a singular point $x_0 \in X$ satisfying:
\begin{itemize}
\item[(a)] the singularity $x_0$ is non-resonant, 
\item[(b)] the singularity $x_0$ is not contained in any $\mathcal F$-invariant proper positive-dimensional irreducible subvariety of $X$.
\end{itemize}
Then every {\em proper} homogeneous closed subvariety  of $N^\ast \mathcal F$ invariant for the foliation $\mathcal F^{[1]}$ projects on a {\em proper} closed invariant subvariety of the foliation $\mathcal F$.
\end{theorem}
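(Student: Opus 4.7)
My plan is to exploit the non-resonance at $x_0$ via formal linearization to pin down the structure of $W$ in a formal neighborhood of the fiber over $x_0$, then invoke hypothesis (b) to derive a contradiction. I first reformulate the question inside the ambient projective cotangent bundle: by Proposition \ref{proposition-translation-Pereira}, a proper homogeneous closed subvariety $W \subset N^\ast \mathcal F$ invariant for $\mathcal F^{[1]}$ corresponds, via projectivization followed by Zariski-closure inside $\P(T^\ast X)$, to a proper closed subvariety $\overline Z \subset H(v)$ invariant under $\P(v^{[1]})$ whose projection has the same Zariski closure as $\pi(W)$. Since $p\colon (\P(T^\ast X),\P(v^{[1]})) \to (X,v)$ is a morphism of algebraic vector fields, $\overline{\pi(W)}$ is automatically a closed $\mathcal F$-invariant subvariety of $X$, so the theorem reduces to ruling out the case $p(\overline Z) = X$, which I assume for contradiction.

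The main step is a local analysis at the fiber $p^{-1}(x_0)$. Let $v$ be a local representative of $\mathcal F$ vanishing at $x_0$, and let $A$ be its linear part. A direct computation using Remark \ref{local-expression} shows that in any local étale coordinates at $x_0$, the vector field $\P(v^{[1]})$ restricted to $p^{-1}(x_0) = \P(T^\ast X_{x_0}) \simeq \P^{n-1}$ is the projective linear vector field induced by $-A^\top$. By hypothesis (a), the eigenvalues $\lambda_1,\dots,\lambda_n$ of $A$ are $\mathbb Z$-linearly independent, so the closure of the orbit of $e^{-tA^\top}$ in $\mathrm{PGL}_n(\mathbb C)$ is the maximal diagonal torus; its invariant projective subvarieties are therefore exactly the unions of coordinate linear subspaces. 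Thus $\overline Z \cap p^{-1}(x_0)$ is such a union.

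To propagate this infinitesimal information, I next apply Poincaré--Dulac: under non-resonance there are formal coordinates at $x_0$ in which $v = \sum_i \lambda_i x_i\, \partial_{x_i}$, and then $v^{[1]} = \sum_i \lambda_i x_i\, \partial_{x_i} - \sum_i \lambda_i y_i\, \partial_{y_i}$. Every monomial $x^\alpha y^\beta$ is an eigenvector of $\delta_{v^{[1]}}$ with eigenvalue $\langle \alpha-\beta,\lambda\rangle$, which by $\mathbb Z$-independence vanishes only when $\alpha = \beta$. It follows that every $v^{[1]}$-invariant formal ideal in $\widehat{\mathcal O}_{\P(T^\ast X), p^{-1}(x_0)}$ is generated by homogeneous monomials in $(x_1,\dots,x_n,y_1,\dots,y_n)$. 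Combining this with the assumption $p(\overline Z) = X$ (which prevents the formal ideal of $\overline Z$ from containing any pure $x$-monomial) and with the description of $\overline Z \cap p^{-1}(x_0)$ obtained in the previous step, a careful combinatorial analysis shows that the formal germ of $p(\overline Z)$ at $x_0$ is contained in a proper positive-dimensional union $V$ of coordinate hyperplanes $\{x_{i_j}=0\}$ which is formally $\mathcal F$-invariant.

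Since $V$ is cut out by coordinate equations, it is the formal germ of a genuine algebraic proper positive-dimensional $\mathcal F$-invariant subvariety of $X$ containing $x_0$, which contradicts hypothesis (b) and concludes the argument. The hard part will be the combinatorial step in the third paragraph: carefully disentangling the interaction between the horizontality of $\overline Z$ over $X$ (which constrains the ``pure $x$'' part of the invariant ideal) and the vertical monomial structure coming from the fiber over $x_0$. This is the technical heart of Pereira's argument in [Pereira-JEMS], where it is handled more intrinsically through an analysis of Bott's partial connection on $N^\ast \mathcal F$ rather than through the embedding into $\P(T^\ast X)$.
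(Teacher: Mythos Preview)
The paper does not prove this theorem; it simply cites Theorem~1 of \cite{Pereira-JEMS}. Your overall strategy (formal linearisation at the non-resonant singularity $x_0$, then a local-to-global argument) is indeed the backbone of Pereira's proof, but the execution has two genuine gaps. First, your claim that every $\P(v^{[1]})$-invariant formal ideal along $p^{-1}(x_0)$ is \emph{monomial} is false: the eigenvalue of $x^\alpha y^\beta$ under $\delta_{v^{[1]}}$ is $\langle \alpha - \beta, \lambda\rangle$, so distinct monomials with the same $\alpha - \beta$ (for instance $x_1 y_1$ and $x_2 y_2$, or after projectivising $x_n$ and $x_1\cdot(y_1/y_n)$, both of eigenvalue $\lambda_n$) share an eigenvalue. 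Non-resonance only forces a $\mathbb Z^n$-grading of the invariant ideal, not a monomial generating set, and that difference is precisely what your combinatorics would need.

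Second, the conclusion of your ``careful combinatorial analysis'' is incoherent as written: you assume $p(\overline Z) = X$ for contradiction and then claim that the formal germ of $p(\overline Z)$ at $x_0$ lies in a proper union of coordinate hyperplanes, which is impossible if $p(\overline Z) = X$. In Pereira's actual argument the contradiction is not obtained by constraining $p(\overline Z)$; rather, working with Bott's partial connection on $N^\ast \mathcal F$ in the linearised formal model, one shows directly that a proper horizontal $\mathcal F^{[1]}$-invariant subvariety forces a proper positive-dimensional $\mathcal F$-invariant subvariety of $X$ through $x_0$, violating (b). Since you yourself defer this step to \cite{Pereira-JEMS} in your final sentence, your proposal ultimately reduces to the same citation the paper already makes, together with an incorrect intermediate assertion.
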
 

We refer to the proof of Theorem 1 in \cite{Pereira-JEMS} for the proof of this theorem.

\begin{remark} \label{remark-dimension3}
If $\mathrm{dim}(X) = 2$, the statement is vacuous so that the first nontrivial case is the case of algebraic varieties of dimension three. In the case where $\mathrm{dim}(X) = 3$, Proposition 3.3 of \cite{Pereira-JEMS} shows that the assumption that the singularity is non-resonant is not necessary for the conclusion of the theorem to hold.   
\end{remark} 

\begin{corollary} \label{corollary-Pereira}
Let $X$ be a smooth algebraic variety and let $v$ be a vector field with reduced singularities which admits a singular point $x_0 \in X$ satisfying:
\begin{itemize}
\item[(a)] the singularity $x_0$ is non-resonant, 
\item[(b)] the singularity $x_0$ is not contained in any invariant proper positive-dimensional irreducible closed algebraic subvariety of the algebraic vector field $(X,v)$.
\end{itemize}
Then the canonical invariant subvariety $H(v)$ is a {\em minimal  invariant horizontal} subvariety of the algebraic vector field $(\P(T^\ast X), \P(v^{[1]}))$.
\end{corollary}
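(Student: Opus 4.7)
My plan is to combine Proposition \ref{proposition-translation-Pereira} with Pereira's Theorem \ref{theorem-Pereira} via a short contradiction argument. Suppose there exists a proper closed invariant horizontal subvariety $Z$ of $(\P(T^\ast X), \P(v^{[1]}))$ strictly contained in $H(v)$; I will derive a contradiction from horizontality of $Z$.

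First I would transfer the situation from $\P(T^\ast X)$ to the total space of the conormal bundle of the rank-one foliation $\mathcal F = \mathcal F(v)$ tangent to $v$, defined on $U = X \setminus \mathrm{Sing}(v)$. By the equivalence (i) $\Leftrightarrow$ (iii) in Proposition \ref{proposition-translation-Pereira}, the Zariski closure $\overline{g^{-1}(Z_U)}$ inside $N^\ast \mathcal F$ is a homogeneous closed subvariety of $N^\ast \mathcal F$ invariant under the foliation $\mathcal F^{[1]}$. Because $Z \subsetneq H(v)$ and $g$ is surjective with positive-dimensional (namely $\mathbb{G}_m$) fibers, $\overline{g^{-1}(Z_U)}$ is a \emph{proper} closed subvariety of $N^\ast \mathcal F$.

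Next I would apply Pereira's Theorem \ref{theorem-Pereira} to the foliation $\mathcal F$. For this the hypotheses (a) and (b) must be transferred from $v$ to $\mathcal F$: condition (a) goes through because the linear part of the foliation $\mathcal F(v)$ at $x_0$ agrees, up to a non-zero scalar, with the linear part of $v$, hence non-resonance of $v$ at $x_0$ implies non-resonance of $\mathcal F$ at $x_0$; and condition (b) goes through because every positive-dimensional irreducible closed subvariety of $X$ invariant under $\mathcal F(v)$ is also invariant under the vector field $v$ (both conditions express the tangency of $v$ to the smooth part of the subvariety away from $\mathrm{Sing}(v)$, which by density determines invariance). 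Pereira's theorem then yields that the projection of $\overline{g^{-1}(Z_U)}$ under the map $N^\ast \mathcal F \to U$ is contained in a \emph{proper} closed invariant subvariety of $(U, \mathcal F)$.

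The contradiction is now immediate. By definition of horizontality, every irreducible component of $Z$ surjects onto $X$; restricting to $U$, every irreducible component of $Z_U$ surjects onto $U$, so $\overline{g^{-1}(Z_U)}$ surjects onto $U$, contradicting the previous step. Hence no such $Z$ exists, proving that $H(v)$ is a minimal invariant horizontal subvariety of $(\P(T^\ast X), \P(v^{[1]}))$. The only truly delicate step I anticipate is the translation of hypotheses (a) and (b) from the vector field $v$ to the foliation $\mathcal F(v)$: since $x_0 \in \mathrm{Sing}(v)$ lies outside $U$, one must work with the extended foliation on $X$ and verify that non-resonance and the non-existence of positive-dimensional invariant subvarieties through $x_0$ survive the passage from $v$ to $\mathcal F(v)$.
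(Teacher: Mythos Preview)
Your proposal is correct and follows essentially the same route as the paper's proof: pass from $Z\subset H(v)$ to the homogeneous subvariety $\overline{g^{-1}(Z_U)}\subset N^\ast\mathcal F$ via Proposition~\ref{proposition-translation-Pereira}, then apply Pereira's Theorem~\ref{theorem-Pereira} and contradict horizontality. The paper phrases this directly (surjectivity onto $X$ forces $\overline{g^{-1}(Z)}=N^\ast\mathcal F$, hence $Z=H(v)$) rather than by contradiction, and it leaves implicit the point you make explicit---that hypotheses (a) and (b) on $v$ transfer to the foliation $\mathcal F(v)$---which is indeed immediate since $v$ generates $\mathcal F(v)$.
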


\begin{proof} 
Consider $Z$ a $\P(v^{[1]})$-invariant horizontal subvariety  contained in $H(v)$. With the notation of Proposition \ref{proposition-translation-Pereira}, this implies that 
$\overline{g^{-1}(Z)}$ is a homogeneous closed subvariety of  $N^\ast \mathcal F$ invariant under the foliation $\mathcal F^{[1]}$ which projects surjectively on $X$. Hence, Theorem \ref{theorem-Pereira} implies that $\overline{g^{-1}(Z)} = N^\ast \mathcal F$ so that $Z = H(v)$  as required. 
\end{proof} 

\subsection{Invariant multidistributions of codimension one}

\begin{definition} 
A {\em{multidistribution $W$ of codimension one  on $X$}} is a horizontal subvariety $W$ of $\P(T^\ast X)$ of relative dimension zero. The {\em{degree of the multidistribution $W$}} is the cardinal of the generic fiber of 
$ p_W : W \rightarrow X$.
If moreover $X$ is equipped with a vector field $v$ then we say that a multidistribution $W$ is {\em invariant under $v$} if $W$ is an invariant horizontal  subvariety of $(\P(T^\ast X), \P(v^{[1]}))$. 
\end{definition}

{\example \label{example-degreeone} If $\mathcal D$ is an (invariant) distribution of rank $n - 1$ then the  horizontal subvariety $Z(\mathcal D)$ given by Proposition \ref{proposition-distributions-horizontal} is a degree one (invariant) multidistribution of codimension one. All degree one (invariant) multidistributions are of this form.}

{\definition Let $W$ be a multidistribution of codimension one on $X$. The {\em singular locus} $\mathrm{Sing}(W)$ of $W$ is the image in $X$ of the critical locus of 
$p_W: W \rightarrow X$
namely 
$$\mathrm{Sing}(W) = p_W(W^{\mathrm{sing}} \cup \lbrace x \in W^{\mathrm{reg}} \mid dp_W \text{ is not locally invertible at } x\rbrace).$$
Since $p_W$ is a projective morphism, $\mathrm{Sing}(W)$ is always a closed subvariety of $X$.}

\begin{lemma} \label{lemma-criticalocus invariant}
Let $p: (Y,w) \rightarrow (X,v)$ be a smooth morphism of algebraic vector fields and $Z$ a closed invariant subvariety of $(Y,w)$ projecting surjectively on $X$. Then the critical locus $\mathrm{Cr}(Z)$ of $p_{\mid Z}: Z \rightarrow X$ defined by 
$$\mathrm{Cr}(Z) = \mathrm{Sing}(Z) \cup \lbrace z \in Z \mid dp_{\mid Z} \text{ is not locally invertible  at } z  \rbrace$$
 is a closed invariant subvariety.   
\end{lemma}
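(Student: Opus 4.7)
The plan is to establish closedness and invariance of $\mathrm{Cr}(Z)$ separately. Closedness is routine: $\mathrm{Sing}(Z)$ is closed in $Z$ by definition, and on the smooth locus $Z^{\mathrm{reg}} := Z \setminus \mathrm{Sing}(Z)$ the condition that $dp_{\mid Z}$ fails to be locally invertible at a point is the rank-dropping condition for a morphism of vector bundles, cut out locally by the vanishing of appropriate Jacobian minors. This locus is closed in $Z^{\mathrm{reg}}$, and its union with $\mathrm{Sing}(Z)$ is closed in $Z$, so $\mathrm{Cr}(Z)$ is a closed subvariety of $Z$ (and hence of $Y$).

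For invariance, I would work analytically over $\mathbb{C}$, as allowed in this section, and first prove invariance on the open set $Y' := Y \setminus \mathrm{Sing}(w)$ where $w$ admits a local biholomorphic flow. At a point $y_0 \in Y'$, let $\phi_t$ denote this local flow of $w$ and $\psi_t$ the local flow of $v$ at $p(y_0)$ (with $\psi_t = \mathrm{Id}$ if $v(p(y_0)) = 0$). Since $p$ is a morphism of algebraic vector fields, $p \circ \phi_t = \psi_t \circ p$ on a neighborhood of $y_0$, and the invariance of $Z$ forces $\phi_t(Z) = Z$ locally, so $\phi_{t \mid Z}$ is a local biholomorphism of $Z$ onto itself; in particular it preserves $\mathrm{Sing}(Z)$. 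At a smooth point $z$ of $Z$, differentiating the conjugation relation yields
$$ dp_{\mid Z, \phi_t(z)} \circ d\phi_{t, z} \;=\; d\psi_{t, p(z)} \circ dp_{\mid Z, z}, $$
where $d\phi_{t,z}$ and $d\psi_{t, p(z)}$ are isomorphisms, so $dp_{\mid Z}$ is invertible at $z$ if and only if it is at $\phi_t(z)$. Hence $\phi_t$ locally preserves $\mathrm{Cr}(Z)$; by the standard translation between analytic flow preservation and algebraic invariance (if $f \in \mathcal{I}_{\mathrm{Cr}(Z)}$ vanishes on $\mathrm{Cr}(Z)$ after flowing, then $\delta_w(f) = \partial_t|_{t=0}(f \circ \phi_t)$ also vanishes on $\mathrm{Cr}(Z)$), this gives that $\mathrm{Cr}(Z) \cap Y'$ is an invariant closed subvariety of $(Y', w_{\mid Y'})$.

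It remains to extend this from $Y'$ to $Y$. By Remark \ref{remark-invariant-restriction}(ii), the Zariski closure in $Y$ of $\mathrm{Cr}(Z) \cap Y'$ is an invariant closed subvariety of $(Y,w)$. The remaining irreducible components of $\mathrm{Cr}(Z)$, if any, are precisely those entirely contained in $\mathrm{Sing}(w)$; but on such a component $w$ vanishes identically, and therefore for any function $f$ in its defining ideal the function $\delta_w(f) = df(w)$ also vanishes on it, which shows that any such component is invariant. Combining the two cases (using $\mathcal{I}_{A \cup B} = \mathcal{I}_A \cap \mathcal{I}_B$) yields the invariance of $\mathrm{Cr}(Z)$. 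The only slightly delicate point is the passage from invariance on $Y'$ to invariance on $Y$, but the dichotomy between irreducible components meeting $Y'$ densely and components contained in $\mathrm{Sing}(w)$ handles it transparently.
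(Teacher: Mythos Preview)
Your proof is correct and follows essentially the same analytic-flow argument as the paper: both use the conjugation relation $p\circ\phi_t=\psi_t\circ p$, differentiate it, and conclude that the rank of $dp_{\mid Z}$ is preserved along the flow. The only organizational differences are that the paper first cites the invariance of $\mathrm{Sing}(Z)$ (from \cite{Coutinho-Pereira} or \cite{BMIS}) to reduce to smooth $Z$, whereas you absorb this into the flow argument; and your restriction to $Y'=Y\setminus\mathrm{Sing}(w)$ is unnecessary, since the local holomorphic flow $\phi_t$ exists (and is a local biholomorphism) even at zeros of $w$, so the extra case analysis at the end can be dropped.
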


\begin{proof}
This is well-known and follows easily from the fact that the pieces of analytic flows $\phi_t$ and $\psi_t$ associated with $w$ and $v$ respectively around a point $z \in Z$ and its image $x = \phi(z)$ respectively are related by $$p \circ \phi_t = \psi_t \circ p$$
since $p$ is a morphism of algebraic vector fields.
\end{proof} 
%
%

\begin{proposition} \label{proposition-preparation-multidistribution}
Let $(X,v)$ be algebraic vector field and let $W$ be an invariant codimension one multidistribution on $(X,v)$ of degree $k \geq 1$.

\begin{itemize} 
\item[(i)] the singular locus $\mathrm{Sing}(W)$ is a closed invariant subvariety of $(X,v)$, 
\item[(ii)] Denote by $\mathcal I(W)$ the union of the irreducible components of $\mathrm{Sing}(W)$ of codimension one. There is a connected étale cover 
$$ \phi :Y \rightarrow X \setminus \mathcal I(W)$$ 
and  distributions $\mathcal D_1,\ldots, \mathcal D_k$ of codimension one on $Y$ invariant under the pullback $\phi^\ast v$ of $v$ such that 
$$ \phi^\ast W = Z(\mathcal D_1) \cup  \cdots \cup Z(\mathcal D_k).$$
\end{itemize}  
\end{proposition}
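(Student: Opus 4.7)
For part (i), the strategy is to realise $\mathrm{Sing}(W)$ as the image under a projective morphism of algebraic vector fields of a closed invariant subvariety. The projection $p : (\mathbb{P}(T^\ast X), \mathbb{P}(v^{[1]})) \to (X,v)$ is smooth (being a projective bundle) and $W$ is closed invariant in $\mathbb{P}(T^\ast X)$ and projects surjectively on $X$, so Lemma \ref{lemma-criticalocus invariant} yields that the critical locus $\mathrm{Cr}(W)$ of $p_W : W \to X$ is a closed invariant subvariety of $W$. Since $p_W$ is projective, its image $\mathrm{Sing}(W) = p_W(\mathrm{Cr}(W))$ is closed in $X$, and invariance under $v$ follows because $p_W$ is a morphism of algebraic vector fields and therefore maps (analytic) flow lines of $\mathbb{P}(v^{[1]})$ onto flow lines of $v$.

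For part (ii), note first that by the very definition of $\mathrm{Sing}(W)$, the restriction $p_W : p_W^{-1}(U) \to U$ is finite étale of degree $k$ on $U := X \setminus \mathrm{Sing}(W)$, which is irreducible (hence connected) as a nonempty open subset of the irreducible variety $X$. Setting $X^\circ := X \setminus \mathcal{I}(W)$, the variety $X^\circ$ is smooth (open in $X$) and the complement $X^\circ \setminus U = \mathrm{Sing}(W) \setminus \mathcal{I}(W)$ has codimension $\geq 2$ in $X^\circ$, precisely because $\mathcal{I}(W)$ was chosen to collect all the codimension one components of $\mathrm{Sing}(W)$. Zariski--Nagata purity of the étale fundamental group then gives an equivalence between the categories of finite étale covers of $U$ and of $X^\circ$, producing an extension of the degree $k$ cover $p_W^{-1}(U) \to U$ to a finite étale cover $W^\circ \to X^\circ$.

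I would then take $\phi : Y \to X^\circ$ to be a connected finite étale Galois cover on which $W^\circ \to X^\circ$ splits completely, so that the pullback $\phi^\ast W$ (in the sense of Proposition \ref{proposition-pull-back-push forward}) decomposes as a disjoint union of $k$ sections of $\mathbb{P}(T^\ast Y) \to Y$. Each such section is a horizontal subvariety of relative dimension zero and degree one, hence of the form $Z(\mathcal D_i)$ for a unique codimension one distribution $\mathcal D_i$ on $Y$ by Example \ref{example-degreeone}. The pullback operation for horizontal subvarieties preserves invariance by Proposition \ref{proposition-pull-back-push forward}(iii), so $\phi^\ast W$ is invariant under the lifted vector field $\mathbb{P}((\phi^\ast v)^{[1]})$; hence so are its irreducible components $Z(\mathcal D_i)$, and the remark following Proposition \ref{proposition-distributions-horizontal} yields that each distribution $\mathcal D_i$ is itself invariant under $\phi^\ast v$.

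The principal technical delicacy lies in the purity step: one must justify the extension of the étale cover from $U$ to $X^\circ$ via Zariski--Nagata, verify that the extension so obtained coincides with $\phi^\ast W$ when viewed as a horizontal subvariety of $\mathbb{P}(T^\ast Y)$, and arrange simultaneously for $Y$ to be connected, Galois, and a trivialisation of the degree $k$ cover --- typically by first extending by purity and then passing to the Galois closure of a suitably chosen connected component.
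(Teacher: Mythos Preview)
Your proposal is correct and follows essentially the same strategy as the paper: part (i) via Lemma \ref{lemma-criticalocus invariant} plus properness of $p$, and part (ii) via the finite \'etale cover $W_U \to U$, purity across the codimension $\geq 2$ locus $\mathrm{Sing}(W)\setminus \mathcal I(W)$, and identification of the resulting degree-one components with $Z(\mathcal D_i)$ using Example \ref{example-degreeone} and Proposition \ref{proposition-pull-back-push forward}(iii).

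The only organisational difference is the order in which purity and splitting are applied. The paper first constructs the splitting cover over $U = X \setminus \mathrm{Sing}(W)$ as the cover corresponding to the kernel of the analytic monodromy representation $\pi_1(U^{an},x_0) \to \mathfrak S(W_{x_0})$, obtains the decomposition $\phi^\ast W = Z(\mathcal D_1)\cup\cdots\cup Z(\mathcal D_k)$ there, and \emph{then} extends the cover $Y\to U$ to $\overline Y \to X\setminus \mathcal I(W)$ by purity of the branch locus, extending the $\mathcal D_i$ by Fact \ref{fact-extension distribution}. You instead extend the degree-$k$ cover $W_U\to U$ to $X^\circ$ first and then pass to a Galois splitting cover. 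Both routes are valid; the paper's use of the monodromy kernel makes the connectedness of $Y$ and the complete splitting of $\phi^\ast W$ immediate (since the pulled-back monodromy is trivial by construction), which is precisely the ``delicacy'' you flag at the end of your plan.
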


\begin{proof} 
The first part follows from Lemma \ref{lemma-criticalocus invariant} and the properness of $p : \P(T^\ast X) \rightarrow X$ and the second part is Proposition 1.3.1 (b) in \cite{Pereira-Pirio}. As the proof is left to the reader in the mentioned reference, we give a complete proof.

\begin{claim}
There is a connected étale cover 
$\phi :Y \rightarrow X \setminus \mathcal \mathrm{Sing}(W)$ 
and  distributions $\mathcal D_1,\ldots, \mathcal D_k$ of codimension one on $Y$ invariant under the pullback $\phi^\ast v$ of $v$ such that 
$$ \phi^\ast W = Z(\mathcal D_1) \cup  \cdots \cup Z(\mathcal D_k).$$
\end{claim}

\begin{proof}[Proof of the claim]
Set $U = X \setminus \mathrm{Sing}(W)$. By construction of $U$, $p_U: W_U = W \cap p^{-1}(U) \rightarrow U$
is étale and proper hence is a finite étale cover with fibers of cardinal $k$. Fix $x_0 \in U$ a base point so that we obtain an action by monodromy: 
$$\rho: \pi_1(U^{\mathrm{an}},x_0) \rightarrow \mathfrak S(W_{x_0})$$
By the Galois correspondence for unramified covering, we can find 
a pointed connected unramified cover $(C,y_0) \rightarrow (U,x_0)$ such that $\pi_1(C,y_0) = \mathrm{Ker}(\rho)$ as subgroups of $\pi_1(X,x_0)$.
The isomorphism between the étale fundamental group and the analytic fundamental group shows that we can write $C = Y^{\mathrm{an}}$ for some étale cover $\phi: Y \rightarrow X$ of $X$, see Theorem 5.7.4 in \cite{Szamuely}.  It follows from the construction of $Y$ that $W_Y := \phi^\ast W$ is a nonsingular multidistribution of codimension one on $Y$ and that the monodromy action  
$$ \rho_Y: \pi_1(Y^{\mathrm{an}},y_0) \rightarrow \mathfrak S(W_{Y,y_0}) =  \mathfrak S (W_{x_0})$$ 
is trivial. Since the connected components of a topological cover are in one-to-one correspondence with the orbits of the monodromy action, we can conclude that  $W_Y^{\mathrm{an}}$ has $k$ connected components and hence so does $W_Y$ since the connected components of $W^{\mathrm{an}}_Y$ are the analytification of the connected components of $W_Y$. Hence, the smooth algebraic variety $W_Y$ has $k$ irreducible components which all have degree one over $X$ and we can write
$$ W_Y = Z(\mathcal D_1) \cup \cdots \cup Z(\mathcal D_k)$$ where $\mathcal D_1,\ldots, \mathcal D_k$ are pairwise transverse distributions of codimension one on $Y$ by Example \ref{example-degreeone}. By Proposition \ref{proposition-pull-back-push forward}, $W_Y$ is an invariant multidistribution for the pullback $\phi^\ast v$ and so are its irreducible components. Hence, the $\mathcal D_i$ are invariant distributions for $\phi^\ast v$. 
\end{proof}

It remains to show that this decomposition can be extended to $X \setminus \mathcal I(W)$. By purity of the branch locus (see Corollary 5.2.14 in \cite{Szamuely}), the connected étale cover $\phi: Y \rightarrow U$ can be extended in a connected étale cover 
$$ \overline{\phi}: \overline{Y} \rightarrow X \setminus \mathcal I(W).$$ 
By Fact \ref{fact-extension distribution}, the distributions $\mathcal D_1,\cdots, \mathcal D_k$ can 	also be extended into (possibly singular) distributions $\overline{\mathcal D_1}, \ldots, \overline{\mathcal D_k}$ on $\overline{Y}$ and we have
$$\overline{\phi}^\ast W =  Z(\overline{\mathcal D_1}) \cup \ldots \cup Z(\overline{\mathcal D_k}) $$
since this equality holds true on some nonempty Zariski-open set. Finally, as $\mathcal D_1,\ldots, \mathcal D_k$ are invariant distributions for $\phi^\ast v$, the distributions $\overline{\mathcal D_1},\ldots, \overline{\mathcal D_k}$ are invariant under $\overline{\phi}^\ast v$. This concludes the proof of the proposition. 
\end{proof}

\subsection{A minimality criterion based on a singular point} The main result of this section is the following theorem.

\begin{theorem}[Theorem C]\label{theorem-minimality2}
Let $(X,v)$ be a complex algebraic vector field with reduced singularities of dimension $n \geq 2$. Assume that $(X,v)$ admits a singular point $x_0 \in X$
such that
\begin{itemize}
\item[(a)] the singularity $x_0$ is  non-resonant, 
\item[(b)]  the singularity $x_0$ is not contained in any invariant proper positive-dimensional irreducible subvariety of $(X,v)$.
\end{itemize}
Then the generic type of $(X,v)$ is minimal.  
\end{theorem}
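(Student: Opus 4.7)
The plan is to apply Theorem~\ref{theorem-minimality} (Theorem~C), which reduces the statement to verifying that $(X,v)$ satisfies condition $(\ast)$ and that its first prolongation is quasi-minimal.

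Condition $(\ast)$ is automatic when $n \geq 3$. When $n = 2$, a nontrivial rational first integral $f : X \dashrightarrow \mathbb{P}^1$ would, after resolving the indeterminacy of $f$, give an invariant fibration whose fibers push down to invariant curves covering $X$; the curve through $x_0$ would be a proper positive-dimensional invariant subvariety containing $x_0$, contradicting~(b).

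For quasi-minimality, let $Z \subsetneq \P(T^*X)$ be a proper invariant horizontal subvariety and fix an irreducible component $Y$. If $Y \subset H(v)$ then $Y = H(v)$ by the minimality statement in Corollary~\ref{corollary-Pereira}. Suppose instead $Y \not\subset H(v)$. If $p(Y \cap H(v)) = X$, then $Y \cap H(v)$ is an invariant horizontal subvariety of $H(v)$ strictly smaller than $H(v)$ (the alternative $H(v) \subset Y$ would force $Y = \P(T^*X)$ by dimensional reasons, contradicting properness), again contradicting Corollary~\ref{corollary-Pereira}. Otherwise, the generic fiber $Y_x$ avoids the hyperplane $H(v)_x$, so it lies in the affine complement $\P(T^*X_x) \setminus H(v)_x$ and must be finite (being projective). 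Thus $Y$ is an invariant codimension-one multidistribution transverse to $\mathcal{F}(v)$.

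To rule out this last possibility, apply Proposition~\ref{proposition-preparation-multidistribution} to $Y$: there exist an étale cover $\phi : \widetilde{X} \to X \setminus \mathcal{I}(Y)$ and invariant codimension-one distributions $\mathcal{D}_1, \dots, \mathcal{D}_k$ on $\widetilde{X}$ with $\phi^* Y = Z(\mathcal{D}_1) \cup \cdots \cup Z(\mathcal{D}_k)$. The divisor $\mathcal{I}(Y)$ is a union of codimension-one components of the invariant subvariety $\mathrm{Sing}(Y)$, hence a positive-dimensional invariant subvariety of $X$; so by~(b), $x_0 \notin \mathcal{I}(Y)$. Lift $x_0$ to $\widetilde{x_0} \in \widetilde{X}$; étaleness ensures that $\widetilde{w} := \phi^* v$ has a non-resonant singularity at $\widetilde{x_0}$, and positive-dimensional invariant subvarieties of $\widetilde{X}$ through $\widetilde{x_0}$ descend to positive-dimensional invariant subvarieties of $X$ through $x_0$. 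Fix one distribution $\mathcal{D}_j$; an argument parallel to Lemma~\ref{lemma-criticalocus invariant} shows that $\mathrm{Sing}(\mathcal{D}_j)$ is invariant, so by~(b) the distribution $\mathcal{D}_j$ is smooth at $\widetilde{x_0}$ (up to possibly an isolated singular point, handled on a punctured neighborhood). Choose locally a $1$-form $\omega$ with $\mathcal{D}_j = \ker \omega$. Transversality yields $\omega(\widetilde{w}) \not\equiv 0$, while invariance of $\mathcal{D}_j$ yields $\mathcal{L}_{\widetilde{w}} \omega = \lambda \omega$ for some function $\lambda$, whence $\widetilde{w}\bigl(\omega(\widetilde{w})\bigr) = \lambda \cdot \omega(\widetilde{w})$. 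Therefore the tangency locus $\{\omega(\widetilde{w}) = 0\}$ is an invariant hypersurface passing through $\widetilde{x_0}$ (since $\widetilde{w}(\widetilde{x_0}) = 0$), and its image under $\phi$, after taking Zariski closure, is a positive-dimensional invariant hypersurface of $X$ through $x_0$, contradicting~(b). Quasi-minimality follows, and Theorem~\ref{theorem-minimality} concludes.

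The main obstacle is the codimension-one multidistribution case: one must carefully apply the structure theorem of Proposition~\ref{proposition-preparation-multidistribution} to pass to an étale cover and then execute the tangency-locus argument (a higher-dimensional analogue of the dimension-two technique of~\cite{jaoui-ANT}), while handling the possible isolated singular behavior of each $\mathcal{D}_j$ at the lifted point $\widetilde{x_0}$.
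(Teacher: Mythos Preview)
Your proof is correct and follows essentially the same route as the paper: verify condition $(\ast)$, establish quasi-minimality of the first prolongation by a case analysis combining Corollary~\ref{corollary-Pereira} with the tangency-locus argument (which the paper isolates as Lemma~\ref{lemma-tangent}), and then invoke Theorem~\ref{theorem-minimality}. The reduction of the multidistribution case via Proposition~\ref{proposition-preparation-multidistribution} and the lift to an \'etale cover matches the paper exactly.

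Two minor remarks. First, your case split differs cosmetically from the paper's: you split on whether $p(Y\cap H(v))=X$, while the paper splits on the relative dimension of $Z$ and uses the projective intersection bound in $\P^{n-1}$ to force $H(v)_x\cap Z_x\neq\emptyset$ when that dimension is positive. In your subcase $p(Y\cap H(v))=X$ with $Y\not\subset H(v)$, the claim ``$H(v)\subset Y$ would force $Y=\P(T^*X)$'' should also rule out $Y=H(v)$ (contradicting $Y\not\subset H(v)$); the conclusion is unaffected. Second, your concern about the smoothness of $\mathcal D_j$ at $\widetilde{x_0}$ is unnecessary: on an affine chart one can take $\omega$ to be a global generator of the conormal (possibly vanishing on $\mathrm{Sing}(\mathcal D_j)$), and $\omega(\widetilde w)$ is then a regular function vanishing at $\widetilde{x_0}$ simply because $\widetilde w(\widetilde{x_0})=0$.

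The one genuine difference is the verification of $(\ast)$ in dimension two. You argue via the pencil of level curves of a rational first integral: every member passes through each base point, and if $x_0$ is not a base point one takes the level curve through it, producing an invariant curve through $x_0$ contradicting (b). This is elementary and correct. The paper instead invokes the Camacho--Sad separatrix theorem, a deeper analytic result; your argument avoids this at the cost of being specific to algebraically integrable foliations, which is all that is needed here.
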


{\remark \label{remark-theoremminimality2} \text{ }

\begin{itemize} 
\item A direct argument shows that the existence of a singular point of $(X,v)$ rules out the possibility of (ii) in Fact \ref{fact-Jaoui-Moosa}. Indeed, assume that $(X,v)$ admits a singular point $x_0$ and consider a generically finite dominant rational morphism 
$$ \phi:(X,v) \dashrightarrow (A,v_A)$$
as in (ii) of Fact \ref{fact-Jaoui-Moosa}. Because $A$ is an abelian variety, $\phi$ can be extended into a regular morphism $\overline{\phi}: X \rightarrow A$ and since $v_A$ is a global vector field on $A$, we obtain a regular morphism of algebraic vector fields 
$$ \overline{\phi}:(X,v) \rightarrow (A,v_A)$$
It follows that $\overline{\phi}(x_0)$ must be a singular point of $v_A$ which is a contradiction since $v_A$ is nowhere singular. 

\item The presence of a singular point (even non-resonant) does not rule out the possibility of (i) to hold in Fact \ref{fact-Jaoui-Moosa}. Indeed, if $(Y,w)$ is an algebraic vector field with a singular point $y_0$ then any algebraic vector field of the form $(X,v) = (Y,w) \times (\mathbb{A}^1, \lambda x \frac d {dx})$ admits $x_0 = (y_0,0)$ as as singular point. In that case, the algebraic vector field $(X,v)$ admits (at least) two nontrivial rational factors given by the two projections. In particular, the generic type of $(X,v)$ is not minimal. Note  that on any example of this form {\em this singular point is contained both in an invariant hypersurface and an invariant curve} so that the hypothesis (b) is violated in two different ways.

\item In dimension two and three, the hypothesis (a) that the singular point is non-resonant can be removed. This follows from the improvement of Theorem \ref{theorem-Pereira} in dimension three described in section 3 of \cite{Pereira-JEMS}, see Remark \ref{remark-dimension3}. In dimension $\geq 4$, this assumption will be needed in our analysis.
\end{itemize}}

\begin{lemma} \label{lemma-tangent}
Let $\mathcal D$ be an invariant distribution of codimension one and let $\mathcal F$ be the foliation tangent to $v$. Assume that $\mathcal D$ and $\mathcal F$ are generically transverse: on some nonempty open set $U$ of $X$ (where both $\mathcal D$ and $\mathcal F$ are nonsingular) we have 
$$ TU = T\mathcal F_{\mid U} \oplus T\mathcal D_{\mid U}$$
Then every singular point of $\mathcal F$ is contained in a closed invariant hypersurface. 
\end{lemma}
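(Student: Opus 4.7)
The plan is to construct a single invariant hypersurface $T$ on $X$ that contains every singularity of $\mathcal{F}$, realized as the tangency locus where $v$ becomes tangent to $\mathcal{D}$.

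First I set up the algebraic framework. Since $X$ is smooth and $\mathcal{D}$ is a codimension-one distribution with $\Theta_{\mathcal{D}}$ saturated in $\Theta_X$, the quotient $Q := \Theta_X/\Theta_{\mathcal{D}}$ is torsion-free of rank one, and its reflexive hull $L := Q^{\vee\vee}$ is a line bundle on $X$ extending the normal bundle $N\mathcal{D}$ of Notation \ref{notation-normal-bundle} across the codimension-two locus $\mathrm{Sing}(\mathcal{D})$. The vector field $v$ then defines a global section $\overline{v} \in H^0(X, L)$ through the composite $\Theta_X \twoheadrightarrow Q \hookrightarrow L$, and generic transversality of $\mathcal{F}$ with $\mathcal{D}$ ensures that $\overline{v}$ is not identically zero (it generates $L$ at the generic point of $X$). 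I set $T := Z(\overline{v})$; because $\overline{v} \not\equiv 0$, this is an effective Cartier divisor, hence a genuine hypersurface. Any $x_0 \in \mathrm{Sing}(\mathcal{F}) = \mathrm{Sing}(v)$ satisfies $v(x_0) = 0$, whence $\overline{v}(x_0) = 0$ in the fiber $L_{x_0}$ and $x_0 \in T$.

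Next I verify that $T$ is $v$-invariant. Invariance of $\mathcal{D}$ (that is, $[v, \Theta_{\mathcal{D}}] \subset \Theta_{\mathcal{D}}$) implies that the adjoint operator $\xi \mapsto [v, \xi]$ descends to a partial connection $\nabla_v$ along $v$ on $Q$, which extends by functoriality of the reflexive hull to a partial connection on $L$. The section $\overline{v}$ is $\nabla_v$-flat since $\nabla_v(\overline{v})$ is the image in $L$ of $[v, v] = 0$. In any local trivialization $L_{\mid V} \simeq \mathcal{O}_V \cdot s$, writing $\overline{v} = g \cdot s$ and $\nabla_v(s) = \mu \cdot s$, the flatness equation becomes $\delta_v(g) = -\mu g \in (g)$, exhibiting $T \cap V = \{g = 0\}$ as an invariant closed subscheme of $V$. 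Gluing across local trivializations (which modify $g$ by multiplication by a unit) and invoking Remark \ref{remark-invariant-restriction}, one obtains $T$ as a globally defined invariant hypersurface.

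The principal subtlety I anticipate is ensuring that the construction is well-defined globally across the singular locus $\mathrm{Sing}(\mathcal{D})$, since a local generator of the conormal sheaf $N^\ast \mathcal{D}$ (viewed as a $1$-form on $X$) may vanish at such points. Passing to the reflexive hull $L := Q^{\vee\vee}$ resolves this by providing a bona fide line bundle on all of $X$, so that the zero locus of $\overline{v}$ is an honest Cartier divisor containing every point where $v$ itself vanishes, regardless of whether such points lie in $\mathrm{Sing}(\mathcal{D})$.
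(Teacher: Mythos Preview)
Your proof is correct and follows essentially the same strategy as the paper's: both exhibit the tangency locus of $v$ with $\mathcal D$ as an invariant hypersurface containing $\mathrm{Sing}(v)$, the key computation being that the local equation $g$ satisfies $\delta_v(g)\in(g)$ because $[v,v]=0$. The only difference is packaging: the paper works locally on an affine chart with a generating $1$-form $\omega$ for $\mathcal D$ and sets $g=\omega(v)$, whereas you phrase this globally via the reflexive hull $L=(\Theta_X/\Theta_{\mathcal D})^{\vee\vee}$ and the section $\overline v$, which is slightly more careful over $\mathrm{Sing}(\mathcal D)$ and yields a single global hypersurface rather than one per chart.
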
 
\begin{proof} 
We may assume that $X$ is affine, that $\mathcal F$ (or equivalently $v$) has at least one singular point $x_0$ and that $\mathcal D$ is generated by a global one form $\omega \in \Omega^1_X(X)$. We claim that $g = i_v (\omega) \in \mathbb{C}[X]$ generates an invariant ideal of $\mathbb{C}[X]$: indeed, since $\mathcal D$ is invariant, we can write $\nabla_v^\vee(\omega) = h \cdot \omega$ for some $h \in \mathbb{C}[X]$ and it follows that 
$$\delta_v(g) = \nabla_v^\vee(\omega)(v) + \omega(\nabla_v(v)) = \nabla_v^\vee(\omega)(v) = h \cdot \omega(v) = h \cdot g .$$
Now, by construction, $g$ vanishes at every singular point of $\mathcal F$ and since $\mathcal F$ and $\mathcal D$ are generically transverse, it is not identically zero on $X$. So the irreducible components of $g = 0$ are invariant hypersurfaces of $(X,v)$ passing through every singular point.
%
\end{proof} 

\begin{proof}[Proof of Theorem \ref{theorem-minimality2}]
We show that the first prolongation of $(X,v)$ is quasi-irreducible and apply Theorem D. Denote by $H(v)$ the canonical invariant hypersurface and consider $Z$ a proper invariant irreducible horizontal subvariety of $(\P(T^\ast X), \P(v^{[1]}))$. We distinguish two cases: 

\begin{itemize} 
\item \textit{Case 1:} either the relative dimension of $Z$ is $> 0$. 
\end{itemize} 
In that case, at the generic point $x \in X$, we have $H(v)_x \cap Z_x \neq \emptyset$ since $H(v)_x$ and $Z_x$ are two irreducible subvarieties of $\P^{n-1}$ such that $\mathrm{dim}(H(v)_x) + \mathrm{dim}(Z_x) \geq n - 1$.
It follows that some irreducible component $Z'$ of $Z \cap H$ is a horizontal subvariety of $\P(T^\ast X)$ contained in $H(v)$. It is also $\P(v^{[1]})$-invariant as the intersection of two closed invariant subvarieties is also invariant. Now Corollary \ref{corollary-Pereira} implies that $Z' = H(v)$ and therefore that  $H(v) \subset Z$. Since $H(v)$ is an hypersurface and $Z$ is irreducible, we conclude that $H(v) = Z$ as required to show the the first prolongation of $(X,v)$ is quasi-irreducible. 

\begin{itemize} 
\item \textit{Case 2:} or the relative dimension of $Z$ is zero and hence $Z = W$ is an invariant multidistribution of codimension one. 
\end{itemize} 
By Proposition \ref{proposition-preparation-multidistribution}, the singular locus $\mathrm{Sing}(W)$ is an invariant subvariety of $(X,v)$ and if $\mathcal I(W)$ denotes the union of the codimension one irreducible components of $\mathrm{Sing}(W)$ then there is an étale morphism $\phi: Y \rightarrow X \setminus \mathcal I(W)$
and distinct invariant distributions $\mathcal D_1,\ldots, \mathcal D_k $ such that $$\phi^\ast W = Z(\mathcal D_1) \cup \cdots \cup Z(\mathcal D_k)$$
Now $x_0 \notin \mathcal I(W)$ since by (b), $x_0$ is not contained in any proper invariant hypersurface. Denote by $w$ the lift of $v$ to $Y$ and consider $y_0 \in \phi^{-1}(x_0)$. Note that since $\phi$ is étale, $y_0$ is a singular point of $w$ which is not contained in  any invariant proper positive-dimensional irreducible subvariety of $(Y,w)$. Hence, Lemma \ref{lemma-tangent} implies that the invariant codimension one distributions $\mathcal D_1,\ldots, \mathcal D_k$ are not generically transverse to $\mathcal F$  that is:
$$  Z(\mathcal D_i) \subset Z(\mathcal F(w))  \text{ for } i = 1,\ldots , k  $$
If follows that $\phi^\ast W \subset  \phi^\ast (H(v))$  and hence that $W \subset H(v)$. As in Case 1, we can now apply Corollary \ref{corollary-Pereira} and conclude that this is impossible if $\mathrm{dim}(X) \geq 3$ and that $W = H(v)$ if $\mathrm{dim}(X) = 2$.  This concludes the proof that the first prolongation of $(X,v)$ is quasi-irreducible.

If $\mathrm{dim}(X) \geq 3$, the hypothesis of Theorem D are fulfilled and we conclude that the generic type of $(X,v)$ is minimal as required. To handle the case where $\mathrm{dim}(X) = 2$, it remains to see that (b) ensures that every rational integral of $(X,v)$ is constant. This follows from the theorem of Camacho and Sad \cite{Camacho-Sad} asserting that every singular point admits at least one analytic separatrix, see the proof of Theorem 2.1.2 in \cite{jaoui-ANT} for additional details. 
\end{proof} 
\section{Abundance of strongly minimal autonomous differential equations}

In this section, we work in the first-order theory $\mathrm{DCF}_0$ of differentially closed field of characteristic zero and use freely in this context several important notions from geometric stability theory (orthogonality, binding groups, Zilber's trichotomy...). Classical references in the setting of differential algebra are \cite{Bouscaren, Marker}  and \cite{Pillay} in general. We also refer the reader to the recent \cite{Pizarro} for a self-contained introduction to these notions.

\subsection{From minimality to strong minimality} Fix $k$ an algebraically closed field of characteristic zero.

\begin{lemma} \label{lemma-strongminimality}
Let $(X,v)$ be a algebraic vector field of dimension $n \geq 2$. Assume that $(X,v)$ does not admit any proper positive-dimensional invariant algebraic subvariety
and denote by $p$ the generic type of $(X,v)$. Then 
\begin{itemize} 
\item[(i)] $\mathrm{Sing}(v)$ is finite, 
\item[(ii)] the solution set $(X,v)^\U$ can be decomposed as 
$$ (X,v)^\U = \mathrm{Sing}(v) \cup p(\U) $$
\item[(iii)] the solution set $(X,v)^\U$ is strongly minimal if and only if $p$ is a minimal type.
\end{itemize} 
\end{lemma}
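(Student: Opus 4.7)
The plan rests on the following principle: if $x \in (X,v)^\U$ then the Zariski closure of $\{x\}$ in $X$ over $k$ is an irreducible closed subvariety of $X$ invariant under $v$, and by hypothesis $(\ast)$ this closure must be either a $k$-point or the whole of $X$. I will exploit this dichotomy throughout the three parts.

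For (i), I first rule out $v \equiv 0$: were $v$ identically zero, every closed subvariety of $X$ would be invariant, contradicting $(\ast)$ since $\dim X \geq 2$ guarantees the existence of proper positive-dimensional closed subvarieties. Hence $\mathrm{Sing}(v)$ is a \emph{proper} closed subvariety of $X$, invariant by the very definition of $\mathrm{Sing}(v)$, whose irreducible components are again invariant by Remark \ref{remark-invariant-restriction}(iii). By $(\ast)$, no such component may have positive dimension, so $\mathrm{Sing}(v)$ is zero-dimensional and therefore finite.

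For (ii), given $x \in (X,v)^\U$ let $Z \subseteq X$ denote the Zariski closure of $\{x\}$ over $k$. Then $Z$ is irreducible, and it is invariant under $v$: the compatibility between $\delta$ on $\U$ and the derivation $\delta_v$ built into the solution functor (Fact \ref{fact-solutionfunctor}) yields, for any local regular $f$ vanishing on $Z$, the identity $0 = \delta(f(x)) = \delta_v(f)(x)$, so $\delta_v(f)$ also vanishes on $Z$. Applying $(\ast)$ to $Z$: either $Z = X$, in which case $\mathrm{trdeg}(k\langle x\rangle/k) = n$ and $x$ realizes the generic type $p$; or $Z$ is a single closed $k$-point, in which case $x \in X(k)$ and the same identity combined with $\delta(k) = 0$ forces $v(x) = 0$, i.e.\ $x \in \mathrm{Sing}(v)$. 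This yields the desired decomposition.

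For (iii), Parts (i) and (ii) show that every non-algebraic complete type in $S_X(k)$ concentrating on $(X,v)^\U$ must be $p$, because any realization outside $p(\U)$ lies in the finite set $\mathrm{Sing}(v)$ of $k$-points and is therefore algebraic over $k$. This immediately gives $\mathrm{Md}((X,v)^\U) = 1$ and $\mathrm{MR}((X,v)^\U) = \mathrm{MR}(p)$, while $p$ is automatically non-algebraic since its realizations have transcendence degree $n \geq 2$ over $k$. Strong minimality of $(X,v)^\U$ is equivalent to $\mathrm{MR} = \mathrm{Md} = 1$, and minimality of the stationary type $p$ is equivalent to $\mathrm{MR}(p) = 1$; by the preceding equalities, the two conditions coincide. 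The only delicate ingredient in the whole argument is the invariance of the Zariski closure in (ii), which is essentially tautological from the definition of the solution functor; everything else is bookkeeping of types and ranks in $\mathrm{DCF}_0$.
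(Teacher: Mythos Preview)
Your arguments for (i) and (ii) are correct and match the paper's approach; the paper phrases (ii) via quantifier elimination in $\mathrm{DCF}_0$ (types on $(X,v)^\U$ correspond bijectively to invariant irreducible subvarieties), while you compute the Zariski closure of a solution directly, but the content is the same.

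Part (iii) contains a gap. You assert that ``minimality of the stationary type $p$ is equivalent to $\mathrm{MR}(p)=1$,'' but in $\mathrm{DCF}_0$ minimality of a type means $U$-rank $1$, and Lascar and Morley ranks are known to differ in this theory---this is exactly the content of \cite{Hrushovski-Scanlon}, which the paper cites when discussing the delicate passage from minimality to strong minimality. The direction $\mathrm{MR}(p)=1 \Rightarrow p$ minimal is unproblematic since $U$-rank is bounded by Morley rank; the converse is the substance of the lemma and does not hold for arbitrary stationary types in $\mathrm{DCF}_0$. It does hold here, but only because of the isolation of $p$ provided by (ii), and you must actually use it. The paper argues directly: if $D\subseteq (X,v)^\U$ is $K$-definable, infinite and coinfinite, then since $\mathrm{Sing}(v)$ is finite both $D\cap p(\U)$ and $p(\U)\setminus D$ are infinite, hence each carries a non-algebraic complete type over $K$; by (ii) each such type restricts to $p$, and minimality of $p$ forces both to be non-forking extensions, contradicting stationarity. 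Your Morley-rank bookkeeping skips precisely this argument, which is where the hypothesis that $p$ is minimal is genuinely consumed.
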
 

\begin{proof} 
(i) follows from the assumption and the fact that $\mathrm{Sing}(v)$ is a proper closed invariant subvariety.  To show (ii), note that by quantifier elimination in $\mathrm{DCF}_0$ \cite[Theorem 1.1]{Marker}, the set $\mathcal Q$  of types $q \in S(k)$ living on $(X,v)^\U$ is in bijection with the set of invariant irreducible subvarieties of $(X,v)$. Since the proper irreducible invariant subvarieties are all zero-dimensional, it follows that  
$$(X,v)^\U = \bigcup_{q \in \mathcal Q} q(\U) = p(\U) \cup \mathrm{Sing}(v).$$
To prove (iii), assume that $p$ is a minimal type and, for the sake of a contradiction, consider $D$ a definable subset of $(X,v)^\U$ defined over a differential extension $K$ of $k$ which is both finite and cofinite.  Since $\mathrm{Sing}(v)$ is finite, (ii) implies that both 
$$p(x) \cup \lbrace x \in D \rbrace \text{ and } p(x) \cup \lbrace x \notin D \rbrace $$ 
are infinite and contain nonalgebraic complete types $p_+ \in S(K)$ and $p_-\in S(K)$ respectively. Since $p$ is minimal both are nonforking extension of $p$. This leads to a contradiction as $p$ is a stationary type.  
\end{proof} 

\subsection{Case of the affine space}

\begin{theorem} \label{theorem-affine-sm}
Let $d \geq 2, n \geq 2$ and consider a system of autonomous differential equations: 

$$(S): \begin{cases} 
x_1'& =  f_1(x_1,\ldots, x_n) \\
& \vdots  \\ 
x'_n & = f_n(x_1,\ldots, x_n) 
\end{cases}$$
where $f_1,\ldots, f_n \in \mathbb{C}[X_1,\ldots, X_n]$ are polynomials of degree $d \geq 2$. If the coefficients of $f_1,\ldots, f_n$ are  $\mathbb{Q}$-algebraically independent transcendental complex numbers (in particular they are all nonzero) then the solution set of $(S)$ is strongly minimal and geometrically trivial. 
\end{theorem}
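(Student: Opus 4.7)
The plan is to deduce the theorem from Theorem D (Theorem~\ref{theorem-minimality2}), Lemma~\ref{lemma-strongminimality}, and the Hrushovski--Sokolovi\'c trichotomy (Fact~\ref{Hrushovski-Sokolovic}), after checking two generic properties of the parameter space $\Xi(n,d)$. Via Proposition~\ref{proposition-affine-degree}, I identify $\Xi(n,d)$ with the space $\overline{\Xi}(n,d)$ of $\mathcal{O}_{\mathbb{P}^n}((d-1)H)$-twisted vector fields on $\mathbb{P}^n$ leaving the hyperplane at infinity $H$ invariant; this allows one to apply results about twisted vector fields on a smooth projective variety, while the analysis of Theorem D takes place on $\mathbb{A}^n = \mathbb{P}^n \setminus H$.

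The first generic property to establish, call it $(a)$, is that the system $(S)$ admits no proper positive-dimensional invariant closed algebraic subvariety of $\mathbb{A}^n$. This is provided by Coutinho's theorem \cite[Theorem 4.3]{Coutinho}, which exhibits a countable union of proper Zariski-closed subsets of $\Xi(n,d)$ outside of which no such subvariety exists (note that $H$ is always invariant but does not meet $\mathbb{A}^n$). The second property, $(b)$, is the existence of a non-resonant singularity $x_0 \in \mathbb{A}^n$ of $v$. Existence of at least one such vector field in $\Xi(n,d)$ is witnessed concretely by any $v_0 = \sum_{i=1}^n \lambda_i x_i \partial_{x_i} + w$ with $\lambda_1,\ldots,\lambda_n$ $\mathbb{Z}$-linearly independent and $w$ a polynomial vector field of degree $d$ vanishing at the origin to order $\geq 2$; its linear part at the origin is $\mathrm{diag}(\lambda_1,\ldots,\lambda_n)$, which is non-resonant. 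Openness of $(b)$ then follows from the implicit function theorem: since a non-resonant linear part is invertible, the singular point persists under small perturbations and its eigenvalues depend continuously on the coefficients, so the $\mathbb{Z}$-independence condition persists on a Zariski-open neighbourhood. Both exceptional loci being defined over $\mathbb{Q}$, any system with $\mathbb{Q}$-algebraically independent transcendental coefficients satisfies both $(a)$ and $(b)$.

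I then chain the three model-theoretic ingredients. By $(a)$, the singular locus of $v$ is finite (otherwise it would itself be a proper positive-dimensional invariant subvariety), so $(\mathbb{A}^n,v)$ has reduced singularities in the sense of the text. The point $x_0$ from $(b)$ is non-resonant, and by $(a)$ lies in no proper positive-dimensional invariant closed subvariety; Theorem~\ref{theorem-minimality2} therefore applies and yields that the generic type $p$ of $(\mathbb{A}^n,v)$ is minimal. Combined with $(a)$, Lemma~\ref{lemma-strongminimality} upgrades minimality of $p$ to strong minimality of the solution set of $(S)$. Since $(S)$ is autonomous of dimension $n \geq 2$, Fact~\ref{Hrushovski-Sokolovic} promotes strong minimality to geometric triviality, completing the argument.

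The main obstacle is verifying $(a)$: ruling out every proper positive-dimensional invariant subvariety of a generic degree-$d$ polynomial vector field on $\mathbb{A}^n$. This is exactly the geometric input specific to the affine setting supplied by \cite[Theorem 4.3]{Coutinho} (the projective analogue used for Theorem B being Fact~\ref{fact-Pereira-Coutinho-int}). The remaining ingredients — persistence of non-resonant singularities, application of Theorem D, and the passage from minimality to strong minimality and then to geometric triviality — are comparatively routine once $(a)$ and $(b)$ are in hand.
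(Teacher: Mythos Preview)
Your overall architecture matches the paper's proof exactly: reduce to Theorem~\ref{theorem-minimality2} via Coutinho's theorem for property (a) and a perturbation argument for property (b), then chain Lemma~\ref{lemma-strongminimality} and Fact~\ref{Hrushovski-Sokolovic}. The treatment of (a) and the final model-theoretic chain are fine.

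There is, however, a genuine gap in your argument for (b). You assert that ``the $\mathbb{Z}$-independence condition persists on a Zariski-open neighbourhood'', but non-resonance is \emph{not} a Zariski-open condition: it is the complement of the countably many conditions $\sum k_i\lambda_i(v)=0$ indexed by $(k_1,\ldots,k_n)\in\mathbb{Z}^n\setminus\{0\}$. Moreover, the eigenvalue functions $\lambda_i(v)$ are only locally defined analytic functions (roots of the characteristic polynomial), not global algebraic functions of the coefficients, so ``defined over $\mathbb{Q}$'' is not immediate. Consequently the inference ``both exceptional loci being defined over $\mathbb{Q}$, any system with $\mathbb{Q}$-algebraically independent coefficients satisfies both (a) and (b)'' is not justified as written.

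The paper closes this gap differently. It first invokes \cite[Proposition~3.2.3]{jaoui-ANT} to reduce to showing the conclusion for \emph{some} generic $v$. It then works in an analytic neighbourhood $U$ of the diagonal linear vector field $v_0=\sum\lambda_i x_i\partial_{x_i}$, where the implicit function theorem gives analytic eigenvalue functions $\lambda_i:U\to\mathbb{C}$. The resonance locus $\mathcal{E}_2=\bigcup_{k\neq 0}\{v\in U:\sum k_i\lambda_i(v)=0\}$ and the non-genericity locus $\mathcal{E}_3$ are each countable unions of proper closed analytic subsets of $U$; combining with $\mathcal{E}_1\cap U$ and applying Baire's category theorem yields a single $v\in U$ that is simultaneously generic, has a non-resonant singularity, and has no invariant proper positive-dimensional subvariety. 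Your route could be repaired by arguing that each resonance condition cuts out a proper $\mathbb{Q}$-Zariski-closed subset of $\Xi(n,d)$, but this requires additional work you have not supplied; the Baire argument is the cleaner fix.
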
 

\begin{remark} \label{remark-sharpdegree}
Even when $n = 2$, Theorem \ref{theorem-affine-sm} improves on the main theorem of \cite{jaoui-ANT} which assumes that the degree $d$ is greater or equal to $3$. 

\begin{itemize}

\item The requirement that the degree $d$ is greater or equal to $2$ is sharp: every system $(S)$ of degree $1$ is internal to the constants and the generic one has binding (or Galois) group the torus $\mathbb{G}_m^n$. This follows easily from the fact that the equation 
$$y' = ay+b$$
over $\mathbb{Q}(a,b)$ has Galois group $\mathbb{G}_m$. When $a = 0 \text{ and } b \neq 0$, then the equation is still internal to the constants but its binding group is the additive group $\mathbb{G}_a$.

\item In dimension one, Rosenlicht's theorem \cite{Rosenlicht} ensures that the conclusion of the theorem holds as long as $d \geq 3$ but not when $d = 2$. Indeed, when $d = 2$, the equation 
$$x' = a x^2 + b x + c$$
is an autonomous Riccati equation. Hence, the equation is internal to the constants: given three distinct particular solutions, any other solution is an algebraic combination of these solutions and constants. Hence the solution set is not geometrically trivial. 
\item In dimension three, it is possible to construct explicit examples of quadratic vector fields on $\mathbb{C}^3$ satisfying the conclusion of the theorem: the Halphen equations described in \cite{Guillot}: 
$$H(\alpha_1,\alpha_2,\alpha_3): \begin{cases}
x_1' = \alpha_1 x_1^2 +  (1 - \alpha_1)(x_1x_2 + x_1x_3 - x_2x_3) \\ 
x_2' = \alpha_2 x_2^2 +  (1 - \alpha_2)(x_1x_2 - x_1x_3 + x_2x_3) \\
x_3' = \alpha_3 x_3^2 +  (1 - \alpha_3)(- x_1x_2 + x_1x_3 + x_2x_3)
\end{cases}$$
for well-chosen values of $\alpha_1,\alpha_2,\alpha_3 \in \mathbb{C}^3$ are strongly minimal and geometrically trivial. This follows from the results of \cite{BCFN} and was kindly pointed out to the author by Ronnie Nagloo. In this case, this construction produces strongly minimal and geometrically trivial systems $(S)$ with coefficients in $\mathbb{Q}$. 
\end{itemize}

In higher dimension, we do not know how to produce strongly minimal systems with coefficients in $\overline{\mathbb{Q}}$. This is reminiscent of the situation of Matzat's conjecture in inverse differential Galois theory which was first established over algebraically closed field $k$ of infinite transcendence degree and only recently deduced over  $\overline{\mathbb{Q}}$ in \cite{Feng-Wibmer} using specialization techniques going back to \cite{Hrushovski-Galois}.
\end{remark}

In the proof of the theorem, the following result will be used. 

\begin{fact}[{\citep[Theorem 4.3]{Coutinho}}]\label{theorem-Coutinho}
Let $n,d \geq 2$ and let $H$ be an hyperplane of $\P^n$. A {\em generic} algebraic foliation of $\P^n$ of {\em degree $d \geq 2$} that leaves $H$ invariant does not leave invariant any other proper positive-dimensional  irreducible algebraic  subvariety of $\P^n$.  
\end{fact}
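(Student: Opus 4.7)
The plan is to apply the minimality criterion of Theorem \ref{theorem-minimality2} to a suitably generic representative $v_0 \in \Xi(n,d)$, deduce strong minimality via Lemma \ref{lemma-strongminimality}, and then obtain geometric triviality from Fact \ref{Hrushovski-Sokolovic}. Since the property ``the solution set of $(\mathbb{A}^n,v)$ is strongly minimal and geometrically trivial'' depends only on the type (in the pure field language) of the coefficients of $v$ over $\mathbb{Q}$, it suffices to show that this property holds outside a countable union of proper Zariski-closed subsets of $\Xi(n,d)$; the $\mathbb{Q}$-algebraically independent transcendental case then follows.

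First I would exhibit a suitable $v_0$. Via Proposition \ref{proposition-affine-degree}, I identify $\Xi(n,d)$ with the subspace of $\Xi(\mathbb{P}^n,\mathcal{O}_{\mathbb{P}^n}(d-1))$ of twisted vector fields leaving the hyperplane $H$ at infinity invariant. Fact \ref{theorem-Coutinho}, applied to the associated projective foliation, ensures that outside a countable union of proper Zariski-closed subsets of $\Xi(n,d)$ the foliation admits no invariant proper positive-dimensional irreducible closed subvariety other than $H$. Since any such subvariety of the affine vector field $(\mathbb{A}^n,v)$ would have Zariski closure in $\mathbb{P}^n$ that is neither $H$ nor contained in $H$, this genericity condition rules out all proper positive-dimensional invariant subvarieties of $(\mathbb{A}^n,v)$. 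Independently, writing $f_i = a_i x_i + (\text{higher-order terms})$ shows that for generic coefficients the origin is a singular point whose linear part has eigenvalues $(a_1,\dots,a_n)$, and non-resonance excludes only a countable union of Zariski-closed subsets of the coefficient space. Intersecting these two generic conditions yields $v_0 \in \Xi(n,d)$ with an affine non-resonant singular point $x_0$ and no proper positive-dimensional invariant subvariety.

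Next I apply Theorem \ref{theorem-minimality2} to $(\mathbb{A}^n,v_0)$: condition (a) holds because $x_0$ is non-resonant, and (b) is vacuous because there are no proper positive-dimensional invariant subvarieties at all. Hence the generic type of $(\mathbb{A}^n,v_0)$ is minimal. The same absence of proper positive-dimensional invariant subvarieties is exactly hypothesis $(\ast)$ of Lemma \ref{lemma-strongminimality}, so the solution set of $(\mathbb{A}^n,v_0)$ is strongly minimal. Finally, since $n \geq 2$, Fact \ref{Hrushovski-Sokolovic} upgrades strong minimality to geometric triviality. Transferring via the type-invariance observed at the start completes the proof.

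The main obstacle I anticipate is reconciling the convention on projective foliation degree with affine degree, to confirm that Coutinho's bound $d\geq 2$ in Fact \ref{theorem-Coutinho} covers the full range $d\geq 2$ claimed here (rather than forcing $d\geq 3$); this is a bookkeeping matter handled by Proposition \ref{proposition-affine-degree}. A secondary concern is that some of the vector fields singled out by Fact \ref{theorem-Coutinho} may have all singular points contained in $H$, not $\mathbb{A}^n$; this is handled by the independent genericity of having an affine singular point with prescribed linear part (a Zariski-open condition seen by inspecting the coefficients), which intersects nontrivially with Coutinho's locus since both complements are at most countable unions of proper Zariski-closed subsets of an irreducible variety.
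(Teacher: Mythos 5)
Your argument does not prove the statement in question; it proves something downstream of it. Fact \ref{theorem-Coutinho} is a purely foliation-theoretic assertion about $\P^n$: a generic degree-$d$ foliation leaving the hyperplane $H$ invariant admits no \emph{other} invariant proper positive-dimensional irreducible closed algebraic subvariety. What you have sketched is instead (essentially) the paper's proof of Theorem \ref{theorem-affine-sm}, and, crucially, your second paragraph invokes Fact \ref{theorem-Coutinho} itself as an ingredient (``Fact \ref{theorem-Coutinho}, applied to the associated projective foliation, ensures that outside a countable union of proper Zariski-closed subsets \dots''). As a proof of Fact \ref{theorem-Coutinho} this is circular: the non-existence of invariant subvarieties for the generic foliation is exactly what has to be established, and nothing in the chain Theorem \ref{theorem-minimality2} $\Rightarrow$ Lemma \ref{lemma-strongminimality} $\Rightarrow$ Fact \ref{Hrushovski-Sokolovic} produces it --- those results all \emph{consume} the absence of invariant subvarieties (and a non-resonant singularity) as hypotheses; they do not output it. Strong minimality of the solution set is also logically weaker than, and does not imply, the absence of invariant hypersurfaces through a given point, so there is no way to reverse the implication afterwards.

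Note that the paper itself offers no proof of this statement: it is quoted as a Fact from Coutinho's work. A genuine proof is an algebro-geometric genericity argument with no model theory in it: for each fixed Hilbert polynomial one checks that the locus of parameters in $\overline{\Xi}(n,d)$ whose foliation leaves invariant some subvariety with that Hilbert polynomial other than $H$ is Zariski closed (properness of the Hilbert scheme plus closedness of the tangency/invariance condition), and one then exhibits, for each such polynomial, a single degree-$d$ foliation tangent to $H$ with no such invariant subvariety, so that each of the countably many closed loci is proper. None of this is addressed in your proposal, so the gap is not a repairable detail but a wholesale substitution of the conclusion for the hypothesis.
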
 

Let's spell out explicitly what is a {\em generic} algebraic foliation of $\P^n$ of degree $d$ leaving $H$ invariant in the sense of \cite{Coutinho,Pereira-JEMS}. As described in Example \ref{example-foliations of rank one}, the space $\mathrm{Fol}(\P^n,d)$ of foliations of degree $d$ on $\P^n$ can be identified with the open set 
$$U \subset \P(\Xi(\P^n, \mathcal O_{\P^n}(d-1)))$$
consisting of the projective classes of twisted vector fields with reduced singularities. As described in Proposition \ref{proposition-affine-degree}, the space of twisted vector field preserving the hyperplane $H$ is a subvector space 
$$\overline{\Xi}(n,d) \subset \P(\Xi(\P^n, \mathcal O_{\P^n}(d-1))) $$ 
With this notation, Fact \ref{theorem-Coutinho} asserts that for $d \geq 2$ and $n \geq 2$, there exists a countable union of proper Zariski closed subsets $\mathcal E_1 = \bigcup_{i \in \mathbb{N}} Z_i(\mathbb{C})$ of $\overline{\Xi}(n,d)$ such that every $v \in \overline{\Xi}(n,d) \setminus \mathcal E_1$ satisfies: 
\begin{itemize} 
\item[(a)] the image $\P(v)$ of the twisted vector field $v$ lies in $U$ and defines a foliation of degree $d$ preserving the hyperplane $H$, 
\item[(b)] apart from the hyperplane $H$, the twisted vector field $v$ (and the associated foliation) does not leave invariant any nontrivial algebraic subvariety of $\P^n$. 
\end{itemize}

\begin{proof}[Proof of Theorem \ref{theorem-affine-sm}] 
Fix $n,d \geq 2$. There is a one-to-one correspondence between polynomial systems $(S)$ in $\Xi(n,d)$ and algebraic vector fields $v_S$ on the affine space $\mathbb{A}^n$ of degree $\leq d$ given by 
$$ (S) \mapsto v_S = \sum_{i = 1}^n f_i \cdot \partial_{x_i}.$$
We say that $v_S$ is a {\em generic vector field of degree $d$} if the coefficients of the $f_1,\ldots, f_n$ are $\mathbb{Q}$-algebraically independent transcendental complex numbers. By Proposition 3.2.3 of \cite{jaoui-ANT}, it is enough to prove that the conclusion of the theorem holds for \textit{some} generic vector field of degree $d$. 

\setcounter{claim}{0}

\begin{claim} 
Let $d \geq 2$. There exists a countable union of proper Zariski-closed subsets $ \mathcal E_1 = \bigcup_{n \in \mathbb N} Z_n(\mathbb{C})$ of $\Xi(n,d)$ such that every vector field $v \in \Xi(n,d) \setminus \mathcal E_1$ does not leave invariant any proper positive-dimensional closed subvariety of $\mathbb{A}^n$. 
\end{claim}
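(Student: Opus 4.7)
The plan is to transfer Coutinho's theorem (Fact \ref{theorem-Coutinho}) from the projective setting to the affine one using the identification provided by Proposition \ref{proposition-affine-degree}. Indeed, the isomorphism
$$\mathrm{res}: \overline{\Xi}(n,d) \xrightarrow{\sim} \Xi(n,d)$$
of $\mathbb{C}$-vector spaces given by restriction to $\mathbb{A}^n$ identifies the finite-dimensional parameter space $\Xi(n,d)$ with the subspace $\overline{\Xi}(n,d) \subset \Xi(\mathbb{P}^n, \mathcal{O}_{\mathbb{P}^n}((d-1)H))$ of twisted projective vector fields preserving the hyperplane at infinity $H$. On the other hand, the passage between invariant closed subvarieties of $(\mathbb{A}^n, v_0)$ and invariant closed subvarieties of the twisted vector field $(\mathbb{P}^n, v)$ on $\mathbb{P}^n$ is controlled by Remark \ref{remark-invariant-restriction}: restriction sends invariants to invariants and, conversely, the Zariski closure in $\mathbb{P}^n$ of an invariant of $(\mathbb{A}^n, v_0)$ is an invariant of $(\mathbb{P}^n, v)$.

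Granted this dictionary, I would apply Fact \ref{theorem-Coutinho} in the reformulation spelled out by the authors just above the claim: since $d \geq 2$ and $n \geq 2$, there is a countable union of proper Zariski-closed subsets
$$ \widetilde{\mathcal E}_1 = \bigcup_{i \in \mathbb{N}} \widetilde{Z}_i(\mathbb{C}) \subset \overline{\Xi}(n,d)$$
such that for every $v \in \overline{\Xi}(n,d) \setminus \widetilde{\mathcal{E}}_1$, the twisted vector field $v$ has reduced singularities, defines a foliation of degree $d$ on $\mathbb{P}^n$ preserving $H$, and leaves invariant no positive-dimensional proper closed subvariety of $\mathbb{P}^n$ other than $H$ itself.

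Next, I would deduce that for such a $v$, the affine restriction $v_0 = \mathrm{res}(v) \in \Xi(n,d)$ does not leave invariant any proper positive-dimensional closed subvariety of $\mathbb{A}^n$. If $Y \subsetneq \mathbb{A}^n$ were such a subvariety, Remark \ref{remark-invariant-restriction}(ii) would give that the Zariski closure $\overline{Y} \subset \mathbb{P}^n$ is invariant under $v$. Since $Y \subset \mathbb{A}^n = \mathbb{P}^n \setminus H$ is nonempty and positive-dimensional, $\overline{Y}$ cannot be contained in $H$, so in particular $\overline{Y} \neq H$; moreover $\dim \overline{Y} = \dim Y \in \{1, \dots, n-1\}$, so $\overline{Y}$ is a positive-dimensional proper invariant subvariety of $\mathbb{P}^n$ distinct from $H$. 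This contradicts the choice of $v \notin \widetilde{\mathcal{E}}_1$.

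Finally, setting $\mathcal{E}_1 := \mathrm{res}(\widetilde{\mathcal{E}}_1) \subset \Xi(n,d)$, and noting that $\mathrm{res}$ is a linear isomorphism so it sends countable unions of proper Zariski-closed subsets to countable unions of proper Zariski-closed subsets, the claim follows. I do not anticipate any serious obstacle: the only point requiring care is checking that invariance really transports both ways between $v$ on $\mathbb{P}^n$ and $v_0$ on $\mathbb{A}^n$, which is guaranteed by Remark \ref{remark-invariant-restriction}. Everything else is bookkeeping around the identification of Proposition \ref{proposition-affine-degree} and a direct appeal to Fact \ref{theorem-Coutinho}.
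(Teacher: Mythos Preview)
Your proposal is correct and follows essentially the same approach as the paper: both use the isomorphism of Proposition \ref{proposition-affine-degree} to identify $\Xi(n,d)$ with $\overline{\Xi}(n,d)$, apply Fact \ref{theorem-Coutinho} to obtain the exceptional set on the projective side, and pull it back to $\Xi(n,d)$. Your argument is in fact slightly more explicit than the paper's, since you spell out via Remark \ref{remark-invariant-restriction} why an invariant proper positive-dimensional $Y \subset \mathbb{A}^n$ would yield a projective invariant $\overline{Y}$ distinct from $H$, whereas the paper leaves this step implicit.
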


\begin{proof}[Proof of the claim]
By Proposition \ref{proposition-affine-degree}, we have an isomorphism of complex vector spaces: 
$$\mathrm{prol} : \Xi(n,d) \rightarrow \overline{\Xi}(n,d) \subset \Xi(\P^n, \mathcal O_{\P^n}(d - 1))$$
which to a vector field $w \in \Xi(n,d)$ associates $\overline{w}$ an $\mathcal O_{\P^n}(d - 1)$-twisted vector field leaving the hyperplane at infinity $H$ invariant and whose restriction to $\mathbb{A}^n$ is $w$.  By Fact \ref{theorem-Coutinho}, there is  a countable union of proper Zariski-closed subsets $\mathcal E_1 = \bigcup_{i \in \mathbb{N}} Z_i(\mathbb{C})$ of $\overline{\Xi}(n,d)$ such that every $v \in \overline{\Xi}(n,d) \setminus \mathcal E_1$ does not leave invariant any proper positive-dimensional algebraic subvariety of $\P^n$ apart from the hyperplane $H$ at infinity. We conclude that every vector field $v \notin \mathrm{prol}^{-1}(\mathcal E_1)$  does not leave invariant any proper positive-dimensional subvariety of $\mathbb{A}^n$. Since $\mathrm{prol}$ is an isomorphism, this concludes the proof of the claim.
\end{proof}

Let $v_0 = \sum_{i = 1}^n \lambda_ix_i\cdot \partial_{x_i}$
be a linear vector field with a non-resonant isolated singularity at zero that is such that $\lambda_1,\ldots, \lambda_n \in \mathbb{C}$ generate a $\mathbb{Z}$-module of rank $n$.

\begin{claim} 
There is an open disk $D$ around $0$ and a connected analytic neighborhood $U$ of $v_0$ in $\Xi(n,d)$ such that 
\begin{itemize} 
\item every vector field $v$ in $U$ has a unique singularity $ x_0(v)$ in $D$, 
\item there are analytic functions 
$$\lambda_i: U \rightarrow \mathbb{C}$$ 
such that $\lambda_1(v),\ldots, \lambda_n(v)$ are the eigenvalues of the linear part of $v$ at $x_0(v)$. 
\end{itemize} 
\end{claim}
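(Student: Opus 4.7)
The plan is to apply the holomorphic implicit function theorem twice: first to locate the singularity $x_0(v)$, then to extract each eigenvalue of the linear part. The hypothesis that $\lambda_1, \ldots, \lambda_n$ generate a free $\mathbb{Z}$-module of rank $n$ will be used only through its two consequences that all $\lambda_i$ are nonzero and pairwise distinct.

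First, I would consider the evaluation map
\[
\mathrm{ev}: \Xi(n,d) \times \mathbb{C}^n \longrightarrow \mathbb{C}^n, \qquad (v,x) \longmapsto v(x),
\]
which is polynomial (in particular holomorphic). By Lemma \ref{lemma-linearpart}(ii), the partial derivative of $\mathrm{ev}$ in the $x$-variable at $(v_0,0)$ is the linear part of $v_0$ at $0$, namely $\mathrm{diag}(\lambda_1, \ldots, \lambda_n)$. Since the $\lambda_i$ are nonzero, this matrix is invertible, so the holomorphic implicit function theorem produces an open polydisk $D$ centered at $0$, an open neighborhood $U_1$ of $v_0$ in $\Xi(n,d)^{\mathrm{an}}$, and a holomorphic map $x_0 : U_1 \to D$ such that for every $v \in U_1$, the point $x_0(v)$ is the \emph{unique} zero of $v$ in $D$, with $x_0(v_0) = 0$.

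Next, I would produce the eigenvalue functions. Consider the holomorphic map
\[
J : U_1 \longrightarrow \mathrm{Mat}_n(\mathbb{C}), \qquad v \longmapsto \text{linear part of } v \text{ at } x_0(v),
\]
whose entries are polynomials in the coefficients of $v$ evaluated at $x_0(v)$ (again using Lemma \ref{lemma-linearpart}(i)). At $v_0$ the matrix $J(v_0)$ has characteristic polynomial $\prod_{i=1}^n (t - \lambda_i)$, and the $\lambda_i$ are pairwise distinct, so each is a \emph{simple} root. Applying the holomorphic implicit function theorem to the map $(v,t) \mapsto \det(tI - J(v))$ at each point $(v_0, \lambda_i)$ (using that the derivative in $t$ at $\lambda_i$ is $\prod_{j\neq i}(\lambda_i - \lambda_j) \neq 0$), one obtains, after shrinking to a common neighborhood $U_2 \subset U_1$, holomorphic functions $\lambda_1, \ldots, \lambda_n : U_2 \to \mathbb{C}$ satisfying $\lambda_i(v_0) = \lambda_i$ and $\det(\lambda_i(v) I - J(v)) = 0$. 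After possibly shrinking again so that the $\lambda_i(v)$ stay disjoint, they constitute the full set of eigenvalues of $J(v)$ for every $v \in U_2$.

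Finally, I would take $U$ to be the connected component of $v_0$ in $U_2$, which is open and connected since $\Xi(n,d)^{\mathrm{an}}$ is a complex vector space (in particular, locally connected). On $U$, the functions $x_0$ and $\lambda_1, \ldots, \lambda_n$ satisfy the required properties. There is no real obstacle in the argument; the only point that deserves care is the passage from the continuous dependence of the spectrum (as an unordered multiset) to the existence of individually \emph{holomorphic} eigenvalue functions, which is exactly where simplicity of the spectrum of $v_0$ at $0$ — itself a consequence of the non-resonance hypothesis — is used.
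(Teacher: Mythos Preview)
Your proof is correct and follows essentially the same route as the paper: both apply the holomorphic implicit function theorem to the evaluation map $(v,x)\mapsto v(x)$ to obtain the analytic singularity $x_0(v)$, and then use distinctness of the eigenvalues at $v_0$ to follow the roots of the characteristic polynomial of the linear part analytically. Your write-up is in fact a bit more explicit on the second step and on passing to a connected neighborhood, and you correctly cite Lemma~\ref{lemma-linearpart}(ii) for the identification of the partial differential with the linear part.
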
 

\begin{proof}[Proof of the claim] 
This follows from the analytic implicit function theorem. Consider the evaluation morphism:
$$e : \Xi(n,d) \times \mathbb{A}^n \rightarrow \mathbb{A}^n$$
given by $(v,x) \mapsto v(x)$. By Lemma \ref{lemma-singularity} (ii), the partial differential $d_2e_{v_0,0}$ along the second factor at $(v_0,0)$ is the linear part of $v_0$ at $0$ and hence invertible. By the analytic implicit function theorem, there are analytic disks $D$ around $0$ and $U$ around $v_0$ and an analytic function 
$$x_0 : \begin{cases} U \rightarrow D \\ v \mapsto x_0(v) 
\end{cases}$$
such that every vector field $v \in U$ has a unique singularity on $D$ which is $x_0(v)$. It follows that the function $v  \mapsto d_2e_{v,x(v)} \in \mathrm{Mat}_n(\mathbb{C})$ is also analytic. Since the eigenvalues of this matrix are distinct for $v = v_0$, by following analytically the roots of the characteristic polynomial of this matrix, there are (up to restricting $U$ again) analytic functions $\lambda_i: U \rightarrow \mathbb{C}$ for $i = 1,\ldots, n$ 
such that $\lambda_1,\ldots, \lambda_n$ are the eigenvalues of the linear part of $v$ at $x_0(v)$. 
\end{proof}

Set $$ \mathcal E_2 = \bigcup_{(k_1,\ldots, k_n) \in \mathbb{Z}^n \setminus \lbrace 0 \rbrace} \Big\{ v \in U \mid \sum_{i = 1}^n k_i \lambda_i(v) = 0 \Big\} $$  
for the set of vector fields in $U$ such that the singularity $x_0(v)$ is resonant and set $$ \mathcal E_3 = \bigcup_{P \in \mathbb{Q}[\overline{X}] \setminus \lbrace 0 \rbrace}\Big\{ v \in U \mid P(\mathrm{coeff}(v)) = 0 \Big\} $$ 
for the set of vector fields in $U$ whose coefficients satisfy an algebraic relation with coefficients in $\mathbb{Q}$. By definition 
$$\mathcal E = (\mathcal E_1 \cap U) \cup \mathcal E_2 \cup \mathcal E_3$$
is a countable union of proper analytic closed subsets of $U$. Hence, each of them has empty interior. It follows from Baire's category theorem that  $ \mathcal E$ has empty interior in $U$ and in particular is  a proper subset of $U$. So we can consider an algebraic vector field $v \in U \setminus \mathcal E$ so that
\begin{itemize} 

\item $v$ is a generic vector field from $\Xi(n,d)$ since $v \notin \mathcal E_3$, 
\item $x_0(v)$ is a non-resonant singularity since $v \notin \mathcal E_2$,
\item $v$ does not leave invariant any proper positive-dimensional subvariety of $\mathbb{A}^n$  since $v \notin \mathcal E_1$
\end{itemize}
It follows from Theorem C that the generic type of  $(\mathbb{A}^n,v)$  is minimal. Now applying Lemma \ref{lemma-strongminimality} (iii), we conclude that the solution set of  $(\mathbb{A}^n,v)$ is strongly minimal. Hrushovski-Sokolovic trichotomy theorem, that is, Fact \ref{Hrushovski-Sokolovic} then implies that this strongly minimal set is geometrically trivial. This concludes the proof of the theorem.  
\end{proof}

\subsection{Generic vector fields with poles on an ample divisor}

\begin{theorem} \label{theorem-sm-ample}
Let $D$ be an ample divisor on a smooth projective complex algebraic variety $X$. Denote by $X_0$ the complement in $X$ of the support of $D$  and consider the restriction morphism 
$$ -_{\mid X_0}: \Xi(X,\mathcal O_X(kD)) \rightarrow \Xi(X_0).$$
There exists $k_0 = k_0(X,D) \geq 0$ such that for $k \geq k_0$, the restriction on $X_0$ of a {\em generic} vector field $v \in \Xi(X,\mathcal O_X(kD))$ is strongly minimal and geometrically trivial. 
\end{theorem}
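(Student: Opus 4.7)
The plan is to deduce Theorem \ref{theorem-sm-ample} from Theorem D (Theorem \ref{theorem-minimality2}) by combining it with Fact \ref{fact-Pereira-Coutinho-int} and Proposition \ref{proposition-singularlocus}(ii), in close analogy with the proof of Theorem \ref{theorem-affine-sm}. First, I would choose $k_0$ as the maximum of the two integer thresholds produced by these two results applied to the ample divisor $D$ on $X$. Then, for every $k \geq k_0$, one has a countable union $\mathcal{E}_1(k) = \bigcup_n Z_n(\mathbb{C})$ of proper Zariski-closed subsets of $\Xi(X, \mathcal{O}_X(kD))$ outside of which the restriction $(X_0, v_{\mid X_0})$ has no positive-dimensional proper invariant closed subvariety, and moreover for every point $x_0 \in X_0$ and every matrix $A \in \mathrm{Mat}_n(\mathbb{C})$ there exists $v \in \Xi(X, \mathcal{O}_X(kD))$ such that $v_{\mid X_0}$ vanishes at $x_0$ with linear part conjugated to $A$.

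The second step is an analytic perturbation argument mirroring the one used in the proof of Theorem \ref{theorem-affine-sm}. Using Proposition \ref{proposition-singularlocus}(ii), construct a reference twisted vector field $v_0 \in \Xi(X, \mathcal{O}_X(kD))$ whose restriction to $X_0$ has a non-resonant singularity at some chosen $x_0 \in X_0$, i.e., whose linear part at $x_0$ has eigenvalues $\lambda_1, \ldots, \lambda_n$ generating a free $\mathbb{Z}$-module of rank $n$ in $\mathbb{C}$. Applying the analytic implicit function theorem to the evaluation morphism $e\colon \Xi(X, \mathcal{O}_X(kD)) \times X_0 \to TX_0$, $(v,x) \mapsto v(x)$, at the point $(v_0, x_0)$ (whose partial derivative along $X_0$ is the linear part of $v_{0,\mid X_0}$ at $x_0$ by Lemma \ref{lemma-linearpart}(ii) and is invertible by non-resonance), one obtains a Euclidean polydisk $U$ around $v_0$ and analytic functions $v \mapsto x(v) \in X_0$ and $v \mapsto (\lambda_1(v), \ldots, \lambda_n(v)) \in \mathbb{C}^n$ on $U$ such that $x(v)$ is the unique singular point of $v_{\mid X_0}$ near $x_0$ and $\lambda_1(v), \ldots, \lambda_n(v)$ are the eigenvalues of its linear part. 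I would then consider three countable unions of nowhere dense closed analytic subsets of $U$: the trace $\mathcal{F}_1 = \mathcal{E}_1(k) \cap U$ of the Coutinho-Pereira exceptional set; the resonance locus $\mathcal{F}_2 = \bigcup_{\mathbf{m} \in \mathbb{Z}^n \setminus \{0\}} \{v \in U : \sum_i m_i \lambda_i(v) = 0\}$; and the $\mathbb{Q}$-arithmetic locus $\mathcal{F}_3 = \bigcup_{P \in \mathbb{Q}[\mathbf{T}] \setminus \{0\}} \{v \in U : P(\mathrm{coeff}(v)) = 0\}$ consisting of vector fields whose coordinates in a fixed basis of $\Xi(X, \mathcal{O}_X(kD))$ are not $\mathbb{Q}$-algebraically independent. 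By Baire's categoricity theorem, the set $U \setminus (\mathcal{F}_1 \cup \mathcal{F}_2 \cup \mathcal{F}_3)$ is nonempty; pick any element $v$ of it.

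For such a $v$, the vector field $(X_0, v_{\mid X_0})$ satisfies both hypotheses of Theorem D --- non-resonance at $x(v)$ from $v \notin \mathcal{F}_2$, and absence of positive-dimensional proper invariant closed subvarieties from $v \notin \mathcal{F}_1$ --- so its generic type is minimal. Lemma \ref{lemma-strongminimality}(iii) then upgrades this to strong minimality of the solution set, and Fact \ref{Hrushovski-Sokolovic} (applicable since $\dim X \geq 2$) yields geometric triviality. Since $v \notin \mathcal{F}_3$, the coefficients of $v$ are $\mathbb{Q}$-algebraically independent, so $v$ is a generic point of $\Xi(X, \mathcal{O}_X(kD))$, proving the existence of at least one generic twisted vector field satisfying the conclusion. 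To transfer this conclusion to all generic vector fields, I would invoke Proposition 3.2.3 of \cite{jaoui-ANT}, exactly as in the proof of Theorem \ref{theorem-affine-sm}: the property of having a strongly minimal geometrically trivial solution set only depends on the type of the parameter over a countable field of definition, and hence passes from one $\mathbb{Q}$-generic point to all of them. The main obstacle in this plan is the analytic propagation of the non-resonant singularity, which requires the flexibility provided by Proposition \ref{proposition-singularlocus}(ii); beyond that, the argument is a Baire-category combination of Theorem D with the geometric input of Coutinho-Pereira.
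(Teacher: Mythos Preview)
Your proposal is correct and follows essentially the same strategy as the paper's proof: combine Fact \ref{fact-Pereira-Coutinho-int} and Proposition \ref{proposition-singularlocus}(ii) to set up a Baire-category argument near a reference $v_0$ with a non-resonant singularity, then apply Theorem D, Lemma \ref{lemma-strongminimality}(iii), and Fact \ref{Hrushovski-Sokolovic}. One small point: your $\mathcal F_3$ should be taken relative to a countable field of definition $l$ for $X$ and $D$ rather than $\mathbb{Q}$, since $X$ and $D$ need not be defined over $\mathbb{Q}$; the paper does this explicitly (cf.\ Remark \ref{remark-Lebesguemeasure}), and it is what makes the invocation of \cite[Proposition 3.2.3]{jaoui-ANT} legitimate.
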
 

\begin{remark} \label{remark-Lebesguemeasure}
Let $D$ be a divisor on a smooth complex projective surface $X$. Recall that $\Xi(X,\mathcal O_X(D))$ is always a finite-dimensional complex vector space. A priori, we have three competing notions that one can choose from to define the properties of a ``{\em generic }'' $\mathcal O_X(D)$-twisted vector field $v \in \Xi(X,\mathcal O_X(D))$: 
\begin{itemize} 
\item \textit{a measure-theoretic notion}: a property holds for a generic vector field if it holds for all vector fields $v \in \Xi(X,\mathcal O_X(D))$ outside of a measurable subset of $\Xi(X,\mathcal O_X(D))$ of Lebesgue measure zero. 

\item \textit{a topological notion: }a property holds for a generic vector field if it holds for all vector fields $v \in \Xi(X,\mathcal O_X(D))$ outside of a $F_\sigma$-set that is outside of a countable union of closed subsets of $\Xi(X,\mathcal O_X(D))$ (for the analytic topology) with empty interior. 

\item \textit{a model-theoretic notion:} a property holds for a generic vector field if it holds for some/every vector fields $v \in \Xi(X,\mathcal O_X(D))$ realizing the generic type of  $\Xi(X,\mathcal O_X(D))$ over some countable field of definition $l$ for the projective variety $X$ and the divisor $D$. 
\end{itemize} 
By Proposition 3.2.3 of \cite{jaoui-ANT}, it turns out that, for the property we are interested in,  all these notions are equivalent. We will work with the model-theoretic notion since it offers us the possibility to work a single vector field instead of the whole family. 
\end{remark} 

\begin{proof}[Proof of Theorem \ref{theorem-sm-ample}]
The proof follows the same strategy as the proof of Theorem \ref{theorem-affine-sm}. We can assume that $X$ and $D$ are defined over some countable field $l$. By the remark above, it is enough to prove that for $k \gg 0$, the conclusion of the theorem hold for some generic algebraic vector field $v \in \Xi(X,\mathcal O_X(kD))$ over $l$.

Since $\mathcal L = \mathcal O_X(D)$ is ample, by Fact \ref{fact-Pereira-Coutinho-int}, there exits $k_1 = k_1(X,\mathcal O_X(D)) \geq 0$ such that for all $k \geq k_1$, there exists a countable union of proper Zariski-closed subset of $\Xi(X,\mathcal L^{\otimes k})$
$$\mathcal E_1(k) = \bigcup_{i \in \N} Z_i(\mathbb{C}) \subset \Xi(X,\mathcal L^{\otimes k})$$ 
such that for all $v \in  \Xi(X,\mathcal L^{\otimes k}) \setminus \mathcal E_1(k)$, the twisted algebraic vector field $(X,v)$ does  not leave invariant any proper positive-dimensional algebraic subvariety of $X$. By Proposition \ref{proposition-singularlocus}, there exists $k_2 \geq 0$ such that for all $k \geq k_2$, there exists a dense open set 
$U_{\mathrm{red}} \subset \Xi(X,\mathcal L^{\otimes k})$ such that all vector fields in $U_{\mathrm{red}}$ have reduced singularities and such that and  $\Xi(X,\mathcal L^{\otimes k})$ contains some vector field $v_0$ whose restriction to $X_0$ admits a non-resonant singularity $x_0 \in X_0$. 

Set $k_0 = \mathrm{max}(k_1,k_2)$. Consider $k \geq k_0$ and $v_0$ a twisted vector field such that $v_{0 \mid X_0}$ admits a non-resonant singularity $x_0 \in X_0$.

\begin{claimproof} 
There is an open disk $D$ around $x_0$ and a connected analytic neighborhood $U$ of $v_0$ in $\Xi(X,\mathcal L^{\otimes k})^{an}$ such that 
\begin{itemize} 
\item every vector field $v$ in $U$ has a unique singularity $ x_0(v)$ in $D$, 
\item there are analytic functions 
$$\lambda_i: U \rightarrow \mathbb{C}$$ 
such that $\lambda_1(v),\ldots, \lambda_n(v)$ are the eigenvalues of the linear part of $v$ at $x_0(v)$. 
\end{itemize} 
\end{claimproof}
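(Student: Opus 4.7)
The plan is to adapt the argument used for the analogous claim inside the proof of Theorem \ref{theorem-affine-sm}, the only subtlety being that we must work in a local trivialization of $\mathcal L^{\otimes k}$ in order to apply the analytic implicit function theorem. First, since $x_0$ lies in $X_0$, I would choose an affine open set $V \subset X_0$ containing $x_0$, equipped with étale coordinates $(x_1,\dots,x_n)$ centered at $x_0$, and sufficiently small so that $\mathcal L^{\otimes k}$ admits a nowhere vanishing regular section $s$ on $V$. The restriction morphism relative to $s$,
$$r : \Xi(X,\mathcal L^{\otimes k}) \to \Xi(V), \qquad v \mapsto v_{\mid V, s},$$
is linear between finite-dimensional complex vector spaces, hence analytic. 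Writing $r(v) = \sum_{i=1}^n a_i(v,\cdot)\cdot \partial_{x_i}$ with each $a_i$ depending linearly on $v$ and holomorphically on the coordinates, I consider the evaluation map
$$e : \Xi(X,\mathcal L^{\otimes k})^{an} \times V^{an} \to \mathbb{C}^n, \qquad (v,x) \mapsto (a_1(v,x),\dots,a_n(v,x)).$$
By Lemma \ref{lemma-linearpart}(ii), the partial differential $d_2 e_{(v_0,x_0)}$ coincides with the linear part of $v_{0 \mid X_0}$ at $x_0$, which is invertible since the eigenvalues of a non-resonant singularity generate a free $\mathbb{Z}$-module of rank $n$ and are in particular all nonzero.

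The analytic implicit function theorem then produces an open polydisk $D \subset V$ around $x_0$ and a connected analytic neighborhood $U$ of $v_0$ in $\Xi(X,\mathcal L^{\otimes k})^{an}$ together with a holomorphic map $v \mapsto x_0(v)$ from $U$ to $D$ such that $x_0(v)$ is the unique zero of $r(v)$ on $D$; by Lemma \ref{lemma-singularity} combined with the fact that $s$ is nowhere vanishing on $V$, this is precisely the unique singularity of $v$ in $D$. For $v \in U$, Lemma \ref{lemma-linearpart}(i) says that the matrix of the linear part of $v_{\mid X_0}$ at $x_0(v)$ in the basis $\partial_{x_1},\dots,\partial_{x_n}$ is
$$A(v) = \bigl(\partial_{x_j} a_i(v,x_0(v))\bigr)_{1 \le i,j \le n},$$
which depends holomorphically on $v \in U$. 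Since non-resonance forces the eigenvalues of $A(v_0)$ to be pairwise distinct, the characteristic polynomial of $A(v)$ has simple roots for $v$ close enough to $v_0$; by the classical holomorphic dependence of simple roots of polynomials on their coefficients, after shrinking $U$ we obtain holomorphic functions $\lambda_1,\dots,\lambda_n : U \to \mathbb{C}$ giving the eigenvalues of $A(v)$.

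No genuine obstacle arises in this argument: it is entirely parallel to the untwisted case treated just above, with the local trivialization of $\mathcal L^{\otimes k}$ by $s$ serving only to identify twisted vector fields with honest vector fields on $V$. The mildest verification required is that the notion of singular point of a twisted vector field (Lemma \ref{lemma-singularity}) agrees locally on $V$ with the vanishing of $r(v)$, which follows at once from part (iii) of that lemma since $s$ does not vanish on $V$.
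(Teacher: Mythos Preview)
Your proposal is correct and follows essentially the same approach as the paper, which simply says ``the claim follows from the analytic implicit function theorem similarly as Claim 2'' in the proof of Theorem~\ref{theorem-affine-sm}; you have carried out exactly that adaptation, with the local trivialization of $\mathcal L^{\otimes k}$ by $s$ being the only extra ingredient needed. One small wording issue: $\Xi(V)$ is not finite-dimensional for affine $V$, but this is harmless since the map $r$ is linear on the finite-dimensional domain $\Xi(X,\mathcal L^{\otimes k})$, and what you actually use is the analyticity of the evaluation $e$ into $\mathbb{C}^n$, which holds for that reason.
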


The claim follows from the analytic implicit function theorem similarly as Claim 2 of Theorem \ref{theorem-minimality}. Set $$ \mathcal E_2 = \bigcup_{(k_1,\ldots, k_n) \in \mathbb{Z}^n \setminus \lbrace0 \rbrace} \Big\{ v \in U \mid \sum_{i = 1}^n k_i \lambda_i(v) = 0 \Big\} $$  
for the set of vector fields in $U$ such that the singularity $x_0(v)$ is resonant and set $$ \mathcal E_3 = \bigcup_{Z \subsetneq \Xi(X,\mathcal L^{\otimes k}) \text{  subvariety def. over } l} \Big\{ v \in U \mid \mathrm{coeff}(v) \in Z(\mathbb{C})  \Big\} $$ 
for the set of vector fields in $U$ whose coefficients satisfy an algebraic relation over $l$. By definition 
$$\mathcal E = (\mathcal E_1 \cap U) \cup \mathcal E_2 \cup \mathcal E_3$$
is a countable union of proper analytic closed subsets of $U$. Since each of them has empty interior, it follows from Baire's category theorem that  $ \mathcal E$ also has empty interior in $U$ and in particular is  a proper subset of $U$. So we can consider a vector field $v \in U \setminus \mathcal E$ so that
\begin{itemize} 

\item $v$ is a generic vector field over $l$ of $\Xi(X,\mathcal L^{\otimes k})$ since $v \notin \mathcal E_3$, 
\item $x_0(v)$ is a non-resonant singularity since $v \notin \mathcal E_2$,
\item $v$ does not leave any proper positive-dimensional subvariety of $X$ invariant since $v \notin \mathcal E_1$
\end{itemize}
It follows from Theorem C that the generic type of  $(\mathbb{A}^n,v)$  is minimal. Now applying Lemma \ref{lemma-strongminimality} (iii), we conclude that the solution set of  $(\mathbb{A}^n,v)$ is strongly minimal. Hrushovski-Sokolovic trichotomy theorem, that is, Fact \ref{Hrushovski-Sokolovic} then implies that this strongly minimal set is geometrically trivial. . This concludes the proof of the theorem. 
\end{proof} 

\begin{corollary} 
Let $X$ be a smooth algebraic subvariety of the projective space $\mathbb{P}^N$, let $H_X$ be a smooth hyperplane section of $X$ and set $X_0 = X \setminus H_X$. Consider the family $\Xi(X_0,d)$ of (regular) vector fields on $X_0$ with a pole of order at most $d$ along $H_X$. There exists $d_0 \geq 0$ such that for all $d \geq d_0$, a generic vector field  from the family $\Xi(X_0,d)$ is strongly minimal and geometrically trivial. 
\end{corollary}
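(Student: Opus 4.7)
The plan is to deduce this corollary directly by combining Theorem \ref{theorem-sm-ample} with Proposition \ref{presentation- twisted-vector field-hyperplanesection}, once one observes that the hypotheses of the former are satisfied for the divisor $D = H_X$.

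First I would note that since $H_X$ is a hyperplane section of the projective variety $X \subset \P^N$, the associated line bundle $\mathcal{O}_X(H_X)$ is the restriction to $X$ of $\mathcal{O}_{\P^N}(1)$, hence is very ample and in particular ample. Therefore Theorem \ref{theorem-sm-ample} applies to the ample divisor $D = H_X$: there exists $d_0 = k_0(X, H_X) \geq 0$ such that for every $d \geq d_0$, the restriction to $X_0$ of a generic vector field $v \in \Xi(X, \mathcal{O}_X(dH_X))$ is strongly minimal and geometrically trivial.

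Next, I would invoke Proposition \ref{presentation- twisted-vector field-hyperplanesection}, which provides an isomorphism of finite-dimensional complex vector spaces given by the restriction morphism
$$-_{\mid X_0} : \Xi(X, \mathcal{O}_X(dH_X)) \xrightarrow{\sim} \Xi(X_0, d).$$
Under this isomorphism, the image of a generic element of the left-hand side is precisely a generic element of $\Xi(X_0, d)$, since genericity (in any of the equivalent senses discussed in Remark \ref{remark-Lebesguemeasure}) is preserved by isomorphisms of complex vector spaces defined over a common countable field of definition for $X$ and $H_X$. Moreover, the solution set attached to $v \in \Xi(X, \mathcal{O}_X(dH_X))$ used in Theorem \ref{theorem-sm-ample} is precisely the solution set attached to the algebraic vector field $(X_0, v_{\mid X_0})$, by Lemma \ref{lemma-canonical-isomorphism}.

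Combining these two facts, for every $d \geq d_0$ a generic vector field of $\Xi(X_0, d)$ is the restriction to $X_0$ of a generic element of $\Xi(X, \mathcal{O}_X(dH_X))$, and hence is strongly minimal and geometrically trivial. There is no real obstacle here: the corollary is a formal translation of Theorem \ref{theorem-sm-ample} into the language of vector fields on $X_0$ with controlled poles along $H_X$, and the only substantive point is the identification in Proposition \ref{presentation- twisted-vector field-hyperplanesection}, which is already established.
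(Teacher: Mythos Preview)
Your proposal is correct and follows essentially the same approach as the paper: the paper's proof simply notes that $H_X$ is an ample divisor on $X$ and invokes Proposition \ref{presentation- twisted-vector field-hyperplanesection} together with Theorem \ref{theorem-sm-ample}. Your version is slightly more detailed in spelling out the ampleness of $\mathcal{O}_X(H_X)$ and the preservation of genericity under the restriction isomorphism, but the argument is the same.
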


\begin{proof}
Since $H_X$ is an ample divisor on $X$, this follows from  Proposition \ref{presentation- twisted-vector field-hyperplanesection} and Theorem \ref{theorem-minimality2}.
\end{proof}  
\subsection{Geometric triviality of decoupled systems} In this section, we will go from geometric triviality of $n$ systems to geometric triviality of the decoupled system. This is well known to experts in differential algebra and model theory (for instance, this was used in \cite{FJMN}) but we include it here for completeness. Fix $k$ a differential field and $\mathcal U$ a saturated differential field relatively to $k$.

\begin{definition} 
Let $(S_1), \ldots, (S_r)$ be $r$ systems of algebraic differential equations defined $K$. The decoupled structure associated with  $(S_1), \ldots, (S_r)$  is the structure $\mathcal M$ with $n$ sorts $\mathcal S_1, \ldots, \mathcal S_n$ such that 
\begin{itemize} 
\item $\mathcal S_i(\mathcal M)$ consists of the solution set $X_i$ of $(S_i)$ in a saturated differentially closed field. 
\item the basic relations on $\prod \mathcal S_i^{n_i}$ are the $k$-definable subsets of $\prod X_i^{n_i}$ 
\end{itemize}

\end{definition} 
By construction, the decoupled structure  $\mathcal M$ is saturated and interpretable in the differentially closed field $\U$ (so in particular $\omega$-stable).
\begin{lemma} 
Assume that $(S_1),\ldots, (S_r)$ are strongly minimal systems of algebraic differential equations. 
Every (possibly imaginary) minimal type of the decoupled structure associated to $(S_1), \ldots, (S_r)$ is nonorthogonal to the generic type of one of the $(S_i)$. 
\end{lemma}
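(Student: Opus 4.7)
The plan is to represent the imaginary realization $e$ of the given minimal type $p = \tp(e/B)$ by a real tuple from the sorts $\mathcal{S}_i$, extract its generically independent coordinates, and finish via a standard tensor-product reduction for non-orthogonality of minimal types. Since $\mathcal{M}$ is interpretable in $\mathcal{U}$, forking and Morley rank in $\mathcal{M}^{eq}$ coincide with those in $\mathcal{U}^{eq}$, so $p$ remains a minimal type of $\mathcal{U}^{eq}$. I would first choose a real tuple $\bar{a} = (a_1, \ldots, a_n)$ with each $a_j \in \mathcal{S}_{i_j}$ and $e \in \mathrm{dcl}(B\bar{a})$; such a tuple exists because every element of $\mathcal{M}^{eq}$ is a class of a $\emptyset$-definable equivalence relation on some product of sorts $\prod \mathcal{S}_i^{n_i}$.

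Next, I would use the strong minimality of each $\mathcal{S}_{i_j}$ to split $\bar{a}$ into algebraic and generic coordinates. Setting $J = \{ j : \mathrm{MR}(a_j / B a_{<j}) = 1\}$, additivity of Morley rank in $\omega$-stable theories identifies $J$ with the indices genuinely contributing to $\mathrm{MR}(\bar{a}/B)$; for $j \notin J$ one has $a_j \in \acl(B a_{<j})$. Writing $\bar{b} = (a_j)_{j \in J}$, a straightforward induction gives $\bar{a} \in \acl(B\bar{b})$, whence $e \in \acl(B\bar{b})$. Since $e \notin \acl(B)$ the set $J$ is nonempty, and since each $a_j$ ($j \in J$) is generic in the strongly minimal set $\mathcal{S}_{i_j}$ over $B a_{<j}$, the coordinates of $\bar{b}$ are mutually independent generics. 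In particular $\tp(\bar{b}/B)$ coincides with the tensor product $\bigotimes_{j \in J} g_{i_j} | B$, where $g_i$ denotes the generic type of $\mathcal{S}_i$.

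From $e \in \acl(B\bar{b}) \setminus \acl(B)$ it follows that $e \nind_B \bar{b}$, i.e.\ $p \not\perp \bigotimes_{j \in J} g_{i_j}$. The final ingredient is the classical reduction: for a minimal stationary type $p$ and stationary types $q_1, q_2$, if $p \not\perp q_1 \otimes q_2$ then $p \not\perp q_1$ or $p \not\perp q_2$. Indeed, if $a \models p$ and $(b_1, b_2) \models q_1 \otimes q_2$ witness $a \nind_B (b_1, b_2)$, then either $a \nind_B b_1$ (so $p \not\perp q_1$), or $a \ind_B b_1$ and $a \nind_{Bb_1} b_2$, in which case $p | Bb_1 \not\perp q_2 | Bb_1$ and hence $p \not\perp q_2$ by parameter-invariance of non-orthogonality for minimal types. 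Iterating this reduction across $J$ yields $p \not\perp g_{i_k}$ for some $k \in J$.

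The main non-routine step is this tensor-product reduction; the rest --- representing $e$ by a real tuple, extracting the generic coordinates, and identifying $\tp(\bar{b}/B)$ with a tensor product --- is routine once one uses additivity of Morley rank together with the preservation of forking and rank under the interpretation $\mathcal{M}^{eq} \hookrightarrow \mathcal{U}^{eq}$.
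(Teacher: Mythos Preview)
Your argument is correct and close in spirit to the paper's, but you take a longer route. The paper skips your preprocessing step of extracting the generic coordinates $\bar{b}$: it works directly with the full real tuple $(b_1,\ldots,b_n)$ representing $e$, chooses the largest $m$ with $e \ind_A b_1,\ldots,b_m$, and concludes from $e \nind_{A,b_1,\ldots,b_m} b_{m+1}$ that $\tp(b_{m+1}/A,b_1,\ldots,b_m)$ is non-algebraic, hence equals the generic type of the relevant $\mathcal{S}_i$ by strong minimality. This is precisely your ``tensor-product reduction'' argument, applied in one pass to the raw tuple rather than to a pre-filtered independent sequence; your extraction of $\bar{b}$ and identification of $\tp(\bar{b}/B)$ as a tensor product are correct but unnecessary, since the reduction step itself already handles coordinates that happen to be algebraic (they are absorbed harmlessly into the base). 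The payoff of your version is a cleaner statement of the reduction lemma as a reusable fact about minimal types and tensor products; the payoff of the paper's version is brevity.
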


\begin{proof} 
Assume that $p \in S(A)$ is a (possibly imaginary) minimal type of the decoupled structure. Consider $a \models p$. By definition, there is $b = (b_1,\ldots, b_n) \in \prod (S_i)^{n_i}$  such that $a \in\mathrm{dcl}(b,A)$. Note that 
$$a \nind_A b_1,\ldots, b_n$$
as otherwise $a \in \mathrm{acl}(A)$ contradicting the minimality of $p$. Consider the largest $m < n$ such that 
$$a \ind_A b_1,\ldots b_m.$$ 
It follows by transitivity  that
$$a \nind_{A,b_1,\ldots,b_m} b_{m+1}.$$ 
In particular, the type $q = \mathrm{tp}(b_{m+1}/A,b_1,\ldots,b_m)$ is not algebraic. Since $b_{m+1}$ lives on $\mathcal S_i$ for some $i$, $q$ is the generic type of $\mathcal S_{i}$ and we have witnessed nonorthogonality.
\end{proof} 

\begin{definition} 
A theory $T$ of finite rank is \textit{trivial} if whenever $a,b,c$ are three tuples of elements in some model of $T$ and $A$ is a set of parameters, $a,b,c$ are pairwise independent over $A$ if and only if there are independent over $A$. 
\end{definition}

By the local character of forking, a complete theory $T$ is trivial if and only if some saturated model is. Moreover, if $a_1,\ldots, a_r$ are some tuples of elements of some model of a trivial theory $T$ then $a_1,\ldots, a_r$
are independent over $A$ if and only if they are pairwise independent over $A$.

\begin{corollary} \label{triviality-decoupledsystem}
Let $(S_1),\ldots, (S_r)$ be $r$ geometrically trivial strongly minimal sets of dimension $n$. If $y(1),\ldots, y(r)$ are solutions of $(S_1),\ldots, (S_r)$ respectively and $L$ is any differential field over which the systems $(S_1),\ldots, (S_r)$ are defined
$$\mathrm{trdeg}(L\langle y(1),\ldots, y(r)\rangle / L) = n\cdot r$$
unless for some $i \neq j$,
$\mathrm{trdeg}(L\langle y(i),y(j) \rangle/L) = 0 \text{ or } n.$
\end{corollary}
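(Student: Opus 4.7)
The plan is to apply the preceding lemma to the decoupled structure $\mathcal M$ associated with $(S_1),\ldots,(S_r)$ and thereby deduce that $\mathrm{Th}(\mathcal M)$ is trivial in the sense of the preceding definition. I would fix a sufficiently saturated differentially closed field $\mathcal U$ extending $K$ and containing the parameters of all $(S_i)$, and view each nonconstant $y(i)$ as an element of the sort $\mathcal S_i$ of $\mathcal M$. Since $\mathcal M$ is interpretable in $\mathcal U$, its theory is $\omega$-stable of finite $U$-rank, so the usual forking calculus and semi-minimal analysis are available.

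The central step is to show that every (possibly imaginary) minimal type of $\mathcal M$ is trivial in the pregeometric sense, i.e.\ the $\mathrm{acl}$-pregeometry on its realizations is trivial. By the preceding lemma, each such minimal type is nonorthogonal to the generic type $q_i$ of some $(S_i)$, and each $q_i$ is a trivial minimal type by the geometric-triviality hypothesis on $(S_i)$. Pregeometric triviality of a minimal type is preserved under nonorthogonality (nonorthogonal minimal types induce isomorphic pregeometries after passing to common nonforking extensions), so every minimal type of $\mathcal M$ is trivial. The classical theorem in geometric stability theory — that an $\omega$-stable theory of finite $U$-rank in which every minimal type is trivial is itself trivial in the three-variable sense of the preceding definition — then yields triviality of $\mathrm{Th}(\mathcal M)$.

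Given this, the conclusion follows by translating everything into transcendence-degree language. By strong minimality and the assumption that $y(i)$ is nonconstant, $\mathrm{trdeg}(K\langle y(i)\rangle/K) = n$ for every $i$, so the equality
$$\mathrm{trdeg}(K\langle y(1),\ldots,y(r)\rangle/K) = nr$$
is equivalent to $y(1),\ldots,y(r)$ being forking-independent over $K$ in $\mathcal M$. As the paper observes, triviality of $\mathrm{Th}(\mathcal M)$ upgrades pairwise independence of finitely many tuples to full independence; consequently, if the above equality fails then some pair $y(i), y(j)$ with $i \neq j$ forks over $K$. Since $(S_j)$ is strongly minimal of dimension $n$, one has $\mathrm{trdeg}(K\langle y(i),y(j)\rangle/K\langle y(i)\rangle) \in \{0,n\}$, and forking excludes the value $n$; hence $\mathrm{trdeg}(K\langle y(i),y(j)\rangle/K) = n$, as desired.

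The main obstacle is the second step: the transfer of pregeometric triviality across nonorthogonality of minimal types and the upgrade from triviality of all minimal types to global three-variable triviality of $\mathrm{Th}(\mathcal M)$. Both rely on the full machinery of $\omega$-stable theories of finite $U$-rank (semi-minimal analysis, stationarity, local character of forking) and crucially use that $\mathrm{DCF}_0$ is $\omega$-stable so that $\mathcal M$ inherits a well-behaved forking calculus. The remaining steps — reducing the transcendence-degree equality to forking-independence and recovering $\mathrm{trdeg}=n$ from a forking pair — are immediate from strong minimality.
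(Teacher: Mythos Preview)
Your proposal is correct and follows essentially the same route as the paper's proof: the paper simply invokes the preceding lemma together with the well-known fact (citing \cite{Goode}) that a finite-rank theory is trivial if and only if all minimal types of $T^{eq}$ are geometrically trivial, whereas you spell out this fact as the combination of ``triviality transfers along nonorthogonality of minimal types'' and ``all minimal types trivial implies the theory is trivial.'' Your explicit translation back into transcendence degrees at the end is also accurate, though the paper leaves it implicit.
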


\begin{proof} 
It is well-known that a theory of finite rank is geometrically trivial if and only if all minimal types of $T^{eq}$ are geometrically trivial (see for example \cite{Goode}). The corollary therefore follows from the previous lemma. 
\end{proof}

\begin{proof}[Proof of Theorem A and Theorem B] 
Theorem A and Theorem B now follows from Corollary \ref{triviality-decoupledsystem} and Theorem \ref{theorem-affine-sm} and Theorem \ref{theorem-sm-ample} respectively. 
\end{proof}

\subsection{New meromorphic functions on Painlevé's hierarchy} We finally apply the previous results to the construction of meromorphic functions which are new in the sense of Painlevé \cite{Painleve}, Vingt-et-unième leçon. By {\em a class $\mathcal C$ of meromorphic functions}, we mean a collection 
$$\lbrace \mathcal C(U) \subset \mathcal M(U) \mid U \text{ ranges over the connected open subset of } \mathbb{C} \rbrace$$
of subsets of meromorphic functions which is stable under restriction and analytic continuation i.e. such that if $U \subset V$ are connected nonempty open subsets of $\mathbb{C}$ then 
$$ f \in \mathcal C(V) \text{ iff } f_{\mid U} \in \mathcal C(U).$$
For example, the class $\mathcal C_{rat}$ given by the restrictions to every connected subset $U$ of the rational functions on $\mathbb{C}$ is a class of meromorphic functions. Note that the intersection of classes (open set by open set) is also a class, so it makes sense to talk about the smallest class satisfying certain closure properties. The following definition is taken from \cite{Umemura}.

\begin{definition}[Painlevé's hierarchy]\label{definition-newmeromorphic}  The {\em{class $\mathcal C_0$ of classical meromorphic functions}} is the smallest class of meromorphic functions containing $\mathcal C_{rat}$ and stable under the following operations: 
\begin{itemize} 
\item[(A)] If $f \in \mathcal M(U)$ is in the algebraic closure of $f_1,\ldots, f_n \in \mathcal M(U)$ then
$$ f_1,\ldots, f_n \in \mathcal C_0(U) \Rightarrow f \in \mathcal C_0(U).$$

\item[(PI)] If $f$ satisfies a linear differential equation of the form $y^{(n)} = f_0y + \ldots + f_{n-1}y^{(n-1)}$ then 
$$f_0, \ldots, f_{n-1} \in \mathcal C_0(U) \Rightarrow f \in \mathcal C(U)$$

\item[(PII)] If $\Lambda$ is a lattice such that $A = \mathbb{C}^n/\Lambda$ is an abelian variety and $f \in \mathcal M(U)$ can be written as 
$$ f = \theta \circ  \pi \circ (f_1,\ldots, f_n)$$
where $\pi : \mathbb{C}^n \rightarrow \mathbb{C}^n/\Lambda$ is the projection, $\theta \in \mathbb{C}(A)$ is an abelian function and $f_1,\ldots, f_n \in \mathcal M(U)$ then 
$$ f_1,\ldots, f_n \in \mathcal C_0(U) \Rightarrow f \in \mathcal C_0(U).$$
\end{itemize} 

For $r \geq 1$, the {\em{class $\mathcal C_r$}} is the smallest class of meromorphic functions containing $\mathcal C_{rat}$ stable under the operations (A), (PI), (PII) and 
\begin{itemize} 
\item[(PIII$_r$)] If $f$ satisfies an algebraic differential equation of order $s \leq r$ 
$$ G(y,y',\ldots, y^{(s)}) = 0 \text{ where } G \in \mathcal M(U)[X_0,\ldots, X_s]$$
then 
$$ \mathrm{coeff}(f) \in \mathcal C_r(U) \Rightarrow f \in \mathcal C_r(U).$$
\end{itemize} 
\end{definition} 

{\remark\label{remarkPII} While the operation (PII) is phrased as a closure operation under postcomposition by abelian functions, it can also be presented as a closure operation under solving certain differential equations. 

Let $A$ be an abelian variety of dimension $n$.  The projection $\pi: \mathrm{Lie}(A) \rightarrow A$ is the exponential of the algebraic group $A$ so that if $F = (f_1,\ldots, f_n) : U \rightarrow \mathbb{C}^n$ is an analytic curve with coordinate functions $f_1,\ldots, f_n$ in some differential field $K \subset \mathcal M(U)$, we see that $y(t) = \pi \circ F(t)$ satisfies the differential equation  (with coefficients in $K$)
$$\mathrm{dlog}_A(y(t)) = (f'_1(t),\ldots, f_n'(t)) \in \mathrm{Lie}(A)(K)$$
where $\mathrm{dlog}_G$ denotes the logarithmic derivative of an algebraic group $G$ (see for example \cite{Pillay-Galois} for a general account on logarithmic derivatives). It follows easily that if $\theta \in \mathbb{C}(A)$ then $f = \theta \circ F$ satisfies a differential equation (with coefficients from $K$) of the from
$$G(y,\ldots, y^{(r)}) = 0 \text{ where } G \in K[X_0,\ldots, X_r]$$ 
of order $r \leq n$ which is internal to the constants and whose Galois group (over $K$) is a quotient of the abelian variety $A$. We refer to \cite{Umemura} for more details.}

\begin{definition} 
Let $r \geq 1$.  We say that a meromorphic function $f \in \mathcal M(U)$  is a {\em new meromorphic function of order $r$ in the sense of Painlevé} if it satisfies an algebraic differential equation of order $r$ defined over $\mathbb{C}(z)$ and does {\em not} lie in the class $\mathcal C_{r-1}$.
\end{definition} 

To give the model-theoretic interpretation of the previous definition, fix $\U$ a monster model of the theory $\mathrm{DCF}_0$ and denote by $\Pi_{< r}$ the set of differential equations (defined over $\U$) of order $< r$. By definition, $\Pi_{<r}$ is an $\emptyset$-invariant collection of definable sets of $\U$. 

\begin{proposition} \label{proposition-Umemura}
Let $r \geq 2$ and consider a complex autonomous differential equation  $$(E): F(y,y',\ldots, y^{(r)}) = 0$$ 
where the polynomial $F(X_1,\ldots, X_n) \in \mathbb{C}[X_0,\ldots, X_n]$ is irreducible. 
Assume that some generic solution of $(E)$ is {\em not} a new meromorphic function of order $r$ in the sense of Painlevé. 
Then the generic type of $(E)$ is $\Pi_{<r}$-analyzable. 
\end{proposition}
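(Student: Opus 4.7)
The plan is to reinterpret the hypothesis model-theoretically and then proceed by induction on the inductive construction of the class $\mathcal{C}_{r-1}$. Since $F$ is irreducible and $(E)$ has order $r$, the definable set of solutions of $(E)$ in $\mathcal{U}$ admits a unique complete type $p_E$ over $\mathbb{C}$ of maximal order $r$, which is by definition the generic type of $(E)$. A generic solution is therefore a realization of $p_E$, and the hypothesis furnishes a meromorphic function $f \in \mathcal{M}(U)$ realizing $p_E$ which lies in $\mathcal{C}_{r-1}(U)$. Since $\Pi_{<r}$-analyzability depends only on the type, it suffices to prove the stronger claim: for every connected open $U \subset \mathbb{C}$ and every $f \in \mathcal{C}_{r-1}(U)$, viewed inside a saturated differentially closed field extending $\mathbb{C}$, the type $\tp(f/\mathbb{C})$ is $\Pi_{<r}$-analyzable. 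Applying this claim to the provided $f$ gives $\Pi_{<r}$-analyzability of $p_E$.

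The proof of the claim proceeds by well-founded induction on the construction of $\mathcal{C}_{r-1}$. The base case is $f \in \mathcal{C}_{rat}(U)$, so $f \in \mathbb{C}(t)$ is rational; since $t$ satisfies the order-one equation $y' = 1$, the type $\tp(t/\mathbb{C})$ belongs to $\Pi_{<r}$ (as $r \geq 2$), and $f$ is algebraic over $\mathbb{C}(t)$, giving an analysis of length at most two. For the inductive step one checks in turn each of the four closure operations. Operation $(A)$ is immediate: if $f \in \acl(f_1, \ldots, f_n)$ and each $\tp(f_i / \mathbb{C})$ is $\Pi_{<r}$-analyzable, then concatenating the analyses of the $f_i$ with the algebraic extension $f$ produces an analysis of $f$. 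For operation $(PI)$, if $f$ satisfies a linear equation with coefficients in $L := \mathbb{C}\langle f_0, \ldots, f_{n-1}\rangle$, classical Picard-Vessiot theory shows that $\tp(f/L)$ is internal to the field of constants; the generic type of the constants is axiomatized by $y' = 0$, an order-one equation, so it lies in $\Pi_{<r}$, and hence $\tp(f/L)$ does as well. For operation $(PII)$, Remark \ref{remarkPII} expresses the identity $f = \theta \circ \pi \circ (f_1, \ldots, f_n)$ as saying that $f$ satisfies a differential equation over $\mathbb{C}\langle f_1, \ldots, f_n\rangle$ whose solution set is internal to the constants, reducing to the previous argument. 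For operation $(PIII_{r-1})$, if $f$ satisfies an algebraic differential equation of order $s \leq r - 1$ over coefficients in $\mathcal{C}_{r-1}$, then $\tp(f/\text{coeffs})$ has order at most $r - 1$ and thus belongs to $\Pi_{<r}$ directly.

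The principal technical point, used uniformly in the inductive step, is the transitivity of $\Pi$-analyzability: if $\tp(b/A)$ is $\Pi_{<r}$-analyzable and $\tp(a/Ab)$ belongs to $\Pi_{<r}$ (or is $\Pi_{<r}$-analyzable), then $\tp(a/A)$ is $\Pi_{<r}$-analyzable. This is standard in geometric stability theory and legitimizes all the concatenations above. The most delicate of the four verifications is $(PII)$, because one must translate the purely analytic identity $f = \theta \circ \pi \circ (f_1, \ldots, f_n)$ into a differential-algebraic statement of internality to the constants over $\mathbb{C}\langle f_1, \ldots, f_n\rangle$; this is precisely the content of Remark \ref{remarkPII}, which rests on the formalism of logarithmic derivatives on algebraic groups together with the Kolchin--Picard--Vessiot analysis of such torsors for the abelian variety $A$. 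Once these four cases are assembled, the induction is complete and the proposition follows.
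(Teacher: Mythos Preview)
Your proof is correct and follows essentially the same strategy as the paper: induct on the inductive construction of $\mathcal{C}_{r-1}$, handle each closure operation separately, and appeal to transitivity of $\Pi_{<r}$-analyzability. The only organizational difference is that the paper rephrases the construction of $\mathcal{C}_{r-1}$ as a finite tower of differential fields $\mathbb{C}(t)=K_0\subset\cdots\subset K_n$ and inducts on the length $n$, invoking Pogudin's primitive element theorem at each step to write $K_{n-1}=\mathbb{C}\langle f\rangle$ for a single $f$ satisfying an autonomous equation; your direct structural induction on elements of $\mathcal{C}_{r-1}$ sidesteps this detour and is slightly cleaner, at the cost of formulating a marginally stronger auxiliary claim.
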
 

\begin{proof}
Consider $(X,v)$ a geometric model of $(E)$. The assumption means that there is a chain of differential fields
$$  \mathbb{C}(t) = K_0 \subset K_1 \subset \ldots \subset K_n$$
each of them being generated (as a differential field) from the previous one using one of the operations (A), (PI), (PII) or (PIII$_r$) such that 
$(X,v)^\U \cap \mathrm{dcl}(K_n) $
contains a realization of the generic type $q$ of $(X,v)$. Note that we have written $\mathrm{dcl}(K_n)$ to stress that we are thinking about tuples. If $X$ is affine and embedded in $\U^N$ then $(X,v)^\U \cap \mathrm{dcl}(K_n)$ means $(X,v)^\U \cap K_n^N$.

Every $K_i$ has finite transcendence degree over the previous one. Indeed, this is clear for operations (A), (PI) and (PIII$_r$) and for operation (PII), this follows from Remark \ref{remarkPII}. It follows that each $K_i$ is {\em a finitely generated differential field over $\mathbb{C}$ of finite transcendence degree}. The proof now follows from induction on the length $n$ of the chain. 
\begin{itemize} 
\item if $n = 0$, then by definition, we have a realization of $q$ in 
$\mathrm{dcl}(\mathbb{C},t)$ and since $\mathrm{tp}(t/\mathbb{C})$ is internal to the constants, the type $q$ is also internal to the constants and in particular analyzable in $\Pi_{<r}$. 

\item Assume that $n \geq 1$ and that the results holds from all autonomous differential equations whose solutions can be expressed with chains of differential fields of $< n$ steps and consider $(E)$ with a generic solution which can be expressed in a chain of $n$ steps as above. 
\end{itemize}
By the primitive element of \cite{Pogudin} and the finite transcendence degree observation above, we may assume that $K_{n - 1} = \mathbb{C} \langle f \rangle$  where $f$ satisfies an autonomous equation $(E^\ast)$. The induction hypothesis implies that the generic type of $(E^\ast)$ is $\Pi_{<r}$-analyzable.

We distinguish cases according to the operation used to define $K_n$ from $K_{n - 1}$: 

\begin{itemize} 

\item[(A)] In this case, we have that 
$$(X,v)^\U \cap \mathrm{acl}(\mathbb{C},f)$$    
contains a realization of $q$. Since $\mathrm{tp}(f/\mathbb{C})$ is $\Pi_{<r}$-analyzable, so is $q$. 

\item[(PI)]In this case, there is a solution $g$ of a linear differential equation 
$$ y^{(n)}  = f_0 y  +  \ldots + f_{n-1}y^{(n-1)} $$
with $f_0,\ldots, f_r \in \mathrm{dcl}(\mathbb{C},f)$ such that  
$(X,v)^\U \cap \mathrm{dcl}(\mathbb{C},f,g)$   
contains a realization of $q$. Since $\mathrm{tp}(f/\mathbb{C})$ is $\Pi_{<r}$-analyzable and so is $\mathrm{tp}(g/f,\mathbb{C})$ (as this type is internal to the constants), it follows that $q$ is also $\Pi_{<r}$-analyzable. 

\item[(PII)] In this case, by the remark \ref{remarkPII}, there is a solution $g$ of an abelian logarithmic differential equation with coefficients in $K_{n-1} = \mathrm{dcl}(\mathbb{C}, f)$
$$ \mathrm{dlog}_A(y) = \alpha \in \mathrm{Lie}(A)(K_{n-1}) $$
such that $(X,v)^\U \cap \mathrm{dcl}(\mathbb{C},f,g)$ contains a realization of $q$. Since $\mathrm{tp}(f/\mathbb{C})$ and $\mathrm{tp}(g/f,\mathbb{C})$ are both $\Pi_{<r}$-analyzable (the latter one being internal to the constant), it follows that $q$ so is. 
\item[(PIII$_r$)]  In this case, there is a solution $g$ of an algebraic differential equation 
$$ G(y,y'\ldots, y^{(s)}) = 0 \text{ where } G \in K_{n-1}[X_0,\ldots, X_s]$$
such that
$(X,v)^\U \cap acl(\mathbb{C},f,g)$    
contains a realization of $q$. Since $\mathrm{tp}(f/\mathbb{C})$ is $\Pi_{<r}$-analyzable and so is $\mathrm{tp}(g/f,\mathbb{C})$ (as this type satisfies an order $s$ equation with $s < r$), it follows that $q$ is also $\Pi_{<r}$-analyzable. 
\end{itemize} 
This conclude the proof of the proposition.
\end{proof}

As a corollary of our results, we obtain procedures to produce new meromorphic functions of order $r$ in the sense of Painlevé for arbitrary large $r \geq 2$.

\begin{corollary} 
Let $n,d \geq 2$. Consider $f \in \mathbb{C}(x_1,\ldots, x_n) \setminus \mathbb{C}$ a rational function and 
$y : t \mapsto y(t) = (y_1(t),\ldots, y_n(t))$
a nonconstant integral curve of a generic vector field from $\Xi(n,d)$ defined over a complex domain $U$. Then the meromorphic function $$\phi = f \circ y: t \mapsto f(y(t)) \in \mathcal M(U)$$ is a new meromorphic function of order $n$ in the sense of Painlevé.
\end{corollary}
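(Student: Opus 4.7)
The plan is to combine Theorem A with Proposition \ref{proposition-Umemura} via a transcendence-degree comparison. First, by Claim 1 in the proof of Theorem \ref{theorem-affine-sm}, a generic $v \in \Xi(n,d)$ admits no proper positive-dimensional invariant algebraic subvariety of $\mathbb{A}^n$, so no level set $\lbrace f = c \rbrace$ can be preserved by $v$ and $\phi = f \circ y$ is genuinely nonconstant on $U$. Interpreting the nonconstant integral curve $y$ in a saturated differentially closed field $\mathcal U$ extending $\mathbb{C}$, Lemma \ref{lemma-strongminimality} identifies $y$ as a realization of the generic type $p$ of the strongly minimal system $(\mathbb{A}^n, v)$.

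Next I would show that $\phi$ satisfies an autonomous algebraic differential equation of order exactly $n$. Because $v$ is autonomous, each derivative $\phi^{(k)}$ is a rational function of the coordinates $y_1,\ldots,y_n$, hence lies in $\mathbb{C}(y)$, which has transcendence degree $n$ over $\mathbb{C}$; this already yields an autonomous equation of order $\leq n$. For the reverse inequality, apply the dichotomy of strong minimality to the type $\mathrm{tp}(y/\mathbb{C}\langle\phi\rangle)$: either $y \in \mathrm{acl}(\mathbb{C}\langle\phi\rangle)$, or $\mathrm{tp}(y/\mathbb{C}\langle\phi\rangle)$ is a nonforking extension of $p$. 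In the second case $y \ind_{\mathbb{C}} \phi$ together with $\phi \in \mathrm{dcl}(\mathbb{C}, y)$ forces $\phi \in \mathrm{acl}(\mathbb{C}) = \mathbb{C}$, contradicting nonconstancy in $\mathcal U$ (whose constants are $\mathbb{C}$). Thus $y$ is algebraic over $\mathbb{C}\langle\phi\rangle$, so $\mathrm{trdeg}(\mathbb{C}\langle\phi\rangle/\mathbb{C}) = n$ and the minimal autonomous polynomial relation $(E)$ for $\phi$ is irreducible of order exactly $n$.

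Finally, suppose toward a contradiction that $\phi \in \mathcal C_{n-1}$. Proposition \ref{proposition-Umemura} applied to $(E)$ then asserts that the generic type $q$ of $(E)$ is $\Pi_{<n}$-analyzable. But $y$ and $\phi$ are interalgebraic over $\mathbb{C}$, so $p$ and $q$ are nonorthogonal, and hence $p$ is also $\Pi_{<n}$-analyzable. Since $p$ is strongly minimal, analyzability in $\Pi_{<n}$ collapses to nonorthogonality to the generic type $r$ of some equation of order $m < n$: over a common parameter set $A$, one obtains realizations $a \models p\mid A$ and $b \models r\mid A$ with $a \nind_A b$. By minimality of $p$ this forces $a \in \mathrm{acl}(A,b)$, so
$$n = \mathrm{trdeg}(A\langle a\rangle/A) \leq \mathrm{trdeg}(A\langle b\rangle/A) \leq m < n,$$
a contradiction. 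The delicate step that I expect to require the most care is translating the conclusion of Proposition \ref{proposition-Umemura} into a usable nonorthogonality statement over a suitably enlarged base, making precise that for minimal types analyzability in $\Pi_{<n}$ indeed reduces to nonorthogonality to a single type of order strictly less than $n$.
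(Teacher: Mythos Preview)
Your proof is correct and follows essentially the same approach as the paper: use strong minimality to show that $\phi$ generates a differential field of transcendence degree $n$ over $\mathbb{C}$ (the paper phrases this as $f$ being \emph{weakly primitive}, so that $\sigma_f$ is generically finite onto its image), and then combine Proposition~\ref{proposition-Umemura} with the fact that a minimal type of order $n$ is orthogonal to $\Pi_{<n}$, which is exactly the transcendence-degree comparison you spell out. The only cosmetic difference is that the paper works directly with the generic type $q$ of $(E_f)$---noting it is minimal via the finite-to-finite correspondence with $(X,v)$---rather than transferring to $p$ through nonorthogonality before running the contradiction.
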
 

\begin{corollary} 
Let $X$ be a smooth projective variety, let $H_X$ be a smooth hyperplane section of $X$ and let $f \in \mathbb{C}(X) \setminus \mathbb{C}$ be a rational function. If
$y : t \mapsto y(t)$
is a nonconstant integral curve of a generic vector field from $\Xi(X_0,d)$ defined over a complex domain $U$ then for $d \gg 0$, the meromorphic function 
$$\phi = f \circ y: t \mapsto f(y(t)) \in \mathcal M(U)$$ is a new meromorphic function of order $\mathrm{dim}(X)$ in the sense of Painlevé.
\end{corollary}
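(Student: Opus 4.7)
The plan is to apply the contrapositive of Proposition \ref{proposition-Umemura} to the irreducible autonomous equation of order $n := \dim X$ satisfied by $\phi$. By Theorem B, there exists $d_0$ such that for $d \geq d_0$ a generic $v \in \Xi(X_0, d)$ yields a strongly minimal system $(X_0, v)^{\mathcal{U}}$. Strong minimality rules out proper positive-dimensional invariant subvarieties of $(X, v)$, so by Lemma \ref{lemma-strongminimality} the solution set decomposes as $\mathrm{Sing}(v) \cup p(\mathcal{U})$ where $p = \mathrm{tp}(y/\mathbb{C})$ is the generic type. Embedding the differential field $\mathbb{C}\langle y\rangle \subset \mathcal{M}(U)$ into a saturated $\mathcal{U}$, the fact that $y$ is nonconstant forces $y$ to avoid $\mathrm{Sing}(v)$, hence to realize $p$; in particular $\mathrm{trdeg}(\mathbb{C}\langle y\rangle/\mathbb{C}) = n$.

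Next I would show that $\phi = f(y)$ satisfies an irreducible autonomous equation $(E)$ of order exactly $n$ whose generic type is $\mathrm{tp}(\phi/\mathbb{C})$. Since $y$ is Zariski-dense in $X$ (else its closure would contradict the previous paragraph) and $f$ is non-constant, $\phi \notin \mathbb{C}^{\mathrm{alg}}$; strong minimality of $p$ then forces $y \in \mathrm{acl}(\mathbb{C}\langle \phi\rangle)$ and hence $\mathrm{trdeg}(\mathbb{C}\langle \phi\rangle/\mathbb{C}) = n$. Stationarity of $\mathrm{tp}(\phi/\mathbb{C})$ (inherited from $p$) ensures that the Zariski closure of $(\phi, \phi', \ldots, \phi^{(n)})$ in $\mathrm{Jet}^n(\mathbb{A}^1)$ is an irreducible hypersurface, cut out by some irreducible polynomial $F$, giving the desired equation $(E): F(y, y', \ldots, y^{(n)}) = 0$.

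By Proposition \ref{proposition-Umemura}, it then suffices to verify that the generic type of $(E)$ --- which is interalgebraic with $p$, hence has the same analyzability properties --- is not $\Pi_{<n}$-analyzable. For this I would invoke the standard geometric-stability reduction, valid for any minimal type: $\Pi$-analyzability yields a base $B$ over which $p|B$ becomes non-orthogonal to some $\Pi$-internal type (take the first non-algebraic step of the analysis), and a finite-character argument upgrades this to non-orthogonality with a type in $\Pi$ itself. Assume for contradiction that $p|B$ is non-orthogonal to some $q \in \Pi_{<n}$ over $B \supseteq \mathbb{C}$, where without loss of generality the parameters of the equation defining $q$ lie in $B$. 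Choose realizations $y \models p|B$ and $c \models q$ with $y \nind_B c$. Strong minimality of $p|B$ forces $y \in \mathrm{acl}(B, c)$, so
\[
n \;=\; \mathrm{trdeg}(B\langle y\rangle/B) \;\leq\; \mathrm{trdeg}(B\langle c\rangle/B) \;<\; n,
\]
the last inequality since $q \in \Pi_{<n}$. Contradiction.

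The main obstacle is this reduction from $\Pi_{<n}$-analyzability of a minimal type to non-orthogonality to a type in $\Pi_{<n}$: while standard in geometric stability, it needs careful bookkeeping of parameters to ensure one lands at a type actually defined by an order $<n$ equation and not merely in its $\mathrm{dcl}$-closure. Once this is in hand, everything else is a packaging of Theorem B together with Proposition \ref{proposition-Umemura}, in direct analogy with the preceding corollary in the affine setting.
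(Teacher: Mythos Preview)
Your argument is correct and matches the paper's proof essentially line for line: construct the order-$n$ autonomous equation $(E)$ satisfied by $\phi$, observe that its generic type is interalgebraic with the (minimal) generic type $p$ of $(X_0,v)$, and then rule out $\Pi_{<n}$-analyzability via the transcendence-degree inequality, concluding by Proposition~\ref{proposition-Umemura}.

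The obstacle you flag at the end is not a genuine difficulty, and the paper dispatches it in a single sentence: since $q$ is minimal of order $n$, it is orthogonal to every type living on a set in $\Pi_{<n}$ (exactly your transcendence-degree argument), and a type orthogonal to every type in an invariant family $\Pi$ cannot be $\Pi$-analyzable. The bookkeeping worry about ``landing in $\Pi_{<n}$ rather than its $\mathrm{dcl}$-closure'' evaporates because $\Pi_{<n}$ is a family of \emph{definable sets}: if $c$ lies in some $D \in \Pi_{<n}$ then $\mathrm{tp}(c/B)$ lives on $D$ for any $B$ containing the parameters of $D$, so the argument of your decoupled-system lemma (Lemma preceding Corollary~\ref{triviality-decoupledsystem}) applies verbatim to produce a type on a set in $\Pi_{<n}$ non-orthogonal to $q$.
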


\begin{proof}[Proof of the corollaries] 
Theorem A and Theorem B ensure that the generic algebraic vector fields  from the families $\Xi(n,d)$ or $\Xi(X_0,d)$ for $d \gg 0$ respectively are strongly minimal. 

Let $(X,v)$ be a strongly minimal algebraic vector field, let $t \mapsto y(t)$ be a nonconstant integral curve of this vector field defined on a complex domain $U$ and let $f \in \mathbb{C}(X) \setminus \mathbb{C}$ be a rational function.  By strong minimality, every differential subfield of $(\mathbb{C}(X),\delta_v)$ containing properly $\mathbb{C}$ has transcendence degree $ n= \mathrm{dim}(X)$. It follows with the notation of Remark \ref{remark-vectorfields-autonomous} that $f$ is a {\em weakly primitive function} in the sense that 
$$\sigma_f : X \dashrightarrow \mathrm{Jet}^n(\mathbb{A}^1)$$ is generically finite on its image which has dimension $ n = \mathrm{dim}(X)$. Therefore, the Zariski-closure of the image is given by a single equation 
$$(E_f) : F(y,y',\ldots, y^{(n)}) = 0$$
which is an order $n$ differential equation satisfied by $\phi = f \circ y \in \mathcal M(U)$. It remains to show that $\phi$ does not lie in the class $\mathcal C_{r - 1}$. 

First, note that $(X,v)$ is in generically finite-to-finite correspondence with every geometric model of $(E_f)$ so that the generic type $q$ of $(E_f)$ is a minimal type of order $r$. It follows that $q$ is orthogonal to all the types living on $\Pi_{<r}$ so that $q$ is {\em not} $\Pi_{<r}$-analyzable. The corollary now follows from Proposition \ref{proposition-Umemura}: 
since $y$ is noncontant and $(X,v)$ is strongly minimal, the analytic curve $y$ is Zariski-dense in $X$ so that $\phi = f \circ y$ realizes the generic type $q$ of $(E_f)$ and the corollary follows from Proposition \ref{proposition-Umemura}.  
\end{proof}


\end{document}